\newtheorem*{rep@theorem}{\rep@title}
\newcommand{\newreptheorem}[2]{%
\newenvironment{rep#1}[1]{%
 \def\rep@title{#2 \ref{##1}}%
 \begin{rep@theorem}}%
 {\end{rep@theorem}}}
\newcommand*\bigcdot{\mathpalette\bigcdot@{.5}}
\newcommand*\bigcdot@[2]{\mathbin{\vcenter{\hbox{\scalebox{#2}{$\m@th#1\bullet$}}}}}
\newtheorem{counter}{subcounter}[section]
\newtheorem{cor}[counter]{Corollary}
\newtheorem{lemma}[counter]{Lemma}
\newtheorem{prop}[counter]{Proposition}
\newtheorem*{prop*}{Proposition}
\newtheorem{question}[counter]{Question}
\newtheorem{theorem}[counter]{Theorem}
\newtheorem{fact}[counter]{Fact}
\theoremstyle{definition}\newtheorem{defn}[counter]{Definition}
\theoremstyle{definition}\newtheorem{example}[counter]{Example}
\theoremstyle{definition}\newtheorem{remark}[counter]{Remark}
\DeclareMathOperator{\Sa}{Sa}
\DeclareMathOperator{\conv}{conv}
\DeclareMathOperator{\Stab}{Stab}
\DeclareMathOperator{\proj}{Proj}
\DeclareMathOperator{\ext}{ext}
\DeclareMathOperator{\cS}{S}
\newcommand{\eps}{\varepsilon}
\newcommand{\C}{\mathbb{C}}
\newcommand{\R}{\mathbb{R}}
\newcommand{\Z}{\mathbb{Z}}
\newcommand{\T}{\mathbb{T}}
\newcommand{\cN}{\mathcal{N}}
\newcommand{\set}[1]{\left\{#1\right\}}
\newcommand{\bK}{\mathbf{K}}
\newcommand{\cK}{\mathcal{K}}
\newcommand{\aut}{\operatorname{Aut}}
\edef\restoreparindent{\parindent=\the\parindent\relax}
\begin{document}

\title{Projectivity in topological dynamics}

\author{Jashan Bal}
\address{University of Waterloo, 200 University Avenue West, Waterloo, Ontario, N2L 3G1, Canada}
\email{j2bal@uwaterloo.ca}
\subjclass[2020]{Primary: 37B05; Secondary: 03E15, 46M10}
\thanks{The author was partially supported by the Ontario Graduate Scholarship.}

\begin{abstract}
We study projectivity in the category of $G$-flows and affine $G$-flows for Polish groups $G$. We also introduce the notion of \emph{proximally irreducible} extensions between affine $G$-flows. Using this we provide a characterization of extreme amenability, strong amenability, and amenability for closed subgroups $H \leq G$ in terms of certain ``dynamical irreducibility'' properties of the Samuel compactification of $G/H$. We then apply this to answer an open question of Zucker by proving a structure theorem for when the universal minimal proximal flow of $G$ is metrizable or contains a comeager orbit. 

\end{abstract}

\maketitle

\setcounter{tocdepth}{1}
\tableofcontents
\section{Introduction}
A compact space $X$ is said to be \emph{projective} if for all compact spaces $Y,Z$, continuous maps $\phi\colon X \to Y$, and continuous surjections $\pi\colon Z \to Y$, there exists a continuous map $\psi\colon X \to Z$ satisfying $\pi \circ \psi = \phi$. The seminal paper of Gleason \cite{Gleason1958} characterized projective compact spaces as exactly the \emph{extremally disconnected} spaces, i.e., ones where the closure of every open subset remains open. Moreover, Gleason also proved the existence and uniqueness of \emph{projective covers} associated to every compact space.

In this paper we aim to further expand upon the theory of projectivity in topological dynamics. We first study projectivity in the category of $G$-flows. It follows from the proof due to Gleason \cite{Gleason1958} that a compact space $X$ is projective if and only if every \emph{irreducible extension} of $X$ is an isomorphism. Here an extension $\phi\colon Y \to X$ is said to be irreducible if for all proper closed subsets $C \subseteq Y$ we have $\phi(C) \neq X$. We can now consider irreducible extensions between $G$-flows where the extensions are also assumed to be $G$-equivariant. Then a $G$-flow $X$ is said to be \emph{$G$-extremally disconnected} ($G$-ED) if any irreducible extension of $X$ must be an isomorphism. These have been well studied in the literature (see \cites{AuslanderGlasner1977, BassoZucker2025,LeBoudecTsankov2025, Zucker2016, Zucker2021}). 

We can naturally weaken the conditions on an irreducible extension to ask that only proper subflows can't surject onto the entire codomain. In particular, an extension between $G$-flows $\phi\colon X \to Y$ is said to be \emph{$G$-irreducible} if for all proper subflows $C \subseteq X$ we have $\phi(C) \neq Y$. The maximal $G$-flows with respect to $G$-irreducible extensions turn out to be exactly the projective objects in the category of $G$-flows and are called $G$-projective flows. This was proved by Ball and Hagler in \cite{BallHagler1996} where they also showed that every $G$-flow has a unique maximal $G$-irreducible extension. Unsurprisingly, it turns out that the notion of $G$-ED does not coincide with $G$-projectivity in general (see the discussion right after Corollary \ref{Cor:ProjImpliesGED}). For example, when restricted to the class of minimal $G$-flows the only projective object is the universal minimal $G$-flow $M(G)$. However, unlike the theory of $G$-ED flows where we have a plethora of examples and known structure results, there are few known examples of $G$-projective flows. Our first main result is the following theorem which allows us to find examples of $G$-projective flows and gives a characterization of extreme amenability for closed subgroups in terms of the Samuel compactification of the coset space.
\begin{reptheorem}{Thm:ExAmenSubgp}
If $G$ is a Polish group and $H \leq G$ is a closed subgroup, then $H$ is extremely amenable if and only if $\Sa(G/H)$ is $G$-projective.
\end{reptheorem}
In the case when $\Sa(G/H)$ is minimal we see that $\Sa(G/H)$ is $G$-projective if and only if $\Sa(G/H)\simeq M(G)$. Then extreme amenability of $H$ follows by a result of Zucker \cite[Theorem 7.5]{Zucker2021}.

Moreover, we can show that in certain cases one can explicitly compute the \emph{$G$-projective cover} (see Theorem \ref{Thm:ProjCoverPointTrans}). We apply this to examples coming from structural Ramsey theory (see \cite{KechrisPestovTodorcevic2005}). In particular, in Proposition \ref{Prop:RamseyEx} we provide a complete characterization of metrizable topologically transitive $G$-projective flows when $G$ is equal to the automorphism group of a Fra\"{\i}ss\'e structure.

Using the compact right topological semigroup structure on $\Sa(G)$ we provide a correspondence between point transitive $G$-projective flows and idempotents in $\Sa(G)$. Moreover, in general we show $G$-projective flows are exactly the retracts of certain kinds of $G$-flows (see Corollary \ref{Cor:IdemAndProjPointTrans} and Proposition \ref{Prop:GxZProj}).

By Gelfand duality and properties of the Samuel compactification we see that $\Sa(G/H)$ is $G$-projective if and only if the right uniformly continuous bounded functions $\operatorname{RUC}^b(G/H)$ are injective in the category of unital commutative $G$-$C^*$-algebras with respect to $G$-equivariant $*$-homomorphisms. 

Injectivity for dynamical systems is an important and well studied property in operator algebras (see \cites{HadwinPaulsen2011,Hamana1985,KalantarKennedy2017,KennedyRaumSalomon2022}). Unlike the commutative setting, injectivity for general noncommutative $C^*$-algebras is defined with respect to unital completely positive maps instead of $*$-homomorphisms (see \cite{Hamana1979a} and \cite{Hamana1979b}). Motivated by this we look at the category of \emph{$G$-function systems} with morphisms given by $G$-equivariant unital positive maps. Kadison's representation theorem tells us that this is equivalent to studying projectivity in the category of \emph{affine $G$-flows} with morphisms as affine $G$-maps. Given a $G$-flow $X$, the set of regular Borel probability measures $P(X)$ on $X$ is an affine $G$-flow. With this we can show that the probability measures on the \emph{Furstenberg boundary}, $P(\Pi_s(G))$, is an affine $G$-projective flow. Our second main result here is an analogous characterization of amenability for closed groups.
\begin{reptheorem}{Thm:AmenSubgp}
    If $G$ is a Polish group and $H \leq G$ is a closed subgroup, then $H$ is amenable if and only if $P(\Sa(G/H))$ is affine $G$-projective. 
\end{reptheorem}
Dualizing we obtain that $H$ is amenable if and only if $\operatorname{RUC}^b(G/H)$ is injective in the category of $G$-function systems with respect to $G$-equivariant positive maps. This can also be viewed as a generalization of the know fact that $\C$ is injective in the category of $G$-function systems if and only if $G$ is amenable. 

 In analogy with $G$-projectivity we can show that projectivity in the category of affine $G$-flows corresponds to being maximal with respect to \emph{affine $G$-irreducible extensions}. Here an affine extension between two affine $G$-flows $\phi\colon K \to L$ is affine $G$-irreducible if for all proper affine subflows $C \subseteq K$ we have that $\phi(C)\neq L $. With basically the same argument that Ball and Hagler \cite{BallHagler1996} use to prove the existence of maximal $G$-irreducible extensions, we can prove that maximal affine $G$-irreducible extension exist (see Theorem \ref{Thm:AffProjCoverExist}). These are exactly the  \emph{affine $G$-projective covers}. For example, the affine $G$-projective cover of the trivial flow is $P(\Pi_s(G))$.

Motivated by the characterizations of amenability and extreme amenability for closed subgroups in terms of certain ``dynamical irreducibility'' properties of $\Sa(G/H)$, we aim to prove a similar result for strong amenability. Towards this we introduce a new notion of extensions for affine $G$-flows which we call \emph{proximally irreducible} (see Definition \ref{Def:ProxIrr}). For example, we can show that the probability measures on the universal minimal proximal flow $P(\Pi(G))$ has no nontrivial proximally irreducible extensions. Moreover, we obtain our desired characterization of strong amenability for closed subgroups.
\begin{reptheorem}{Thm:StrAmenSubgp}
    If $G$ is a Polish group and $H \leq G$ is a closed subgroup, then $H$ is strongly amenable if and only if $P(\Sa(G/H))$ has no nontrivial proximally irreducible extensions.
\end{reptheorem}
  Applying the above Theorem we can answer an open question of Zucker \cite[Question 7.6]{Zucker2021}. In doing so we obtain a characterization of when the universal minimal proximal flow has a comeager orbit or is metrizable. 
\begin{reptheorem}{Thm:StructureOfUMPF}
    Let $G$ be a Polish group.
    \begin{enumerate}
        \item $\Pi(G)$ has a comeager orbit if and only if there exists a maximal presyndetic strongly amenable closed subgroup $H \leq G$. Moreover in this case, $\Pi(G) \simeq \Sa(G/H)$.
        \item $\Pi(G)$ is metrizable if and only if there exists a maximal presyndetic co-precompact strongly amenable closed subgroup $H \leq G$. Moreover in this case, $\Pi(G) \simeq \widehat{G/H}$.
    \end{enumerate}
\end{reptheorem}
Unlike the case of $G$-irreducible and affine $G$-irreducible extensions, we were not able to show that maximal proximally irreducible extensions exists in general. The issue being that it is not clear if the composition of two proximally irreducible extensions remains proximally irreducible (see Question \ref{Ques:CompProxIrrExt}). However, when this obstacle disappears we can prove the existence and uniqueness of a maximal proximally irreducible extension. For example, if $X$ is a $G$-flow which is minimal or has a comeager orbit, then the affine $G$-flow $P(X)$ has a maximal proximally irreducible extension (see Theorem \ref{Thm:MaxProxIrrExt}). Using this we can explicitly compute the maximal proximal irreducible extension in certain cases (see Theorem \ref{Thm:MaxProxIrrOfPointTrans}). For example, we see that the maximal proximally irreducible extension of the trivial flow is $P(\Pi(G))$.

\subsection{Acknowledgements} I would like to thank Andy Zucker for all the helpful conversations, guidance, and feedback during this project.  
\section{Background}
 \subsection{Notation}
Throughout all topological space are assumed to be Hausdorff. Moreover, $G$ will always denote a Polish group, $1_G\in G$ the identity element, and $\mathcal{N}_G$ the collection of open neighbourhoods of $1_G$ in $G$. Some of the results proved also hold for arbitrary topological groups so we will often specify the assumption that $G$ is Polish when it necessary.
\subsection{Topological dynamics} See Auslander \cite{Auslander1988} or Glasner \cite{Glasner1976} for a detailed introduction to topological dynamics

 A \emph{$G$-flow} is a compact Hausdorff space $X$ with a specified continuous (left) action $\tau\colon G\times X \to X$. We write $g\cdot x$ or $gx$ to mean $\tau(g,x)$ when the action is understood. When the acting group is understood, we will sometimes write \emph{flow} to mean $G$-flow. Given $G$-flows $X$ and $Y$ we say a map $\phi\colon X \to Y$ is a \emph{$G$-map} if $\phi$ is continuous and $G$-equivariant, i.e., for every $x\in X$ and $g\in G$ we have $\phi(gx) = g\phi(x)$. Furthermore, if $\phi$ is surjective, then we say $\phi$ is an \emph{extension} or that $X$ is an extension of $Y$ when the map $\phi$ is understood. We say two $G$-flows are \emph{isomorphic} if there exists a bijective $G$-map between them

Let $X$ be a $G$-flow.  A \emph{subflow} of $X$ is a nonempty closed $G$-invariant subset $Y \subseteq X$. We say $X$ is a \emph{minimal} $G$-flow if it contain no proper subflows or equivalently every element $x\in X$ has dense orbit. By Zorn's lemma every $G$-flow contains a minimal subflow. It is classic result of Ellis that there exists a \emph{universal minimal flow}, denoted $M(G)$, such that $M(G)$ is a minimal $G$-flow and is an extension of all other minimal $G$-flows. Furthermore, $M(G)$ is unique up to isomorphism. The group $G$ is said to be \emph{extremely amenable} if every $G$-flow has a fixed point or equivalently $M(G)$ is trivial. 

We say a $G$-flow $X$ is \emph{point transitive} if there exists $x\in X$ with dense orbit. There exists a \emph{universal point transitive flow}, denoted $\Sa(G)$, with the following properties. We can identify $G \subseteq \Sa(G)$ as a dense subset and thus $1_G \in \Sa(G)$ has dense orbit. Also, for any $G$-flow $X$ with $x\in X$, there exists a $G$-map $\phi:\Sa(G) \to X$ such that $\phi(g) = gx$ for all $g\in G$. Moreover, $\Sa(G)$ is unique up to isomorphism. It also called called the \emph{Samuel compactification} of $G$ (see \cite{Samuel1948}). We will present a construction and some important properties of the Samuel compactification in the next section (see Section \ref{Sec:SamuelCompact}).

We say a $G$-flow $X$ is \emph{proximal} if for all $x,y\in X$ there exists a net $(g_\lambda)_\lambda \subseteq G$ such that $\lim_\lambda g_\lambda x = \lim_\lambda g_\lambda y$. There exists a \emph{universal minimal proximal flow}, denoted $\Pi(G)$, such that $\Pi(G)$ is a minimal proximal $G$-flow and it is an extension of any other minimal proximal $G$-flow. In addition, $\Pi(G)$ is unique up to isomorphism. See \cite{Glasner1976} for a proof of the existence of the universal minimal proximal flow. The group $G$ is said to be \emph{strongly amenable} if each proximal $G$-flow has a fixed point or equivalently $\Pi(G)$ is trivial. We say an extension between $G$-flows $\phi\colon X \to Y$ is \emph{proximal} if each fiber of $\phi$ consists of proximal pairs, i.e., for all $x,y\in X$ with $\phi(x) = \phi(y)$ there exists $(g_\lambda)_\lambda \subseteq G$ with $\lim_\lambda g_\lambda x = \lim_\lambda g_\lambda y$. Thus a $G$-flow $X$ is proximal if and only if the trivial extension $X \to \set{*}$ is proximal.  

If $X$ is a compact Hausdorff space, let $P(X)$ denote the collection of all regular Borel probability measures on $X$. By the Riesz representation theorem we may view $P(X) \subseteq C(X)^*$. With the induced weak$\ast$-topology we obtain that $P(X)$ is a compact Hausdorff space. We will identify $X$ as a closed subset of $P(X)$ where $x \in X$ is sent to the Dirac mass $\delta_x \in P(X)$. Notice that the extreme points of $P(X)$ are the exactly the set of Dirac masses. Furthermore, if $X$ is assumed to be a $G$-flow, then the action of $G$ on $X$ induces a continuous action of $G$ on $P(X)$ via continuous affine homeomorphisms. Therefore, $P(X)$ is also seen to be a $G$-flow. More generally we say a compact convex set $K$ is an \emph{affine $G$-flow} if there is some specified continuous action of $G$ on $K$ via affine homeomorphisms. An \emph{affine subflow} of $K$ is a $G$-invariant compact convex subset of $K$. Let $\ext(K)$ denote the extreme points of $K$. Notice that $\ext(K)$ is $G$-invariant and thus the closure $\overline{\ext}(K)$ is a $G$-flow. A surjective affine $G$-map between two affine $G$-flows is said to be an \emph{affine extension}. 

If $K$ is a compact convex subset of a locally convex topological vector space $E$, then there exists a unique surjective affine continuous map, called the \emph{barycenter map}, $\beta\colon  P(K) \to K$ such that
\[
f(\beta(\mu)) = \int_K fd\mu
\]
for all $f\in E^*$ and $\mu \in P(X)$ (see \cite{Phelps2001}). If $\beta(\mu) = x$ we say that $\mu$ is a \emph{representing measure} for $x$. In the case when $K$ is an affine $G$-flow then by uniqueness of the barycenter map we obtain that $\beta$ must be $G$-equivariant, i.e., an extension of affine $G$-flows. 

Given a compact convex set $K$ and a subset $C \subseteq K$, let $\conv(C)$ denote the set of convex combinations from elements of $C$ and $\overline{\conv}(C)$ the closure. Notice if $K$ is an affine $G$-flow and $C \subseteq K$ is $G$-invariant, then $\overline{\conv}(C)$ is an affine subflow. Recall the Krien-Milman theorem which states that for any compact convex set $K$ we have that $K = \overline{\conv}(\ext(K))$. Also, recall Milman's theorem which states that if $K$ is a compact convex set and $C \subseteq K$ is a closed subset such that $K = \overline{\conv}(C)$, then $\ext(K) \subseteq C$. For a proof of the above mentioned theorems see \cite{Phelps2001}.

We say a $G$-flow $X$ is \emph{strongly proximal} if $P(X)$ is a proximal $G$-flow. There exists a \emph{universal minimal strongly proximal flow}, denoted $\Pi_s(G)$, such that $\Pi_s(G)$ is a minimal strongly proximal $G$-flow and it is an extension of every other minimal strongly proximal $G$-flow. Moreover, $\Pi_s(G)$ is also unique up to isomorphism. See \cite{Glasner1976} for a proof of the existence of the universal minimal strongly proximal flow. Often the universal minimal strongly proximal flow is also called the \emph{Furstenberg boundary} of $G$. The group $G$ is said to be \emph{amenable} if each strongly proximal $G$-flow has a fixed point or equivalently $\Pi_s(G)$ is trivial. We say an extension between $G$-flows $\phi\colon X \to Y$ is \emph{strongly proximal} if for any $y \in Y$ and $\mu \in P(\phi^{-1}(\set{y})) \subseteq P(X)$ there exists a net $(g_\lambda)_\lambda \subseteq G$ such that 
\[
\lim_\lambda g_\lambda \mu  = \delta_x
\]
for some $x\in X$. It can be shown that $X$ is a strongly proximal $G$-flow if and only if the trivial extension $X \to \set{*}$ is strongly proximal. 
\subsection{Samuel compactification}\label{Sec:SamuelCompact} 
We present a construction of the Samuel compactification and some properties which will be needed later on. See \cite{KocakStrauss19797}, \cite{Zucker2021} and \cite{BassoZucker2025} for more details and proofs.

The following construction can be done in general for uniform spaces but we restrict our attention to spaces of the form $G/H$ where $G$ is a topological group and $H \leq G$ is a closed subgroup.
\begin{defn}
    A collection of subsets $\mathcal{F} \subseteq \mathcal{P}(G/H)$ has the \emph{near finite intersection property} if for all $A_1,\dots, A_n \in \mathcal{F}$ and open $ U \in \mathcal{N}_G$ we must have
    \[
    UA_1 \cap \cdots \cap UA_n \neq \emptyset.
    \]
    where $UA_j := \set{gkH: g\in U \text{ and } kH \in A_j}$.
\end{defn}
A \emph{near ultrafilter} on $G/H$ is a maximal collection of subsets of $G/H$ with the near finite intersection property. Let $\Sa(G/H)$ be the set of all near ultrafilters. Given $p\in \Sa(G/H)$ and $A \subseteq G/H$ we have that $A \in p$ if and only if $UA \in p$ for all $U \in \mathcal{N}_G$.

We endow $\Sa(G/H)$ with the topology whose basic open neighbourhoods are given by
\[
N_A:= \set{p\in \Sa(G/H): A\notin p}
\]
for $A \subseteq G/H$. Then $\Sa(G/H)$ is a compact Hausdorff space and we can embed $G/H \hookrightarrow \Sa(G/H)$ as a dense subset by associating each $gH \in G/H$ to the near ultrafilter $\widehat{gH} \in \Sa(G/H)$ given by
\[
\widehat{gH} := \set{A \subseteq G/H: gH \in \overline{A}}.
\]
Thus we will identify $G/H \subseteq \Sa(G/H)$. Moreover, $\Sa(G/H)$ will be a $G$-flow. Furthermore, if $X$ is a $G$-flow and $x \in X$ is an $H$-fixed point, then there exists a $G$-map $\phi\colon \Sa(G/H) \to X$ such that $\phi(gH) = gx$ for all $gH\in G/H$. This follows from the following result.
\begin{fact} \label{Fact:ExtUnifCtsMap}
    If $X$ is a compact space and $\phi\colon G/H \to X$ is a uniformly continuous function, then it extends uniquely to a continuous map $\Tilde{\phi}\colon \Sa(G/H) \to X$ such that $\Tilde{\phi}(p) = x$ if and only if for all open $W \ni x$ we have that
    \[
    \phi^{-1}(W) \in p.
    \]
\end{fact}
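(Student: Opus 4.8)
The plan is to \emph{define} the extension by the stated formula and then verify it is well defined, continuous, and extends $\phi$. So for $p\in\Sa(G/H)$ declare $\tilde\phi(p)=x$ precisely when $\phi^{-1}(W)\in p$ for every open $W\ni x$; the content is that such an $x$ exists and is unique. Existence rests on the following covering lemma for near ultrafilters, which I expect to be the main obstacle: \emph{if $G/H=B_1\cup\cdots\cup B_m$ then $B_j\in p$ for some $j$.} Granting this, suppose toward a contradiction that every point of $X$ lies in some open $W$ with $\phi^{-1}(W)\notin p$; by compactness finitely many such $W_1,\dots,W_m$ cover $X$, so $G/H=\bigcup_j\phi^{-1}(W_j)$ and the lemma forces some $\phi^{-1}(W_j)\in p$, a contradiction. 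Hence some $x\in X$ satisfies $\phi^{-1}(W)\in p$ for all open $W\ni x$, giving existence.

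To prove the covering lemma I would use maximality of $p$: if no $B_j\in p$, then for each $j$ the family $p\cup\set{B_j}$ fails the near finite intersection property, yielding $U_j\in\mathcal{N}_G$ and finitely many $A^j_1,\dots,A^j_{k_j}\in p$ with $U_jB_j\cap U_jA^j_1\cap\cdots\cap U_jA^j_{k_j}=\emptyset$. Setting $U=\bigcap_jU_j\in\mathcal{N}_G$ and collecting all the $A^j_i$ into one finite subfamily of $p$, the near finite intersection property produces a point $s$ in the common thickening $\bigcap_{i,j}UA^j_i$. For each $j$ one checks $s\notin UB_j$ (otherwise $s$ lies in the empty set above, using $U\subseteq U_j$ and monotonicity of thickening), so $s\notin\bigcup_jUB_j=U(G/H)=G/H$, which is absurd since $1_G\in U$. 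The key technical point throughout is that near ultrafilters are \emph{not} closed under finite intersections, so one must work with the thickenings $UA$ rather than with the sets themselves.

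Uniqueness of $x$ and continuity of $\tilde\phi$ both follow by combining the near finite intersection property with uniform continuity through a ``uniform gap'' estimate; here I use that the compact Hausdorff space $X$ carries a unique compatible uniformity, with open symmetric entourages $V$ and balls $V[x]=\set{y\in X:(x,y)\in V}$. For uniqueness, if $x\neq y$ both satisfied the defining condition for the same $p$, choose $V$ small enough that $(x,y)\notin V^4$ and, by uniform continuity of $\phi$, a symmetric $U\in\mathcal{N}_G$ with $(\phi(s),\phi(t))\in V$ whenever $t\in Us$. Since $\phi^{-1}(V[x])$ and $\phi^{-1}(V[y])$ both lie in $p$, the near finite intersection property gives $z\in U\phi^{-1}(V[x])\cap U\phi^{-1}(V[y])$; chaining the resulting $V$-relations forces $(x,y)\in V^4$, a contradiction. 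The same estimate yields continuity: given open $O\ni\tilde\phi(p)=x$, pick $V$ small enough that $\overline{V^3[x]}\subseteq O$ and $U$ as above, and set $A=\phi^{-1}(X\setminus V^3[x])$. Since $\phi^{-1}(V[x])\in p$ while the gap computation gives $U\phi^{-1}(V[x])\cap UA=\emptyset$, the near finite intersection property forces $A\notin p$, i.e.\ $p\in N_A$. For any $q\in N_A$, writing $y=\tilde\phi(q)$, if $y\notin O$ then $X\setminus\overline{V^3[x]}$ is an open neighbourhood of $y$ whose $\phi$-preimage lies in $q$ and is contained in $A$; upward closure of $q$ then gives $A\in q$, contradicting $q\in N_A$. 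Hence $\tilde\phi(N_A)\subseteq O$, so $\tilde\phi$ is continuous.

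Finally, $\tilde\phi$ extends $\phi$: for $gH\in G/H$ and open $W\ni\phi(gH)$ we have $gH\in\phi^{-1}(W)$, hence $gH\in\overline{\phi^{-1}(W)}$ and $\phi^{-1}(W)\in\widehat{gH}$; thus $x=\phi(gH)$ satisfies the defining condition for $\widehat{gH}$, and by uniqueness $\tilde\phi(\widehat{gH})=\phi(gH)$. Uniqueness of the continuous extension is then immediate, since $G/H$ is dense in $\Sa(G/H)$ and $X$ is Hausdorff. The crux of the whole argument is reconciling the near finite intersection property with uniform continuity---embodied in the covering lemma and the gap estimates---rather than any single long computation.
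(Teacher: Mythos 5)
Your proof is correct; note, though, that the paper offers no proof to compare against---it records this statement as a Fact and defers the proofs of all such properties of $\Sa(G/H)$ to the cited references (Ko\c{c}ak--Strauss, Zucker, Basso--Zucker). Your argument is a complete, self-contained verification from the near-ultrafilter definition, in the spirit of the Ko\c{c}ak--Strauss construction. The essential combinatorial input, your covering lemma (if $G/H = B_1\cup\cdots\cup B_m$ then some $B_j \in p$), is the right substitute for the prime property of ordinary ultrafilters, and your proof of it is sound: maximality yields for each $j$ a $U_j \in \mathcal{N}_G$ and finitely many members of $p$ whose $U_j$-thickenings are disjoint from $U_jB_j$; taking $U = \bigcap_j U_j$ and applying the near finite intersection property to all of those members at once produces a point of $G/H$ lying in no $UB_j$, which is absurd since $1_G \in U$. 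The analytic steps also check out: since $V$ is an open entourage, $V[x]$ is an open neighbourhood of $x$, so the defining condition genuinely places $\phi^{-1}(V[x])$ in $p$; near ultrafilters are upward closed (maximality plus monotonicity of thickening), which legitimizes the final contradiction in your continuity argument; the disjointness $U\phi^{-1}(V[x]) \cap U\phi^{-1}(X\setminus V^3[x]) = \emptyset$ follows from uniform continuity by exactly the chaining you describe, and the inclusion $\overline{V^3[x]} \subseteq V^4[x]$ justifies choosing $V$ with $\overline{V^3[x]} \subseteq O$. Uniqueness of the continuous extension from density of $G/H$ and Hausdorffness of $X$ is standard. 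The net effect of your route is that the Fact becomes independent of the literature, at the modest cost of invoking the unique compatible uniformity on the compact target.
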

The following fact will also be implicitly used throughout the paper.
\begin{fact}
    If $G$ is a Polish group and $H\leq G$ is a closed subgroup, then $G/H \subseteq \Sa(G/H)$ is a comeager subset.
\end{fact}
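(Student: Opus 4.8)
The plan is to realize $G/H$ as a dense $G_\delta$ subset of $\Sa(G/H)$; since $\Sa(G/H)$ is compact Hausdorff and hence a Baire space, and $G/H$ is dense, this immediately gives that $G/H$ is comeager. The main idea is to push the problem down to a \emph{metrizable} compactification of $G/H$, where the corresponding statement is a standard fact of descriptive set theory, and then pull the resulting $G_\delta$ back along a continuous surjection.

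First I would record the structural inputs coming from $G$ being Polish. The uniformity on $G/H$ defining the near ultrafilters has as a base the entourages coming from a countable decreasing neighbourhood basis $\{V_n\} \subseteq \cN_G$ at $1_G$; being Hausdorff with a countable base, this uniformity is induced by a bounded uniformly continuous metric $\rho$ on $G/H$. Moreover $G/H$ is Polish. Fixing a countable dense set $\{c_k\} \subseteq G/H$, the functions $f_k := \rho(\cdot, c_k)$ are bounded, uniformly continuous, separate the points of $G/H$, and generate its uniformity; hence $\iota := (f_k)_k \colon G/H \to \prod_k I_k$ (with $I_k$ a compact interval containing the range of $f_k$) is a uniform, and in particular topological, embedding. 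Let $Z := \overline{\iota(G/H)} \subseteq \prod_k I_k$, a metrizable compactification of $G/H$ in which $G/H$ sits densely.

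Next, by Fact \ref{Fact:ExtUnifCtsMap} each $f_k$ extends to a continuous $\tilde f_k \colon \Sa(G/H) \to I_k$, and I would assemble these into a continuous map $F := (\tilde f_k)_k \colon \Sa(G/H) \to \prod_k I_k$ extending $\iota$. Since $F(\Sa(G/H))$ is compact and contains the dense subset $\iota(G/H)$ of $Z$, it equals $Z$, so $F \colon \Sa(G/H) \to Z$ is a continuous surjection. As $G/H$ is Polish and dense in the compact metric space $Z$, it is a dense $G_\delta$ in $Z$, say $G/H = \bigcap_n U_n$ with each $U_n \subseteq Z$ open dense. Then each $F^{-1}(U_n)$ is open in $\Sa(G/H)$ and contains the (dense) copy of $G/H$, hence is dense open, and $\bigcap_n F^{-1}(U_n) = F^{-1}(G/H)$.

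It remains to identify $F^{-1}(G/H)$ with the canonical copy of $G/H$ inside $\Sa(G/H)$, i.e. to show that the fibre of $F$ over each point $\iota(gH)$ is exactly $\{\widehat{gH}\}$; this is the only genuinely delicate step, and the main obstacle. Given a near ultrafilter $p$ with $F(p) = \iota(gH)$ one has $\tilde f_k(p) = f_k(gH)$ for every $k$, and I would prove $p \subseteq \widehat{gH}$, whence $p = \widehat{gH}$ by maximality. The point requiring care is that near ultrafilters are \emph{not} closed under finite intersection, so one cannot simply intersect basic neighbourhoods of $gH$ known to lie in $p$. Instead, for $A \in p$ with $gH \notin \overline{A}$ one has $\rho(gH, A) \geq \delta > 0$; choosing $c_k$ within $\delta/4$ of $gH$ forces $A \subseteq \{x : f_k(x) \geq 3\delta/4\}$ while the set $B := \{x : f_k(x) < \delta/2\}$ lies in $p$ by the description of $\tilde f_k(p)$ in Fact \ref{Fact:ExtUnifCtsMap}. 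Uniform continuity of $f_k$ then yields some $U \in \cN_G$ with $UA \cap UB = \emptyset$, contradicting the near finite intersection property of $p$. Hence every $A \in p$ satisfies $gH \in \overline{A}$, giving $p = \widehat{gH}$ and therefore $F^{-1}(G/H) = G/H$. This exhibits $G/H$ as the dense $G_\delta$ set $\bigcap_n F^{-1}(U_n)$, and the uniform-separation argument is precisely what tames the fact that near ultrafilters can contain ``near-disjoint'' sets.
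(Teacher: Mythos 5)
The paper never proves this statement: it is recorded as a bare Fact and implicitly deferred to the references given at the start of Section 2.3 (Ko\c{c}ak--Strauss, Zucker, Basso--Zucker), so there is no internal proof to compare against. Judged on its own, your argument is correct, and all the standard ingredients you invoke are legitimate: Weil's metrization theorem for a Hausdorff uniformity with countable base, Polishness of $G/H$ for $H$ closed in a Polish $G$, and the fact that a Polish subspace of a Polish space is $G_\delta$.

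Your route is essentially the classical general-topology argument, made concrete in the near-ultrafilter model. Abstractly: $G/H$ is Polish, hence \v{C}ech-complete, hence a $G_\delta$ in \emph{every} Hausdorff compactification, in particular a dense $G_\delta$ (so comeager) in $\Sa(G/H)$. The crux of that abstract statement is the classical lemma (Engelking, Theorem 3.5.7) that the canonical map between two compactifications of the same space carries remainder onto remainder; your computation that $F^{-1}\left(\set{\iota(gH)}\right) = \set{\widehat{gH}}$ is precisely a hands-on proof of this lemma for the pair $(\Sa(G/H), Z)$, and it is where the real content of your proof lies. The separation argument there is right: from $A \in p$ with $\rho(gH,A) \geq \delta$ you place $A$ inside $\set{f_k \geq 3\delta/4}$, extract $B = \set{f_k < \delta/2} \in p$ from the characterization in Fact \ref{Fact:ExtUnifCtsMap}, and then uniform continuity of $f_k$ yields a single $U \in \cN_G$ with $UA \cap UB = \emptyset$, contradicting the near finite intersection property; maximality then forces $p = \widehat{gH}$. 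This correctly circumvents the failure of near ultrafilters to be closed under finite intersections, which you rightly identify as the delicate point. One minor overstatement: $\iota$ is in general only a topological embedding, not a uniform one (when $G/H$ is not precompact, e.g.\ $G/H = \R$, distant points are not uniformly separated by finitely many of the $f_k$). This is harmless, since your proof uses only the topological embedding property of $\iota$ together with uniform continuity of each individual $f_k$, but the phrase ``uniform embedding'' should be dropped.
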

Now if $H \leq G$ is a closed subgroup, then given any $p \in \Sa(H)$ we have that the collection $\set{UA: A \in p, U \in \mathcal{N}_G}$ extends to a unique near ultrafilter on $G$. This induces an embedding $\iota: \Sa(H) \to \Sa(G)$ where we have that $p \in \iota(\Sa(H))$ if and only if $UH \in p$ for all $U \in \mathcal{N}_G$. Thus we identify $\Sa(H) \subseteq \Sa(G)$ as a closed subset. 

 There is another, more classical, method of constructing the Samuel compactification using Gelfand duality. Let $\operatorname{RUC}^b(G/H)$ be the collection of right uniformly continuous bounded functions on $G/H$, i.e., a bounded function $f\colon G/H \to \C$ is \emph{right uniformly continuous} if for all $\eps > 0$ there exists an open neighbourhood $V$ of $H \in G/H$ such that
\[
xy^{-1} \in V \implies |f(x) - f(y)| < \eps.
\]
When endowed with the supremum norm $\operatorname{RUB}^b(G/H)$ is a unital commutative $C^\ast$-algebra and define $\Sa(G/H)$ be the spectrum of said $C^\ast$-algebra. Notice $G/H$ embeds into $\Sa(G/H)$ via the evaluational functionals. In particular, $\widehat{gH} \in \Sa(G/H)$ is defined via
\[
\widehat{gH}(f) := f(gH)
\]
for all $f\in \operatorname{RUC}^b(G/H)$. 

From the above discussion it follows that $\Sa(G)$ is the universal point transitive $G$-flow. Indeed, $1_G\in \Sa(G)$ is seen to have dense orbit and for any $G$-flow $X$ with $x\in X$ there exists a $G$-map $\phi \colon \Sa(G) \to X$ with $\phi(g)= gx$ for all $g\in G$. Moreover, we have that $\Sa(G)$ is a \emph{compact right topological semigroup}, i.e., $\Sa(G)$ is a semigroup such that for all $p \in \Sa(G)$ the map $\Sa(G) \ni q \mapsto q\cdot p$ is continuous. Lastly if $X$ is a $G$-flow, then there exists a semigroup action of $\Sa(G)$ on $X$ which extends the action of $G$. This action is seen to be continuous if we fix the $X$-coordinate, i.e., if $x\in X$ is fixed, then the map $\Sa(G)\ni p\mapsto px $ is continuous. 
\subsection{Kadison's representation theorem}

Let $A$ be a unital commutative $C^*$-algebra. A norm closed unital self-adjoint subspace $E \subseteq A$ is said to be a \emph{function system}. A linear map $\phi\colon E_1 \to E_2$ between two function systems is said to be \emph{positive} if for all $x\in E_1$
\[
x\geq 0 \implies \phi(x) \geq 0.
\]
A unital linear map $\phi\colon E_1 \to E_2$ is said to be a \emph{unital order isomorphism} if $\phi$ is bijective and for all $x\in E_1$ we have
\[
x\geq 0 \iff \phi(x) \geq 0.
\]
If $E$ is a function system, then any unital positive map $\phi\colon E \to \C$ is said to be a \emph{state} on $E$. The collection of all states on $E$, denoted $S(E)$ and called the \emph{state space}, is a compact convex set when equipped with the weak-* topology. 

Now Kadison's representation theorem, originally proved in \cite{Kadison1951}, states that the category of function systems with unital positive maps as morphisms is dually equivalent to the category of compact convex sets with continuous maps as morphisms. Also see the book of Alfsen \cite{Alfsen1971} for another reference. More precisely we have the following.

Let $K$ be a compact convex set. Then $A(K)$, the set of all continuous affine functions on $K$, is a function system when viewed as a subspace of $C(K)$ and the natural evaluation map $\phi\colon K \to S(A(K))$ is an affine homeomorphism. Conversely, if $E$ is a compact convex set, then the evaluation map $\psi\colon E \to A(S(E))$ is a unital order isomorphism. 

Let $E_1,E_2$ be function systems. If $\psi\colon E_1 \to E_2$ is a unital positive map, then $\phi \colon S(E_2) \to S(E_1)$ defined via
\[
\phi(f)(x) := f(\psi(x))
\]
for $f\in S(E_2)$ and $x\in E_1$ is a continuous affine map. Moreover, if $\psi$ is injective (surjective), then $\phi$ is surjective (injective). Similarly, if $K_1, K_2$ are compact convex sets and $\phi: K_1 \to K_2$ is a continuous affine map, then $\psi: A(K_2)\to A(K_1)$ defined via
\[
\psi(f)(x) := f(\phi(x))
\]
for $f \in A(K_2)$ and $x \in K_1$ is a unital positive map. Moreover, if $\phi$ is injective (surjective), then $\psi$ will be surjective (injective).

A \emph{$G$-function system} is a function system $E$ with some specified continuous action of $G$ on $E$ via unital order isomorphisms. We can then naturally define an action of $G$ on $S(E)$ via 
\[
g\cdot f (x) := f (g^{-1}x)
\]
for $f \in S(E)$, $x \in E$, and $g\in G$. It follows that $S(E)$ is an affine $G$-flow. Conversely, if $K$ is an affine $G$-flow, then $A(K)$ is a $G$-function system with the action given by 
\[
g\cdot f(x) := f(g^{-1}x)
\]
where $f \in A(K)$, $x \in K$, and $g\in G$. It follows that the duality described above induces a duality between the category of $G$-function systems with morphisms given by $G$-equivariant unital positive maps and the category of affine $G$-flows with morphisms given as affine $G$-maps. 

\section{$G$-projective flows}

\begin{defn}
     A $G$-flow $X$ is said to be \emph{$G$-projective} if for any $G$-flows $Y$, $Z$, $G$-maps $\phi\colon X \to Y$, and surjective $G$-maps $\pi\colon Z \to Y$, there exists a $G$-map $\psi\colon X \to Z$ such that $\pi \circ \psi = \phi$.
\end{defn}
The above can be summarized by the following commutative diagram:
\[\begin{tikzcd}
	&& Z \\
	\\
	X && Y
	\arrow["\pi", two heads, from=1-3, to=3-3]
	\arrow["{\exists \psi}", dotted, from=3-1, to=1-3]
	\arrow["\phi"', from=3-1, to=3-3]
\end{tikzcd}\]
We will not explicitly define injectivity here but the following is an immediate consequence of Gelfand duality.
\begin{cor}
    A $G$-flow $X$ is $G$-projective if and only if the $G$-$C^{\ast}$-algebra $C(X)$ is injective in the category of unital commutative $G$-$C^*$-algebras with respect to $G$-equivariant $*$-homomorphisms. 
\end{cor}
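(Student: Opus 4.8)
The plan is to prove this purely by transporting the defining lifting diagram of $G$-projectivity across Gelfand duality. Recall that the assignment $X \mapsto C(X)$ is a contravariant equivalence between the category of compact Hausdorff spaces (with continuous maps) and the category of unital commutative $C^*$-algebras (with unital $*$-homomorphisms), sending a continuous map $f\colon X \to Y$ to the pullback $f^*\colon C(Y)\to C(X)$, $f^*(h) = h\circ f$. First I would upgrade this to the equivariant setting: a continuous $G$-action on $X$ by homeomorphisms dualizes to a (point-norm continuous) $G$-action on $C(X)$ by unital $*$-automorphisms, and conversely; a $*$-homomorphism is $G$-equivariant precisely when its dual map is $G$-equivariant. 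This yields a contravariant equivalence between $G$-flows with $G$-maps and unital commutative $G$-$C^*$-algebras with $G$-equivariant $*$-homomorphisms, which is in particular full and faithful.

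The next step is to record how the two relevant classes of morphisms correspond under this equivalence. A continuous map $\pi\colon Z\to Y$ between compact Hausdorff spaces is surjective if and only if $\pi^*\colon C(Y)\to C(Z)$ is injective: surjectivity forces $\pi^*$ to separate points and hence be injective, and conversely if $\pi$ omits a point $y_0$, then by Urysohn there is a nonzero $h\in C(Y)$ vanishing on the closed set $\pi(Z)$, whence $\pi^*(h)=0$. Thus surjective $G$-maps correspond exactly to injective $G$-equivariant $*$-homomorphisms, which are the monomorphisms relevant to injectivity in the target category.

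With these translations in hand the proof is a formal diagram chase. Given data witnessing a possible failure of injectivity of $C(X)$ — an injective $G$-equivariant $*$-homomorphism $\iota\colon A\to B$ and a $G$-equivariant $*$-homomorphism $\alpha\colon A\to C(X)$ — set $Y=\operatorname{spec}(A)$ and $Z=\operatorname{spec}(B)$, so that $A\cong C(Y)$ and $B\cong C(Z)$ are $G$-flows, and write $\iota=\pi^*$, $\alpha=\phi^*$ for the unique $G$-maps $\pi\colon Z\to Y$ (surjective, since $\iota$ is injective) and $\phi\colon X\to Y$. If $X$ is $G$-projective, applying the lifting property to $\pi$ and $\phi$ produces a $G$-map $\psi\colon X\to Z$ with $\pi\circ\psi=\phi$; dualizing and using contravariance gives $\psi^*\circ\pi^*=(\pi\circ\psi)^*=\phi^*$, so that $\beta:=\psi^*$ is a $G$-equivariant $*$-homomorphism with $\beta\circ\iota=\alpha$, establishing injectivity of $C(X)$. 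Running the identical argument in reverse — starting from a projectivity diagram $\phi\colon X\to Y$, $\pi\colon Z\to Y$ with $\pi$ surjective, dualizing, applying injectivity of $C(X)$, and dualizing back — yields the required lift $\psi$, so the two properties are equivalent.

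I expect the only genuine points requiring care, rather than formal bookkeeping, to be the two foundational facts underlying the equivalence: first, that the dual $G$-action on $C(X)$ (resp. the dualized action on $\operatorname{spec}(A)$) is continuous in the sense making it a bona fide object of the stated category — so that spectra of $G$-$C^*$-algebras really are $G$-flows — and second, the identification of the monomorphisms in the category of unital commutative $G$-$C^*$-algebras with the injective $G$-equivariant $*$-homomorphisms. Once these are in place, the remainder is simply the contravariant translation of one universal lifting property into the other, with no further content.
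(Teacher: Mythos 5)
Your proposal is correct and is precisely the argument the paper has in mind: the paper states this corollary as an immediate consequence of Gelfand duality, and your write-up simply spells out that duality argument — the equivariant Gelfand correspondence, the surjection/injection translation via Urysohn, and the contravariant transport of the lifting diagram. No gaps; this matches the intended proof.
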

\begin{example}
    Our first example of a $G$-projective flow is the universal point transitive $G$-flow, i.e., the Samuel compactification $\Sa(G)$.

    Suppose $Y$ and $Z$ are $G$-flows, $\phi\colon \Sa(G) \to Y$ a $G$-map, and $\pi\colon Z \to Y$ a surjective $G$-map. Fix some $z \in \pi^{-1}(\set{\phi(1_G)})$ and notice by the universal property of $\Sa(G)$ there exists a $G$-map $\psi\colon \Sa(G) \to Z$ such that $\psi(g) = gz$ for all $g\in G$. Checking on the dense subset $G \subseteq \Sa(G)$ we obtain that $\pi \circ \psi = \phi$. 
\end{example}
Given a $G$-flow $X$ we have a natural semigroup action of $\Sa(G)$ on $X$. Given a point $x\in X$ we associate to it the subset $S_x \subseteq \Sa(G)$ given by
\[
S_x := \set{p \in \Sa(G): px = x}.
\]
Then $S_x$ is a closed sub-semigroup of $\Sa(G)$. The following is a crucial result which allows us describe $S_x$ under certain structural assumptions on the orbit of $x \in X$.
\begin{lemma}\label{Lem:Stab}
    Let $G$ be a Polish group and $X$ a $G$-flow. If an element $x_0 \in X$ has nonmeager orbit, then 
    \[
    S_x = \Sa(\Stab(x_0)).
    \]
\end{lemma}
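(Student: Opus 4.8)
The plan is to write $H:=\Stab(x_0)$ and to prove the two inclusions of $S_{x_0}=\Sa(H)$ separately. Throughout I would work with the canonical $G$-map $\rho\colon \Sa(G)\to X$ attached to $x_0$, namely the continuous extension of the orbit map $\phi_0\colon G\to X$, $g\mapsto gx_0$; since the semigroup action satisfies $\rho(p)=px_0$ and the map $p\mapsto px_0$ is continuous, we have $S_{x_0}=\rho^{-1}(\set{x_0})$, a closed set. For the inclusion $\Sa(H)\subseteq S_{x_0}$, I would simply observe that every $h\in H$ satisfies $hx_0=x_0$, that under the identification recalled in Section~\ref{Sec:SamuelCompact} the set $\Sa(H)$ is exactly the closure of $H$ inside $\Sa(G)$, and that $\rho$ is continuous; hence $\rho$ is constantly $x_0$ on $\overline{H}=\Sa(H)$. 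This direction uses neither Polishness nor the non-meagerness hypothesis.

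The substance is the reverse inclusion $S_{x_0}\subseteq\Sa(H)$. By the identification $\Sa(H)=\set{p\in\Sa(G): UH\in p \text{ for all } U\in\mathcal{N}_G}$, it suffices to show that $px_0=x_0$ forces $UH\in p$ for every $U\in\mathcal{N}_G$. By Fact~\ref{Fact:ExtUnifCtsMap} applied to $\phi_0$, the condition $\rho(p)=x_0$ is equivalent to $\phi_0^{-1}(W)\in p$ for every open $W\ni x_0$ in $X$. The key geometric input I would isolate is a \emph{micro-transitivity} (open mapping) statement: for each $U\in\mathcal{N}_G$ there is an open $W\ni x_0$ with $W\cap Gx_0\subseteq Ux_0$. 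Since $gx_0\in Ux_0$ is equivalent to $g\in UH$, this gives $\phi_0^{-1}(W)\subseteq UH$, and then $\phi_0^{-1}(W)\in p$ together with the upward-closure of near ultrafilters yields $UH\in p$. As $U$ is arbitrary, $p\in\Sa(H)$, completing the proof.

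The main obstacle is therefore the micro-transitivity statement, which is an Effros-type open mapping phenomenon extracted from non-meagerness. The plan here is a Baire-category argument exploiting that $G$ is Polish: fixing $U$, choose a symmetric $V\in\mathcal{N}_G$ with $V^2\subseteq U$; by separability write $G=\bigcup_n g_nV$, so that $Gx_0=\bigcup_n g_nVx_0$, and since $Gx_0$ is non-meager in the compact (hence Baire) space $X$, some translate $g_nVx_0$, and thus $Vx_0$ itself, is non-meager. Consequently $\overline{Vx_0}$ has nonempty interior $O$; choosing $v_0x_0\in O\cap Vx_0$ and translating by $v_0^{-1}$ produces an open neighbourhood of $x_0$ contained in $\overline{V^2x_0}\subseteq\overline{Ux_0}$. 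The delicate point — the part I expect to require the most care — is passing from $\overline{Ux_0}$ to $Ux_0$, i.e.\ removing the closure; this is exactly the content of Effros' theorem, where one uses completeness of the Polish structure via a Cauchy-type approximation of points of the orbit by group elements with controlled displacement. When $X$ is metrizable this is the classical open mapping theorem for Polish group actions and the approximation can be carried out with sequences; in the general (possibly non-metrizable) case I would either replace $X$ by the orbit closure $\overline{Gx_0}$ and reduce the separation to a metrizable factor, or invoke a net version of Effros' theorem valid in this setting. Once openness of the orbit map $G/H\to Gx_0$ is established, the argument of the previous paragraph closes the proof.
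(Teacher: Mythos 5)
Your proposal is correct and takes essentially the same route as the paper: identify $S_{x_0}=\rho^{-1}(\set{x_0})$, get $\Sa(\Stab(x_0))\subseteq S_{x_0}$ by continuity, and combine Fact \ref{Fact:ExtUnifCtsMap} with Effros-type micro-transitivity (that $Ux_0$ is relatively open in $Gx_0$) to conclude $UH\in p$ for all $U\in\mathcal{N}_G$. The only difference is presentational: where the paper simply cites Effros's theorem, you sketch the Baire-category half of its proof and then defer the delicate closure-removal step (including the non-metrizable case) to the same citation, which is a legitimate way to close the argument.
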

\begin{proof}
    Let $H := \Stab(x_0) \leq G$ and $\phi\colon \Sa(G) \to X$ be the extension given by $\phi(g) = gx_0$. It follows that
    \[
    S_x = \phi^{-1}(\set{x_0}).
    \]
    By continuity of $\phi$ we obtain that $\Sa(H) \subseteq S_x$. As $\phi$ is the continuous extension of the uniformly continuous map $\psi: G \to X$ given by $\psi(g) = gx_0$ we see that $\phi(p) = x_0$ if and only if for all open $W \ni x_0$ we have that $\psi^{-1}(W) \in p$.
    
    Now fix $p \in S_x$ and let $U \in \cN_G$. Then Effros's theorem \cite{Effros1965} implies $Ux_0$ is relative open in $Gx_0$. Thus there exists an open subset $W \ni x_0$ such that $W \cap Gx_0 = Ux_0$. It follows that
    \[
    UH = \psi^{-1}(Ux_0) = \psi^{-1}(W) \in p.
    \]
    As this holds for all $U \in N_G$ we obtain that $p \in \Sa(H)$.
\end{proof}
    For Lemma \ref{Lem:Stab} the case when $X = \Sa(G/H)$ for a closed subgroup $H \leq G$ was shown in \cite[Proposition 6.4]{Zucker2021}. In fact we will primarily only be using this case for the results presented here. 
\begin{example}\label{Ex:IrrRot}
    We present an example showing the necessity of the nonmeager assumption in Lemma \ref{Lem:Stab}.
    
    Let $\Z \curvearrowright \T$ be an irrational rotational. Then $\T$ is a free minimal $\Z$-flow. Fix $x \in \T$ and let $(m_n)_n \subseteq \Z$ be a sequence such that $|m_n| \geq n$ and 
    \[
    |m_n \cdot x - x| < \frac{1}{n}.
    \]
    If $\rho \in \beta\Z$ is a cluster point of $(m_n)_n$, then $\rho$ is necessarily a non-principal ultrafilter and $\rho \cdot x = x$ by continuity. Thus 
    \[
    \set{0} = \beta(\operatorname{Stab}(x)) \subsetneq S_x.
    \]
    is a strict subset.
    
    This particular example can be easily generalized to show any free minimal $G$-flow $X$ for a non-compact Polish group $G$ must have all meager orbits. Indeed, pick some $U \in \cN_G$ which cannot cover $G$ by finitely many translates. Then for any $x \in X$ we can find a net $(g_\lambda)_\lambda \subseteq G\setminus U$ such that $\lim_\lambda g_\lambda x = x$. By passing to a subnet we obtain that $S_x \neq \set{1_G}$.
\end{example}
\begin{theorem}\label{Thm:ExAmenSubgp}
    If $G$ is a Polish group and $H \leq G$ a closed subgroup, then $H$ is extremely amenable if and only if $\Sa(G/H)$ is $G$-projective. 
\end{theorem}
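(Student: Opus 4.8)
The plan is to realize $\Sa(G/H)$ as a retract of the (already $G$-projective) flow $\Sa(G)$ precisely when $H$ is extremely amenable, using Lemma~\ref{Lem:Stab} to identify the relevant fiber.

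First I would introduce the canonical $G$-map $q\colon \Sa(G)\to\Sa(G/H)$ obtained by extending the uniformly continuous quotient map $G\to G/H$ via Fact~\ref{Fact:ExtUnifCtsMap}; it is surjective since its image is compact and contains the dense set $G/H$. Expressing the $G$-action on $\Sa(G/H)$ through the semigroup action and checking on the dense set $G$, one sees that $q(p)=p\cdot(eH)$ for all $p\in\Sa(G)$, so the fiber over the base point is exactly $q^{-1}(eH)=S_{eH}=\set{p\in\Sa(G):p\cdot eH=eH}$. The orbit of $eH$ is all of $G/H$, which is comeager in $\Sa(G/H)$ and hence nonmeager, and $\Stab(eH)=H$; so Lemma~\ref{Lem:Stab} yields
\[
q^{-1}(eH)=S_{eH}=\Sa(H),
\]
identifying the fiber with the closed subset $\Sa(H)\subseteq\Sa(G)$.

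Next I would record the categorical reduction: a retract of a $G$-projective flow is $G$-projective (given a retraction $r\colon P\to X$ with section, lift any $\phi\colon X\to Y$ through $\phi\circ r$ and precompose the lift with the section), and $\Sa(G)$ is $G$-projective by the Example. Hence it suffices to show that $q$ admits a $G$-equivariant section $s\colon\Sa(G/H)\to\Sa(G)$ with $q\circ s=\mathrm{id}$ if and only if $H$ is extremely amenable. The forward direction of the theorem will then follow because such a section exhibits $\Sa(G/H)$ as a retract of $\Sa(G)$; the reverse direction will follow because $G$-projectivity of $\Sa(G/H)$, applied to the diagram with $\phi=\mathrm{id}_{\Sa(G/H)}$ and $\pi=q$, produces exactly such a section.

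The heart of the argument is then to match $G$-sections of $q$ with $H$-fixed points. Given a $G$-section $s$, the point $p_0:=s(eH)$ lies in $q^{-1}(eH)=\Sa(H)$ and satisfies $hp_0=s(h\cdot eH)=s(eH)=p_0$ for every $h\in H$, so $p_0$ is fixed by the restricted left $H$-action. Conversely, given an $H$-fixed point $p_0\in\Sa(H)\subseteq\Sa(G)$, the universal property of $\Sa(G/H)$ applied to the $H$-fixed point $p_0$ yields a $G$-map $s\colon\Sa(G/H)\to\Sa(G)$ with $s(gH)=gp_0$, and since $q(p_0)=eH$ one checks $q\circ s=\mathrm{id}$ on the dense set $G/H$, so $s$ is a section. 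Finally I would invoke that $\Sa(H)$, carrying the restriction of the left $G$-action, is the universal point transitive $H$-flow, whence it has an $H$-fixed point exactly when $M(H)$ is trivial, i.e.\ when $H$ is extremely amenable. Combining these equivalences closes both directions. I expect the main obstacle to be the bookkeeping around identifications: verifying that the embedded copy $\Sa(H)\subseteq\Sa(G)$ carries, under the restricted left $G$-action, precisely the universal point transitive $H$-flow structure, and confirming that the fiber description $S_{eH}=\Sa(H)$ from Lemma~\ref{Lem:Stab} refers to this same embedded copy. Once these are in place, the remainder is a formal manipulation of the universal properties and the retract characterization of projectivity.
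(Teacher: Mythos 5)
Your proposal is correct, and its two halves relate differently to the paper's proof. Your reverse direction is essentially identical to the paper's: both apply $G$-projectivity to the single diagram $\mathrm{id}\colon \Sa(G/H)\to\Sa(G/H)$ against $q\colon \Sa(G)\to\Sa(G/H)$, identify the fiber $q^{-1}(\set{eH})=\Sa(H)$ via Lemma \ref{Lem:Stab} (using that $G/H$ is comeager, hence the orbit of $eH$ is nonmeager), and read off the $H$-fixed point $s(eH)\in\Sa(H)$. Your forward direction, however, is a genuinely different decomposition. The paper verifies the lifting property directly: given $\phi\colon\Sa(G/H)\to X$ and a surjection $\pi\colon Y\to X$, it applies extreme amenability of $H$ to the fiber $\pi^{-1}(\set{\phi(H)})$, which is an $H$-flow, to obtain an $H$-fixed point $y$ there, and then the universal property of $\Sa(G/H)$ gives the lift $gH\mapsto gy$. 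You instead invoke extreme amenability exactly once, on $\Sa(H)$ itself, use the resulting fixed point and the universal property to build a $G$-equivariant section of $q$, and conclude via the retract principle: $\Sa(G/H)$ is (up to isomorphism) a retract of the $G$-projective flow $\Sa(G)$, hence $G$-projective. This is in effect the point-transitive retract characterization that the paper only establishes afterwards (Corollary \ref{Cor:RetractProj} and Proposition \ref{Prop:ProjPointTransCRTS}); since the retract lemma is elementary, is sketched inline by you, and does not depend on the theorem, there is no circularity. What each approach buys: the paper's argument is shorter and entirely self-contained, needing only Lemma \ref{Lem:Stab} and the universal property; yours is more modular, isolates $q\colon\Sa(G)\to\Sa(G/H)$ as the only diagram that matters, and makes explicit the chain of equivalences (extreme amenability of $H$) $\iff$ ($q$ splits $G$-equivariantly) $\iff$ ($\Sa(G/H)$ is a retract of $\Sa(G)$), which foreshadows Corollary \ref{Cor:IdemAndProjPointTrans}. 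The bookkeeping you flag as the main obstacle --- that the embedded copy $\Sa(H)\subseteq\Sa(G)$ with the restricted action is $H$-equivariantly the Samuel compactification of $H$, so an $H$-fixed point there forces $M(H)$ to be trivial --- is precisely the identification the paper also uses implicitly when it declares ``it suffices to show $\Sa(H)$ has an $H$-fixed point,'' so your proof is no less rigorous on this point than the original.
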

\begin{proof}
    $(\Rightarrow)$ Let $X$ and $Y$ be $G$-flows, $\phi\colon \Sa(G/H) \to X$ a $G$-map, and $\pi\colon Y \to X$ a surjective $G$-map. Notice $x:= \phi(H) \in X$ is an $H$-fixed point. It follows that $\pi^{-1}(\set{x}) \subseteq Y$ is a nonempty $H$-flow. Since $H$ is extremely amenable, there exists an $H$-fixed point $y\in \pi^{-1}(\set{x})$. By the universal property of $\Sa(G/H)$, there exists a $G$-map $\psi\colon  \Sa(G/H) \to Y$ such that $\phi(gH) = gy$ for every $g\in G$. It follows that for every $gH\in G/H$ we have
    \[
    \pi(\psi(gH)) = \pi(gy) = g\pi(y) = gx = \phi(gH).
    \]
    Thus $\pi\circ \psi = \phi$ as $G/H \subseteq \Sa(G/H)$ is a dense subset. 
    
    $(\Leftarrow)$ It suffices to show $\Sa(H)$ has an $H$-fixed point. Consider the $G$-map $\phi\colon  \Sa(G) \to \Sa(G/H)$ given by $\phi(g) = gH$. As $H \in \Sa(G/H)$ has comeager orbit, Lemma \ref{Lem:Stab} gives us that $\phi^{-1}(\set{H}) = \Sa(H)$. Since $\Sa(G/H)$ is $G$-projective there exists a $G$-map $\psi\colon  \Sa(G/H) \to \Sa(G)$ such that $\phi\circ\psi = \text{id}$. Hence, $\psi(H) \in \phi^{-1}(\set{H})= \Sa(H)$ is an $H$-fixed point. 
\end{proof}
The following can be seen as a generalization of Veech's theorem which states that any locally compact group $G$ acts freely on $\Sa(G)$.
\begin{prop}
    If $G$ is a locally compact group and $X$ is a $G$-projective flow, then $G$ acts freely on $X$.
\end{prop}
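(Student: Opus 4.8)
The plan is to use projectivity to manufacture a $G$-map from $X$ into $\Sa(G)$, and then transport Veech's theorem across that map. The point is that admitting an equivariant map to the free $G$-flow $\Sa(G)$ should be inherited by every $G$-projective flow, with Veech's theorem serving as the base case.

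First I would set up a retraction. Apply the definition of $G$-projectivity with target $Y = X$ and $\phi = \mathrm{id}_X$, taking $Z = \Sa(G) \times X$ equipped with the diagonal action $g\cdot(p,x) = (gp,gx)$ and the coordinate projection $\pi\colon \Sa(G)\times X \to X$. Since $\Sa(G)\times X$ is a $G$-flow and $\pi$ is a surjective $G$-map, projectivity produces a $G$-map $\psi\colon X \to \Sa(G)\times X$ with $\pi\circ\psi = \mathrm{id}_X$. Writing $q\colon \Sa(G)\times X \to \Sa(G)$ for the other coordinate projection, set $\sigma := q\circ\psi\colon X \to \Sa(G)$; this is a $G$-map, being a composite of $G$-maps, and the relation $\pi\circ\psi = \mathrm{id}_X$ simply records that $\psi(x) = (\sigma(x), x)$. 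The existence of the equivariant map $\sigma$ is the only consequence of projectivity I need.

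Next I would exploit equivariance to compare stabilizers. If $g \in G$ fixes some $x \in X$, then $g\sigma(x) = \sigma(gx) = \sigma(x)$, so $g$ also fixes $\sigma(x)$. Hence $\Stab(x) \subseteq \Stab(\sigma(x))$ for every $x \in X$. Finally I would invoke Veech's theorem: because $G$ is locally compact, $G$ acts freely on $\Sa(G)$, so $\Stab(\sigma(x)) = \set{1_G}$. Combined with the inclusion above, this forces $\Stab(x) = \set{1_G}$ for every $x \in X$, which is exactly freeness of the $G$-action on $X$.

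I do not expect a genuine obstacle here; the one thing worth verifying carefully is that the construction really yields an \emph{equivariant} $\sigma$ into $\Sa(G)$ rather than merely a continuous one, and this is automatic since $\psi$ and both coordinate projections are $G$-maps for the diagonal action. Everything else is formal once Veech's theorem is granted, and indeed the argument makes transparent in what sense the statement generalizes Veech: the free action on $\Sa(G)$ propagates to any flow that can retract off a product with $\Sa(G)$, and $G$-projectivity guarantees exactly such a retraction.
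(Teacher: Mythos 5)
Your proof is correct and follows essentially the same strategy as the paper: use $G$-projectivity to manufacture a $G$-map $X \to \Sa(G)$, then conclude freeness from Veech's theorem via the stabilizer inclusion $\Stab(x) \subseteq \Stab(\sigma(x)) = \set{1_G}$. The only cosmetic difference is the diagram used to extract that map — the paper lifts $X \to \set{*}$ through the surjection $\Sa(G) \to \set{*}$, whereas you lift $\mathrm{id}_X$ through the projection $\Sa(G) \times X \to X$ — but both are valid instances of projectivity and the rest of the argument is identical.
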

\begin{proof}
    By Veech's theorem, see \cite[Theorem 2.2.1]{Veech1977}, $G$ acts freely on $\Sa(G)$. Now let $\set{\ast}$ be the trivial $G$-flow and consider the $G$-maps $X \to \set{\ast}$ and $\Sa(G) \to \set{\ast}$. Since $X$ is $G$-projective, there exists a $G$-map $\phi\colon  X \to \Sa(G)$. It follows that for every $x\in X$ we have
    \[
    \Stab(x) \subseteq \Stab(\phi(x))= \set{1_G}. \qedhere
    \]
\end{proof}
\begin{remark}\label{Rmk:UniversalProjFlows}
    The above proof shows every $G$-projective flow $X$ is \emph{universal}, i.e., given any minimal $G$-flow $Y$, there exists a surjective $G$-map $\psi\colon X \to Y$. However, the converse is not true. Indeed, in \cite[Theorem 3.5.8]{GutmanNguyenVanThe2015} it was shown that for $G = S_\infty$, there exists a closed subgroup $H \leq G$ isomorphic to $\Z$ such that every $G$-flow has an $H$-fixed point, i.e., $\Sa(G/H)$ is universal. However, by Theorem \ref{Thm:ExAmenSubgp} alongside the fact that $\Z$ is not extremely amenable, we have that $\Sa(G/H)$ is not $G$-projective. 
\end{remark}
\subsection{$G$-projective covers}

\begin{defn}
    Let $X$ and $Y$ be $G$-flows. An extension $\phi\colon X \to Y$ is said to be \emph{$G$-irreducible} if for all proper subflows $Z \subseteq X$ we must have that $\phi(Z) \neq Y$. 
\end{defn}
\begin{defn}
    Let $X$ be a $G$-flow. A pair $(Y, \phi)$ is a \emph{$G$-projective cover} of $X$ if $\phi\colon Y \to X$ is a $G$-irreducible extension and given any other $G$-flow $Z$ with a $G$-irreducible extension $\pi\colon  Z \to X$, there exists a $G$-map $\psi\colon  Y \to Z$ such that $\pi \circ \psi = \phi$.
\end{defn}
The following is a result of Ball and Hagler from \cite{BallHagler1996} showing the existence and uniqueness of $G$-projective covers. One could also apply the main result in \cite{GutmanLi2013} to prove uniqueness. 
\begin{theorem}[\cite{BallHagler1996}]\label{Thm:ExistenceProjCover}
    If $X$ is a $G$-flow, then there exists a unique $G$-projective cover $(Y,\phi)$ of $X$. Moreover, $Y$ will be a $G$-projective flow. 
\end{theorem}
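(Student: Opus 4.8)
The plan is to realize the $G$-projective cover as a maximal $G$-irreducible extension of $X$, mirroring Gleason's characterization. The first step I would establish is the bridge between the two notions: a $G$-flow $Y$ is $G$-projective if and only if every $G$-irreducible extension of $Y$ is an isomorphism. For the forward direction, given a $G$-irreducible extension $\phi\colon W\to Y$, apply projectivity of $Y$ to $\mathrm{id}_Y$ and the surjection $\phi$ to get a section $s\colon Y\to W$ with $\phi s=\mathrm{id}_Y$; then $s(Y)$ is a subflow with $\phi(s(Y))=Y$, so $G$-irreducibility forces $s(Y)=W$, and $s$ is injective as a section, hence an isomorphism. For the converse, given a lifting problem with $\phi\colon Y\to A$ and a surjection $\pi\colon B\to A$, I would form the fibered product $P=Y\times_A B$, choose by Zorn a subflow $P_0\subseteq P$ minimal for the property that the first projection $q\colon P_0\to Y$ is onto, observe that minimality makes $q$ $G$-irreducible, hence an isomorphism by hypothesis, and solve the problem with $\mathrm{proj}_B\circ q^{-1}$.

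Next I would produce a maximal $G$-irreducible extension. Two closure properties are required. Composition: if $Z\to Y\to X$ are $G$-irreducible then so is $Z\to X$, since a proper subflow of $Z$ has proper image in $Y$, whose image in $X$ is proper. Inverse limits: along a directed system of $G$-irreducible extensions $(Y_i,\phi_i)$ over $X$, the limit projection $Y_\infty\to X$ is again $G$-irreducible, because a subflow $C\subseteq Y_\infty$ surjecting onto $X$ has each $q_i(C)=Y_i$ by irreducibility, and a finite-intersection argument on the fibers then forces $C=Y_\infty$. To invoke Zorn I need the $G$-irreducible extensions of $X$ to form a set up to isomorphism, and here I would use that $G$ is Polish, hence separable: for $G$-irreducible $\phi\colon Y\to X$, picking a dense $D\subseteq X$ with preimages $y_d$, the subflow generated by $\{y_d\}$ surjects onto $X$ and so equals $Y$, whence with a countable dense $G_0\leq G$ the set $\{g y_d : g\in G_0,\ d\in D\}$ is dense in $Y$; thus $d(Y)\leq d(X)\cdot\aleph_0$ and the weight and cardinality of $Y$ are bounded by a cardinal depending only on $X$. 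Zorn then yields a maximal $G$-irreducible extension $(Y,\phi)$.

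The main obstacle is showing this maximal $(Y,\phi)$ is $G$-projective, equivalently that every $G$-irreducible extension of $Y$ is an isomorphism. I would reduce this via an \emph{upgrade lemma}: if $\gamma\colon Y\to Y$ satisfies $\phi\gamma=\phi$ and admits $s$ with $\gamma s=\mathrm{id}_Y$, then $s$ is an isomorphism, since $\phi s=(\phi\gamma)s=\phi$ forces $s$ onto by $G$-irreducibility while $s$ is injective as a section. Given a $G$-irreducible $\rho\colon W\to Y$, composition makes $(W,\phi\rho)$ a $G$-irreducible extension of $X$ dominating $(Y,\phi)$, and maximality returns $\sigma\colon Y\to W$ with $\phi\rho\sigma=\phi$, so $\gamma:=\rho\sigma$ is a $G$-irreducible endomorphism of $Y$ over $\phi$. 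Everything therefore comes down to the rigidity statement that such an endomorphism is invertible, which I expect to be the genuinely hard point: when $X=\{\ast\}$ the maximal object is $M(G)$ and the statement specializes to the coalescence of the universal minimal flow. I would prove it in general by the same mechanism, using the compact right-topological semigroup action of $\Sa(G)$ on $Y$ and minimal idempotents to build a one-sided inverse for $\gamma$, after which the upgrade lemma finishes. Once $Y$ is known $G$-projective, it is automatically a cover: for any $G$-irreducible $\pi\colon Z\to X$, projectivity applied to $\phi$ and $\pi$ yields $\psi\colon Y\to Z$ with $\pi\psi=\phi$.

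Finally, for uniqueness, suppose $(Y_1,\phi_1)$ and $(Y_2,\phi_2)$ are both $G$-projective covers. The cover property gives $\psi_{12}\colon Y_1\to Y_2$ and $\psi_{21}\colon Y_2\to Y_1$ over $X$, both surjective by $G$-irreducibility, and I compute $\phi_1(\psi_{21}\psi_{12})=\phi_1$, so $\alpha:=\psi_{21}\psi_{12}$ is a surjective endomorphism of $Y_1$ over $\phi_1$. Projectivity of $Y_1$ splits $\alpha$ (lift $\mathrm{id}_{Y_1}$ through the surjection $\alpha$), and the upgrade lemma makes $\alpha$ an isomorphism, whence $\psi_{12}$ is injective as well as surjective and is therefore an isomorphism. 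This proves the cover is unique up to isomorphism, and by the third step $Y$ is $G$-projective, completing the proof.
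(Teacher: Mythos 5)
Your first step (a $G$-flow is $G$-projective if and only if it admits no nontrivial $G$-irreducible extension, via the section argument and the fibered product) is exactly the paper's Corollary \ref{Cor:GIrrExtGProjCover}, and your composition, inverse-limit, and cardinality lemmas are the right ingredients. The genuine gap is in how you pass from ``maximal'' to ``projective''. Zorn's lemma applied to the domination relation can only yield an extension $(Y,\phi)$ such that anything dominating it is dominated by it in return; since domination is merely a preorder, this is strictly weaker than what you need, namely that every $G$-irreducible extension $\rho\colon W\to Y$ is an isomorphism. You correctly reduce the difference to the rigidity statement that every surjective $G$-endomorphism $\gamma$ of $Y$ over $\phi$ is an isomorphism, and you correctly flag that for $X=\{\ast\}$ this is coalescence of $M(G)$; but your proposed proof (``minimal idempotents in $\Sa(G)$ build a one-sided inverse'') is not an argument for general $X$. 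The idempotent mechanism works when $Y$ is realized as a left ideal $\Sa(G)\cdot\rho$ inside the compact right topological semigroup $\Sa(G)$, which is the point-transitive case (cf.\ Proposition \ref{Prop:ProjPointTransCRTS}); an abstract Zorn-maximal extension of an arbitrary, possibly non-point-transitive, $X$ comes with no such realization, and for fixed $p\in\Sa(G)$ the map $y\mapsto py$ on a general flow is neither continuous nor $G$-equivariant, so it cannot furnish the required section of $\gamma$. There is also a secondary defect in the Zorn step itself: a chain in the domination preorder carries no coherent system of bonding maps, so the inverse limit you propose as an upper bound is not even defined.

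The paper (following Ball--Hagler) is structured precisely to avoid any such rigidity theorem: one shows directly that every $G$-flow admits a $G$-irreducible extension which itself has \emph{no} nontrivial $G$-irreducible extensions, by arguing that otherwise one could build, by transfinite recursion---nontrivial irreducible extensions at successor stages, inverse limits at limit stages, with the bonding maps constructed as part of the recursion rather than chosen after the fact---a chain $(L_\lambda)$ indexed by all ordinals with $|L_\lambda|\geq|\lambda|$, contradicting the uniform cardinality bound on irreducible extensions; this is carried out in detail for the affine analogue in Theorem \ref{Thm:AffProjCoverExist}. For such a $Y$, projectivity is immediate from your step one, and the cover property and uniqueness then follow essentially as you describe. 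If you replace Zorn-maximality in the domination preorder by this construction (equivalently, apply Zorn to coherent towers of irreducible extensions ordered by end-extension, rather than to isomorphism classes of extensions), the rest of your outline goes through and no coalescence theorem is needed.
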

Given a $G$-flow $X$ we write $\operatorname{Proj}_G(X)$ to denote the (unique) $G$-projective cover of $X$ and we often omit writing the given $G$-irreducible extension from $\operatorname{Proj}_G(X)$ to $X$. The main idea behind the proof of Theorem \ref{Thm:ExistenceProjCover} as seen in \cite{BallHagler1996} is to show that every $G$-flow admits a $G$-irreducible extension which itself admits no nontrivial $G$-irreducible extensions. This maximal $G$-irreducible extension must be unique and equal to the $G$-projective cover. The following are seen to be consequences of their proof.
\begin{cor}[\cite{BallHagler1996}]\label{Cor:GIrrExtGProjCover}
    If $X$ is a $G$-flow, then $X$ is $G$-projective if and only if $X$ has no nontrivial $G$-irreducible extensions, i.e., if $Y$ is $G$-flow and $\phi\colon Y \to X$ is a $G$-irreducible extension, then $\phi$ must be a homeomorphism. Moreover, if $Z$ is $G$-projective flow and a $G$-irreducible extension of $X$, then $Z \simeq \proj_G(X)$.
\end{cor}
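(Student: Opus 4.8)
The plan is to derive the entire statement from Theorem \ref{Thm:ExistenceProjCover}, which already supplies, for each $G$-flow $X$, a $G$-projective cover $(\proj_G(X), \phi)$ in which the extension $\phi$ is $G$-irreducible and the domain $\proj_G(X)$ is itself $G$-projective. Two elementary tools will be used repeatedly. First, a continuous $G$-equivariant map out of a $G$-flow has compact, hence closed, and $G$-invariant image, so any section of an extension produces a subflow of the domain. Second, a continuous $G$-equivariant bijection between $G$-flows is automatically an isomorphism, since both spaces are compact Hausdorff.

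For the forward direction of the equivalence, I would assume $X$ is $G$-projective and let $\phi\colon Y \to X$ be a $G$-irreducible extension. Feeding the projectivity diagram with $\pi = \phi$ and the identity $X \to X$ yields a $G$-map $\psi\colon X \to Y$ with $\phi \circ \psi = \mathrm{id}_X$. Then $\psi(X)$ is a subflow of $Y$ with $\phi(\psi(X)) = X$, so $G$-irreducibility of $\phi$ forces $\psi(X) = Y$. Since $\psi$ is injective (it has a left inverse) and now surjective, it is an isomorphism, whence $\phi = \psi^{-1}$ is as well. The reverse direction is immediate from Theorem \ref{Thm:ExistenceProjCover}: if $X$ admits no nontrivial $G$-irreducible extension, then the cover map $\phi\colon \proj_G(X) \to X$, being $G$-irreducible, must be an isomorphism, so $X \simeq \proj_G(X)$ inherits $G$-projectivity.

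For the ``moreover'' clause, suppose $Z$ is $G$-projective and $\pi\colon Z \to X$ is a $G$-irreducible extension, and write $\phi\colon \proj_G(X) \to X$ for the cover's extension. The universal property of the $G$-projective cover gives a $G$-map $\psi\colon \proj_G(X) \to Z$ with $\pi \circ \psi = \phi$. As above, $\psi(\proj_G(X))$ is a subflow mapping onto $X$ under $\pi$, so $G$-irreducibility of $\pi$ makes $\psi$ surjective. The one step that takes genuine thought is verifying that $\psi$ is itself $G$-irreducible: if $C \subseteq \proj_G(X)$ were a proper subflow with $\psi(C) = Z$, then $\phi(C) = \pi(\psi(C)) = X$, contradicting $G$-irreducibility of $\phi$. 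Thus $\psi$ is a $G$-irreducible extension of the $G$-projective flow $Z$, and the equivalence just proved forces $\psi$ to be an isomorphism, giving $Z \simeq \proj_G(X)$.

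The genuine mathematical content sits inside the cited existence theorem; the corollary itself is a formal consequence. The only delicate point I anticipate is the irreducibility of the comparison map $\psi$ in the last clause, where one must transport a hypothetical surjecting proper subflow of $\proj_G(X)$ down through $\phi$. Every other step is a direct manipulation of sections, images, and compactness.
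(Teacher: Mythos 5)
Your proof is correct and follows essentially the same route the paper intends: the paper states this corollary as a consequence of the Ball--Hagler existence theorem (Theorem \ref{Thm:ExistenceProjCover}) without writing out details, and your argument is exactly the standard formal derivation from that theorem's statement --- the section-plus-irreducibility trick for the forward direction, the cover map for the reverse, and the universal property combined with transport of irreducibility for the ``moreover'' clause. All steps check out, including the one you flag as delicate (irreducibility of the comparison map $\psi$), so nothing further is needed.
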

\begin{example}
As was shown in \cite{BallHagler1996}, the universal minimal $G$-flow $M(G)$ is also $G$-projective. Indeed, if $\phi\colon X \to M(G)$ is a $G$-irreducible extension, then $X$ must be a minimal $G$-flow. The universality of $M(G)$ implies $\phi$ must be an isomorphism. Thus Corollary \ref{Cor:GIrrExtGProjCover} tells us that $M(G)$ is $G$-projective.

 We also claim that for any minimal $G$-flow $X$, we have $\proj_G(X) \simeq M(G)$. Indeed, there exists an extension $\phi\colon M(G) \to X$. Notice $\phi$ has to be a $G$-irreducible extension as $M(G)$ contains no proper subflows. Therefore, Corollary \ref{Cor:GIrrExtGProjCover} implies that $\proj_G(X) \simeq M(G)$.
\end{example}
\subsection{Topometric structure and Gleason complete flows}
    \begin{defn}
          A $G$-flow $X$ is \emph{$G$-extremely disconnected} ($G$-ED) if for all open subsets $A \subseteq X$ and $U \in \mathcal{N}_G$ we have $\overline{A} \subseteq \operatorname{Int}(\overline{UA})$.
    \end{defn}
    As shown in \cite{Zucker2021}, given any $G$-flow $X$ we can take its \emph{Gleason completion}. This is a $G$-ED flow $\operatorname{S}_G(X)$ with an \emph{irreducible extension} $\phi\colon \cS_G(X) \to X$, i.e., for all proper closed subsets $C \subseteq \operatorname{S}_G(X)$ we have that $\phi(C) \neq X$. Moreover, $\cS_G(X)$ is universal in the sense that for any $G$-flow $Y$ and irreducible extension $\pi\colon Y \to X$, there exists a $G$-map $\psi\colon  \cS_G(X) \to Y$ such that $\pi \circ \psi = \phi$.
    
    Moreover, from \cite{Zucker2021}, we see that a $G$-flow $X$ is $G$-ED if and only if $\cS_G(X) \simeq X$, i.e., $X$ has no nontrivial irreducible extensions. 
    \begin{cor}\label{Cor:ProjImpliesGED}
           If $X$ is a $G$-projective flow, then $X$ is also $G$-ED.
    \end{cor}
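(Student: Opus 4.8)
The plan is to exploit the Gleason completion together with the characterization of $G$-ED flows in terms of irreducible extensions. Recall from the excerpt that a $G$-flow $X$ is $G$-ED if and only if $\cS_G(X) \simeq X$, where $\cS_G(X)$ denotes the Gleason completion equipped with its universal irreducible extension $\phi \colon \cS_G(X) \to X$. So it suffices to show that this particular extension $\phi$ is an isomorphism whenever $X$ is $G$-projective.

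First I would apply $G$-projectivity of $X$ to the identity map $\mathrm{id}_X \colon X \to X$ (a $G$-map) together with the surjective $G$-map $\phi \colon \cS_G(X) \to X$. This produces a $G$-map $\psi \colon X \to \cS_G(X)$, a section of $\phi$, satisfying $\phi \circ \psi = \mathrm{id}_X$. In particular $\psi$ is injective, since it has a left inverse.

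Next I would observe that $\psi(X)$ is a closed (by compactness of $X$ and continuity of $\psi$) and $G$-invariant (by equivariance of $\psi$) subset of $\cS_G(X)$ with $\phi(\psi(X)) = X$. Since $\phi$ is an \emph{irreducible} extension—so no proper closed subset of $\cS_G(X)$ can surject onto $X$—we are forced to conclude $\psi(X) = \cS_G(X)$; that is, $\psi$ is surjective. Thus $\psi$ is a continuous $G$-equivariant bijection between compact Hausdorff spaces, hence an isomorphism of $G$-flows, giving $X \simeq \cS_G(X)$. By the characterization above, $X$ is $G$-ED.

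There is no serious obstacle in this argument; it is essentially a formal lifting-and-irreducibility computation. The one point meriting care is that one must invoke irreducibility with respect to arbitrary proper closed subsets, which is stronger than mere $G$-irreducibility, in order to force the section $\psi$ to be surjective—and this is exactly the property the Gleason completion $\cS_G(X)$ supplies.
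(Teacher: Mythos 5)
Your proof is correct and follows essentially the same route as the paper: the paper notes that the Gleason completion map $\phi\colon \cS_G(X)\to X$ is irreducible, hence $G$-irreducible, and then cites Corollary \ref{Cor:GIrrExtGProjCover} (a $G$-projective flow has no nontrivial $G$-irreducible extensions), whereas you simply inline the proof of that cited fact---produce a section $\psi$ of $\phi$ via projectivity and use irreducibility to force $\psi$ to be surjective. One correction to your closing remark: full irreducibility with respect to arbitrary closed subsets is \emph{not} needed at that step, since $\psi(X)$ is nonempty, closed, and $G$-invariant, i.e., a subflow, so $G$-irreducibility of $\phi$ already suffices; this is precisely why the paper can pass from irreducibility to the weaker $G$-irreducibility and invoke Corollary \ref{Cor:GIrrExtGProjCover}.
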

    \begin{proof}
        Let $\phi\colon  \operatorname{S}_G(X) \to X$ be the Gleason completion of $X$. As $\phi$ is irreducible it must also be $G$-irreducible. Hence, Corollary \ref{Cor:GIrrExtGProjCover} implies that $\phi$ is an isomorphism and thus $X$ is $G$-ED.
    \end{proof}
 Note the converse to the above statement is not true in general. For example, let $G$ be a Polish group with $H\leq G$ a closed subgroup. Then $\Sa(G/H)$ is always $G$-ED by \cite[Section 3.2]{Zucker2021} while it is $G$-projective if and only if $H$ is extremely amenable by Theorem \ref{Thm:ExAmenSubgp}. 
 \subsection{Point transitive case} \label{Sec:PointTrans}
We sketch out the argument in \cite{BallHagler1996} showing how to compute the $G$-projective cover of a point transitive $G$-flow. We also make use of the Samuel compactification $\Sa(G)$.

Let $X$ be a point transitive $G$-flow and fix $x \in X$ with dense orbit. Consider the extension $\phi\colon \Sa(G) \to X$ given by $\phi(g) = gx$ for all $g\in G$. By Zorn's lemma there exists a subflow $Z \subseteq \Sa(G)$ minimal with respect to $\phi(Z) = X$. Therefore, $\phi|_Z\colon Z \to X$ is a $G$-irreducible extension. We claim that $\proj_G(X) \simeq Z$. Towards this let $Y$ be a $G$-flow and $\pi\colon Y \to X$ a $G$-irreducible extension. Fix $y \in \pi^{-1}(\set{x})$ and consider the extension $\psi\colon  \Sa(G) \to Y$ given via $\psi(g) = gy$ for all $g\in G$. It follows that $\psi|_Z\colon Z \to Y$ is a $G$-map satisfying $\pi\circ \psi|_Z = \phi|_Z$ thus showing $\proj_G(X) \simeq Z$. By uniqueness of the $G$-projective cover the the above construction of $Z$ does not depend (up to isomorphism) on the choice of the point $x\in X$ with dense orbit.

Under certain conditions we can explicitly compute the $G$-projective covers of point transitive $G$-flows. We will need the following structure result of $M(G)$ shown in \cite{Zucker2021}.

\begin{theorem}[\cite{Zucker2021}]
    If $G$ is a Polish group such that $M(G)$ has a comeager orbit, then $M(G) \simeq \Sa(G/G^*)$ for some extremely amenable closed subgroup $G^* \leq G$.
\end{theorem}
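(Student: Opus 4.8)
The plan is to locate a point with comeager orbit, take its stabilizer as the candidate subgroup $G^*$, produce the canonical map $\Sa(G/G^*) \to M(G)$, and then show this map is an isomorphism by proving it is an \emph{irreducible} extension and invoking that $M(G)$ is $G$-ED. I would fix $x_0 \in M(G)$ with comeager (hence nonmeager and dense) orbit and set $G^* := \Stab(x_0)$, a closed subgroup. Since $x_0$ is a $G^*$-fixed point, the universal property of $\Sa(G/G^*)$ yields a $G$-map $\phi\colon \Sa(G/G^*) \to M(G)$ with $\phi(gG^*) = gx_0$; its image is a subflow containing the dense orbit $Gx_0$, so $\phi$ is surjective. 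Writing $o := G^* \in \Sa(G/G^*)$ for the basepoint, it then remains to prove $\phi$ is injective and that $G^*$ is extremely amenable.

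The key step is a fiber computation carried out in $\Sa(G)$. Let $q\colon \Sa(G) \to \Sa(G/G^*)$ and $\psi\colon \Sa(G) \to M(G)$ be the canonical extensions of $g \mapsto gG^*$ and $g \mapsto gx_0$; both are surjective and $\phi \circ q = \psi$ (they agree on the dense set $G$). Applying Lemma \ref{Lem:Stab} to the nonmeager orbit of $x_0$ gives $\psi^{-1}(\{x_0\}) = S_{x_0} = \Sa(G^*)$, and applying it to the comeager orbit $G/G^*$ of $o$ gives $q^{-1}(\{o\}) = S_{o} = \Sa(G^*)$. Since $q$ is surjective and $\phi \circ q = \psi$,
\[
\phi^{-1}(\{x_0\}) = q\bigl(\psi^{-1}(\{x_0\})\bigr) = q(\Sa(G^*)) = q\bigl(q^{-1}(\{o\})\bigr) = \{o\}.
\]
By $G$-equivariance every fiber over the orbit is then a singleton: $\phi^{-1}(\{gx_0\}) = \{gG^*\}$.

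I expect this computation to be the main obstacle, since it is precisely where the nonmeager-orbit hypothesis and Lemma \ref{Lem:Stab} are indispensable; note that the fibers over points \emph{outside} the orbit are never analyzed directly, which is what makes the argument go through. Finally I would convert the singleton fibers over the dense orbit into irreducibility: given any nonempty open $V \subseteq \Sa(G/G^*)$, density of $G/G^*$ produces some $gG^* \in V$, and then $\phi^{-1}(\phi(gG^*)) = \{gG^*\} \subseteq V$, so $\phi$ is an irreducible extension. Since $M(G)$ is $G$-projective, it is $G$-ED by Corollary \ref{Cor:ProjImpliesGED}, hence admits no nontrivial irreducible extensions, forcing $\phi$ to be an isomorphism and giving $M(G) \simeq \Sa(G/G^*)$. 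Transporting $G$-projectivity of $M(G)$ across this isomorphism makes $\Sa(G/G^*)$ $G$-projective, and Theorem \ref{Thm:ExAmenSubgp} then yields that $G^*$ is extremely amenable, completing the proof.
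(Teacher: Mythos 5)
Your proof is correct, and it is worth noting at the outset that the paper itself offers no argument for this statement: it is quoted as a black box from Zucker's \emph{Maximally highly proximal flows}, so there is no in-paper proof to match against. Your derivation is a legitimate, non-circular reconstruction from the paper's own toolkit: Lemma \ref{Lem:Stab} applied twice (to the nonmeager orbit of $x_0$ in $M(G)$ and to the comeager orbit of the basepoint in $\Sa(G/G^*)$) gives the fiber identity $\phi^{-1}(\{x_0\}) = q(\Sa(G^*)) = \{o\}$; equivariance spreads the singleton fibers over the dense orbit; and the standard open-set reformulation of irreducibility then makes $\phi$ an irreducible extension, which must be an isomorphism since $M(G)$ is $G$-projective. (At that last step you detour through $G$-EDness via Corollary \ref{Cor:ProjImpliesGED}; it is slightly more economical to note that irreducible implies $G$-irreducible and quote Corollary \ref{Cor:GIrrExtGProjCover} directly, but both are sound.) Where you genuinely diverge from the cited source is the extreme amenability of $G^*$: Zucker obtains it from presyndeticity of $G^*$ together with his Theorem 7.5, whereas you get it for free by transporting $G$-projectivity across the isomorphism and invoking Theorem \ref{Thm:ExAmenSubgp} -- a shortcut that only exists inside this paper, and which is non-circular here because Theorem \ref{Thm:ExAmenSubgp} and Lemma \ref{Lem:Stab} are proved without reference to the quoted structure theorem. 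In effect your argument shows that this paper's machinery yields Zucker's structure theorem as a corollary rather than an imported input, which is a tidy observation.
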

\begin{remark}
    The above result was first shown for the setting when $M(G)$ is metrizable in \cite{MellerayNguyenVanTheTsankov2016} where they additionally obtain that the extremely amenable closed subgroup $G^* \leq G$ is also co-precompact, i.e., $\Sa(G/G^*)$  is isomorphic to the completion $\widehat{G/G^*}$. Moreover, $M(G)$ metrizable implies it has a comeager orbit by \cite{BenYaacovMellerayTsankov2017}.
\end{remark}

\begin{theorem}\label{Thm:ProjCoverPointTrans}
    Let $G$ be a Polish group and $X$ a point transitive $G$-flow. If $x\in X$ has comeager orbit and the universal minimal flow of $H:= \Stab(x)$ has a comeager orbit, then $\proj_G(X) \simeq \Sa(G/H^*)$ where $H^* \leq H$ is an extremely amenable closed subgroup such that $M(H) \simeq \Sa(H/H^*)$.
\end{theorem}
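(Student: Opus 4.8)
The plan is to verify the two hypotheses of Corollary \ref{Cor:GIrrExtGProjCover}: that $\Sa(G/H^*)$ is $G$-projective and that it admits a $G$-irreducible extension onto $X$. The first is immediate, since $H^*$ is extremely amenable and so Theorem \ref{Thm:ExAmenSubgp} gives that $\Sa(G/H^*)$ is $G$-projective. For the extension, note that $H^* \leq H = \Stab(x)$ fixes $x$, so $x$ is an $H^*$-fixed point; the universal property of $\Sa(G/H^*)$ then produces a $G$-map $\phi\colon \Sa(G/H^*) \to X$ with $\phi(gH^*) = gx$. As $x$ has comeager, hence dense, orbit, $\phi$ is surjective. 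Once $\phi$ is shown to be $G$-irreducible, Corollary \ref{Cor:GIrrExtGProjCover} yields $\Sa(G/H^*) \simeq \proj_G(X)$.

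The heart of the argument is proving $\phi$ is $G$-irreducible, and the main obstacle is understanding the fiber $F := \phi^{-1}(\set{x})$. Since $x$ is a transitive point of $X$, the extension $\phi$ is $G$-irreducible exactly when every point of $F$ has dense $G$-orbit: if $W \subseteq \Sa(G/H^*)$ is a subflow with $\phi(W) = X$, then $W$ must meet $F$, and if every point of $F$ has dense orbit then $W$ is forced to be everything. I would first identify $F$. Writing $q\colon \Sa(G) \to \Sa(G/H^*)$ for $g \mapsto gH^*$ and $\phi_0 = \phi \circ q \colon \Sa(G) \to X$ for $g \mapsto gx$, Lemma \ref{Lem:Stab} gives $\phi_0^{-1}(\set{x}) = \Sa(H)$ (as $x$ has nonmeager orbit), and since $q$ is surjective this yields $F = q(\Sa(H)) = \overline{H \cdot eH^*}$, the $H$-orbit closure of the base point $eH^*$.

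It then remains to show $F$ is a minimal $H$-flow: indeed $eH^* \in F$ has dense $G$-orbit (its orbit is all of $G/H^*$), so if $F$ is $H$-minimal then for any $w \in F$ we get $eH^* \in \overline{Hw} \subseteq \overline{Gw}$, forcing $\overline{Gw} = \Sa(G/H^*)$, which is precisely the density needed above. To see that $F = \overline{H \cdot eH^*}$ is minimal, I would locate a minimal idempotent of $\Sa(H)$ fixing $eH^*$. Because $M(H) \simeq \Sa(H/H^*)$ is minimal, its point $eH^*$ is almost periodic, so there is a minimal idempotent $u \in \Sa(H)$ with $u \cdot eH^* = eH^*$ inside $\Sa(H/H^*)$. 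Applying Lemma \ref{Lem:Stab} to the $H$-flow $\Sa(H/H^*)$ shows $u \in \Sa(H^*)$, and applying it to the $G$-flow $\Sa(G/H^*)$ shows that every element of $\Sa(H^*)$ fixes the base point of $\Sa(G/H^*)$; hence $u \cdot eH^* = eH^*$ there as well. Thus $eH^* \in \Sa(G/H^*)$ is fixed by the minimal idempotent $u$, so it is $H$-almost periodic and $F = \overline{H \cdot eH^*}$ is $H$-minimal, completing the argument.

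The step I expect to be most delicate is the transfer of the idempotent across the two coset spaces, that is, ensuring the minimal idempotent $u \in \Sa(H)$ witnessing almost periodicity of $eH^*$ in $\Sa(H/H^*)$ also fixes the base point of $\Sa(G/H^*)$. This is exactly where the double application of Lemma \ref{Lem:Stab}, once for the action of $H$ on $\Sa(H/H^*)$ and once for the action of $G$ on $\Sa(G/H^*)$, does the work: both base points have comeager orbit with stabilizer $H^*$, so both stabilizer subsemigroups are pinned to $\Sa(H^*)$. One should also take momentary care that a minimal idempotent of $\Sa(H)$ continues to act as a minimal idempotent on any $H$-flow, so that fixing $eH^*$ genuinely certifies almost periodicity rather than merely membership in $\overline{Hw}$ for some $w$.
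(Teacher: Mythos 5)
Your proposal is correct, and it establishes the key step by a genuinely different route from the paper's. Both arguments share the same outer skeleton: $\Sa(G/H^*)$ is $G$-projective by Theorem \ref{Thm:ExAmenSubgp}, and once the canonical map onto $X$ is shown to be $G$-irreducible, Corollary \ref{Cor:GIrrExtGProjCover} finishes the job. The difference lies in how $G$-irreducibility is proved. The paper factors the map as $\Sa(G/H^*) \to \Sa(G/H) \to X$, proves each leg $G$-irreducible by an Effros-type computation of the relevant fiber (the fiber of the first leg over the base point being identified with $\Sa(H/H^*) \simeq M(H)$, whose minimality is given by hypothesis), and then invokes the fact that a composition of $G$-irreducible extensions is $G$-irreducible. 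You instead work directly with the composite $\phi\colon \Sa(G/H^*) \to X$: you compute its fiber over $x$ as $F = q(\Sa(H)) = \overline{H\cdot H^*}$ via Lemma \ref{Lem:Stab} applied to $\Sa(G)\to X$, and then prove $F$ is $H$-minimal by Ellis-semigroup means, transferring a minimal idempotent $u\in\Sa(H)$ that fixes the base point of the minimal flow $\Sa(H/H^*)$ into $\Sa(H^*)$ (Lemma \ref{Lem:Stab} again, the nontrivial use) and hence onto the base point of $\Sa(G/H^*)$, certifying $H$-almost periodicity. Your route is more self-contained in one respect: the paper implicitly identifies $\Sa(H/H^*)$ as an $H$-subflow of $\Sa(G/H^*)$, a coset-space analogue of an identification the background only develops for group inclusions, whereas you use only the embeddings $\Sa(H^*)\subseteq\Sa(H)\subseteq\Sa(G)$, which the paper does establish; note also that your second application of Lemma \ref{Lem:Stab} is overkill, since the inclusion you need there (elements of $\Sa(H^*)$ fix the base point of $\Sa(G/H^*)$) is the easy direction, following from continuity of $p\mapsto p\cdot H^*$ because $H^*$ fixes that point. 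The costs of your route are the appeal to standard Ellis theory (every point of a minimal flow is fixed by a minimal idempotent, and a point fixed by a minimal idempotent has minimal orbit closure) and the routine but unstated compatibility between the $\Sa(H)$-action on an $H$-flow and the restriction of the $\Sa(G)$-action to $\Sa(H)\subseteq\Sa(G)$; a side benefit of the paper's factorization is that its intermediate fiber computation $\psi^{-1}(\set{H}) = \Sa(H/H^*)$ is reused verbatim later, in the proof of Theorem \ref{Thm:AffProjCoverPointTrans}.
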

\begin{proof}
We begin by first showing the extension $\phi\colon \Sa(G/H) \to X$ given by $\phi(gH) = gx$ is $G$-irreducible. We claim $\phi^{-1}(\set{x}) = \set{H}$. It suffices to show for any $\rho \in \phi^{-1}(\set{x})$ and $U \in \mathcal{N}_G$ we have $UH \in \rho$. By Effros's theorem there exists an open subset $W \subseteq X$ such that $Ux = W\cap Gx$. It follows that
\[
UH = \set{gH: gx \in Ux} = \set{gH: gx \in W} \in \rho.
\]
If $Y \subseteq \Sa(G/H)$ is a subflow such that $\phi(Y) = X$, then we must have that $Y\cap \phi^{-1}(\set{x})$ is nonempty. Therefore, $H \in Y$ and it follows that $Y = \Sa(G/H)$. 

By Theorem \ref{Thm:ExAmenSubgp} $\Sa(G/H^*)$ is $G$-projective. As the composition of two $G$-irreducible extensions remains $G$-irreducible it suffices, by Corollary \ref{Cor:GIrrExtGProjCover}, to show that $\Sa(G/H^*)$ is a $G$-irreducible extension of $\Sa(G/H)$.

Let $\eta\colon  G/H^* \to \Sa(G/H)$ be the uniformly continuous map given by $\eta(gH^*) = gH$ for all $g\in G$ and $\psi\colon  \Sa(G/H^*) \to \Sa(G/H)$ be its continuous extension. We claim that $\psi^{-1}(\set{H}) = \Sa(H/H^*)$. To show $\psi^{-1}(\set{H}) \subseteq \Sa(H/H^*)$ it suffices to show for any $\rho \in \psi^{-1}(\set{H})$ and $U \in \mathcal{N}_G$ we have $U\cdot (H/H^*) \in \rho$. Using Effros's theorem and the fact that $H \in \Sa(G/H)$ has comeager orbit there exists an open subset $W \subseteq \Sa(G/H)$ such that $W\cap G/H = UH$. It follows that
\[
U\cdot (H/H^*) = \eta^{-1}(UH) = \eta^{-1}(W) \in \rho.
\]
The reverse inclusion is clear. Now suppose $Y \subseteq \Sa(G/H^*)$ is a subflow such that $\phi(Y) = \Sa(G/H)$. It follows that $$Y \cap \psi^{-1}(\set{H}) = Y\cap \Sa(H/H^*)$$must be nonempty. As $\Sa(H/H^*) \simeq M(H)$ is a minimal $H$-flow it not hard to see that implies $\Sa(H/H^*) \subseteq Y$. As $H^* \in \Sa(G/H^*)$ has dense orbit we obtain that $Y = \Sa(G/H^*)$. Therefore, $\psi$ is a $G$-irreducible extension and thus $\proj_G(X) \simeq \Sa(G/H^*)$.
\end{proof}
We can still compute the $G$-projective of a point transitive $G$-flow $X$ where $x_0 \in X$ has comeager orbit without any assumptions on the metrizability of the universal minimal flow of $\Stab(x_0)$. In particular, viewing $M(\Stab(x_0)) \subseteq \Sa(G)$ as a closed subset we will have that $$\proj_G(X) \simeq \overline{G\cdot M(\Stab(x_0))}.$$ This will be proven later in the section on proximally irreducible flows after introducing additional techniques (see Proposition \ref{Prop:ProjCoverPointTransGen}).
\begin{prop}
    Let $G$ be a Polish group and $X$ a point transitive $G$-flow. If $x_0 \in X$ has comeager orbit, then $\proj_G(X)$ is isomorphic to $\overline{G\cdot M(\operatorname{Stab}(x_0))}$ where we identify $M(\operatorname{\Stab}(x_0)) \subseteq \Sa(G)$ as a closed subset. 
\end{prop}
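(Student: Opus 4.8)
The plan is to exploit the compact right topological semigroup structure of $\Sa(G)$ and realize $\overline{G\cdot M(\Stab(x_0))}$ as a principal left ideal generated by an idempotent, and then to invoke the description of the $G$-projective cover of a point transitive flow recalled in Section \ref{Sec:PointTrans}. Write $H := \Stab(x_0)$ and let $\phi\colon \Sa(G)\to X$ be the canonical extension $\phi(p)=px_0$; since $x_0$ has comeager (hence nonmeager) orbit, Lemma \ref{Lem:Stab} gives $\phi^{-1}(\set{x_0}) = \set{p\in\Sa(G):px_0=x_0} = \Sa(H)$, where $\Sa(H)\subseteq \Sa(G)$ is the closed subsemigroup from the background section. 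I realize the fixed copy of $M(H)$ inside $\Sa(G)$ as a minimal left ideal of the semigroup $\Sa(H)$: by Ellis's lemma I choose an idempotent $v\in M(H)$ so that $M(H)=\Sa(H)v=\overline{Hv}$. The goal is then to prove that $W:=\overline{G\cdot M(H)}$ is a $G$-projective flow and that $\phi|_W\colon W\to X$ is a $G$-irreducible extension, whence Corollary \ref{Cor:GIrrExtGProjCover} forces $W\simeq \proj_G(X)$.

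First I would establish the identity $W=\Sa(G)v=\overline{Gv}$. One inclusion is immediate from $v\in M(H)$, giving $\overline{Gv}\subseteq \overline{G\cdot M(H)}$; for the reverse, $M(H)=\Sa(H)v\subseteq \Sa(G)v$, and $\Sa(G)v=\overline{Gv}$ is a closed $G$-invariant set (a left ideal), hence a subflow containing $G\cdot M(H)$. In particular $W$ is a retract of $\Sa(G)$: the map $p\mapsto pv$ is continuous by right-topologicality, is $G$-equivariant since $(gp)v=g(pv)$, and fixes $W$ pointwise because $v^2=v$. As $\Sa(G)$ is $G$-projective and retracts of projective objects are projective, $W$ is $G$-projective.

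The heart of the argument is the irreducibility of $\phi|_W$. That it is an extension is clear: $\phi(M(H))=\set{x_0}$ because $M(H)\subseteq \Sa(H)=\phi^{-1}(\set{x_0})$, so $\phi(W)\supseteq \overline{Gx_0}=X$. The crucial step is the fiber computation
\[
W\cap \Sa(H) = \Sa(G)v\cap \Sa(H) = \Sa(H)v = M(H),
\]
where the middle equality holds because any $p\in \Sa(G)v\cap \Sa(H)$ satisfies $pv=p$ and $p\in \Sa(H)$, so $p=pv\in \Sa(H)v$. Thus the fiber of $\phi|_W$ over $x_0$ is exactly the \emph{minimal} $H$-flow $M(H)$. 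Now if $Z\subseteq W$ is a subflow with $\phi(Z)=X$, then $Z$ meets $\phi^{-1}(\set{x_0})\cap W=M(H)$, and $Z\cap M(H)$ is a nonempty closed $H$-invariant subset of the minimal $H$-flow $M(H)$, so $M(H)\subseteq Z$. In particular $v\in Z$, whence $W=\overline{Gv}\subseteq Z$ and $Z=W$; this is precisely $G$-irreducibility of $\phi|_W$.

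Combining the two points, $W$ is $G$-projective and $\phi|_W\colon W\to X$ is a $G$-irreducible extension, so Corollary \ref{Cor:GIrrExtGProjCover} gives $W\simeq \proj_G(X)$; equivalently, $W$ is a subflow of $\Sa(G)$ minimal with respect to surjecting onto $X$, so the construction in Section \ref{Sec:PointTrans} identifies it with $\proj_G(X)$ directly. I expect the main obstacle to be the fiber identity $W\cap \Sa(H)=M(H)$: it is what forces the fiber over $x_0$ to be a minimal $H$-flow and hence what makes $\phi|_W$ irreducible. The point is that by generating $W$ from an idempotent $v$ rather than from an arbitrary minimal $H$-subflow, the semigroup relation $pv=p$ on $W$ pins the fiber down exactly, circumventing the difficulty that a general subflow of $\Sa(G)$ surjecting onto $X$ need only meet \emph{some} minimal $H$-subflow of the fiber.
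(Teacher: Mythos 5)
Your proof is correct, but it reaches the crucial fiber identity by a genuinely different route than the paper. The paper's proof deduces $\overline{G\cdot M(H)}\cap\phi^{-1}(\set{x_0})=M(H)$ from a general lemma (Lemma \ref{Lem:FiberStab}): for \emph{any} extension $\phi\colon Y\to X$ over a comeager-orbit point and \emph{any} closed $\Stab(x_0)$-invariant subset $C$ of the fiber, one has $\overline{G\cdot C}\cap\phi^{-1}(\set{x_0})=C$; its proof is topological, passing a net $g_\lambda y_\lambda\to y$ to a subnet with $g_\lambda\to p\in\Sa(G)$, invoking Lemma \ref{Lem:Stab} to get $p\in\Sa(\Stab(x_0))$, and concluding via the circle operation $y\in p\bigcdot C\subseteq C$ in the Vietoris hyperspace. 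You instead pin the fiber down algebraically: choosing the copy of $M(H)$ to be the principal left ideal $\Sa(H)v$ of an idempotent $v$, you get $W=\overline{G\cdot M(H)}=\Sa(G)v$, and then $W\cap\Sa(H)=\Sa(H)v=M(H)$ falls out of the identity $p=pv$ on $W$ together with associativity — no hyperspace argument, only Lemma \ref{Lem:Stab} to identify $\phi^{-1}(\set{x_0})=\Sa(H)$. Both proofs then finish identically: a subflow of $W$ surjecting onto $X$ meets the fiber $M(H)$, hence contains it by $H$-minimality, hence equals $W$. What your route buys: it stays entirely inside the semigroup formalism, exhibits $\proj_G(X)$ explicitly as $\Sa(G)v=\overline{Gv}$ in the spirit of Corollary \ref{Cor:IdemAndProjPointTrans}, and yields $G$-projectivity of $W$ directly via the retraction $r_v$ — though this last part is logically redundant, since by the discussion in Section \ref{Sec:PointTrans} (or equivalently the paper's own argument) $G$-irreducibility of $\phi|_W$ alone already forces $W\simeq\proj_G(X)$. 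What the paper's route buys: Lemma \ref{Lem:FiberStab} applies to an arbitrary closed $\Stab(x_0)$-invariant subset of an arbitrary fiber, which is exactly what the affine analogues (Proposition \ref{Prop:AffProjCoverPointTransGen}, and the converse direction of Theorem \ref{Thm:StrAmenSubgp}) require, where the relevant sets $\Pi_s(\Stab(x_0))$ and $\Pi(\Stab(x_0))$ inside $P(\Sa(G))$ have no evident idempotent generator, so your trick would not transfer there.
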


We now show how Theorem \ref{Thm:ProjCoverPointTrans} can be used to compute $G$-projective covers in structural Ramsey theory. Some amount of background on Fra\"{\i}ss\'e theory is assumed (see \cite{Hodges1993} and \cite{KechrisPestovTodorcevic2005}). 

Let $L$ be a countable language, $\bK$ a Fra\"{\i}ss\'e $L$-structure on $\omega$, $\cK$ the Fra\"{\i}ss\'e class of $\bK$, $\operatorname{Fin}(\bK)$ the set of finite substructures of $\bK$, and $G:= \aut(\bK)$. Suppose $L^{\ast}$ is a countable language containing $L$ and $\cK^{\ast}$ is a Fra\"{\i}ss\'e class in $L^{\ast}$ which is also a reasonable precompact expansion of $\cK$. We define $X_{\cK^\ast}$ to be set of all expansions of $\bK$ to $L^\ast$-structures in a compatible way, more explicitly we have
\[
X_{\cK^\ast} := \set{\langle \bK, \vec{S}\rangle: \langle A,\vec{S}|_A \rangle \in \cK^* \text{ for every } A\in \operatorname{Fin}(\bK)}.
\]
where $<{\bK},{\vec{S}}>$ denotes an expansion of $\bK$ to an $L^\ast$-structure by interpreting $\vec{S} := L^\ast \setminus L$. We see that $X_{\cK^\ast}$ is a compact Hausdorff space with the topology induced by a basis of clopen sets of the form
\[
N_{A^\ast} := \set{<{\bK},{\vec{S}}> \in X_{\cK^\ast}: <{A},{\vec{S}|_A}> = A^\ast}
\]
where $A \in \operatorname{Fin}(\bK)$ and $A^\ast \in \cK^\ast$ is an expansion of $A$. Moreover, $X_{\cK^\ast}$ is a $G$-flow when endowed with the logic action. In particular, given $g\in G$, $\bK' \in X_{\cK^\ast}$, and a relational symbol $R \in L^\ast$ with arity $n$, we define $g\bK' \in X_{\cK^\ast}$ by letting
\[
(a_1,\dots, a_n) \in R^{g\bK'} \iff (g^{-1}(a_1),\dots, g^{-1}(a_n)) \in R^{\bK'}.
\]
Notice we can also view $X_{\cK^\ast} \simeq \Sa(G/G^*)$ where $G^* := \aut(\bK^*) \leq G$ is the automorphism group of the Fra\"{\i}ss\'e structure associated to $\cK^\ast$. 

Suppose $\cK^\ast$ has finite Ramsey degrees or equivalently $G^\ast$ has metrizable universal minimal flow. It follows that there exists a reasonable precompact expansion Fra\"{\i}ss\'e expansion $\cK^{\ast\ast}$ of $\cK^\ast$ in a countable language $L^{\ast\ast}$ containing $L^\ast$ such that the pair $(\cK^{\ast}, \cK^{\ast\ast})$ has the expansion property and $\cK^{\ast\ast}$ has the Ramsey property. Moreover, we must have $M(G^\ast) \simeq \Sa(G^\ast/G^{\ast\ast})$ where $G^{\ast\ast} := \aut(\bK^{\ast\ast})$ is the automorphism group of the Fra\"{\i}ss\'e structure associated to $\cK^{\ast\ast}$. See \cite[Theorem 5.7 and 8.14]{Zucker2016} for a reference to the above claimed facts. 

By Theorem \ref{Thm:ProjCoverPointTrans} we obtain that
\[
\proj_G(X_{\cK^\ast}) \simeq X_{\cK^{\ast\ast}}.
\]
For example let $G = S_\infty := \aut(\omega)$ and $X = \operatorname{Graphs}(\omega)$ be the set of all graphs on vertex set $\omega$. Then by \cite{KechrisPestovTodorcevic2005}, we have that $\proj_G(X)$ is isomorphism to $\operatorname{LOGraphs}(\omega)$, the set of all linearly ordered graphs on vertex set $\omega$.

Now if $X$ is a metrizable topological transitive $G$-projective flow, then we know $X$ must be $G$-ED and hence by \cite[Proposition 3.8]{Zucker2021}, $X \simeq X_{\cK^*}$ for some reasonable precompact Fra\"{\i}ss\'e expansion $\cK^\ast$ of $\cK$ in a countable language $L^\ast$ containing $L$. Then by Theorem \ref{Thm:ExAmenSubgp}, we must have that $G^\ast:= \aut(\bK^*)$ is extremely amenable and hence by the KPT correspondence (see \cite{KechrisPestovTodorcevic2005}) $\cK^{\ast}$ has the Ramsey property. In particular, we have shown the following.
\begin{prop}\label{Prop:RamseyEx}
    If $G= \aut(\bK)$ is the automorphism group of a Fra\"{\i}ss\'e structure $\bK$, then a metrizable topological transitive $G$-projective flow is exactly of the form $X_{\cK^\ast}$ for some reasonable precompact Fra\"{\i}ss\'e expansion of $\cK$ with the Ramsey property.
\end{prop}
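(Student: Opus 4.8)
The plan is to establish both inclusions of the asserted equality by assembling the results developed above with the structure theorem for metrizable topologically transitive $G$-ED flows and the KPT correspondence. Neither direction requires new ideas beyond chaining together the equivalences already in place.

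First I would treat the forward direction. Let $X$ be a metrizable topological transitive $G$-projective flow. By Corollary \ref{Cor:ProjImpliesGED}, $X$ is $G$-ED. Since $X$ is metrizable, topological transitivity and point transitivity coincide (a Baire category argument in the second countable compact space $X$ produces a point with dense orbit), so the structure theorem \cite[Proposition 3.8]{Zucker2021} applies and identifies $X \simeq X_{\cK^\ast}$ for some reasonable precompact Fra\"{\i}ss\'e expansion $\cK^\ast$ of $\cK$ in a countable language $L^\ast \supseteq L$. Setting $G^\ast := \aut(\bK^\ast)$ and invoking the $G$-equivariant identification $X_{\cK^\ast} \simeq \Sa(G/G^\ast)$ recorded above, the hypothesis that $X$ is $G$-projective becomes $G$-projectivity of $\Sa(G/G^\ast)$. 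By Theorem \ref{Thm:ExAmenSubgp} this is equivalent to extreme amenability of $G^\ast$, and the KPT correspondence \cite{KechrisPestovTodorcevic2005} then yields that $\cK^\ast$ has the Ramsey property.

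For the reverse direction I would run the same chain of equivalences backwards. Given a reasonable precompact Fra\"{\i}ss\'e expansion $\cK^\ast$ of $\cK$ with the Ramsey property, the KPT correspondence gives that $G^\ast = \aut(\bK^\ast)$ is extremely amenable, whence Theorem \ref{Thm:ExAmenSubgp} makes $X_{\cK^\ast} \simeq \Sa(G/G^\ast)$ a $G$-projective flow. It then remains to check that $X_{\cK^\ast}$ is metrizable and topologically transitive. Metrizability follows from second countability: since $\bK$ lives on $\omega$ the set $\operatorname{Fin}(\bK)$ is countable, and precompactness of the expansion leaves only finitely many expansions $A^\ast$ of each finite $A$, so the clopen basis consisting of the sets $N_{A^\ast}$ is countable. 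Topological transitivity follows because $X_{\cK^\ast} \simeq \Sa(G/G^\ast)$ is point transitive, the coset $G^\ast$ having dense orbit.

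As the argument is essentially a composition of established equivalences, I do not expect a serious obstacle. The only points requiring care are matching the two black-box inputs correctly: verifying that the quoted form of \cite[Proposition 3.8]{Zucker2021} genuinely covers the transitive metrizable $G$-ED case (which is why I would explicitly record the equivalence of topological and point transitivity in the metrizable setting), and confirming that the identification $X_{\cK^\ast} \simeq \Sa(G/G^\ast)$ is $G$-equivariant, so that $G$-projectivity transfers faithfully between the two descriptions and the application of Theorem \ref{Thm:ExAmenSubgp} is legitimate in both directions.
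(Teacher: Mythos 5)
Your proposal is correct and follows essentially the same route as the paper: Corollary \ref{Cor:ProjImpliesGED} to get $G$-ED, \cite[Proposition 3.8]{Zucker2021} to identify $X \simeq X_{\cK^\ast} \simeq \Sa(G/G^\ast)$, Theorem \ref{Thm:ExAmenSubgp} to pass to extreme amenability of $G^\ast$, and the KPT correspondence to conclude the Ramsey property, with the reverse direction obtained by running the same chain backwards. Your explicit verification of the point-transitivity/topological-transitivity equivalence in the metrizable setting and of the metrizability of $X_{\cK^\ast}$ fills in details the paper leaves implicit, but the argument is the same.
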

We now present a characterization of point transitive $G$-projective flows through the use of the compact right topological semigroup structure on $\Sa(G)$.
\begin{defn}
    Let $Y$ be a $G$-flow. A $G$-flow $X$ is a \emph{retract of $Y$} if $X \subseteq Y$ and there exists a $G$-map $\phi\colon Y \to X$ such that $\phi|_X = \text{id}$.
\end{defn}
\begin{prop}\label{Prop:ProjPointTransCRTS}
   If $X$ is a $G$-flow, then $X$ is a point transitive $G$-projective flow if and only if $X$ is (up to isomorphism) a retract of $\Sa(G)$. 
\end{prop}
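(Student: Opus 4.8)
The plan is to prove both implications by exploiting that $\Sa(G)$ is itself a point transitive $G$-projective flow (point transitive because $1_G$ has dense orbit, and $G$-projective as shown in the example above), so that ``retract of $\Sa(G)$'' and ``point transitive $G$-projective'' coincide by a standard splitting argument.

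For the forward direction, suppose $X$ is a point transitive $G$-projective flow and fix $x\in X$ with dense orbit. The universal property of $\Sa(G)$ produces the canonical extension $\phi\colon \Sa(G)\to X$ with $\phi(g)=gx$, which is surjective since $x$ has dense orbit. I would then feed the identity map $\mathrm{id}_X\colon X\to X$ and the surjection $\phi$ into the definition of $G$-projectivity of $X$ to obtain a $G$-map $\psi\colon X\to\Sa(G)$ with $\phi\circ\psi=\mathrm{id}_X$. Since $\phi\circ\psi = \mathrm{id}_X$, the map $\psi$ is a continuous $G$-equivariant injection, so $X':=\psi(X)$ is a subflow of $\Sa(G)$ isomorphic to $X$. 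The $G$-map $r:=\psi\circ\phi\colon \Sa(G)\to\Sa(G)$ takes values in $X'$ and restricts to the identity on $X'$, because for $\psi(y)\in X'$ we have $r(\psi(y))=\psi(\phi(\psi(y)))=\psi(y)$. Hence $X\simeq X'$ is a retract of $\Sa(G)$.

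For the reverse direction, assume (up to isomorphism) that $X\subseteq\Sa(G)$ is a subflow with a $G$-map retraction $r\colon\Sa(G)\to X$ satisfying $r|_X=\mathrm{id}$. Point transitivity is immediate: $1_G\in\Sa(G)$ has dense orbit and $r$ is a continuous surjection, so $X=r(\Sa(G))=r(\overline{G\cdot 1_G})\subseteq\overline{G\cdot r(1_G)}$, showing $r(1_G)$ has dense orbit in $X$. For $G$-projectivity I would run the usual ``projective objects absorb retracts'' argument: given $G$-maps $\phi\colon X\to Y$ and a surjective $G$-map $\pi\colon Z\to Y$, I apply $G$-projectivity of $\Sa(G)$ to the composite $\phi\circ r\colon\Sa(G)\to Y$ to obtain a $G$-map $\tilde\psi\colon\Sa(G)\to Z$ with $\pi\circ\tilde\psi=\phi\circ r$; then $\psi:=\tilde\psi|_X$ satisfies $\pi\circ\psi=(\phi\circ r)|_X=\phi\circ(r|_X)=\phi$, as required.

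There is no serious obstacle here; the argument is purely formal once one knows that $\Sa(G)$ is point transitive and $G$-projective. The only points requiring a moment of care are verifying that $\psi(X)$ is genuinely a subflow (closed because $X$ is compact and $\psi$ is continuous, and $G$-invariant because $\psi$ is equivariant) and that the dense orbit of $1_G$ pushes forward to a dense orbit under the continuous surjection $r$. Both are routine.
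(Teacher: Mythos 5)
Your proposal is correct, but your forward direction takes a genuinely different route from the paper's. You apply $G$-projectivity of $X$ itself to the identity map $\mathrm{id}_X$ against the canonical surjection $\phi\colon \Sa(G)\to X$, obtaining a splitting $\psi$ with $\phi\circ\psi=\mathrm{id}_X$, and then observe that $\psi\circ\phi$ retracts $\Sa(G)$ onto the subflow $\psi(X)\simeq X$. This is the standard categorical ``projectives are exactly retracts of a projective that surjects onto them'' argument, and it is purely formal. The paper instead realizes $X$ (using the projective cover machinery of Section 3.4 and Corollary \ref{Cor:GIrrExtGProjCover}) as a subflow $Y\subseteq\Sa(G)$ minimal with respect to meeting the fiber $S_x=\phi^{-1}(\set{x})$, invokes the Ellis--Numakura lemma to produce an idempotent $\rho\in Y\cap S_x$, shows $Y=\Sa(G)\cdot\rho$ by minimality, and takes the retraction to be right multiplication $r_\rho(q)=q\cdot\rho$. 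The semigroup route is less elementary but buys concrete structural information: it is exactly what yields Corollary \ref{Cor:IdemAndProjPointTrans}, that point transitive $G$-projective flows are precisely the sets $\Sa(G)\cdot\rho$ for idempotents $\rho\in\Sa(G)$; your retraction $\psi\circ\phi$ is not visibly of this form (though one could recover it with extra work, since every $G$-endomorphism of $\Sa(G)$ is right multiplication by the image of $1_G$). Your backward direction coincides with the paper's, except that you explicitly verify point transitivity of a retract of $\Sa(G)$ via $r(\Sa(G))\subseteq\overline{G\cdot r(1_G)}$, a point the paper leaves implicit; that is a small improvement in completeness.
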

\begin{proof}
    $(\Rightarrow)$ Fix $x\in X$ with dense orbit and consider the surjective $G$-map $\phi\colon \Sa(G) \to X$ given by $\phi(g) = gx$. Let $S_x:= \phi^{-1}(\set{x})$ and notice $X$ is isomorphic to a subflow $Y \subseteq \Sa(G)$ minimal with respect to $Y\cap S_x \neq \emptyset$. As $Y \cap S_x$ is a nonempty closed sub-semigroup of $\Sa(G)$, by the Ellis-Numakura lemma, there exists an idempotent $\rho \in Y \cap S_x$. As $\Sa(G)\cdot \rho \subseteq Y$ is a subflow which surjections on $X$, by minimality of $Y$ we must have that $\Sa(G)\cdot \rho = Y$. Let $r_\rho\colon  \Sa(G) \to \Sa(G)$ denote the continuous $G$-map $r_\rho(q) = q\cdot \rho$. We obtain that $r_\rho\colon  \Sa(G) \to Y$ is the required retract. 
    
    $(\Leftarrow)$ Assume $X \subseteq \Sa(G)$ is subflow and $\phi\colon \Sa(G) \to X$ is a retract. Suppose $Y$ and $Z$ are $G$-flows, $\psi\colon  X \to Y$ a $G$-map, and $\pi\colon Z \to Y$ a surjective $G$-map. Notice $\psi \circ \phi\colon  \Sa(G) \to Y$ is a $G$-map and $G$-projectivity of $\Sa(G)$ implies there exists a $G$-map $\eta\colon  \Sa(G) \to Z$ such that $\pi\circ \eta = \psi \circ \phi$. It follows that $\eta|_X\colon X \to Z$ is a $G$-map satisfying 
    \[
    \pi \circ \eta|_X = \psi \circ \phi|_X = \psi.
    \]
    Hence, $X$ is $G$-projective.
\end{proof}
The converse direction of the above proof shows the following general statement.
\begin{cor}\label{Cor:RetractProj}
    Let $Y$ be a $G$-projective flow. If $X$ is $G$-flow and a retract of $Y$, then $X$ is also $G$-projective. 
\end{cor}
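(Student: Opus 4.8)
The plan is to notice that the $(\Leftarrow)$ direction of Proposition \ref{Prop:ProjPointTransCRTS} only ever used the $G$-projectivity of the ambient flow $\Sa(G)$ together with the existence of a retraction; replacing $\Sa(G)$ by an arbitrary $G$-projective flow $Y$ gives the general statement verbatim. This is really the categorical fact that a retract of a projective object is projective, so I expect no genuine obstacle, only the bookkeeping of checking that restriction preserves morphisms in the category of $G$-flows.

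First I would unwind the hypothesis: $X$ being a retract of $Y$ means $X \subseteq Y$ is a subflow and there is a $G$-map $\phi\colon Y \to X$ with $\phi|_X = \mathrm{id}_X$. To verify $G$-projectivity of $X$, I would then fix arbitrary $G$-flows $W$ and $Z$, a $G$-map $\psi\colon X \to W$, and a surjective $G$-map $\pi\colon Z \to W$, the goal being to produce a $G$-map $X \to Z$ lifting $\psi$ along $\pi$.

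The key move is to transport this lifting problem up to $Y$ via the retraction and then pull the solution back down. The composite $\psi \circ \phi\colon Y \to W$ is a $G$-map, so applying $G$-projectivity of $Y$ to the surjection $\pi\colon Z \to W$ yields a $G$-map $\eta\colon Y \to Z$ with $\pi \circ \eta = \psi \circ \phi$. Restricting to the subflow $X \subseteq Y$, the map $\eta|_X\colon X \to Z$ is again continuous and $G$-equivariant, and using $\phi|_X = \mathrm{id}_X$ one computes
\[
\pi \circ \eta|_X = (\psi \circ \phi)|_X = \psi \circ (\phi|_X) = \psi,
\]
so $\eta|_X$ is the desired lift and $X$ is $G$-projective.

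The only point requiring any care, and the closest thing to an obstacle, is ensuring that restricting $\eta$ to $X$ stays inside the category: continuity of $\eta|_X$ is automatic since $X$ carries the subspace topology, and $G$-equivariance is inherited because $X$ is $G$-invariant. Everything else is a direct diagram chase identical in spirit to the converse argument already given for Proposition \ref{Prop:ProjPointTransCRTS}, so I would simply cite that proof and record the single displayed computation above.
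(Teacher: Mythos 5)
Your proposal is correct and is exactly the paper's own argument: the paper derives Corollary \ref{Cor:RetractProj} by observing that the $(\Leftarrow)$ direction of Proposition \ref{Prop:ProjPointTransCRTS} never uses any special property of $\Sa(G)$ beyond its $G$-projectivity, so the same lift-and-restrict computation $\pi \circ \eta|_X = \psi \circ \phi|_X = \psi$ goes through verbatim for an arbitrary $G$-projective flow $Y$.
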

Moreover, the proof of Proposition \ref{Prop:ProjPointTransCRTS} shows the following.
\begin{cor} \label{Cor:IdemAndProjPointTrans}
    The point transitive $G$-projective flows are (up to isomorphism) exactly of the form $\Sa(G)\cdot \rho$ for an idempotent $\rho \in \Sa(G)$.
\end{cor}
\subsection{General case}
We sketch out the argument for computing $G$-projective covers of arbitrary $G$-flows as shown in \cite{BallHagler1996}. 

Let $X$ be a $G$-flow and let $Z$ be a set of representatives of the orbit classes in $X$ equipped with the discrete topology. Consider the space $G\times Z$ and define a continuous $G$-action via
\[
g\cdot (h,z) := (gh,z)
\]
for all $g\in G, (h,z) \in G\times Z$. Define $\eta\colon  G\times Z \to X$ via $\eta(g,z) := gz$. Notice by construction, $\eta$ is a uniformly continuous surjective $G$-equivariant map and thus extends to an extension $\phi\colon \Sa(G\times Z) \to X$. Let $Y \subseteq \Sa(G\times Z)$ be a subflow minimal with respect to $\phi(Y ) = X$. We claim that $Y \simeq \proj_G(X)$. Let $C$ be a $G$-flow and $\pi\colon  C \to X$ a $G$-irreducible extension. It suffices to show there exists a $G$-map $\psi\colon  Y \to C$ such that $\pi\circ \psi = \phi|_Y$. For each $z\in Z$ fix some $y_z \in \pi^{-1}(\set{z})$. Now define $\xi\colon  G\times Z \to Y$ via
\[
\xi (g,z) = gy_z.
\]
Now $\xi$ is a uniformly continuous $G$-map and hence extends to a $G$-map $\Psi\colon  \Sa(G\times Z) \to Y$. By checking on the dense subset $G\times Z$ we obtain that $\pi \circ \Psi = \phi$. Thus $\psi := \Psi|_Y$ is our desired map. 
\begin{prop}\label{Prop:GxZProj}
    If $Z$ is a discrete set, then $\Sa(G\times Z)$ is a $G$-projective flow.
\end{prop}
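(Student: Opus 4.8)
The plan is to equip $\Sa(G\times Z)$ with a universal property exactly parallel to the one for $\Sa(G)$ exploited in the Example above, and then repeat that lifting argument verbatim. Recall that $G\times Z=\bigsqcup_{z\in Z}G$ as a $G$-space, each orbit being a free copy of $G$. I claim the relevant universal property is: for any $G$-flow $X$ and any family of base points $(x_z)_{z\in Z}$ in $X$, the assignment $(g,z)\mapsto gx_z$ is a $G$-equivariant uniformly continuous map $G\times Z\to X$ and hence extends to a (necessarily unique) $G$-map $\Sa(G\times Z)\to X$. This is precisely the kind of map (the map $\xi$) already used in the general-case construction of $G$-projective covers preceding this proposition.

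The one point that requires care is the uniform continuity, and this will be the main obstacle. Since $Z$ carries the discrete uniformity, a basic entourage of $G\times Z$ pairs $(g,z)$ with $(g',z)$ for $g,g'$ close in $G$; thus uniform continuity of $(g,z)\mapsto gx_z$ means controlling the distance between $gx_z$ and $g'x_z$ by the closeness of $g,g'$ \emph{uniformly over all $z\in Z$ at once}. Writing $g'=hg$, this reduces to the following assertion: for every open entourage $E$ of $X$ there is $V\in\cN_G$ with $(hy,y)\in E$ for all $h\in V$ and all $y\in X$. This in turn follows from continuity of the action $G\times X\to X$ and compactness of $X$, via the tube lemma applied to the open set $\set{(h,y)\in G\times X:(hy,y)\in E}$, which contains $\set{1_G}\times X$; crucially the resulting $V$ depends only on $E$, not on the point $y$, hence not on $z$. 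Equivariance of the extension is automatic, as the extension and its $G$-translates agree on the dense set $G\times Z$.

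With the universal property established, projectivity follows as in the Example. Let $Y$ and $W$ be $G$-flows, $\phi\colon\Sa(G\times Z)\to Y$ a $G$-map, and $\pi\colon W\to Y$ a surjective $G$-map. For each $z\in Z$ put $y_z:=\phi(1_G,z)$ and, using surjectivity of $\pi$, choose $w_z\in\pi^{-1}(\set{y_z})$. The universal property produces a $G$-map $\psi\colon\Sa(G\times Z)\to W$ with $\psi(g,z)=gw_z$. On the dense subset $G\times Z$ we then compute
\[
\pi(\psi(g,z))=\pi(gw_z)=g\pi(w_z)=gy_z=g\phi(1_G,z)=\phi(g,z),
\]
so $\pi\circ\psi=\phi$ by continuity, and $\Sa(G\times Z)$ is $G$-projective. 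Taking $Z=\set{*}$ recovers exactly the Example showing $\Sa(G)$ is $G$-projective.
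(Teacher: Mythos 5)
Your proof is correct and follows essentially the same route as the paper's: for each $z\in Z$ pick a preimage $w_z\in\pi^{-1}(\set{\phi(1_G,z)})$, extend the uniformly continuous map $(g,z)\mapsto gw_z$ to a $G$-map on $\Sa(G\times Z)$, and verify $\pi\circ\psi=\phi$ on the dense subset $G\times Z$. The only difference is that you spell out, via the tube lemma, the uniform continuity (uniformly over $z$) that the paper simply asserts, which is a welcome addition but not a different argument.
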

\begin{proof}
    Suppose $X$ and $Y$ are $G$-flows, $\phi\colon  \Sa(G\times Z) \to X$ a $G$-map, and $\pi\colon Y \to X$ a surjective $G$-map. For each $z \in Z$, fix some $y_z \in \pi^{-1}(\set{\phi(1_G,z_y)})$. Now define the map $\eta\colon  G\times Z \to Y$ via $\eta(g,z) := gy_z$. Then $\eta$ is uniformly continuous and thus extends to a $G$-map $\psi\colon  \Sa(G\times Z) \to Y$. By checking on the dense subset $G\times Z$ we obtain that $\pi \circ \psi = \phi$.
\end{proof}
\begin{prop}
Let $X$ be a $G$-flow. Then $X$ is a $G$-projective flow if and only if $X$ is (up to isomorphism) a retract of $\Sa(G\times Z)$ for some discrete set $Z$.
\end{prop}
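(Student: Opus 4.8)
The plan is to combine the two ingredients already assembled in this section. For the reverse direction, suppose $X$ is (up to isomorphism) a retract of $\Sa(G\times Z)$ for some discrete set $Z$. By Proposition \ref{Prop:GxZProj} the flow $\Sa(G\times Z)$ is $G$-projective, and so Corollary \ref{Cor:RetractProj} immediately gives that $X$ is $G$-projective. This direction requires no further work.

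For the forward direction, suppose $X$ is $G$-projective. I would first invoke the construction sketched above for the general case: letting $Z$ be a set of representatives of the orbit classes of $X$ equipped with the discrete topology, there is a surjective $G$-map $\phi\colon \Sa(G\times Z) \to X$, obtained as the continuous extension of the uniformly continuous map $(g,z)\mapsto gz$. Now I apply the defining property of $G$-projectivity of $X$ to the identity map $\text{id}_X\colon X \to X$ together with the surjection $\phi$. This yields a $G$-map $\psi\colon X \to \Sa(G\times Z)$ with $\phi\circ\psi = \text{id}_X$; that is, a $G$-equivariant section of $\phi$.

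It then remains to upgrade this categorical section into an honest retract in the sense of the definition. Since $\phi\circ\psi = \text{id}_X$ is injective, $\psi$ is injective; being a continuous injection from a compact space into a Hausdorff space, $\psi$ is a homeomorphism onto its image, and as it is $G$-equivariant the subflow $\psi(X)\subseteq \Sa(G\times Z)$ is isomorphic to $X$. Finally, I would set $r := \psi\circ\phi\colon \Sa(G\times Z) \to \psi(X)$. This is a $G$-map, and for any $\psi(x)\in \psi(X)$ one computes $r(\psi(x)) = \psi(\phi(\psi(x))) = \psi(x)$, so $r$ restricts to the identity on $\psi(X)$. Hence $\psi(X)$ is a retract of $\Sa(G\times Z)$ isomorphic to $X$.

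The argument is essentially routine once Proposition \ref{Prop:GxZProj} and Corollary \ref{Cor:RetractProj} are in hand. The only point requiring a small amount of care is the passage from the section $\psi$ to a genuine retraction onto a subflow, which is exactly the same bookkeeping that appeared in the point transitive case (Proposition \ref{Prop:ProjPointTransCRTS}); I do not anticipate any serious obstacle here.
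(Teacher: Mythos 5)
Your proof is correct, but the forward direction takes a genuinely different route from the paper's. The paper starts from the same extension $\phi\colon \Sa(G\times Z)\to X$, but then invokes Zorn's lemma to pass to a subflow $Y\subseteq \Sa(G\times Z)$ minimal with respect to $\phi(Y)=X$; minimality makes $\phi|_Y$ a $G$-irreducible extension, so Corollary \ref{Cor:GIrrExtGProjCover} (a $G$-projective flow admits no nontrivial $G$-irreducible extensions) forces $\phi|_Y$ to be an isomorphism, and $(\phi|_Y)^{-1}\circ\phi$ is then the retraction onto $Y$. You instead apply the definition of $G$-projectivity directly, lifting $\mathrm{id}_X$ through the surjection $\phi$ to obtain a $G$-equivariant section $\psi$, and then convert the split surjection into a retraction onto the subflow $\psi(X)$ via $r=\psi\circ\phi$. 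Your argument is the standard categorical one (a projective object receiving an epimorphism from $P$ is a retract of $P$) and is more elementary: it needs neither Zorn's lemma nor the Ball--Hagler characterization via irreducible extensions. What the paper's route buys in exchange is a concrete description of the retract as a minimal closed subflow of $\Sa(G\times Z)$ surjecting onto $X$, i.e., as a $G$-irreducible extension of $X$, which ties the statement to the projective-cover machinery of the section. One small inaccuracy in your closing remark: the section-to-retraction step is \emph{not} the same bookkeeping as in Proposition \ref{Prop:ProjPointTransCRTS}, where the retraction was produced by right multiplication by an idempotent obtained from the Ellis--Numakura lemma, not from a section; this does not affect the validity of your proof.
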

\begin{proof}
   $(\Rightarrow)$ By the discussion above, there exists a discrete set $Z$ and an extension $\phi\colon \Sa(G\times Z) \to X$. Now passing to a subflow $Y \subseteq \Sa(G\times Z)$ minimal with respect to $\phi(Y) = X$, we obtain that $\psi:= \phi|_Y\colon  Y\to X$ is an isomorphism. Now $\psi^{-1} \circ\phi\colon  \Sa(G\times Z) \to Y$ is a retract of $Y$. 
   
    $(\Leftarrow)$ Follows by Proposition \ref{Prop:GxZProj} and Corollary \ref{Cor:RetractProj}.
\end{proof}
\section{Affine \texorpdfstring{$G$}{G}-projective flows}

\begin{defn}\label{Defn:AffineProj}
    An affine $G$-flow $K$ is \emph{affine $G$-projective} if for all affine $G$-flows $L$ and $P$, affine $G$-maps $\phi\colon X \to L$, and affine extensions $\pi\colon  P \to L$, there exists an affine $G$-map $\psi\colon K \to P$ such that $\pi \circ \psi = \phi$.
\end{defn}
Kadison's representation theorem gives us the following.
\begin{cor}
    An affine $G$-flow $K$ is affine $G$-projective if and only if $A(K)$ is injective in the category of $G$-function systems with respect to $G$-equivariant unital positive maps. 
\end{cor}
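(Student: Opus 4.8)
The plan is to deduce this purely formally from the dual equivalence between affine $G$-flows and $G$-function systems recorded at the end of the Background section, by translating the lifting property of Definition \ref{Defn:AffineProj} arrow-by-arrow into the extension property defining injectivity of $A(K)$. Recall that $K \mapsto A(K)$ and $E \mapsto S(E)$ are mutually inverse contravariant functors: an affine $G$-map $\phi$ is surjective precisely when its dual $G$-equivariant unital positive map is injective, and injective precisely when the dual map is surjective; moreover the unit $K \to S(A(K))$ and counit $E \to A(S(E))$ are isomorphisms (an affine homeomorphism and a unital order isomorphism, respectively). No analytic input is needed; the whole content is matching up the two universal properties across this equivalence.

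First I would handle the forward direction. Suppose $K$ is affine $G$-projective and consider an injectivity problem for $A(K)$: a $G$-equivariant unital positive map $f \colon E_1 \to A(K)$ together with an injective $G$-equivariant unital positive map $\iota \colon E_1 \to E_2$. Applying $S(-)$ turns $\iota$ into a surjective affine $G$-map $\pi \colon S(E_2) \to S(E_1)$ and $f$ into an affine $G$-map $S(E_1) \to S(A(K)) \simeq K$. Composing with the counit isomorphism and feeding the resulting diagram into the defining property of affine $G$-projectivity (with $L := S(E_1)$ and $P := S(E_2)$) produces a lift $K \to S(E_2)$. Dualizing this lift through $A(-)$ and using $A(S(E_i)) \simeq E_i$ yields a $G$-equivariant unital positive map $E_2 \to A(K)$ extending $f$ along $\iota$, which is exactly injectivity of $A(K)$.

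The converse is symmetric: given affine $G$-flows $L$, $P$, an affine $G$-map $\phi \colon K \to L$, and an affine extension $\pi \colon P \to L$, apply $A(-)$ to obtain $A(\phi) \colon A(L) \to A(K)$ together with the injective map $A(\pi) \colon A(L) \to A(P)$ (injective because $\pi$ is surjective). Injectivity of $A(K)$ supplies a $G$-equivariant unital positive map $A(P) \to A(K)$ extending $A(\phi)$ along $A(\pi)$, and dualizing gives the desired affine $G$-map $\psi \colon K \to P$ with $\pi \circ \psi = \phi$. The only point requiring care, and the sole genuine obstacle, is the bookkeeping of variance: surjections of flows dualize to monomorphisms of function systems while composition order reverses, so one must confirm that the monomorphisms in the function-system category are exactly the injective $G$-equivariant unital positive maps, and that the quantifier over all $G$-function systems $E_1, E_2$ matches the quantifier over all affine $G$-flows $L, P$ via $E_i \simeq A(S(E_i))$. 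Once this matching is verified the two conditions are literally term-by-term dual, so the corollary follows by invoking Kadison duality and nothing more.
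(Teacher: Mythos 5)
Your route is the paper's own: the paper offers no argument beyond the phrase ``Kadison's representation theorem gives us the following,'' and unwinding the dual equivalence arrow by arrow, as you do, is exactly what is intended. Your converse direction is correct as written: an affine extension $\pi\colon P\to L$ dualizes to $A(\pi)\colon A(L)\to A(P)$, which is not merely injective but a unital order embedding (since $\pi$ is surjective, $f\circ\pi\ge 0$ iff $f\ge 0$), so any reasonable injectivity hypothesis on $A(K)$ applies to it, and dualizing the resulting extension yields the lift $\psi$ with $\pi\circ\psi=\phi$.

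The forward direction, however, has a genuine gap, located exactly at the point you dismiss as bookkeeping. You test injectivity of $A(K)$ against arbitrary injective $G$-equivariant unital positive maps $\iota\colon E_1\to E_2$ and claim that $S(\iota)\colon S(E_2)\to S(E_1)$ is then surjective. That is false: injectivity of a unital positive map does not dualize to surjectivity. Take $G$ trivial and $\iota\colon\C^2\to\C^2$, $\iota(a,b)=\bigl(a,\tfrac{a+b}{2}\bigr)$; this map is unital, positive, and injective, yet the image of the dual map on state spaces is the convex hull of $e_1$ and $\tfrac12(e_1+e_2)$, where $e_1,e_2$ are the coordinate evaluations, and this misses $e_2$. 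Worse, the same example shows that under your reading (injectivity relative to all injective morphisms, which you propose to identify with the monomorphisms) the corollary itself is false: the one-point flow is trivially affine projective, but there is no state $g$ on $\C^2$ with $g\circ\iota=e_2$, so $A(\set{*})=\C$ would fail to be injective. What is true, and what the statement means --- as the paper's subsequent remark on $G$-operator systems, written with inclusions $E_1\subseteq E_2$, confirms --- is injectivity with respect to unital $G$-equivariant \emph{order embeddings}, i.e.\ maps satisfying $\iota(x)\ge 0\iff x\ge 0$. These are precisely the maps whose dual affine map is surjective; proving that requires extending states along an order embedding, a Krein/Hahn--Banach argument, so your assertion that ``no analytic input is needed'' is also not quite right. (The paper's own background claim that ``$\psi$ injective implies $\phi$ surjective'' carries the same imprecision, so you inherited the error; but a correct proof must run through order embeddings, not monomorphisms.) With ``injective'' read this way, your argument goes through verbatim and coincides with the paper's intended one-line proof.
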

\begin{remark}
    We can actually strength from injectivity in the category of $G$-function systems to the category of all \emph{$G$-operator systems} with respect to $G$-equivariant \emph{unital completely positive maps} (see \cite{Paulsen2002} for the definitions). The following is a brief sketch of the argument.
    
    Suppose the $G$-function system $A(K)$ is injective in the category of $G$-function systems. Let $E_1 \subseteq E_2$ be $G$-operator systems and $\phi\colon E_1 \to A(K)$ be a $G$-equivariant unital completely positive map. Now by Kadison's representation theorem each $E_i$ is \emph{unital order isomorphic} (not neccesairly completely) to the affine functions on their state space, $A(S(E_i))$. Thus $\phi$ extends to a $G$-equivariant unital positive map $\psi\colon E_2 \to A(K)$. Now any unital positive map whose codomain is commutative must actually be completely positive (see \cite[Theorem 3.9]{Paulsen2002}). Thus $\phi$ extends to a unital completely positive map $\psi$ and hence $A(K)$ is injective in the category of $G$-operator systems.
\end{remark}

\begin{prop}\label{G-proj-implies-affine-G-proj-proposition}
    If $X$ is a $G$-projective flow, then $P(X)$ is an affine $G$-projective flow.
\end{prop}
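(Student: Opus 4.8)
\emph{Proof proposal.} The plan is to exploit the fact that $P(X)$ is the free affine $G$-flow on the $G$-flow $X$: any $G$-map from the underlying flow $X$ into an affine $G$-flow extends uniquely to an affine $G$-map out of $P(X)$ (via pushforward of measures followed by the barycenter map). Granting this, affine $G$-projectivity of $P(X)$ should reduce cleanly to the $G$-projectivity of $X$ applied to the extreme points. So suppose we are given affine $G$-flows $L$ and $P$, an affine $G$-map $\phi\colon P(X) \to L$, and an affine extension $\pi\colon P \to L$; we must produce an affine $G$-map $\psi\colon P(X)\to P$ with $\pi\circ\psi = \phi$.

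First I would restrict $\phi$ to the copy of $X$ sitting inside $P(X)$ as the Dirac masses. Since $g\delta_x = \delta_{gx}$, this copy is $G$-invariant, so $\phi_0 := \phi|_X\colon X \to L$ is a $G$-map, where $L$ is regarded as a $G$-flow through its underlying compact space. As $\pi$ is in particular a surjective $G$-map between the underlying flows $P$ and $L$, the $G$-projectivity of $X$ yields a $G$-map $\psi_0\colon X \to P$ with $\pi\circ\psi_0 = \phi_0$. Next I would promote $\psi_0$ to an affine $G$-map. Define $\psi := \beta_P \circ (\psi_0)_*$, where $(\psi_0)_*\colon P(X)\to P(P)$ is the (weak$\ast$-continuous, affine) pushforward of measures and $\beta_P\colon P(P)\to P$ is the barycenter map. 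The pushforward is $G$-equivariant because $\psi_0$ is, and $\beta_P$ is $G$-equivariant by uniqueness of the barycenter, so $\psi$ is an affine $G$-map. Moreover $\psi(\delta_x) = \beta_P(\delta_{\psi_0(x)}) = \psi_0(x)$, so $\psi|_X = \psi_0$.

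Finally I would verify $\pi\circ\psi = \phi$. Both sides are continuous affine $G$-maps $P(X)\to L$, and on a Dirac mass we compute $\pi(\psi(\delta_x)) = \pi(\psi_0(x)) = \phi_0(x) = \phi(\delta_x)$, so they agree on $X$. Since $\ext(P(X))$ is exactly the set of Dirac masses and $P(X) = \overline{\conv}(X)$ by the Krein--Milman theorem, two continuous affine maps agreeing on $X$ agree on $\conv(X)$ by affineness and hence on all of $P(X)$ by density and continuity; thus $\pi\circ\psi = \phi$ and $P(X)$ is affine $G$-projective.

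The substantive content lies entirely in the lifting step (where $G$-projectivity of $X$ is invoked) and in the free-object machinery. The only delicate points are the $G$-equivariance of the extension $\psi$ — which I expect to be the main thing to check, and which is handled by the $G$-equivariance of the pushforward together with the uniqueness (hence equivariance) of the barycenter map recalled in the background — and the propagation of agreement from the extreme points to all of $P(X)$, which is exactly a Krein--Milman density argument. No genuine obstacle is anticipated beyond bookkeeping these two facts.
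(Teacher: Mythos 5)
Your proposal is correct and follows essentially the same route as the paper's proof: restrict $\phi$ to the Dirac masses, lift that $G$-map using the $G$-projectivity of $X$, promote the lift to an affine $G$-map via pushforward of measures composed with the barycenter map, and conclude by checking agreement on the extreme points of $P(X)$. The only cosmetic difference is that you invoke Krein--Milman explicitly to propagate the agreement, where the paper simply notes that two affine $G$-maps agreeing on $\ext(P(X))$ coincide.
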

\begin{proof}
    Suppose $K$ and $L$ are affine $G$-flows, $\phi\colon P(X) \to K$ an affine $G$-map, and $\pi\colon  L \to K$ an affine extension. As $\phi|_X\colon X \to K$ is a $G$-map and $X$ is $G$-projective, there exists a $G$-map $\Tilde{\psi}\colon  X \to L$ such that $\pi \circ \Tilde{\psi} = \phi|_X$. Let $\psi\colon  P(X) \to P(L)$ be the affine $G$-map induced by $\Tilde{\psi}$ and $\beta\colon  P(L) \to L$ denote the barycenter map. It follows that $\eta := \beta \circ \psi\colon P(X) \to L$ is an affine $G$-map such that for all $x \in X$
    \[
    \pi(\eta(\delta_x)) = \pi(\beta(\delta_{\Tilde{\psi}(x)})) = \pi(\Tilde{\psi}(x)) = \phi(\delta_x).
    \]
    As the affine $G$-maps $\pi \circ \eta$ and $\phi$ agree on the extreme points of $P(X)$, we obtain that $\pi \circ \eta = \phi$.
\end{proof}
\begin{example}
    The above proposition tells us that $P(\Sa(G))$ and $P(M(G))$ are always affine $G$-projective flows. Moreover, by Theorem \ref{Thm:ExAmenSubgp} $P(\Sa(G/H))$ is an affine $G$-projective flow whenever $G$ is a Polish group and $H$ is an extremely amenable closed subgroup of $G$. Actually, we can obtain an exact characterization of when an affine flow of the form $P(\Sa(G/H))$ is $G$-projective.
\end{example}

\begin{theorem} \label{Thm:AmenSubgp}
   If $G$ is a Polish group and $H \leq G$ is a closed subgroup, then $H$ is amenable if and only if $P(\Sa(G/H))$ is affine $G$-projective. 
\end{theorem}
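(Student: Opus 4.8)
The plan is to run the two implications in close parallel with the proof of Theorem~\ref{Thm:ExAmenSubgp}, replacing throughout the fixed-point property for $G$-flows (which characterizes extreme amenability) by the corresponding fixed-point property for affine $G$-flows (which characterizes amenability). Concretely, I will use the standard fact that $H$ is amenable if and only if every nonempty affine $H$-flow has an $H$-fixed point, equivalently if and only if $\Sa(H)$ carries an $H$-invariant measure. The passage between these is via the $H$-equivariant barycenter map: an $H$-invariant measure on the $H$-flow $\overline{\ext}(C)$ has an $H$-fixed barycenter in $C$, and conversely a fixed point is a Dirac mass.

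For the forward direction, assume $H$ is amenable. Given affine $G$-flows $K,L$, an affine $G$-map $\phi\colon P(\Sa(G/H)) \to K$, and an affine extension $\pi\colon L \to K$, set $x := \phi(\delta_H)$, which is an $H$-fixed point of $K$ by $G$-equivariance. The fiber $\pi^{-1}(\set{x}) \subseteq L$ is then a nonempty affine $H$-flow, so amenability of $H$ supplies an $H$-fixed point $y \in \pi^{-1}(\set{x})$. By the universal property of $\Sa(G/H)$ there is a $G$-map of flows $\theta\colon \Sa(G/H) \to L$ with $\theta(gH) = gy$; composing the induced affine map $P(\theta)\colon P(\Sa(G/H)) \to P(L)$ with the barycenter map $\beta_L\colon P(L) \to L$ yields an affine $G$-map $\psi := \beta_L \circ P(\theta)$. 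On a Dirac mass one computes $\psi(\delta_{gH}) = \beta_L(\delta_{gy}) = gy$, whence $\pi(\psi(\delta_{gH})) = gx = \phi(\delta_{gH})$. Since $\pi\circ\psi$ and $\phi$ restrict to continuous $G$-maps on the extreme points that agree on the dense set $G/H$, they agree on all extreme points of $P(\Sa(G/H))$, and being affine and continuous they agree everywhere. Thus $P(\Sa(G/H))$ is affine $G$-projective.

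For the reverse direction, assume $P(\Sa(G/H))$ is affine $G$-projective. Consider the extension $\phi\colon \Sa(G) \to \Sa(G/H)$, $\phi(g) = gH$, which by Lemma~\ref{Lem:Stab} satisfies $\phi^{-1}(\set{H}) = \Sa(H)$ since $H \in \Sa(G/H)$ has comeager, hence nonmeager, orbit. The push-forward $P(\phi)\colon P(\Sa(G)) \to P(\Sa(G/H))$ is a surjective affine $G$-map, i.e. an affine extension. Applying affine $G$-projectivity to lift the identity on $P(\Sa(G/H))$ through $P(\phi)$, I obtain an affine $G$-map $\psi\colon P(\Sa(G/H)) \to P(\Sa(G))$ with $P(\phi)\circ\psi = \mathrm{id}$. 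Then $\nu := \psi(\delta_H)$ satisfies $\phi_*\nu = \delta_H$, so $\nu$ is concentrated on $\phi^{-1}(\set{H}) = \Sa(H)$ and thus $\nu \in P(\Sa(H))$; moreover $\delta_H$ is $H$-fixed in $P(\Sa(G/H))$ and $\psi$ is $H$-equivariant, so $\nu$ is an $H$-invariant measure on $\Sa(H)$. As $\Sa(H)$ is the universal point transitive $H$-flow, this invariant measure pushes forward to every $H$-flow, and the standard equivalence yields that $H$ is amenable.

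The main obstacle, as in Theorem~\ref{Thm:ExAmenSubgp}, is not a single hard estimate but the bookkeeping of assembling the right objects: one must verify that $P(\phi)$ is genuinely an affine extension, and then check that the lifted section evaluated at $\delta_H$ lands inside $P(\Sa(H))$ rather than merely in $P(\Sa(G))$ — this is exactly where the fiber computation $\phi^{-1}(\set{H}) = \Sa(H)$ from Lemma~\ref{Lem:Stab} is indispensable. A secondary point requiring care is reconciling the invariant-measure conclusion with the definition of amenability via strongly proximal flows: given the invariant measure $\nu$ on $\Sa(H)$, for any strongly proximal $H$-flow $Y$ with $H$-map $q\colon \Sa(H)\to Y$ the measure $q_*\nu \in P(Y)$ is $H$-invariant, and strong proximality provides a net $(h_\lambda)$ with $h_\lambda(q_*\nu) \to \delta_z$; invariance forces $q_*\nu = \delta_z$, so $z \in Y$ is a fixed point and $\Pi_s(H)$ is trivial.
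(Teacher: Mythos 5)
Your proposal is correct and follows essentially the same route as the paper's proof: the forward direction is identical (an $H$-fixed point in the fiber $\pi^{-1}(\set{x})$ supplied by amenability, then the induced map on measures composed with the barycenter map, checked on Dirac masses), and the reverse direction likewise lifts the identity through the affine extension $P(\Sa(G)) \to P(\Sa(G/H))$ and evaluates at $\delta_H$, with Lemma \ref{Lem:Stab} giving $\phi^{-1}(\set{H}) = \Sa(H)$. The only cosmetic differences are that where the paper identifies the whole fiber $\eta^{-1}(\set{\delta_H}) = P(\Sa(H))$ by a face/Milman argument, you observe directly that the single lifted measure $\nu = \psi(\delta_H)$ satisfies $\nu(\phi^{-1}(\set{H})) = 1$ and hence lies in $P(\Sa(H))$ (which suffices), and that you spell out the standard equivalence between the paper's strongly-proximal definition of amenability and the affine fixed-point/invariant-measure characterizations, which the paper invokes implicitly.
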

\begin{proof}
    $(\Rightarrow)$ Suppose $K$ and $L$ are affine $G$-flows, $\phi\colon P(\Sa(G/H)) \to K$ an affine $G$-map, and $\pi\colon  L \to K$ an affine extension. Let $x := \phi(\delta_H)\in K$ be the image of the trivial coset. Notice that $\pi^{-1}(\set{x}) \subseteq L$ is a nonempty closed convex subset. As $H \subseteq \Stab(\set{x})$ we obtain that $\pi^{-1}(\set{x})$ is an $H$-invariant subset and hence an affine $H$-flow. Therefore, there exists an $H$-fixed point $y \in \pi^{-1}(\set{x})$. Now consider the $G$-map $\Tilde{\psi}\colon \Sa(G/H)\to L$ given via $\Tilde{\psi}(gH) = gy$. This incudes an affine $G$-map $P(\Sa(G/H)) \to P(L)$, which upon composing with the barycenter map $P(L) \to L$ yields an affine $G$-map $\psi\colon  P(\Sa(G/H)) \to L$. Now given any $gH\in G/H$ we have that
    \[
    \pi(\psi(\delta_{gH})) = \pi(gy) = \phi(gH).
    \]
    By continuity $\pi\circ \psi$ and $\phi$ agree on $\Sa(G/H) \subseteq P(\Sa(G/H))$. Thus we must have $\pi \circ \psi = \phi$. 
    
    $(\Leftarrow)$ It suffices to show $P(\Sa(H))$ contains an $H$-fixed point. Consider the extension $\phi\colon \Sa(G) \to \Sa(G/H)$ given by $\phi(g) = gH$. This induces an affine extension $\eta\colon P(\Sa(G)) \to P(\Sa(G/H))$. We claim that $\eta^{-1}(\set{\delta_H}) = P(\Sa(H))$. Notice that as $\delta_H \in P(\Sa(G/H))$ is an extreme point we obtain that $\eta^{-1}(\set{\delta_H})$ is a nonempty closed face in $P(\Sa(G/H))$. Thus it follows that $\ext(\eta^{-1}(\set{\delta_H})) \subseteq \ext (P(\Sa(G))) = \Sa(G)$. By Lemma \ref{Lem:Stab} we have that $\phi^{-1}(\set{H}) = \Sa(H)$. As $\eta$ is induced by $\phi$ we obtain that
    \[
    \ext(\eta^{-1}(\set{\delta_H})) \subseteq \Sa(G) \cap \phi^{-1}(\set{H}) = \Sa(H).
    \]
    Thus $\eta^{-1}(\set{\delta_H}) \subseteq P(\Sa(H))$ and the reverse inclusion is clear. Now by affine $G$-projectivity there exists an affine $G$-map $\psi\colon  P(\Sa(G/H)) \to P(\Sa(G))$ such that $\eta \circ \psi = \text{id}$. It follows that $\psi(\delta_H) \in \eta^{-1}(\set{\delta_H}) = P(\Sa(H))$ is an $H$-fixed point. 
\end{proof}
\begin{cor}\label{Cor:AmenSubgpInj}
    If $G$ is a Polish group and $H \leq G$ is a closed subgroup, then $H$ is amenable if and only if $\operatorname{RUC}^b(G/H)$ is $G$-injective with respect to $G$-equivariant unital positive maps. 
\end{cor}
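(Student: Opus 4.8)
The plan is to dualize Theorem \ref{Thm:AmenSubgp} through Kadison's representation theorem. By the corollary following Definition \ref{Defn:AffineProj}, an affine $G$-flow $K$ is affine $G$-projective if and only if $A(K)$ is injective in the category of $G$-function systems with respect to $G$-equivariant unital positive maps. Combined with Theorem \ref{Thm:AmenSubgp}, it therefore suffices to identify the $G$-function system $A(P(\Sa(G/H)))$ with $\operatorname{RUC}^b(G/H)$, after which injectivity of one transfers to the other.

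First I would establish the natural identification $A(P(X)) \cong C(X)$ as $G$-function systems for an arbitrary $G$-flow $X$. The map sends $f \in C(X)$ to the continuous affine function $\hat{f} \in A(P(X))$ given by $\hat{f}(\mu) = \int_X f\, d\mu$; this is clearly unital, linear, and positive. Its inverse is restriction to the extreme points, since $\ext(P(X)) = \set{\delta_x : x \in X}$ is (canonically) $X$ and every continuous affine function on $P(X)$ is determined by its values on the Dirac masses by the Krein--Milman theorem together with continuity. Hence $f \mapsto \hat{f}$ is a bijective unital order isomorphism. Equivariance follows by unwinding the $G$-actions: for $g \in G$ and $\mu \in P(X)$ we compute $(g\cdot \hat{f})(\mu) = \hat{f}(g^{-1}\mu) = \int_X f\, d(g^{-1}\mu) = \int_X (g\cdot f)\, d\mu = \widehat{g\cdot f}(\mu)$, so the identification is an isomorphism of $G$-function systems.

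Next I would invoke the Gelfand-duality construction of the Samuel compactification recalled in Section \ref{Sec:SamuelCompact}, under which $C(\Sa(G/H)) = \operatorname{RUC}^b(G/H)$ as $G$-$C^*$-algebras, and in particular as $G$-function systems. Composing with the previous step yields $A(P(\Sa(G/H))) \cong \operatorname{RUC}^b(G/H)$ as $G$-function systems. Since injectivity in the category of $G$-function systems is manifestly preserved under $G$-equivariant unital order isomorphisms, $A(P(\Sa(G/H)))$ is injective if and only if $\operatorname{RUC}^b(G/H)$ is. Chaining these equivalences with Theorem \ref{Thm:AmenSubgp} gives the corollary.

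This argument is essentially a routine dualization, and there is no serious obstacle; all the genuine dynamical content already sits inside Theorem \ref{Thm:AmenSubgp}. The only point demanding mild care is confirming that $A(P(X)) \cong C(X)$ is a bona fide $G$-function system isomorphism, that is, unital, order-isomorphic, \emph{and} $G$-equivariant, rather than a mere linear isomorphism. This is immediate from the correspondence between continuous affine functions on $P(X)$ and their restrictions to the extreme points, so the verification amounts to the short computation displayed above.
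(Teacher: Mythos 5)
Your proposal is correct and follows essentially the same route as the paper: its proof is precisely the combination of Theorem \ref{Thm:AmenSubgp} with the observation that $\operatorname{RUC}^b(G/H)$ is isomorphic, as a $G$-function system, to $A(P(\Sa(G/H)))$, via Kadison duality. You have simply spelled out the routine verification of that observation (the identification $A(P(X)) \cong C(X)$ and the Gelfand-duality description $C(\Sa(G/H)) = \operatorname{RUC}^b(G/H)$), which the paper leaves implicit.
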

\begin{proof}
    Follows by Theorem \ref{Thm:AmenSubgp} and the observation that $\operatorname{RUC}^b(G/H)$ is isomorphic (as $G$-function systems) to $A(P(\Sa(G/H)))$.
\end{proof}

    While not explicitly mentioned, the above corollary in the case of countable discrete groups follows from \cite[Corollary 4.4.5]{Anantharaman-Delaroche2003} (also see \cite[Theorem 6]{CapraceMonod2014}). In particular, they show that for a locally compact group $G$, a closed subgroup $H \leq G$ is amenable if and only if there exists a $G$-equivariant condition expectation from $L^\infty(G)$ to $L^\infty(G/H)$. When $G$ is a discrete group we see that 
    \[
    L^\infty(G) = \ell^\infty(G) = \operatorname{RUC}^b(G).
    \]
    and similarly $L^\infty(G/H) = \operatorname{RUC}^b(G/H)$. A basic fact from operator algebras is that given an injective $C^*$-algebra $A$ and a $C^*$-subalgebra $B \subseteq A$, we have that $B$ is injective if and only if there exists a conditional expectation $\Phi\colon A \to B$ (see \cite{Paulsen2002}). Moreover, this fact remain true if we replace $A,B$ with $G$-$C^*$-algebras and require the condition expectation to be $G$-equivariant. Thus Corollary \ref{Cor:AmenSubgpInj} for countable discrete groups is seen to be equivalent to \cite[Corollary 4.4.5]{Anantharaman-Delaroche2003}. Moreover, we have the following.
    \begin{cor}
        Let $G$ be a Polish locally compact group and $H \leq G$ a closed subgroup. Then the following conditions are equivalent:
        \begin{enumerate}
            \item $H$ is amenable.
            \item $\operatorname{RUC}^b(G/H)$ is $G$-injective.
            \item There exists a $G$-equivariant conditional expectation $\Phi\colon  \operatorname{RUC}^b(G) \to \operatorname{RUC}^b(G/H)$.
            \item There exists a $G$-equivariant conditional exception $\Phi\colon  L^\infty(G) \to L^\infty(G/H)$.
        \end{enumerate}
    \end{cor}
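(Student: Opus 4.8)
The plan is to treat amenability of $H$ as the hub condition and show that each of the four statements is equivalent to it, so that all four are equivalent to one another. The equivalence (1) $\iff$ (2) is immediate from Corollary \ref{Cor:AmenSubgpInj}: there ``$G$-injective'' is taken with respect to $G$-equivariant unital positive maps, and since $\operatorname{RUC}^b(G/H)$ is commutative this coincides with $G$-injectivity in the operator-system sense, because every unital positive map into a commutative algebra is automatically completely positive (as recorded in the remark following Definition \ref{Defn:AffineProj}). So condition (2) is unambiguous and is exactly the conclusion of Corollary \ref{Cor:AmenSubgpInj}.

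For (2) $\iff$ (3), I would first record that $\operatorname{RUC}^b(G)$ is itself $G$-injective. This follows from Corollary \ref{Cor:AmenSubgpInj} applied to the trivial (hence amenable) subgroup $\set{1_G}$, since $\operatorname{RUC}^b(G) = \operatorname{RUC}^b(G/\set{1_G})$; alternatively one may note $\operatorname{RUC}^b(G) \cong A(P(\Sa(G)))$ and that $P(\Sa(G))$ is affine $G$-projective by Proposition \ref{G-proj-implies-affine-G-proj-proposition}. Next, pullback along the quotient map $G \to G/H$ realizes $\operatorname{RUC}^b(G/H)$ as a unital $G$-$C^*$-subalgebra of $\operatorname{RUC}^b(G)$, with the $G$-action on each side given by left translation, so that the inclusion is a unital $G$-equivariant $*$-homomorphism. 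Then the $G$-equivariant form of the standard characterization---that a $G$-$C^*$-subalgebra $B$ of a $G$-injective $C^*$-algebra $A$ is $G$-injective precisely when there is a $G$-equivariant conditional expectation $A \to B$ (see \cite{Paulsen2002})---yields (2) $\iff$ (3) directly, taking $A = \operatorname{RUC}^b(G)$ and $B = \operatorname{RUC}^b(G/H)$.

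Finally, (1) $\iff$ (4) is exactly \cite[Corollary 4.4.5]{Anantharaman-Delaroche2003}, which characterizes amenability of a closed subgroup $H$ of a locally compact group $G$ by the existence of a $G$-equivariant conditional expectation $L^\infty(G) \to L^\infty(G/H)$; this is precisely the statement recalled in the discussion preceding the corollary. Chaining (1) $\iff$ (2) $\iff$ (3) together with (1) $\iff$ (4) shows that (1)--(4) are all equivalent.

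The main obstacle I expect is the bookkeeping in (2) $\iff$ (3): one must verify that the inclusion $\operatorname{RUC}^b(G/H) \hookrightarrow \operatorname{RUC}^b(G)$ is genuinely a unital $G$-equivariant inclusion of $C^*$-algebras (so that ``conditional expectation onto it'' is meaningful), and that the conditional-expectation characterization of injectivity transfers verbatim to the $G$-equivariant setting. The $G$-equivariance of the conditional expectation is exactly the feature distinguishing this statement from the non-equivariant theory, so it must be stated carefully; the verification itself is routine. Everything else is either a citation or a direct application of results already established in the paper.
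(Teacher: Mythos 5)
Your proposal is correct and follows essentially the same route the paper intends: the paper leaves this corollary's proof implicit in the preceding discussion, which consists of exactly your three ingredients --- Corollary \ref{Cor:AmenSubgpInj} for $(1)\Leftrightarrow(2)$, the $G$-equivariant conditional-expectation characterization of injective subalgebras (applied to $\operatorname{RUC}^b(G/H)\subseteq\operatorname{RUC}^b(G)$ with $\operatorname{RUC}^b(G)$ being $G$-injective) for $(2)\Leftrightarrow(3)$, and \cite[Corollary 4.4.5]{Anantharaman-Delaroche2003} for $(1)\Leftrightarrow(4)$. Your explicit attention to the positive-versus-completely-positive issue and to chaining $(3)$ and $(4)$ only through $(1)$ matches the paper's viewpoint, including its closing remark that no direct proof of $(3)\Leftrightarrow(4)$ is known.
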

    It would be interesting to see if there is a direct proof of the equivalence of conditions $(3)$ and $(4)$ without passing through the amenability of $H$.
\subsection{Affine $G$-projective covers}
As we saw before, the property of $G$-projectivity can be characterized with being maximal with respect to certain extensions. In particular, $X$ is $G$-projective if and only if $X$ has no nontrivial $G$-irreducible extensions. We now show affine $G$-projectivity can be characterized with an analogous notion of extensions.
\begin{defn}
    Let $K,L$ be affine $G$-flows. We say an affine extension $\phi\colon K \to L$ is \emph{affine $G$-irreducible} if for all proper affine subflows $P \subseteq K$ we have $\phi(P) \neq L$.
\end{defn}
\begin{lemma}\label{Lem:AffIrrImpliesIrr}
    Let $K$ and $L$ be affine $G$-flows. If $\phi\colon L \to K$ is an affine $G$-irreducible extension, then $\phi$ restricts to a $G$-irreducible extension from $\overline{\ext}(L)$ to $\overline{\ext}(K)$.
\end{lemma}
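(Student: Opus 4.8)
The statement asks us to show that an affine $G$-irreducible extension $\phi\colon L \to K$ restricts to a $G$-irreducible extension between the flows $\overline{\ext}(L)$ and $\overline{\ext}(K)$. There are really two things to verify: first, that $\phi$ actually maps $\overline{\ext}(L)$ onto $\overline{\ext}(K)$ (so that the restriction is a genuine extension of $G$-flows), and second, that this restriction is $G$-irreducible in the sense that no proper subflow of $\overline{\ext}(L)$ surjects onto $\overline{\ext}(K)$. I would treat these two points in turn, with the second being the heart of the matter.

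\textbf{First step: the restriction is an extension.} Since $\phi$ is a continuous affine map, it sends extreme points into the codomain but need not preserve extremality; however, the standard way to control this is Milman's theorem, which is recalled in the Background. I would argue that $\phi(\overline{\ext}(L))$ is a closed $G$-invariant set whose closed convex hull is all of $K$ (because $\phi$ is surjective and affine, so $\phi(L) = \phi(\overline{\conv}(\ext(L))) = \overline{\conv}(\phi(\overline{\ext}(L))) = K$). Milman's theorem then forces $\ext(K) \subseteq \phi(\overline{\ext}(L))$, and since the latter set is closed we get $\overline{\ext}(K) \subseteq \phi(\overline{\ext}(L))$. The reverse containment $\phi(\overline{\ext}(L)) \subseteq \overline{\ext}(K)$ would follow once we know $\phi$ maps extreme points of $L$ into $\overline{\ext}(K)$; here I expect to use affine $G$-irreducibility to rule out the possibility that an extreme point of $L$ maps to a non-extreme point of $K$, or alternatively to show that $\overline{\conv}(\phi(\ext(L))) = K$ together with minimality considerations pins things down. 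This containment is the more delicate of the two directions and may require the irreducibility hypothesis rather than Milman alone.

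\textbf{Second step: $G$-irreducibility of the restriction.} This is where I expect the main difficulty. Suppose $C \subseteq \overline{\ext}(L)$ is a proper subflow with $\phi(C) = \overline{\ext}(K)$. The natural move is to pass to the closed convex hull $\overline{\conv}(C)$, which by the remarks in the Background is an affine subflow of $L$. Since $\phi$ is affine and continuous, $\phi(\overline{\conv}(C)) = \overline{\conv}(\phi(C)) = \overline{\conv}(\overline{\ext}(K)) = K$ by Krein--Milman. Affine $G$-irreducibility of $\phi$ then forces $\overline{\conv}(C) = L$. The crux is to convert this back into a statement about the flows: I need to show that $\overline{\conv}(C) = L$ together with $C \subseteq \overline{\ext}(L)$ actually forces $C = \overline{\ext}(L)$, contradicting properness. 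The key point is that applying Milman's theorem to $\overline{\conv}(C) = L$ with the closed generating set $C$ yields $\ext(L) \subseteq C$, and taking closures gives $\overline{\ext}(L) \subseteq C$, hence $C = \overline{\ext}(L)$.

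\textbf{Main obstacle.} The step I anticipate being trickiest is ensuring that the various $\overline{\conv}$ and $\phi$ manipulations interact correctly with the closure operations --- specifically the identity $\phi(\overline{\conv}(C)) = \overline{\conv}(\phi(C))$ for a continuous affine $\phi$ on a compact convex set, and the clean application of Milman's theorem requiring $C$ to be closed. Since $C$ is a subflow it is closed by definition, so Milman applies directly, which is reassuring; the affineness and continuity of $\phi$ give the commutation with $\overline{\conv}$ by a compactness argument. The genuinely subtle point is the reverse containment in the first step --- confirming $\phi(\ext(L)) \subseteq \overline{\ext}(K)$ --- and I would expect the proof to lean on the irreducibility hypothesis there, perhaps by noting that the preimage under $\phi$ of a non-extreme point would produce a proper affine subflow still surjecting onto $K$.
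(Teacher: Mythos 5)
Your second step and the surjectivity half of your first step are correct and essentially match the paper. Once one knows that $\phi$ maps $\overline{\ext}(L)$ onto $\overline{\ext}(K)$, the argument ``proper subflow $C \Rightarrow \overline{\conv}(C)$ is an affine subflow with $\phi(\overline{\conv}(C)) = \overline{\conv}(\phi(C)) = K \Rightarrow$ irreducibility gives $\overline{\conv}(C) = L \Rightarrow$ Milman gives $\overline{\ext}(L) \subseteq C$'' is exactly what is needed; and your Krein--Milman/Milman derivation of $\overline{\ext}(K) \subseteq \phi(\overline{\ext}(L))$ is a fine substitute for the paper's observation that each fiber $\phi^{-1}(\set{x})$, $x \in \ext(K)$, is a closed face of $L$ and hence contains an extreme point of $L$.

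The genuine gap is the containment $\phi(\overline{\ext}(L)) \subseteq \overline{\ext}(K)$, which you correctly flag as the delicate point but never prove; without it the ``restriction'' is not even a map into $\overline{\ext}(K)$, so the second step has nothing to apply to. Neither of your two suggestions works as stated. Ruling out ``an extreme point of $L$ maps to a non-extreme point of $K$'' is stronger than what is true or needed: extreme points of $L$ may well map to non-extreme points of $K$; they only need to land in the \emph{closure} $\overline{\ext}(K)$. And the preimage of a single non-extreme point is not $G$-invariant, so it does not produce an affine subflow, let alone one surjecting onto $K$. The paper's fix is to apply irreducibility to one specific affine subflow: set $C := \phi^{-1}(\overline{\ext}(K)) \cap \overline{\ext}(L)$, which is closed and $G$-invariant. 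Since every fiber over a point of $\ext(K)$ is a closed face of $L$ and hence contains a point of $\ext(L)$ (which then lies in $C$), we get $\ext(K) \subseteq \phi(C)$, so $\phi(\overline{\conv}(C)) \supseteq \overline{\conv}(\ext(K)) = K$. Affine $G$-irreducibility then forces $\overline{\conv}(C) = L$, and Milman's theorem applied to the closed set $C$ gives $\ext(L) \subseteq C$, hence $\overline{\ext}(L) \subseteq C \subseteq \phi^{-1}(\overline{\ext}(K))$ --- exactly the missing containment. Note the logic runs opposite to what you proposed: irreducibility is used to show that this particular convex subflow is \emph{not} proper, and Milman then traps the extreme points of $L$ inside $\phi^{-1}(\overline{\ext}(K))$.
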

\begin{proof}
    We first claim $\phi(\overline{\ext}(L)) = \overline{\ext}(K)$. Notice for each $x \in \ext(K)$, $\phi^{-1}(\set{x})$ is a face in $L$ and thus $$\phi^{-1}(\set{x}) \cap \overline{\ext}(L) \neq \emptyset.$$
    Let $C := \phi^{-1}(\overline{\ext}(K)) \cap \overline{\ext}(L)$ and notice that $\overline{\conv}(C)$ is an affine subflow such that $\phi(\overline{\conv}(C)) = K$. By Milman's theorem and the assumption that $\phi$ is affine $G$-irreducible we see that
    \[
    \overline{\ext}(L) \subseteq C.
    \]
    The claim follows and a similar argument using Milman's theorem shows $\phi\colon \overline{\ext}(L) \to \overline{\ext}(K)$ is a $G$-irreducible extension. 
\end{proof}

\begin{lemma}\label{Lem:AffProjAffIrr} 
    If $K$ is an affine $G$-flow, then $K$ is affine $G$-projective if and only if $K$ has no nontrivial affine $G$-irreducible extensions.
\end{lemma}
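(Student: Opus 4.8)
The plan is to adapt the proof of Corollary \ref{Cor:GIrrExtGProjCover} to the affine setting by exhibiting an affine $G$-projective flow that admits an affine $G$-irreducible extension onto $K$. For the forward direction I would argue directly from Definition \ref{Defn:AffineProj}. Suppose $K$ is affine $G$-projective and let $q\colon M \to K$ be an arbitrary affine $G$-irreducible extension. Applying the lifting property of $K$ to the identity $\mathrm{id}_K\colon K \to K$ and the affine extension $q\colon M \to K$ produces an affine $G$-map $\psi\colon K \to M$ with $q\circ\psi=\mathrm{id}_K$. Then $\psi(K)$ is an affine subflow of $M$ satisfying $q(\psi(K))=K$, so affine $G$-irreducibility forces $\psi(K)=M$; since $\psi$ also has the left inverse $q$, it is a continuous affine bijection of compact Hausdorff spaces, hence an isomorphism, and therefore $q=\psi^{-1}$ is an isomorphism. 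This shows $K$ has no nontrivial affine $G$-irreducible extensions.

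For the converse I would first construct an affine $G$-projective flow surjecting onto $K$. Let $Z$ be a set of representatives of the $G$-orbits in the flow $\overline{\ext}(K)$, given the discrete topology; then $(g,z)\mapsto gz$ is a uniformly continuous surjection $G\times Z \to \overline{\ext}(K)$, which extends to an extension $W:=\Sa(G\times Z)\to \overline{\ext}(K)$, exactly as in the general-case construction. Here $W$ is $G$-projective by Proposition \ref{Prop:GxZProj}, so $P(W)$ is affine $G$-projective by Proposition \ref{G-proj-implies-affine-G-proj-proposition}. Composing the induced affine $G$-map $P(W)\to P(K)$ (coming from $W\to\overline{\ext}(K)\hookrightarrow K$) with the barycenter map $\beta\colon P(K)\to K$ yields an affine $G$-map $\Phi\colon P(W)\to K$ whose image is compact, convex, and contains $\ext(K)$, hence equals $K$ by the Krein--Milman theorem. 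Thus $\Phi$ is an affine extension.

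Next I would pass to a minimal affine subflow $L\subseteq P(W)$ with respect to $\Phi(L)=K$, which exists by Zorn's lemma: for a decreasing chain of affine subflows surjecting onto $K$, each fiber $L_i\cap\Phi^{-1}(k)$ is a decreasing chain of nonempty compact sets, so $\Phi$ still surjects off the intersection, and the intersection is again a (nonempty, compact, convex, $G$-invariant) affine subflow. Minimality makes $\Phi|_L\colon L\to K$ an affine $G$-irreducible extension, so the hypothesis that $K$ has no nontrivial such extensions forces $\Phi|_L$ to be an isomorphism. Setting $\sigma:=(\Phi|_L)^{-1}\colon K\to L\subseteq P(W)$, the affine $G$-map $r:=\sigma\circ\Phi\colon P(W)\to L$ satisfies $r|_L=\mathrm{id}_L$, exhibiting $L$ as a retract of the affine $G$-projective flow $P(W)$. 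The same lifting argument as in Corollary \ref{Cor:RetractProj}, transported verbatim to affine $G$-flows and affine $G$-maps, shows that a retract of an affine $G$-projective flow is affine $G$-projective; hence $L$, and therefore $K\simeq L$, is affine $G$-projective.

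The step requiring the most care is the very last one: projectivity is not inherited by arbitrary affine subflows of $P(W)$, only by retracts, so merely realizing $K$ as a minimal affine subflow is not enough. The crucial leverage is that the irreducibility hypothesis upgrades $\Phi|_L$ to an isomorphism, and it is precisely this isomorphism that converts the inclusion $L\hookrightarrow P(W)$ into a retraction $r=\sigma\circ\Phi$, thereby transferring affine $G$-projectivity from $P(W)$ back to $K$. I would therefore present the construction of $\Phi$, the Zorn reduction to the irreducible $\Phi|_L$, and the retraction identity $r|_L=\mathrm{id}_L$ as the three load-bearing computations.
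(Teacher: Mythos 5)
Your forward direction coincides with the paper's: apply projectivity to $\mathrm{id}_K$ against the irreducible extension to get a section, use irreducibility to make the section surjective, and conclude the extension is an isomorphism. Your converse, however, takes a genuinely different route. The paper proves the lifting property directly: given $\phi\colon K \to L$ and an affine extension $\pi\colon P \to L$, it forms the fiber product $W = \set{(x,y) \in K \times P : \phi(x) = \pi(y)}$, projects onto $K$, passes by Zorn's lemma to a minimal affine subflow $W'$ surjecting onto $K$ (which is then an affine $G$-irreducible extension, hence an isomorphism by hypothesis), and defines the lift as the second-coordinate projection composed with $(\rho|_{W'})^{-1}$. You instead build one explicit affine $G$-projective flow over $K$ --- namely $P(\Sa(G\times Z))$ for $Z$ a set of orbit representatives in $\overline{\ext}(K)$, using Propositions \ref{Prop:GxZProj} and \ref{G-proj-implies-affine-G-proj-proposition} together with the barycenter map and Krein--Milman for surjectivity --- then extract a minimal (hence irreducible) affine subflow $L$ over $K$, use the hypothesis to get $\Phi|_L$ an isomorphism, and transfer projectivity through the retraction $r = \sigma\circ\Phi$. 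Both arguments are correct, and all the ingredients you invoke appear in the paper before this lemma, so there is no circularity; note in particular that you correctly avoid citing Lemma \ref{Lem:BarycenterAffIrr} (which comes later) and use Krein--Milman directly for surjectivity of $\Phi$. The trade-off: the paper's pullback argument is more elementary and self-contained, needing nothing beyond the definitions and Zorn's lemma, and it solves an arbitrary lifting problem uniformly; your argument requires heavier machinery but yields extra structural information as a by-product --- every affine $G$-flow with no nontrivial affine $G$-irreducible extensions is realized as a retract of $P(\Sa(G\times Z))$, an affine analogue of the paper's characterization of $G$-projective flows as retracts of $\Sa(G\times Z)$, and it foreshadows the construction used later in Theorem \ref{Thm:AltConOfAffProjCover}.
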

\begin{proof}
    $(\Rightarrow)$ Let $L$ be an affine $G$-flow and $\phi\colon L \to K$ an affine $G$-irreducible extension. As $K$ is affine $G$-projective, there exists an affine $G$-map $\psi\colon K \to L$ such that $\phi \circ \psi = \text{id}_K$. Using that $\phi$ is affine $G$-irreducible we obtain that $\psi$ is surjective and thus showing $\phi$ is injective. 
    
    $(\Leftarrow)$ Let $L,P$ be affine $G$-flows, $\phi\colon K \to L$ an affine $G$-map, and $\pi\colon P \to L$ an affine extension. Now define the affine subflow $W \subseteq K\times P$ via
    \[
    W:= \set{(x,y)\in K\times P: \phi(x) = \psi(y)}.
    \]
    It follows that the projection onto the first coordinate $\rho\colon W \to K$ is an affine extension. By Zorn's lemma, let $W' \subseteq W$ be an affine subflow minimal with respect to $\rho(W') = K$. By assumption, $\rho|_{W'}\colon  W' \to K$ is an isomorphism. Let $\eta\colon  W \to L$ denote the projection onto the second coordinate. Then $\psi:= \eta \circ (\rho|_{W'})^{-1}\colon K \to L$ is the required affine $G$-map which satisfies $\pi \circ \psi = \phi$.
\end{proof}
\begin{prop}\label{Prop:StrProxExt}
    Let $X$ and $Y$ be $G$-flows where $Y$ is also assumed to be minimal. Then there exists an affine $G$-irreducible extensions $\psi\colon P(X) \to P(Y)$ if and only if there exists a $G$-irreducible strongly proximal extension $\phi\colon X \to Y$.
\end{prop}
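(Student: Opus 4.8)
The plan is to route both implications through the single natural map between the measure spaces, namely the pushforward $P(\phi)\colon P(X)\to P(Y)$, $\mu\mapsto \phi_*\mu$. The central structural observation I would establish first is that \emph{every} affine $G$-irreducible extension $\psi\colon P(X)\to P(Y)$ is automatically of this form. Indeed, continuous affine functions on $P(X)$ are determined by their restriction to the Dirac masses $X=\ext(P(X))$, so a continuous affine map out of $P(X)$ is determined by its values on $X$; hence $\psi=P(\phi)$ where $\phi:=\psi|_X$. By Lemma \ref{Lem:AffIrrImpliesIrr}, applied with $\overline{\ext}(P(X))=X$ and $\overline{\ext}(P(Y))=Y$ (using that $X,Y$ are closed in their measure spaces), this $\phi\colon X\to Y$ is already a $G$-irreducible extension. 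Thus in both directions the extension is $P(\phi)$ and the only real content is the equivalence ``$P(\phi)$ is affine $G$-irreducible'' $\iff$ ``$\phi$ is strongly proximal'', given that $\phi$ is $G$-irreducible.

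For ($\Leftarrow$) I would take $\psi=P(\phi)$, which is an affine extension since $\phi$ is surjective, and verify affine $G$-irreducibility directly. Let $P\subseteq P(X)$ be an affine subflow with $\psi(P)=P(Y)$. For a fixed $y\in Y$, surjectivity onto the extreme point $\delta_y$ gives $\mu\in P$ with $\phi_*\mu=\delta_y$, i.e.\ $\mu\in P(\phi^{-1}(\set{y}))$; strong proximality of $\phi$ then yields a net $(g_\lambda)_\lambda$ with $g_\lambda\mu\to\delta_x$, and since $P$ is closed and $G$-invariant we get $\delta_x\in P$. Hence $D:=\set{x\in X:\delta_x\in P}$ is a nonempty subflow of $X$. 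Its image $\phi(D)$ is a subflow of $Y$, so $\phi(D)=Y$ by minimality, and then $G$-irreducibility of $\phi$ forces $D=X$. Therefore $P$ contains every Dirac mass, so $P\supseteq\overline{\conv}(X)=P(X)$ and $P=P(X)$.

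For ($\Rightarrow$), after reducing to $\psi=P(\phi)$ with $\phi$ a $G$-irreducible extension as above, I would prove $\phi$ is strongly proximal by contradiction. Fix $y\in Y$ and $\mu\in P(\phi^{-1}(\set{y}))$, and note $\psi(\mu)=\phi_*\mu=\delta_y$. If no net $g_\lambda\mu$ converges to a Dirac mass, then $\overline{G\mu}$ contains no point of $X=\ext(P(X))$, so by Milman's theorem the affine subflow $Q:=\overline{\conv}(\overline{G\mu})$ satisfies $\ext(Q)\subseteq\overline{G\mu}$, which is disjoint from $X$; in particular $Q\neq P(X)$, a proper affine subflow. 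On the other hand, using continuity and affinity of $\psi$ together with equivariance, $\psi(Q)=\overline{\conv}(\overline{G\,\psi(\mu)})=\overline{\conv}(\overline{G\delta_y})$, and minimality of $Y$ gives $\overline{G\delta_y}=Y$, whence $\psi(Q)=\overline{\conv}(Y)=P(Y)$. This contradicts affine $G$-irreducibility of $\psi$. Hence some $\delta_x\in\overline{G\mu}$, which is exactly strong proximality on the fiber over $y$; as $y$ and $\mu$ were arbitrary, $\phi$ is strongly proximal.

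I expect the main obstacle to be the forward direction, and specifically two points that need care: first, justifying the identification $\psi=P(\phi)$ (that an affine $G$-irreducible extension is forced to be a pushforward), which is what makes strong proximality expressible in terms of $\psi$; and second, the Milman-theorem argument showing that when $\overline{G\mu}$ avoids the Dirac masses the hull $\overline{\conv}(\overline{G\mu})$ is a \emph{proper} affine subflow whose image is nonetheless all of $P(Y)$. The remaining steps---surjectivity of $P(\phi)$, closedness and $G$-invariance of the set of Diracs lying in a given subflow, and the passage from a single fiber to all fibers---are routine.
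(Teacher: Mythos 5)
Your proof is correct and takes essentially the same route as the paper's: both directions rest on Lemma \ref{Lem:AffIrrImpliesIrr}, Milman's theorem applied to $\overline{\conv}(\overline{G\cdot\mu})$ for a fiber measure $\mu$, and the interplay between minimality of $Y$ and $G$-irreducibility of $\phi$. Your explicit identification $\psi = P(\phi)$ (which the paper uses implicitly when asserting $\psi(\overline{\conv}(\overline{G\cdot\mu})) = P(Y)$), the contrapositive organization of the forward direction, and the use of the Dirac subflow $D$ in place of noting that $X$ is minimal are only cosmetic reorganizations of the same argument.
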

\begin{proof}
    $(\Rightarrow)$ By Lemma \ref{Lem:AffIrrImpliesIrr} we obtain that $\phi:= \psi|_X\colon X \to Y$ is a $G$-irreducible extension. Now let $y\in Y$ and $\mu \in P(\phi^{-1}(\set{y})) \subseteq P(X)$ be a probability measures supported on the fiber of $y$. As $Y$ is minimal we see that
    \[
    \psi(\overline{\conv}(\overline{G\cdot \mu})) = P(Y).
    \]
    Therefore, $\overline{\conv}(\overline{G\cdot \mu}) = P(X)$ and Milman's theorem tells us that $X \subseteq \overline{G\cdot \mu}$. It follows that $\phi$ is a strongly proximal extension. 
    
    $(\Leftarrow)$ Let $\phi\colon X \to Y$ be a $G$-irreducible strongly proximal extension. We claim the induced affine extension $\psi\colon P(X) \to P(Y)$ is affine $G$-irreducible. Let $Z \subseteq P(X)$ be an affine subflow such that $\psi(Z) = P(Y)$. Fix $y \in Y$ and consider $$\mu \in Z \cap \psi^{-1}(\set{y}) = Z \cap P(\phi^{-1}(\set{y})).$$ By strong proximality of $\phi$ we obtain that there exists $p \in \Sa(G)$ and $x \in X$ such that $\delta_x = p\mu \in Z$. Since $\phi$ is $G$-irreducible we obtain that $X$ is a minimal $G$-flow and thus $X \subseteq Z$. It follows that $Z = P(X)$, showing $\psi$ is an affine $G$-irreducible extension.
\end{proof}
\begin{remark}
    In the above theorem as $Y$ was assumed to be minimal we have that $\phi\colon X \to Y$ is $G$-irreducible if and only if $X$ is minimal. In particular, if $X$ and $Y$ are minimal $G$-flows, then $X$ is a strongly proximal extension of $Y$ if and only if $P(X)$ is an affine $G$-irreducible extension of $P(Y)$. 
\end{remark}
Using Lemma \ref{Lem:AffProjAffIrr} as motivation we have the following definition.
\begin{defn}
    Let $K$ be an affine $G$-flow. We say the pair $(L,\psi)$ is an \emph{affine $G$-projective cover} of $K$ if $L$ is an affine $G$-projective flow and $\phi\colon L \to K$ is an affine $G$-irreducible extension. 
\end{defn}
To prove that affine $G$-projective cover exist it suffices, by Lemma \ref{Lem:AffProjAffIrr}, to show that for every affine $G$-flow $K$ there exists an affine $G$-irreducible extension $\psi\colon L \to K$ such that $L$ itself has no nontrivial affine $G$-irreducible extensions. To prove this we essentially repeat the exact same argument in \cite{BallHagler1996} used to prove the existence of $G$-projective covers. 
\begin{theorem}\label{Thm:AffProjCoverExist}
    Let $K$ be an affine $G$-flow. Then there exists an affine $G$-flow $L$ and an affine $G$-irreducible extension $\phi\colon L \to K$ such that $L$ itself has no nontrivial affine $G$-irreducible extensions. Moreover, $L$ is up to isomorphism the unique such affine $G$-flow.
\end{theorem}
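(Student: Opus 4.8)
The plan is to transport the Ball--Hagler construction (Theorem \ref{Thm:ExistenceProjCover}) to the affine category, using Lemma \ref{Lem:AffProjAffIrr} to reduce the problem to producing an affine $G$-irreducible extension of $K$ that is maximal, i.e.\ admits no further nontrivial affine $G$-irreducible extension. Three closure facts drive the argument: (i) affine $G$-irreducible extensions are closed under composition; (ii) they are closed under inverse limits over the index tower; and (iii) they satisfy a uniform bound on topological weight depending only on $K$ and $G$. Granting these, I would build a transfinite tower $K = L_0 \leftarrow L_1 \leftarrow \cdots$ in which each successor $L_{\alpha+1} \to L_\alpha$ is a chosen \emph{proper} affine $G$-irreducible extension (when one exists) and each limit stage is the inverse limit; by (i) and (ii) every structure map $L_\alpha \to K$ stays affine $G$-irreducible, and by (iii) the tower must stabilize at some $L$, which then has no nontrivial affine $G$-irreducible extension.

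For (i), if $\psi\colon M \to L$ and $\phi\colon L \to K$ are affine $G$-irreducible and $P \subseteq M$ is a proper affine subflow, then $\psi(P)$ is a proper affine subflow of $L$ (by affine $G$-irreducibility of $\psi$), hence $\phi(\psi(P)) \neq K$; so $\phi \circ \psi$ is affine $G$-irreducible. For (ii), let $L_\lambda = \varprojlim_{\alpha} L_\alpha$ with projections $\pi_\alpha$ and structure maps $\phi_\alpha\colon L_\alpha \to K$; this is again a compact convex $G$-flow sitting inside $\prod_\alpha L_\alpha$, and $\phi_\lambda\colon L_\lambda \to K$ is a surjective affine $G$-map. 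If $P \subseteq L_\lambda$ is an affine subflow with $\phi_\lambda(P) = K$, then each $\pi_\alpha(P)$ is an affine subflow with $\phi_\alpha(\pi_\alpha(P)) = K$, so $\pi_\alpha(P) = L_\alpha$ by affine $G$-irreducibility of $\phi_\alpha$; a standard compactness argument then recovers a closed subset of an inverse limit from its projections, giving $P = L_\lambda$. Hence $\phi_\lambda$ is affine $G$-irreducible.

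The main work is (iii), and here I would invoke Lemma \ref{Lem:AffIrrImpliesIrr}: any affine $G$-irreducible extension $\phi\colon L \to K$ restricts to a $G$-irreducible extension $\overline{\ext}(L) \to \overline{\ext}(K)$. By the already-established flow case (Theorem \ref{Thm:ExistenceProjCover}), every $G$-irreducible extension of the fixed flow $\overline{\ext}(K)$ is dominated by $\proj_G(\overline{\ext}(K))$, so the weight of $\overline{\ext}(L)$ is bounded by a fixed cardinal $\lambda$ depending only on $K$. Since $L = \overline{\conv}(\overline{\ext}(L))$ by Krein--Milman and restriction embeds $A(L)$ into $C(\overline{\ext}(L))$, the weight of $L$ is bounded in terms of $\lambda$ as well. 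Thus the affine $G$-irreducible extensions of $K$ form a set up to isomorphism; dualizing the proper bonding maps to strict inclusions of the function systems $A(L_\alpha)$, all of density character at most $\lambda$, forces the tower to terminate below $\lambda^+$ by the usual cofinality argument. I expect this cardinality bookkeeping, rather than any genuinely new dynamical idea, to be the main obstacle to make fully rigorous.

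Finally, for uniqueness, suppose $L_1, L_2$ are two affine $G$-irreducible extensions of $K$ with no nontrivial affine $G$-irreducible extensions; by Lemma \ref{Lem:AffProjAffIrr} both are affine $G$-projective. Projectivity yields affine $G$-maps $f\colon L_1 \to L_2$ and $g\colon L_2 \to L_1$ over $K$, each surjective since its image is an affine subflow mapping onto $K$. It then suffices to prove the rigidity statement that any affine $G$-map $h\colon L_1 \to L_1$ over $K$ is the identity, for then $g\circ f = \mathrm{id}$ and $f \circ g = \mathrm{id}$. This is clean in the affine setting: for each $k \in K$ the fiber $\phi_1^{-1}(\set{k})$ is a nonempty compact convex set preserved by $h$, so the Schauder--Tychonoff fixed point theorem gives a fixed point of $h$ inside it; hence $\operatorname{Fix}(h)$ is an affine subflow with $\phi_1(\operatorname{Fix}(h)) = K$, and affine $G$-irreducibility forces $\operatorname{Fix}(h) = L_1$, i.e.\ $h = \mathrm{id}$. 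Therefore $f$ and $g$ are mutually inverse isomorphisms and $L_1 \simeq L_2$.
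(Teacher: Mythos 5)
Your proposal is correct, and its skeleton --- a transfinite tower of proper affine $G$-irreducible extensions, closed under composition and under inverse limits, forced to stabilize by a cardinality bound --- is exactly the paper's, which in turn transports the Ball--Hagler argument. Two of your key steps are handled genuinely differently, though, and both variants work. First, the bound: the paper bounds the raw cardinality of any affine $G$-irreducible extension $\phi\colon L \to K$ directly, by choosing for each $x \in \ext(K)$ a preimage $y_x \in \ext(L)$ and noting that irreducibility forces $L = \overline{\conv}\bigl(G\cdot\set{y_x : x\in \ext(K)}\bigr)$, giving $|L| \leq 2^{2^{|\R||G||K|}}$; you instead route through Lemma \ref{Lem:AffIrrImpliesIrr} and the already-established flow case (Theorem \ref{Thm:ExistenceProjCover}), bounding the weight of $\overline{\ext}(L)$ by that of $\proj_G(\overline{\ext}(K))$ and transferring the bound to $L$ via the isometric embedding $A(L) \hookrightarrow C(\overline{\ext}(L))$. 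Your route reuses the flow-case projective cover and spares you fresh set-theoretic bookkeeping, at the price of the weight-transfer step and the Banach-space separation argument (a strictly increasing $\lambda^+$-chain of closed subspaces yields a $1$-separated family of size $\lambda^+$, impossible in density character $\lambda$) needed to terminate the tower; the paper's bound is more elementary and self-contained. Second, uniqueness: the paper runs the pullback construction from the proof of Lemma \ref{Lem:AffProjAffIrr} to produce a comparison map $\psi\colon L \to P$ over $K$, observes that $\psi$ is itself affine $G$-irreducible because $\phi$ and $\pi$ are, and concludes it is an isomorphism when $P$ is maximal. You instead use projectivity (via Lemma \ref{Lem:AffProjAffIrr}) to get maps both ways over $K$ and then prove a rigidity statement: any affine $G$-self-map $h$ of $L_1$ over $K$ preserves each compact convex fiber, so Schauder--Tychonoff (or Markov--Kakutani, since $h$ is affine) places a fixed point in every fiber, whence $\operatorname{Fix}(h)$ is an affine subflow mapping onto $K$ and affine $G$-irreducibility gives $h = \mathrm{id}$. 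This fixed-point rigidity is a genuinely different mechanism and yields slightly more than the paper extracts --- maximal extensions admit no nontrivial endomorphisms over $K$ at all --- though it invokes a fixed-point theorem where the paper needs none.
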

\begin{proof}
    Towards a contradiction suppose $K$ has no such affine $G$-irreducible extensions. We claim for each $ \lambda \in \operatorname{ORD}$ there exists an affine $G$-flow $L_\lambda$ along with a nontrivial affine $G$-irreducible extension $\phi_\lambda\colon  L_\lambda \to K$. Moreover, if $\beta < \lambda$, then there exists an affine $G$-irreducible extension $\gamma_\lambda^\beta\colon  L_\lambda \to L_\beta$ such that $$\phi_\beta \circ \gamma_\lambda^\beta  = \phi_\lambda$$and $\gamma_\beta^\alpha \circ \gamma_\lambda^\beta = \gamma_\lambda^\alpha$ for all $\alpha < \beta < \lambda$. 
    
    By assumption there exists a nontrivial affine $G$-irreducible $\phi\colon  L \to K$. Define $L_0:= L$ and $\phi_0:= \phi$. Now suppose the claim holds for all $\beta \leq \lambda$. By assumption $L_\lambda$ has a nontrivial affine $G$-irreducible. Let $L_{\lambda + 1}$ and $\gamma_{\lambda+1}^\lambda$ be said extension. Now define 
    \[
    \gamma_{\lambda+1}^\beta := \gamma_{\lambda+1}^\lambda \circ \gamma_{\lambda}^\beta
    \]
    for all $\beta < \lambda$. Lastly, define $\phi_{\lambda+ 1} := \phi_{\lambda} \circ \gamma_{\lambda+1}^\lambda$. As the composition of two affine $G$-irreducible extension remains affine $G$-irreducible the claim also also holds with $\lambda +1$. Now suppose $\lambda$ is a limit ordinal and the claim holds for all ordinals $\beta < \lambda$. Let $L_{\lambda} := \varprojlim L_\beta$ be the inverse limit, i.e. 
    \[
    L_{\lambda} := \set{(x_\beta)_{\beta < \lambda} \in \prod_{\beta < \lambda} L_\beta : \gamma_{\beta}^\alpha(x_\beta) = x_\alpha \text{ for all } \alpha < \beta < \lambda}.
    \]
    It can be checked that $L_\lambda$ is an affine $G$-flow. Now define $\gamma_\lambda^\beta\colon  L_\lambda \to L_\beta$ to the the projection map and observe that $\phi_{\lambda} := \phi_\beta \circ \gamma_{\lambda}^\beta$ is independent of the choice of $\beta < \lambda$. Notice that $\phi_{\lambda}\colon L_\lambda \to K$ is an affine $G$-irreducible extension. Indeed, if $C \subseteq L_\lambda$ is an affine subflow such that $\phi_{\lambda}(C) = K$ it follows that $\phi_\beta(\gamma_\lambda^\beta(C)) = K$ for all $\beta < \lambda$. Since $\phi_\beta$ is an affine $G$-irreducible extension, we obtain that $\gamma_{\lambda}^\beta (C)= L_\beta$ for all $\beta < \lambda$ and thus $C = L_\lambda$ by the universal property of the inverse limit. One can show in a similar fashion that each $\gamma_\lambda^\beta$ is also an affine $G$-irreducible extension and that all the maps are compatible as desired. Thus by transfinite induction the claim holds true. 
    
    Now if $\phi\colon  L \to K$ is affine $G$-irreducible extension, then the cardinality of $L$ is bounded above as follows. For each $x \in {\ext}(K)$ pick $y_x \in \ext(L)$ such that $\phi(y_x)= x$. It follows that
    \[
    \phi(\overline{\conv}(G\cdot \set{y_x: x\in \ext(K)})) = K
    \]
    and thus $L = \overline{\conv}(G\cdot \set{y_x: x\in \ext(K)})$. Therefore we obtain that
    \[
    |L| \leq 2^{2^{|\R||G||K|}}.
    \]
    However, one can show that $|L_\lambda| \geq |\lambda|$ in the above construction as each extension was assumed to be nontrivial (see \cite[Theorem 3.8]{BallHagler1996}), a contradiction to the above bound. Therefore, there must exists an affine $G$-irreducible extension $\phi\colon L \to K$ such that $L$ itself has no nontrivial affine $G$-irreducible extensions. 
    
    Uniqueness of $L$ follows from the exact same argument as shown in the converse direction of Lemma \ref{Lem:AffProjAffIrr}. Indeed, if $\pi\colon P \to K$ is also an affine $G$-irreducible extension, then by the proof of Lemma \ref{Lem:AffProjAffIrr} there exists an affine $G$-map $\psi\colon  L \to P$ such that $\pi \circ \psi = \phi$. Using that $\pi$ and $\phi$ are affine $G$-irreducible, it is not hard to see that $\psi$ must also be affine $G$-irreducible. Therefore, if $P$ was assumed to have no nontrivial affine $G$-irreducible extensions, then $\psi$ must be an isomorphism.
\end{proof}
\begin{lemma}\label{Lem:BarycenterAffIrr}
    If $K$ is an affine $G$-flow, then the barycenter map $$\beta\colon  P(\overline{\ext}(K)) \to K$$ is an affine $G$-irreducible extension. 
\end{lemma}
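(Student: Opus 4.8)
The plan is to first check that $\beta\colon P(\overline{\ext}(K)) \to K$ is genuinely an affine extension, and then to prove irreducibility by pushing the hypothesis $\beta(P)=K$ down onto the extreme points of $K$, where the barycenter map is forced to be rigid. Recall first that $\overline{\ext}(K)$ is a $G$-flow, so $P(\overline{\ext}(K))$ is an affine $G$-flow and $\beta$ makes sense. Continuity and affineness are built into the barycenter map, while $G$-equivariance follows from the uniqueness of the barycenter map recalled in the background. For surjectivity I would note that $\beta(\delta_x)=x$ for every $x \in \overline{\ext}(K)$, so the image $\beta(P(\overline{\ext}(K)))$ is a closed convex set containing $\overline{\ext}(K)$; by the Krein--Milman theorem it then contains $\overline{\conv}(\overline{\ext}(K)) = K$, and the reverse inclusion is trivial. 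Hence $\beta$ is an affine extension.

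The substance of the statement is irreducibility. I would take an affine subflow $P \subseteq P(\overline{\ext}(K))$ with $\beta(P) = K$ and aim to show $P = P(\overline{\ext}(K))$. The key step is the following rigidity: for an extreme point $x \in \ext(K)$, the fiber $\beta^{-1}(\{x\})$ is a closed face of $P(\overline{\ext}(K))$, being the preimage of an extreme point under an affine map. Consequently its extreme points are extreme points of the ambient simplex-like set $P(\overline{\ext}(K))$, which are exactly the Dirac masses $\delta_y$; but $\beta(\delta_y)=y$, so $\beta(\delta_y)=x$ forces $y=x$. Thus the only extreme point of this face is $\delta_x$, and Krein--Milman applied to the compact convex set $\beta^{-1}(\{x\})$ gives $\beta^{-1}(\{x\}) = \{\delta_x\}$.

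With this in hand the conclusion is quick. Since $\beta(P)=K$, for every $x\in\ext(K)$ there is some $\mu \in P$ with $\beta(\mu)=x$, and the previous step forces $\mu = \delta_x$, so $\delta_x \in P$. As $P$ is closed and $x \mapsto \delta_x$ is a homeomorphism onto its image, it follows that $\delta_x \in P$ for all $x \in \overline{\ext}(K)$. Finally, because $P$ is compact convex and the extreme points of $P(\overline{\ext}(K))$ are precisely the Dirac masses $\{\delta_x : x \in \overline{\ext}(K)\}$, Krein--Milman yields $P \supseteq \overline{\conv}(\{\delta_x : x \in \overline{\ext}(K)\}) = P(\overline{\ext}(K))$, hence $P = P(\overline{\ext}(K))$, as required. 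I expect the one genuinely load-bearing step to be the identification $\beta^{-1}(\{x\}) = \{\delta_x\}$ for extreme $x$: it rests on the facts that a fiber of an affine map over an extreme point is a face, that a face inherits its extreme points from the ambient set, and that the extreme points of $P(\overline{\ext}(K))$ are exactly the Dirac masses. Once this is secured, the remainder is a routine application of the Krein--Milman and Milman machinery already recalled in the background.
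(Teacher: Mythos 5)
Your proof is correct and follows essentially the same route as the paper: both arguments hinge on the identification $\beta^{-1}(\{x\}) = \{\delta_x\}$ for $x \in \ext(K)$, and then conclude that any affine subflow mapping onto $K$ must contain all Dirac masses over $\ext(K)$ and hence, by closedness, convexity, and Krein--Milman, equal $P(\overline{\ext}(K))$. The only difference is cosmetic: where the paper cites Phelps (Propositions 1.2 and 1.4) for surjectivity and for the uniqueness of representing measures of extreme points, you prove these inputs directly via Krein--Milman and the face argument, which is a valid self-contained substitution.
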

\begin{proof}
    First by \cite[Proposition 1.2]{Phelps2001}, we see that $\beta$ is a surjective map. By \cite[Proposition 1.4]{Phelps2001}, any extreme point $x \in \ext(K)$ has a unique representing measure on $K$ given by its Dirac mass $\delta_x$, i.e., $\beta^{-1}(\set{x}) = \set{\delta_x}$. Hence, if $C \subseteq P(\overline{\ext}(K))$ is an affine subflow such that $\beta(C) = K$, then we must have that $\delta_x \in C$ for all $x \in \ext(K)$. This forces $C = P(\overline{\ext}(K))$.
\end{proof}
\begin{cor}\label{Cor:AffProjFlows}
    If $K$ is an affine $G$-projective flow, then $K \simeq P(X)$ for some $G$-flow $X$.
\end{cor}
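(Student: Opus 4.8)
The plan is to realize $K$ as the space of probability measures on its own extreme boundary, so that the only real work has already been done in Lemmas \ref{Lem:BarycenterAffIrr} and \ref{Lem:AffProjAffIrr}. First I would set $X := \overline{\ext}(K)$. Since $G$ acts on $K$ by affine homeomorphisms, it permutes the extreme points, so $\ext(K)$ is $G$-invariant; hence its closure $X$ is a compact $G$-invariant subset of $K$, i.e. a genuine $G$-flow. This is the candidate $G$-flow with $K \simeq P(X)$.

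The key step is to apply Lemma \ref{Lem:BarycenterAffIrr}, which tells us that the barycenter map $\beta\colon P(\overline{\ext}(K)) \to K$ is an affine $G$-irreducible extension. Thus $\beta\colon P(X) \to K$ is an affine $G$-irreducible extension \emph{onto} $K$. Now I would invoke the hypothesis that $K$ is affine $G$-projective: by Lemma \ref{Lem:AffProjAffIrr}, $K$ has no nontrivial affine $G$-irreducible extensions, meaning every affine $G$-irreducible extension with codomain $K$ is an isomorphism. Applying this to $\beta$ forces $\beta$ to be an isomorphism, and therefore
\[
K \simeq P(X) = P(\overline{\ext}(K)),
\]
which is precisely of the claimed form $P(X)$ for a $G$-flow $X$.

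There is no serious obstacle here beyond correctly assembling the two preceding results; the entire content has been pushed into Lemma \ref{Lem:BarycenterAffIrr} (irreducibility of the barycenter map, which rests on the uniqueness of representing measures for extreme points together with Milman's theorem) and into the characterization of affine $G$-projectivity in Lemma \ref{Lem:AffProjAffIrr}. The only minor point worth checking explicitly is that $\overline{\ext}(K)$ is a legitimate $G$-flow, which follows from $G$-invariance of $\ext(K)$ as noted above, and that an affine $G$-irreducible extension which is an isomorphism between the compact convex sets $P(X)$ and $K$ does produce the desired affine $G$-flow isomorphism.
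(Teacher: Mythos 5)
Your proof is correct and is essentially identical to the paper's own argument: both apply Lemma \ref{Lem:BarycenterAffIrr} to get that the barycenter map $\beta\colon P(\overline{\ext}(K)) \to K$ is an affine $G$-irreducible extension, and then use Lemma \ref{Lem:AffProjAffIrr} to conclude that $\beta$ is an isomorphism. The extra remarks you include (the $G$-invariance of $\ext(K)$ making $\overline{\ext}(K)$ a genuine $G$-flow) are correct but routine, and the paper leaves them implicit.
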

\begin{proof}
    By Lemma \ref{Lem:BarycenterAffIrr}, the barycenter map $\beta\colon  P(\overline{\ext}(K)) \to K$ is an affine $G$-irreducible extension. Now Lemma \ref{Lem:AffProjAffIrr} implies that $\beta$ must be an isomorphism.
\end{proof}
\begin{remark}
    Another way to prove the above is as follows. Notice that $A(K) \subseteq C(K)$ is a function system. Extending the identity map $A(K) \to A(K)$ yield a $G$-equivariant conditional expectation $\phi\colon C(K) \to A(X)$. Now the Choi-Effros product on $A(K)$ defined by
    \[
    f\ast g := \phi(f\cdot g)
    \]
    is a $C^\ast$-product on $A(K)$. In particular, $A(K)$ with the product $\ast$ is a commutative unital $C^\ast$-algebra (see \cite{ChoiEffros1977}). Thus $A(K)$ is isomorphic (as $G$-function systems) to $C(X)$ for some $G$-flow $X$. By Kadison's representation theorem, $K \simeq P(X)$.
\end{remark}
Thus we see that affine $G$-projectivity is a property of $G$-flows. In particular, without any loss of generality Definition \ref{Defn:AffineProj} could have only been stated for $G$-flows.

Now an application of Proposition \ref{Prop:StrProxExt} and the existence of affine $G$-projective covers shows that every minimal $G$-flow has a unique largest minimal strongly proximal extension.
\begin{prop}\label{Prop:MaxStrProxExt}
    Let $Y$ be a minimal $G$-flow. Then there exists a minimal $G$-flow $X$ and a strongly proximal extension $\phi\colon X \to Y$ such that if $Z$ is a minimal $G$-flow and $\psi\colon Z \to Y$ is a strongly proximal extension, then there exists a strongly proximal extension $\pi\colon X \to Z$ such that $\psi\circ\pi = \phi$. Moreover, $X$ is unique up to isomorphism.
\end{prop}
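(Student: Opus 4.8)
The plan is to transport the problem into the affine category, where the machinery of affine $G$-projective covers from Theorem \ref{Thm:AffProjCoverExist} is available, and then translate back using Proposition \ref{Prop:StrProxExt}. Concretely, I would let $(L,q)$ be the affine $G$-projective cover of $P(Y)$, so that $q\colon L\to P(Y)$ is an affine $G$-irreducible extension and $L$ is affine $G$-projective. By Corollary \ref{Cor:AffProjFlows} we may write $L\simeq P(X)$ with $X := \overline{\ext}(L)$, and I would \emph{define} $X$ to be this flow. Lemma \ref{Lem:AffIrrImpliesIrr} then says $q$ restricts to a $G$-irreducible extension $\phi := q|_X\colon X\to Y$, using $\overline{\ext}(P(X))=X$ and $\overline{\ext}(P(Y))=Y$. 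Since $Y$ is minimal, any subflow $C \subseteq X$ has $\phi(C)$ a subflow of $Y$, hence equal to $Y$, so $G$-irreducibility forces $X$ to be minimal. Finally Proposition \ref{Prop:StrProxExt} upgrades $\phi$ to a strongly proximal extension, producing the desired $X$ and $\phi$.

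For the maximality property, let $Z$ be a minimal $G$-flow and $\psi\colon Z\to Y$ a strongly proximal extension. By the remark following Proposition \ref{Prop:StrProxExt}, the induced map $P(\psi)\colon P(Z)\to P(Y)$ is an affine $G$-irreducible extension. Applying affine $G$-projectivity of $P(X)$ to the pair $q\colon P(X)\to P(Y)$ and $P(\psi)\colon P(Z)\to P(Y)$, I obtain an affine $G$-map $\Theta\colon P(X)\to P(Z)$ with $P(\psi)\circ\Theta = q$. A short argument shows $\Theta$ is itself affine $G$-irreducible: it is surjective because $P(\psi)$ is affine $G$-irreducible, and if $C\subsetneq P(X)$ were a proper affine subflow with $\Theta(C)=P(Z)$ then $q(C)=P(\psi)(\Theta(C))=P(Y)$, contradicting affine $G$-irreducibility of $q$. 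Restricting to extreme points via Lemma \ref{Lem:AffIrrImpliesIrr} and invoking Proposition \ref{Prop:StrProxExt} again (here $Z$ is minimal) gives a strongly proximal extension $\pi := \Theta|_X\colon X\to Z$, and evaluating $P(\psi)\circ\Theta=q$ on Dirac masses yields $\psi\circ\pi=\phi$.

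For uniqueness, suppose $X'$ is a minimal $G$-flow carrying a strongly proximal extension $\phi'\colon X'\to Y$ with the same maximality property. As above, $P(X')$ is an affine $G$-irreducible extension of $P(Y)$, and affine $G$-projectivity of $P(X)$ produces an affine $G$-irreducible extension $\Theta'\colon P(X)\to P(X')$ with $P(\phi')\circ\Theta'=q$. Applying the maximality property of $X'$ to the minimal strongly proximal extension $\phi\colon X\to Y$ gives a strongly proximal $\rho\colon X'\to X$ with $\phi\circ\rho=\phi'$; by the remark after Proposition \ref{Prop:StrProxExt}, $P(\rho)\colon P(X')\to P(X)$ is affine $G$-irreducible. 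Then $P(\rho)\circ\Theta'\colon P(X)\to P(X)$ is a composition of affine $G$-irreducible extensions, hence affine $G$-irreducible, but $P(X)=L$ is affine $G$-projective and so by Lemma \ref{Lem:AffProjAffIrr} has no nontrivial affine $G$-irreducible extensions; therefore $P(\rho)\circ\Theta'$ is an isomorphism, forcing $\Theta'$ to be an isomorphism. Restricting to extreme points shows $X\simeq X'$.

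I expect the main obstacle to be the uniqueness step, specifically arguing that the canonical comparison maps between two maximal extensions are isomorphisms rather than merely mutual extensions. The resolution is to stay in the affine picture and exploit the rigidity of $P(X)$ as an affine $G$-projective flow: the key leverage is that a composition of affine $G$-irreducible extensions is again affine $G$-irreducible (as in the proof of Theorem \ref{Thm:AffProjCoverExist}), so that any such self-extension of $P(X)$ collapses to the identity. A secondary point requiring care is the repeated passage between strongly proximal extensions of minimal flows and affine $G$-irreducible extensions of their measure spaces, which is exactly what Proposition \ref{Prop:StrProxExt} and Lemma \ref{Lem:AffIrrImpliesIrr} are there to supply, and which relies throughout on the minimality of the codomains $Y$, $Z$, and $X$.
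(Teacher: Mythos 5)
Your proof is correct and takes essentially the same route as the paper: the paper's (two-line) proof also takes the affine $G$-projective cover of $P(Y)$, writes it as $P(X)$ via Corollary \ref{Cor:AffProjFlows}, and invokes Proposition \ref{Prop:StrProxExt} to pass between affine $G$-irreducible extensions and strongly proximal extensions of minimal flows. Your write-up simply fills in the maximality and uniqueness details (projectivity of $P(X)$ producing the comparison maps, and closure of affine $G$-irreducible extensions under composition forcing them to be isomorphisms) that the paper leaves implicit.
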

\begin{proof}
    Let $\psi\colon P(X) \to P(Y)$ be the affine $G$-projective cover of $P(Y)$. By Proposition \ref{Prop:StrProxExt}, this induces a strongly proximal extension $\phi\colon X \to Y$. The remaining desired properties of $X$ also follow from the Proposition \ref{Prop:StrProxExt} and the fact that $P(X)$ is the affine $G$-projective cover of $P(Y)$.
\end{proof}
\begin{example}
It is not hard to see that $P(\Pi_s(G))$ is the affine $G$-projective cover of the trivial flow $\set{*}$. Indeed, let $\phi\colon P(X) \to \set{*}$ be the affine $G$-projective cover of $\set{*}$. By Proposition \ref{Prop:StrProxExt}, $X$ is a minimal strongly proximal $G$-flow and thus there exists a $G$-map $\psi\colon  \Pi_s(G) \to X$. Once again Proposition \ref{Prop:StrProxExt} implies that the induced affine map $\tilde{\psi}\colon  P(\Pi_s(G)) \to P(X)$ is an affine $G$-irreducible extension and thus an isomorphism. 
\end{example}
A well known construction of $\Pi_s(G)$ is as follows. Let $K \subseteq P(M(G))$ be a minimal affine subflow. Then we must have that $K \simeq P(\Pi_s(G))$ (see \cite{Glasner1976}). Recall that $M(G)$ is the $G$-projective cover of $\set{*}$ and as shown above $P(\Pi_s(G))$ is the affine $G$-projective cover of $\set{*}$. This turns out to be a special case of the following general result.
\begin{theorem}\label{Thm:AltConOfAffProjCover}
    Let $K$ be an affine $G$-flow and $X$ be the $G$-projective cover of $\overline{\ext}(K)$. Then there exists an affine extension $\phi\colon  P(X) \to K$ and if $L \subseteq P(X)$ is an affine subflow minimal with respect to $\phi(L) = K$ we have that $L$ is isomorphic to the affine $G$-projective cover of $K$.
\end{theorem}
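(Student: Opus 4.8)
The plan is to first exhibit a concrete affine extension $\phi\colon P(X)\to K$, and then to identify any minimal $L$ with the abstract affine $G$-projective cover supplied by Theorem \ref{Thm:AffProjCoverExist}, rather than trying to verify affine $G$-projectivity of $L$ directly.

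For the extension, let $\pi\colon X\to\overline{\ext}(K)$ be the $G$-irreducible extension witnessing $X=\proj_G(\overline{\ext}(K))$. Pushing measures forward gives an affine $G$-map $P(\pi)\colon P(X)\to P(\overline{\ext}(K))$, and I would check it is surjective: its image is a compact convex subset of $P(\overline{\ext}(K))$ containing every Dirac mass $\delta_y$ (since $\delta_y=\pi_*\delta_x$ for any $x\in\pi^{-1}(y)$), so by Krein--Milman it is everything. Composing with the barycenter map $\beta\colon P(\overline{\ext}(K))\to K$, which is a surjective affine $G$-irreducible extension by Lemma \ref{Lem:BarycenterAffIrr}, yields the desired affine extension $\phi:=\beta\circ P(\pi)$. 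Given a minimal $L$, minimality immediately makes $\phi|_L\colon L\to K$ affine $G$-irreducible.

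For the identification, let $(N,\sigma)$ be the affine $G$-projective cover of $K$ from Theorem \ref{Thm:AffProjCoverExist}, so $N$ is affine $G$-projective and $\sigma\colon N\to K$ is affine $G$-irreducible. Since $X$ is $G$-projective, $P(X)$ is affine $G$-projective by Proposition \ref{G-proj-implies-affine-G-proj-proposition}; applying this projectivity to $\phi\colon P(X)\to K$ and $\sigma$ produces an affine $G$-map $u\colon P(X)\to N$ with $\sigma\circ u=\phi$. I would then study the restriction $u|_L\colon L\to N$. From $\sigma\circ u|_L=\phi|_L$ and $\phi|_L(L)=K$ one gets $\sigma(u(L))=K$, so affine $G$-irreducibility of $\sigma$ forces $u(L)=N$, i.e.\ $u|_L$ is an affine extension; and if some proper affine subflow $C\subseteq L$ had $u|_L(C)=N$, then $\phi|_L(C)=\sigma(u|_L(C))=K$, contradicting affine $G$-irreducibility of $\phi|_L$, so $u|_L$ is itself affine $G$-irreducible. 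As $N$ is affine $G$-projective it has no nontrivial affine $G$-irreducible extensions by Lemma \ref{Lem:AffProjAffIrr}, whence $u|_L$ is an isomorphism and $L\simeq N$, compatibly with the extensions to $K$ since $\sigma\circ u|_L=\phi|_L$.

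The step I expect to be the genuine obstacle is this identification, and the route above is designed precisely to sidestep the naive argument. One is tempted to lift $\phi$ through $\phi|_L$ to a map $r\colon P(X)\to L$ and argue that $L$ is a retract of $P(X)$, then invoke the affine analogue of Corollary \ref{Cor:RetractProj} (whose proof is the purely categorical fact that retracts of projectives are projective, as in Proposition \ref{Prop:ProjPointTransCRTS}). The difficulty is that this only yields a surjective affine $G$-self-map $r|_L\colon L\to L$ fixing the fibres of $\phi|_L$, and a surjective continuous affine self-map of an infinite-dimensional compact convex set need not be injective, so one cannot simply conclude $r|_L=\mathrm{id}$. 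One could still force the desired retraction by extracting an idempotent via Ellis--Numakura from the compact right-topological semigroup of such self-maps, but this is more delicate than necessary; comparing $L$ directly against the abstract cover $N$ avoids the injectivity issue altogether, since there the conclusion is handed to us by the defining property of $N$ as admitting no nontrivial affine $G$-irreducible extensions.
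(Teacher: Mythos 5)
Your proposal is correct, and its overall architecture matches the paper's: both construct $\phi$ as the pushforward $P(X)\to P(\overline{\ext}(K))$ composed with the barycenter map, both note that minimality of $L$ makes $\phi|_L$ affine $G$-irreducible, and both finish by comparing $L$ against the abstract cover from Theorem \ref{Thm:AffProjCoverExist} via a lifted map whose restriction to $L$ is shown to be affine $G$-irreducible, hence an isomorphism by Lemma \ref{Lem:AffProjAffIrr}. The one genuine difference is where the comparison map comes from. The paper lifts at the level of flows: writing the abstract cover as $\pi\colon P(Y)\to K$ (via Corollary \ref{Cor:AffProjFlows}), it uses Lemma \ref{Lem:AffIrrImpliesIrr} to see that $\pi$ restricts to a $G$-irreducible extension $Y\to\overline{\ext}(K)$, invokes the universal property of $X=\proj_G(\overline{\ext}(K))$ to get $\rho\colon X\to Y$, and then must verify that the induced affine map $\tilde\rho\colon P(X)\to P(Y)$ satisfies $\pi\circ\tilde\rho=\phi$ (an extreme-point/Krein--Milman check the paper leaves implicit). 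You instead lift at the affine level in one stroke: $P(X)$ is affine $G$-projective by Proposition \ref{G-proj-implies-affine-G-proj-proposition}, so the map $u$ with $\sigma\circ u=\phi$ exists by definition, with the compatibility built in. Your route is slightly cleaner in that it needs neither Lemma \ref{Lem:AffIrrImpliesIrr} nor the representation of the cover as a $P(Y)$, at the cost of invoking Proposition \ref{G-proj-implies-affine-G-proj-proposition}; the paper's route keeps the argument internal to the machinery of $G$-projective covers of flows, which is the point of the theorem (relating the flow-level cover of $\overline{\ext}(K)$ to the affine cover of $K$). Your closing remark about why the naive retract argument fails is also well taken, and the paper indeed avoids it the same way you do, by comparing against the abstract cover rather than trying to produce a retraction of $P(X)$ onto $L$.
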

\begin{proof}
  Let $\beta_1\colon  P(\overline{\ext}(K)) \to K$ be the barycenter map which is an affine $G$-irreducible extension via Lemma \ref{Lem:BarycenterAffIrr}. Now let $X := \operatorname{Proj}_G(\overline{\ext}(K))$ be the $G$-projective cover and $\tilde{\phi}\colon  X \to \overline{\ext}(K)$ the given $G$-irreducible extension. We can then induce an affine extension from $\tilde{\phi}$ which upon composing with the barycenter map yields an affine extension 
  \[
  \phi\colon  P(X) \to K.
  \]
  By Zorn's lemma, let $L \subseteq P(X)$ be an affine subflow minimal with respect to $\phi(L) = K$. We claim that $(L, \phi|_Y)$ is the affine $G$-projective cover of $K$. Indeed, by construction $\phi|_L\colon  L \to K$ is an affine $G$-irreducible extension. Now by Theorem \ref{Thm:AffProjCoverExist} let $\pi\colon P(Y) \to  K$ be the affine $G$-projective cover of $K$. By Lemma \ref{Lem:AffIrrImpliesIrr}, $\pi$ restricts to a $G$-irreducible extension from $Y$ to $\overline{\ext}(K)$. Thus there exists a $G$-map $\rho\colon  X \to Y$ such that $\phi = \pi \circ \rho$. Taking the affine extension of $\rho$ yields an affine extension
  \[
  \tilde{\rho}\colon P(X) \to P(Y).
  \]
  One can show that $\phi = \pi \circ \tilde{\rho}$ holds and thus  $\tilde{\rho}|_L\colon  L \to P(Y)$ is an affine $G$-irreducible extension. Therefore, $\tilde{\rho}|_L$ is an isomorphism. The above argument can be summarized by the following diagrams:
\[\begin{tikzcd}
	& Y &&&& {P(Y)} \\
	X & {\overline{\ext}(K)} && {L \subseteq P(X)} & {P(\overline{\ext}(K))} & K
	\arrow["{\pi|_Y}", from=1-2, to=2-2]
	\arrow["\pi", from=1-6, to=2-6]
	\arrow["\rho", dotted, from=2-1, to=1-2]
	\arrow[from=2-1, to=2-2]
	\arrow["{\tilde{\rho}}", dotted, from=2-4, to=1-6]
	\arrow[from=2-4, to=2-5]
	\arrow[from=2-5, to=2-6]
\end{tikzcd}\]
\end{proof}
The following result was shown in \cite{KennedyRaumSalomon2022} for discrete groups. 
\begin{prop}
   $P(\Pi(G))$ is an affine $G$-projective flow.
\end{prop}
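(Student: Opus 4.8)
The plan is to apply Lemma \ref{Lem:AffProjAffIrr} and show that $P(\Pi(G))$ admits no nontrivial affine $G$-irreducible extensions. So suppose $\psi\colon K \to P(\Pi(G))$ is an affine $G$-irreducible extension; I want to conclude it is an isomorphism. Since Proposition \ref{Prop:StrProxExt} is stated for extensions between flows of the form $P(\cdot)$, the first move is to reduce to that shape. Setting $X' := \overline{\ext}(K)$, Lemma \ref{Lem:BarycenterAffIrr} gives that the barycenter map $\beta\colon P(X') \to K$ is affine $G$-irreducible, and since the composition of affine $G$-irreducible extensions is again affine $G$-irreducible (as in the proof of Theorem \ref{Thm:AffProjCoverExist}), the map $\psi\circ\beta\colon P(X') \to P(\Pi(G))$ is affine $G$-irreducible. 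It then suffices to prove $\psi\circ\beta$ is an isomorphism: $\beta$ is injective as a factor of an injective map and surjective as an extension, hence an isomorphism, and then $\psi = (\psi\circ\beta)\circ\beta^{-1}$ is an isomorphism as well.

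Next I would apply Proposition \ref{Prop:StrProxExt} to $\psi\circ\beta$, which is legitimate because $\Pi(G)$ is minimal: the restriction $\phi := (\psi\circ\beta)|_{X'}\colon X' \to \Pi(G)$ is a $G$-irreducible strongly proximal extension. Because $\phi$ is $G$-irreducible and $\Pi(G)$ is minimal, any proper subflow of $X'$ would surject onto $\Pi(G)$, a contradiction, so $X'$ is itself minimal. The crux of the argument is then to show $X' \simeq \Pi(G)$; equivalently, that $\Pi(G)$ has no nontrivial strongly proximal extension from a minimal flow.

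For this, I would first note that a strongly proximal extension is in particular a proximal extension, by testing the defining condition on $\mu = \tfrac12(\delta_{x_1}+\delta_{x_2})$ for $x_1,x_2$ in a common fiber. I then claim $X'$ is a proximal flow: given $x_1,x_2 \in X'$, proximality of $\Pi(G)$ furnishes $p\in\Sa(G)$ with $\phi(px_1)=p\phi(x_1)=p\phi(x_2)=\phi(px_2)$, so $px_1$ and $px_2$ lie in one fiber; proximality of the extension then yields $q\in\Sa(G)$ with $q(px_1)=q(px_2)$, whence $(qp)x_1=(qp)x_2$ and $x_1,x_2$ are proximal. Here I use that a pair is proximal exactly when some element of $\Sa(G)$ identifies it, together with associativity of the $\Sa(G)$-action. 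Thus $X'$ is minimal and proximal, so by universality of $\Pi(G)$ there is an extension $\theta\colon \Pi(G)\to X'$.

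Finally I would invoke the rigidity of minimal proximal flows: every $G$-endomorphism $\sigma$ of a minimal proximal flow $X$ is the identity. Indeed, for any $x$ the pair $(x,\sigma(x))$ is proximal, so some net $(g_\lambda)_\lambda$ satisfies $g_\lambda x \to w$ and $g_\lambda \sigma(x)=\sigma(g_\lambda x)\to \sigma(w)$ with $w=\sigma(w)$; the fixed-point set is then nonempty, closed and $G$-invariant, hence all of $X$ by minimality. Applying this to $\theta\circ\phi$ and $\phi\circ\theta$ shows $\phi$ is an isomorphism. Consequently $\psi\circ\beta$ agrees with the induced isomorphism $P(\phi)$ on extreme points, and since affine continuous maps of probability spaces are determined by their values on Dirac masses, $\psi\circ\beta = P(\phi)$ is an isomorphism; as explained above this forces $\psi$ to be an isomorphism, so $P(\Pi(G))$ has no nontrivial affine $G$-irreducible extensions. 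The main obstacle is the middle step, namely establishing that $\Pi(G)$ admits no nontrivial strongly proximal minimal extension, which rests on the proximality-propagation computation in $\Sa(G)$ and on the endomorphism-rigidity of proximal flows.
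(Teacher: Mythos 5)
Your proof is correct, and its core --- invoking Proposition \ref{Prop:StrProxExt} to pass to a strongly proximal $G$-irreducible extension of $\Pi(G)$, then concluding from minimality, proximality, and the universal property of $\Pi(G)$ --- is the same as the paper's. The difference is in how you arrive at an extension of the form $P(X') \to P(\Pi(G))$: the paper invokes Theorem \ref{Thm:AffProjCoverExist} to take the affine $G$-projective cover of $P(\Pi(G))$ (automatically of that form by Corollary \ref{Cor:AffProjFlows}) and shows the covering map is an isomorphism, whereas you take an \emph{arbitrary} affine $G$-irreducible extension $\psi\colon K \to P(\Pi(G))$ and reduce it to the desired shape by precomposing with the barycenter map of Lemma \ref{Lem:BarycenterAffIrr} and using closure of affine $G$-irreducible extensions under composition. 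Your route is therefore more self-contained: it bypasses the transfinite-induction existence theorem entirely, resting only on Lemmas \ref{Lem:AffProjAffIrr} and \ref{Lem:BarycenterAffIrr} and Proposition \ref{Prop:StrProxExt}. You also make explicit several steps the paper compresses: that a strongly proximal extension is proximal (testing on $\tfrac12(\delta_{x_1}+\delta_{x_2})$), that proximality of the base together with proximality of the extension forces proximality of the total space (via the $\Sa(G)$-semigroup computation and associativity of the action), and that ``the universal property of $\Pi(G)$'' really means coalescence --- every $G$-endomorphism of a minimal proximal flow is the identity --- which is what upgrades the two extensions $\phi$ and $\theta$ into mutually inverse isomorphisms. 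What the paper's approach buys is brevity once the cover machinery is in place (and it is reused verbatim for Proposition \ref{Prop:UMPFIsProxIrr}); what yours buys is logical independence from Theorem \ref{Thm:AffProjCoverExist} and a fully rigorous final step.
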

\begin{proof}
    By Theorem \ref{Thm:AffProjCoverExist}, let $\phi\colon  P(X) \to P(\Pi(G))$ be the affine $G$-projective cover of $P(\Pi(G))$. Now Proposition \ref{Prop:StrProxExt} implies that $\phi|_X\colon  X \to \Pi(G)$ is a strongly proximal $G$-irreducible extension. In particular, $X$ is a minimal $G$-flow and $\phi|_X$ is a proximal extension. Therefore, $X$ must be a minimal proximal $G$-flow and the universal property of $\Pi(G)$ implies that $\phi|_X$ must be an isomorphism. Hence, $\phi$ is an isomorphism as well.  
\end{proof}
We now prove an analogue of Theorem \ref{Thm:ProjCoverPointTrans} for affine $G$-projective covers. Recall the following structure result for the Furstenberg boundary for Polish groups proven in \cite[Theorem 7.5]{Zucker2021}.
\begin{theorem}[\cite{Zucker2021}]
    Let $G$ be a Polish group. If $\Pi_s(G)$ has a comeager orbit, then $\Pi_s(G) \simeq \Sa(G/H)$ for some maximal amenable closed subgroup $H \leq G$.
\end{theorem}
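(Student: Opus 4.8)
The plan is to first identify the flow as a coset compactification, and then read off amenability and maximality from the machinery already developed in the excerpt. Throughout I assume $\Pi_s(G)$ has a comeager orbit and fix $x_0 \in \Pi_s(G)$ whose orbit is comeager; set $H := \Stab(x_0)$.

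First I would show $\Pi_s(G) \simeq \Sa(G/H)$. The orbit map $gH \mapsto gx_0$ is uniformly continuous and hence extends to a $G$-extension $\bar{o}\colon \Sa(G/H) \to \Pi_s(G)$. Since $x_0$ has nonmeager orbit, Lemma \ref{Lem:Stab} gives $\{p \in \Sa(G) : p x_0 = x_0\} = \Sa(H)$, and pulling this back along the quotient $\Sa(G) \to \Sa(G/H)$ shows $\bar{o}^{-1}(\{x_0\}) = \{H\}$, so $\bar{o}$ is irreducible. To upgrade irreducibility to an isomorphism I would invoke that $\Pi_s(G)$ is $G$-ED (which follows from the theory developed in \cite{Zucker2021}): a minimal $G$-ED flow with a comeager orbit is isomorphic to $\Sa(G/\Stab)$ of its comeager-orbit point, exactly paralleling the quoted structure theorem for $M(G)$. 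This is the step I expect to be the main obstacle, since it rests on the nontrivial disconnectedness properties of the Furstenberg boundary rather than on the formal tools of the present paper.

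Granting $\Pi_s(G) \simeq \Sa(G/H)$, amenability of $H$ is immediate: the excerpt shows that $P(\Pi_s(G))$ is the affine $G$-projective cover of the trivial flow and is therefore itself affine $G$-projective, so $P(\Sa(G/H))$ is affine $G$-projective, and Theorem \ref{Thm:AmenSubgp} then yields that $H$ is amenable.

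For maximality, suppose $H \leq H'$ with $H'$ a closed amenable subgroup; I would deduce $H = H'$ purely formally. The factor map $q\colon \Sa(G/H) \to \Sa(G/H')$ carries the minimal flow $\Pi_s(G)$ onto the minimal flow $\Sa(G/H')$, and $q$ is a strongly proximal extension: any $\mu \in P(q^{-1}(\{y\}))$ lies in the proximal flow $P(\Pi_s(G))$, hence is proximal to a Dirac mass and can be pushed by a net in $G$ to a point mass. Since $\Pi_s(G)$ is minimal, $q$ is also $G$-irreducible, so Proposition \ref{Prop:StrProxExt} makes the induced map $P(q)\colon P(\Sa(G/H)) \to P(\Sa(G/H'))$ an affine $G$-irreducible extension. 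But $H'$ amenable gives, via Theorem \ref{Thm:AmenSubgp}, that $P(\Sa(G/H'))$ is affine $G$-projective, and by Lemma \ref{Lem:AffProjAffIrr} an affine $G$-projective flow admits no nontrivial affine $G$-irreducible extension; hence $P(q)$ is an isomorphism. Restricting to extreme points, $q$ is a bijection, and comparing the base points $gH \mapsto gH'$ forces $g \in H \iff g \in H'$, i.e. $H = H'$. Thus $H$ is maximal amenable, completing the plan.
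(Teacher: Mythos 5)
Your proof is correct, but note that the paper itself does not prove this statement: it is quoted from \cite[Theorem 7.5]{Zucker2021}, so there is no internal proof to match. What your argument actually reconstructs is the strongly proximal analogue of the paper's own proof of Theorem \ref{Thm:StructureOfUMPF}(1) (the $\Pi(G)$ version): there the author also (i) identifies the flow as $\Sa(G/H)$ by combining the comeager orbit with $G$-ED-ness and citing \cite[Theorem 5.5]{Zucker2021}, (ii) deduces the amenability-type property of $H$ from the relevant irreducibility characterization (Theorem \ref{Thm:StrAmenSubgp} there, Theorem \ref{Thm:AmenSubgp} for you), and (iii) proves maximality by showing the canonical map $\Sa(G/H) \to \Sa(G/H')$ induces an extension of the relevant irreducible type, which must be an isomorphism because $H'$ enjoys the same property. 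Your substitutions --- strongly proximal for proximal, Proposition \ref{Prop:StrProxExt} for Proposition \ref{Prop:ProxExt}, and Theorem \ref{Thm:AmenSubgp} together with Lemma \ref{Lem:AffProjAffIrr} for Theorem \ref{Thm:StrAmenSubgp} --- are all applied correctly, and the endgame ($P(q)$ an isomorphism $\Rightarrow$ $q$ injective $\Rightarrow$ $H' \subseteq H$) is sound. What your route buys is worth stating explicitly: the cited theorem is recoverable from the present paper's machinery plus the single external fact that $\Pi_s(G)$ is $G$-ED, which you correctly isolate as the one step that cannot be done with the formal tools of this paper.

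Two small points to tighten. First, the singleton fiber $\bar{o}^{-1}(\set{x_0}) = \set{H}$ yields irreducibility in the full closed-subset sense (which is what the $G$-ED upgrade needs, not mere $G$-irreducibility) only after invoking equivariance: $\bar{o}^{-1}(\set{gx_0}) = \set{gH}$ for every $g \in G$, so any closed $C$ with $\bar{o}(C) = \Pi_s(G)$ contains the dense set $G/H$. Alternatively you can skip the fiber computation and quote \cite[Theorem 5.5]{Zucker2021} directly for the minimal $G$-ED flow $\Pi_s(G)$, exactly as the paper does for $\Pi(G)$. Second, in the maximality step, record that $\Sa(G/H')$ is minimal --- it is the continuous $G$-image of the minimal flow $\Sa(G/H) \simeq \Pi_s(G)$ --- before applying Proposition \ref{Prop:StrProxExt}, since that proposition assumes minimality of the codomain.
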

\begin{theorem}\label{Thm:AffProjCoverPointTrans}
    Let $G$ be a Polish group and $H \leq G$ a closed subgroup. If $\Pi_s(H)$ has a comeager orbit, then $P(\Sa(G/H^*))$ is isomorphism to the affine $G$-projective cover of $P(\Sa(G/H))$ where $H^* \leq H$ is a maximal amenable subgroup such that $\Pi_s(H) \simeq \Sa(H/H^*)$.
\end{theorem}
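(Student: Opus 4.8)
The plan is to verify, for the pair $(P(\Sa(G/H^*)),\Psi)$ with $\Psi$ the natural pushforward extension onto $P(\Sa(G/H))$, the two defining properties of an affine $G$-projective cover, and then to invoke uniqueness. Since $H^* \leq H \leq G$ is closed in $G$ and (maximally) amenable, Theorem \ref{Thm:AmenSubgp} gives at once that $P(\Sa(G/H^*))$ is affine $G$-projective. Thus the whole problem reduces to exhibiting an affine $G$-irreducible extension $\Psi\colon P(\Sa(G/H^*)) \to P(\Sa(G/H))$; uniqueness of the affine $G$-projective cover, guaranteed by Theorem \ref{Thm:AffProjCoverExist} and Lemma \ref{Lem:AffProjAffIrr}, then completes the proof.

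First I would construct $\Psi$ and analyze the relevant fiber. Exactly as in the proof of Theorem \ref{Thm:ProjCoverPointTrans}, the uniformly continuous map $gH^* \mapsto gH$ extends to a $G$-map $\psi\colon \Sa(G/H^*) \to \Sa(G/H)$, and the Effros-theorem computation there (using that the trivial coset $H$ has comeager orbit in $\Sa(G/H)$) yields $\psi^{-1}(\set{H}) = \Sa(H/H^*)$. Pushing measures forward gives the affine extension $\Psi$. Because $\delta_H$ is an extreme point of $P(\Sa(G/H))$, its fiber $\Psi^{-1}(\set{\delta_H})$ is a closed face, so its extreme points lie in $\ext(P(\Sa(G/H^*))) = \Sa(G/H^*)$ and are carried by $\psi$ to $H$; hence, just as in the proof of Theorem \ref{Thm:AmenSubgp}, one obtains $\Psi^{-1}(\set{\delta_H}) = P(\Sa(H/H^*))$.

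The heart of the argument is affine $G$-irreducibility of $\Psi$. Let $C \subseteq P(\Sa(G/H^*))$ be an affine subflow with $\Psi(C) = P(\Sa(G/H))$. Since $\delta_H \in \Psi(C)$, there is some $\mu \in C \cap \Psi^{-1}(\set{\delta_H}) = C \cap P(\Sa(H/H^*))$. Here I use the hypothesis $\Sa(H/H^*) \simeq \Pi_s(H)$: as an $H$-flow this is minimal and strongly proximal. Strong proximality furnishes a net $(h_\lambda)_\lambda \subseteq H$ with $h_\lambda \mu \to \delta_x$ for some $x \in \Sa(H/H^*)$; as $C$ is $G$-invariant, hence $H$-invariant and closed, $\delta_x \in C$. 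Minimality of $\Pi_s(H)$ gives $\overline{Hx} = \Sa(H/H^*)$, so the distinguished Dirac mass $\delta_{H^*}$ lies in $\overline{H\delta_x} \subseteq C$. Finally $H^*$, the image of the trivial coset, has dense $G$-orbit in $\Sa(G/H^*)$, whence $\overline{G\delta_{H^*}} = \Sa(G/H^*) = \ext(P(\Sa(G/H^*)))$ is contained in $C$; therefore $C = \overline{\conv}(\Sa(G/H^*)) = P(\Sa(G/H^*))$, as required.

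I expect the main obstacle to be this measure-collapsing chain. The subtlety is that strong proximality and minimality must be applied relative to the subgroup $H$ rather than $G$, so the crucial observation is that a $G$-invariant $C$ is automatically $H$-invariant and closed, which keeps the $H$-limits inside $C$. The passage from $\mu$ to the point mass $\delta_x$, then to the basepoint mass $\delta_{H^*}$, and finally to all of $\ext(P(\Sa(G/H^*)))$ is exactly where the three features of $H^*$ are used in turn: amenability (for projectivity), strong proximality together with minimality of $\Pi_s(H) \simeq \Sa(H/H^*)$ (to collapse measures to point masses and then spread them over the fiber), and the density of the $G$-orbit of the basepoint (to spread point masses over all of $\Sa(G/H^*)$). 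Note that the cleaner route through Proposition \ref{Prop:StrProxExt} is unavailable here because $\Sa(G/H)$ is not minimal, which is what forces the hands-on fiberwise argument.
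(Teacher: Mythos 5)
Your proposal is correct and follows essentially the same route as the paper's proof: affine $G$-projectivity of $P(\Sa(G/H^*))$ via Theorem \ref{Thm:AmenSubgp}, the fiber identification $\Psi^{-1}(\set{\delta_H}) = P(\Sa(H/H^*))$ borrowed from the proof of Theorem \ref{Thm:ProjCoverPointTrans}, and then strong proximality and minimality of $\Sa(H/H^*) \simeq \Pi_s(H)$ as an $H$-flow to force $\delta_{H^*}$ into any surjecting affine subflow, followed by density of the $G$-orbit of $H^*$. You have merely filled in the details (the face argument for the fiber and the collapse $\mu \to \delta_x \to \delta_{H^*}$) that the paper compresses into a single sentence.
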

\begin{proof}
    Consider the canonical extension $\phi\colon \Sa(G/H^*) \to \Sa(G/H)$ given via $gH^* \mapsto gH$. Let $\psi\colon P(\Sa(G/H^*))\to P(\Sa(G/H))$ be the induced affine extension. By Theorem \ref{Thm:AmenSubgp}, $P(\Sa(G/H^*))$ is an affine $G$-projective flow. Thus it suffices to show $\psi$ is an affine $G$-irreducible extension. As shown in the proof of Theorem \ref{Thm:ProjCoverPointTrans} we have that $\phi^{-1}(\set{H})= \Sa(H/H^*)$ and thus
    \[
    \psi^{-1}(\set{\delta_H}) = P(\Sa(H/H^*)).
    \]
    Now suppose $Z \subseteq P(\Sa(G/H^*))$ is an affine subflow such that $\psi(Z) = P(\Sa(G/H))$. It follows that 
    \[
   \emptyset \neq  Z \cap \psi^{-1}(\set{\delta_H}) = Z \cap P(\Sa(H/H^*))
    \]
    is nonempty. Using strongly proximality and minimality of $\Sa(H/H^*)$ we see that $\delta_{H^*} \in Z$ and thus $Z= P(\Sa(G/H^*))$.
\end{proof}
Suppose $X$ is a $G$-flow such that $x_0 \in X$ has comeager orbit. If the Furstenberg boundary of $\Stab(x_0)$ contains a comeager orbit, then the above theorem allows us to compute the affine $G$-projective cover of $P(X)$. Even if we don't make any assumptions on $\Pi_s(\Stab(x_0))$ we can still compute the affine $G$-projective cover of $P(X)$. Indeed, it will be isomorphic to $\overline{\conv}(\overline{G\cdot\Pi_s(\Stab(x_0))})$ when viewing $\Pi_s(\Stab(x_0)) \subseteq P(\Sa(G))$ as a closed subset. We delay proving this until introducing more machinery and techniques in the next section (see Proposition \ref{Prop:AffProjCoverPointTransGen}). 
\section{Proximally irreducible flows}
Motivated by the property of algebraically irreducible affine $G$-flows in \cite{KennedyRaumSalomon2022} we define the following notion of affine extensions.
\begin{defn} \label{Def:ProxIrr}
    Let $L$ and $K$ be affine $G$-flows. An affine extension $\phi\colon L \to K$ is \emph{proximally irreducible} if for any affine subflow $C \subseteq L$ satisfying 
   \[
 C \cap \conv (\overline{\ext}(L)\cap \phi^{-1}(\set{y})) \neq \emptyset, \quad \text{for all } y\in \overline{\ext}(K) 
    \]
  we must have that $C = L$. 
\end{defn}
With this it is not hard to see that the minimal algebraically irreducible flows as defined in \cite{KennedyRaumSalomon2022} are exactly the proximally irreducible extensions of the trivial $G$-flow.
\begin{defn}
    An affine $G$-flow $K$ is said to be \emph{proximally irreducible} if $K$ has no nontrivial proximally irreducible extensions. 
\end{defn}
\begin{example}
    Notice if $K$ is a proximally irreducible $G$-flow, then $K$ must be affine $G$-projective. Indeed, if $L$ is an affine $G$-flow and $\phi\colon L \to K$ is an affine $G$-irreducible extension, then $\phi$ is also a proximally irreducible extension. Therefore, $\phi$ is an isomorphism and thus by Lemma \ref{Lem:AffProjAffIrr}, $K$ is affine $G$-projective. In particular, by corollary \ref{Cor:AffProjFlows} we have that $K \simeq P(X)$ for some $G$-flow $X$.
\end{example} 
With basically the exact same proof of Lemma \ref{Lem:AffIrrImpliesIrr} we have the following analogous result for proximally irreducible extensions. 
\begin{lemma}\label{Lem:ProxIrrImpliesIrr}
    Let $L$ and $K$ be affine $G$-flows. If $\phi\colon L \to K$ is a proximally irreducible extension, then $\phi$ restricted to $\overline{\ext}(L)$ is a $G$-irreducible extension from $\overline{\ext}(L)$ to $\overline{\ext}(K)$.
\end{lemma}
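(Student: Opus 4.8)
The plan is to imitate the proof of Lemma~\ref{Lem:AffIrrImpliesIrr} almost verbatim, replacing each appeal to affine $G$-irreducibility by an appeal to proximal irreducibility; the only genuinely new ingredient is checking that the auxiliary affine subflows I construct satisfy the fiber-intersection hypothesis of Definition~\ref{Def:ProxIrr}, rather than merely surjecting onto $K$. First I would establish the inclusion $\overline{\ext}(K) \subseteq \phi(\overline{\ext}(L))$. For $x \in \ext(K)$, since $\phi$ is affine the fiber $\phi^{-1}(\set{x})$ is a nonempty closed face of $L$, so by Krein--Milman it contains an extreme point of $L$, giving $x \in \phi(\ext(L))$. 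Taking closures and using that $\phi(\overline{\ext}(L))$ is compact yields $\overline{\ext}(K) \subseteq \phi(\overline{\ext}(L))$; in particular $\overline{\ext}(L) \cap \phi^{-1}(\set{y}) \neq \emptyset$ for every $y \in \overline{\ext}(K)$.

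For the reverse inclusion I would set $C := \phi^{-1}(\overline{\ext}(K)) \cap \overline{\ext}(L)$, which is closed and $G$-invariant, so $\overline{\conv}(C)$ is an affine subflow of $L$. The key point is that $\overline{\conv}(C)$ satisfies the hypothesis of proximal irreducibility: for each $y \in \overline{\ext}(K)$ choose $z \in \overline{\ext}(L) \cap \phi^{-1}(\set{y})$ (nonempty by the first paragraph); then $z \in C$ and $z \in \conv(\overline{\ext}(L) \cap \phi^{-1}(\set{y}))$, so $z$ lies in $\overline{\conv}(C) \cap \conv(\overline{\ext}(L)\cap \phi^{-1}(\set{y}))$. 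Proximal irreducibility then forces $\overline{\conv}(C) = L$, and Milman's theorem gives $\ext(L) \subseteq C$, hence $\overline{\ext}(L) \subseteq C \subseteq \phi^{-1}(\overline{\ext}(K))$. Combined with the first paragraph this gives $\phi(\overline{\ext}(L)) = \overline{\ext}(K)$, so $\phi$ restricts to an extension of $G$-flows from $\overline{\ext}(L)$ to $\overline{\ext}(K)$.

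Finally I would verify $G$-irreducibility of this restriction by the same device, which is also where I expect the only real subtlety to sit. Suppose $D \subseteq \overline{\ext}(L)$ is a subflow with $\phi(D) = \overline{\ext}(K)$; then $\overline{\conv}(D)$ is an affine subflow and $\phi(\overline{\conv}(D)) = \overline{\conv}(\overline{\ext}(K)) = K$. For each $y \in \overline{\ext}(K)$, surjectivity of $\phi|_D$ provides $d \in D$ with $\phi(d) = y$, and since $d \in D \subseteq \overline{\ext}(L)$ we get $d \in \overline{\conv}(D) \cap \conv(\overline{\ext}(L) \cap \phi^{-1}(\set{y}))$; thus $\overline{\conv}(D)$ again meets the hypothesis of Definition~\ref{Def:ProxIrr}, and proximal irreducibility gives $\overline{\conv}(D) = L$. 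Milman's theorem then yields $\ext(L) \subseteq D$, whence $D = \overline{\ext}(L)$, as required. The observation driving both steps is that each affine subflow we build already contains an extreme-point preimage in every fiber over $\overline{\ext}(K)$; this is exactly what allows the weaker proximal-irreducibility hypothesis to substitute for the surjectivity hypothesis used in Lemma~\ref{Lem:AffIrrImpliesIrr}, and everything else reduces to Krein--Milman, Milman's theorem, and the fact that affine preimages of extreme points are faces.
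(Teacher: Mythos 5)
Your proof is correct and takes essentially the same route as the paper's: the paper likewise sets $C := \overline{\ext}(L) \cap \phi^{-1}(\overline{\ext}(K))$, applies proximal irreducibility to $\overline{\conv}(C)$ to get $\overline{\conv}(C) = L$, invokes Milman's theorem to conclude $C = \overline{\ext}(L)$, and disposes of $G$-irreducibility by ``a similar argument.'' Your write-up is if anything more complete, since you explicitly verify the fiber-intersection hypothesis of Definition \ref{Def:ProxIrr} for the auxiliary subflows $\overline{\conv}(C)$ and $\overline{\conv}(D)$, a point the paper's terse proof leaves implicit under ``by construction.''
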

\begin{proof}
    Consider the $G$-flow defined via
    \[
    C := \overline{\ext}(L) \cap \phi^{-1}(\overline{\ext}(K)).
    \]
   Notice by construction and using the assumption that $\phi$ is proximally irreducible we obtain that $\overline{\conv}(C) = K$. Thus Milman's theorem implies that $\overline{\ext}(K) \subseteq L$ and hence $C= \overline{\ext}(L)$. Therefore $\phi|_{\overline{\ext}(L)}\colon \overline{\ext}(L) \to \overline{\ext}(K)$ is an extension of $G$-flows. A similar argument shows it must be a $G$-irreducible extension.
   \end{proof}
We now show an analogous result of Proposition \ref{Prop:StrProxExt} holds for proximally irreducible extensions.
\begin{prop}\label{Prop:ProxExt}
    Let $X$ and $Y$ be $G$-flow where $Y$ is also assumed to be minimal. Then there exists a $G$-irreducible proximal extension $\phi\colon  X \to Y$ if and only if there exists a proximally irreducible extension $\psi\colon  P(X) \to P(Y)$.
\end{prop}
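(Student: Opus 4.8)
The plan is to run the same strategy as in Proposition~\ref{Prop:StrProxExt}, replacing affine $G$-irreducibility by proximal irreducibility and Lemma~\ref{Lem:AffIrrImpliesIrr} by Lemma~\ref{Lem:ProxIrrImpliesIrr}. First I would record the two identifications $\overline{\ext}(P(X)) = X$ and $\overline{\ext}(P(Y)) = Y$ (the extreme points of $P(X)$ being precisely the Dirac masses), together with the translation of Definition~\ref{Def:ProxIrr} in this setting: for an affine $G$-map $\psi\colon P(X)\to P(Y)$ whose restriction to Dirac masses is $\phi\colon X \to Y$ and for $y\in Y$, one has $\overline{\ext}(P(X))\cap\psi^{-1}(\set{\delta_y}) = \set{\delta_x : x\in\phi^{-1}(\set{y})}$, so that $\conv(\overline{\ext}(P(X))\cap\psi^{-1}(\set{\delta_y}))$ is exactly the set of finitely supported probability measures carried by the fiber $\phi^{-1}(\set{y})$. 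Since $Y$ is minimal, the Remark following Proposition~\ref{Prop:StrProxExt} shows that a $G$-irreducible $\phi$ forces $X$ to be minimal, a fact I will use in both directions.

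For the forward implication I would take $\phi\colon X\to Y$ to be a $G$-irreducible proximal extension and let $\psi := P(\phi)\colon P(X)\to P(Y)$ be the induced affine extension, then show it is proximally irreducible. So let $C\subseteq P(X)$ be an affine subflow meeting $\conv(\overline{\ext}(P(X))\cap\psi^{-1}(\set{\delta_y}))$ for every $y\in Y$. Fixing one $y$, this produces a finitely supported $\nu\in C$ carried by a single fiber. The key step is a support reduction: if $\nu$ is supported on $m\geq 2$ points of a fiber $\phi^{-1}(\set{y'})$, pick two of them; they are proximal because $\phi$ is a proximal extension, so there is $p\in\Sa(G)$ identifying them, and then $p\nu\in C$ (as $C$ is closed, convex and $G$-invariant, hence $\Sa(G)$-invariant) is carried by the single fiber $\phi^{-1}(\set{py'})$ with strictly smaller support. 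Iterating drives the support down to one point, yielding a Dirac mass $\delta_z\in C$; minimality of $X$ then gives $X = \overline{G\delta_z}\subseteq C$, whence $C = \overline{\conv}(X) = P(X)$.

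For the reverse implication I would take a proximally irreducible $\psi\colon P(X)\to P(Y)$ and set $\phi := \psi|_X$, which is a $G$-irreducible extension by Lemma~\ref{Lem:ProxIrrImpliesIrr} (so $X$ is minimal). It remains to show $\phi$ is proximal, which I would do by contraposition. Suppose a fiber contains a non-proximal pair $x_0, x_0'$, put $\mu := \tfrac12(\delta_{x_0}+\delta_{x_0'})$ and $C := \overline{\conv}(\overline{G\cdot\mu})$. Every point of $\overline{G\cdot\mu}$ has the form $\tfrac12(\delta_a+\delta_b)$ with $\phi(a)=\phi(b)$ (pass to subnets with $g_\lambda x_0\to a$ and $g_\lambda x_0'\to b$), and letting $g_\lambda y\to y'$, possible since $Y$ is minimal, exhibits a point of $C$ in $\conv(\overline{\ext}(P(X))\cap\psi^{-1}(\set{\delta_{y'}}))$ for each $y'\in Y$. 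Proximal irreducibility then forces $C = P(X)$. On the other hand, non-proximality of $(x_0,x_0')$ means no such $\tfrac12(\delta_a+\delta_b)$ can be a Dirac mass, so $\overline{G\cdot\mu}\cap X = \emptyset$; by Milman's theorem $\ext(C)\subseteq\overline{G\cdot\mu}$, so $C$ cannot contain $\ext(P(X)) = X$, contradicting $C=P(X)$. Hence every fiber consists of proximal pairs and $\phi$ is proximal.

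The main obstacle I anticipate is in the forward direction: proximality of $\phi$ only provides proximal pairs, not a single net collapsing an entire finitely supported measure on a fiber to a point (that would be strong proximality). The resolution, and the crux of the argument, is the observation that after using one proximal pair to identify two atoms via some $p\in\Sa(G)$, the remaining atoms are pushed into the single new fiber $\phi^{-1}(\set{py'})$ and hence remain pairwise proximal, so the reduction can be iterated. In the reverse direction the one point requiring care is the verification that $\overline{G\cdot\mu}$ contains no Dirac mass exactly when $(x_0,x_0')$ is non-proximal, which is what lets Milman's theorem produce a proper subflow and close the contradiction.
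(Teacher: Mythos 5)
Your proof is correct and follows essentially the same route as the paper: the forward direction induces $\psi$ from $\phi$ and collapses finitely supported measures on fibers using proximality together with the $\Sa(G)$-invariance of a closed convex $G$-invariant set, while the reverse direction uses Lemma \ref{Lem:ProxIrrImpliesIrr} and the affine subflow $\overline{\conv}\bigl(\overline{G\cdot \tfrac12(\delta_{x_1}+\delta_{x_2})}\bigr)$ together with Milman's theorem, merely phrased contrapositively instead of directly. Your explicit support-reduction iteration is precisely the justification (left implicit in the paper) for its one-line claim that a proximal extension yields a single $p \in \Sa(G)$ with $px_1 = \cdots = px_n$ for $n$ points in a common fiber.
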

\begin{proof}
    $(\Rightarrow)$ Suppose $\phi\colon  X\to Y$ is a $G$-irreducible proximal extension. It follows that $X$ must be a minimal $G$-flow. Let $\psi\colon  P(X) \to P(Y)$ be the induced affine extension and $Z \subseteq P(X)$ an affine subflow such that
    \[
     Z \cap \conv(X\cap\phi^{-1}(\set{y})) \neq \emptyset, \ \ \ \forall y\in Y.
    \]
    Given $y\in Y$ let $\lambda_1,\dots \lambda_n \in \R^+$ with $\sum_{j=1}^n \lambda_j = 1$ and $x_1,\dots, x_n\in \phi^{-1}(\set{y}) \subseteq X$ such that
    \[
    \lambda_1\delta_{x_1} + \cdots + \lambda_n \delta_{x_n} \in Z.
    \]
    As $\phi$ is a proximal extension, there exists $p \in \Sa(G)$ such that $px_1 = \cdots  = px_n$. It follows that
    \[
    p(\lambda_1\delta_{x_1} + \cdots + \lambda_n \delta_{x_n}) = \delta_{px_1} \in Z.
    \]
    By minimality of $X$, we obtain that $X \subseteq Z$ and hence $Z = P(X)$. 
    
    $(\Leftarrow)$ Suppose $\psi\colon  P(X) \to P(Y)$ is a proximally irreducible extension. By the previous lemma we have that $\phi: = \psi|_X\colon X \to Y$ is a $G$-irreducible extension. Now given any $y\in Y$ and $x_1,x_2 \in \phi^{-1}(y)$ we have that
    \[
    \psi\left(\frac{\delta_{x_1} + \delta_{x_2}}{2}\right) = \delta_y.
    \]
    By minimality of $Y$ it is not hard to check that that affine subflow $Z \subseteq X$ defined by
    \[
    Z := \overline{\conv} (\overline{G\cdot (\frac{\delta_{x_1} + \delta_{x_2}}{2})})
    \]
    satisfies 
    \[
       Z \cap \conv(X\cap\phi^{-1}(\set{y})) \neq \emptyset, \ \ \ \forall y\in Y.
    \]
    As $\psi$ is proximally irreducible we must have $Z = P(X)$ and thus by Milman's theorem $X \subseteq \overline{G\cdot (\frac{\delta_{x_1} + \delta_{x_2}}{2})}$. Fix $x \in X$ and pick $p\in \Sa(G)$ such that
    \[
    \delta_x = p(\frac{\delta_{x_1} + \delta_{x_2}}{2}) = \frac{\delta_{px_1} + \delta_{px_2}}{2}.
    \]
    Since $x$ is an extreme point we obtain that $px_1 = px_2$. Therefore, $\phi\colon X \to Y$ is a proximal extension.
\end{proof}
\begin{lemma}\label{Lem:CompAffIrrProxIrr}
    Suppose $K,L$, $P$ are affine $G$-flows. If $\phi\colon K \to P$ is an affine $G$-irreducible extension and $\psi\colon P \to L$ is proximally irreducible, then $\eta:= \psi \circ \phi\colon K \to L$ is proximally irreducible. 
\end{lemma}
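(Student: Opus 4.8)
The plan is to take an arbitrary affine subflow $C \subseteq K$ witnessing the hypothesis for proximal irreducibility of $\eta$, push it forward along $\phi$, and show the image witnesses the hypothesis for proximal irreducibility of $\psi$; applying the two irreducibility properties in turn will then force $C = K$. Concretely, I would begin by assuming $C \subseteq K$ is an affine subflow satisfying
\[
C \cap \conv(\overline{\ext}(K) \cap \eta^{-1}(\set{y})) \neq \emptyset \quad \text{for all } y \in \overline{\ext}(L),
\]
and setting $D := \phi(C)$. Since $\phi$ is an affine $G$-map and $C$ is a compact convex $G$-invariant set, $D$ is an affine subflow of $P$.

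The key step will be to verify that $D$ satisfies the defining condition for $\psi$ being proximally irreducible, i.e. that $D \cap \conv(\overline{\ext}(P) \cap \psi^{-1}(\set{y})) \neq \emptyset$ for every $y \in \overline{\ext}(L)$. Fixing such a $y$, I would choose a point $c \in C \cap \conv(\overline{\ext}(K) \cap \eta^{-1}(\set{y}))$ and write it as a finite convex combination $c = \sum_j \lambda_j e_j$ with each $e_j \in \overline{\ext}(K)$ and $\eta(e_j) = y$. Pushing forward, $\phi(c) = \sum_j \lambda_j \phi(e_j) \in D$. Because $\eta = \psi \circ \phi$, each $\phi(e_j)$ lies in $\psi^{-1}(\set{y})$; and because $\phi$ is affine $G$-irreducible, Lemma \ref{Lem:AffIrrImpliesIrr} tells us $\phi$ carries $\overline{\ext}(K)$ into $\overline{\ext}(P)$, so that each $\phi(e_j) \in \overline{\ext}(P) \cap \psi^{-1}(\set{y})$. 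Hence $\phi(c)$ lies in $D \cap \conv(\overline{\ext}(P) \cap \psi^{-1}(\set{y}))$, establishing the claim.

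Once this is in place, proximal irreducibility of $\psi$ forces $D = \phi(C) = P$, and then affine $G$-irreducibility of $\phi$ (no proper affine subflow of $K$ can surject onto $P$) forces $C = K$, which is exactly proximal irreducibility of $\eta$. I expect the only real care to be needed in the key step above: one must track that the finite convex combination exhibiting the intersection for $\eta$ transports to a finite convex combination of genuine extreme points lying in the correct $\psi$-fiber, and this is precisely where Lemma \ref{Lem:AffIrrImpliesIrr} is essential—without knowing that $\phi$ sends extreme points to extreme points, the pushed-forward combination would not obviously land in $\conv(\overline{\ext}(P) \cap \psi^{-1}(\set{y}))$.
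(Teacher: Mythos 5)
Your proposal is correct and follows essentially the same route as the paper's own proof: push the witnessing subflow $C$ forward to $D = \phi(C)$, use Lemma \ref{Lem:AffIrrImpliesIrr} to see that $\phi$ sends $\overline{\ext}(K)$ into $\overline{\ext}(P)$ so the finite convex combinations transport into $\conv\bigl(\overline{\ext}(P) \cap \psi^{-1}(\{y\})\bigr)$, then apply proximal irreducibility of $\psi$ to get $\phi(C) = P$ and affine $G$-irreducibility of $\phi$ to conclude $C = K$. The only difference is cosmetic: the paper states the fact $\phi(\overline{\ext}(K)) = \overline{\ext}(P)$ without citing the lemma (and with a small typo, writing $\psi$ for $\phi$), whereas you invoke Lemma \ref{Lem:AffIrrImpliesIrr} explicitly.
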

\begin{proof}
    Let $C \subseteq K$ be an affine subflow such that 
    \[
    C \cap \conv (\overline{\ext}(K) \cap \eta^{-1}(\set{y})) \neq \emptyset, \ \ \ \forall y\in \overline{\ext}(L).
    \]
    To show $C = K$ it suffices to prove $\phi(C) = P$ as $\phi$ is an affine $G$-irreducible extension. Let $y\in \overline{\ext}(L)$ and consider 
    \[
    w:= \lambda_1x_1 + \cdots \lambda_n x_n \in C
    \]
    where the $\lambda_j$'s are positive real numbers with $\sum_{j=1}^n \lambda_j = 1$ and $x_1,\dots, x_n \in \overline{\ext}(K)$ are such that 
    \[
    y = \eta(x_j) = \psi(\phi(x_j)).
    \]
    As $\psi(\overline{\ext}(K)) = \overline{\ext}(P)$ we see that each $z_j:= \phi(x_j) \in \overline{\ext}(P)$ satisfies $z_j \in \psi^{-1}(\set{y})$. It follows that
    \[
    \phi(w) = \lambda_1z_1 + \cdots + \lambda_n z_n \in \phi(C) \cap \conv(\overline{\ext}(P) \cap \psi^{-1}(\set{y})).
    \]
    As this holds for all $y\in \overline{\ext}(L)$ and $\psi$ is proximally irreducible we obtain that $\phi(C) = P$. Therefore, $C = K$.
\end{proof}
\begin{example}
    We claim if $X$ is a $G$-projective flow, then $P(X)$ is proximally irreducible. Indeed, suppose $L$ is an affine $G$-flow and $\phi\colon L \to P(X)$ is a proximally irreducible extension. Let $\psi\colon  P(Y) \to L$ be the affine $G$-projective cover of $L$. It follows by Lemma \ref{Lem:CompAffIrrProxIrr} that $\eta = \phi \circ \psi\colon  P(Y) \to P(X)$ is a proximally irreducible extension. Thus $\eta|_Y\colon  Y \to X$ is a $G$-irreducible extension and thus an isomorphism. It follows that $\eta$ and consequently $\phi$ must be an isomorphism. Therefore, $P(X)$ is proximally irreducible. In particular, we see that $P(\Sa(G))$ and $P(M(G))$ are proximally irreducible. 
\end{example}
\begin{prop} \label{Prop:UMPFIsProxIrr}
    $P(\Pi(G))$ is a proximally irreducible affine $G$-flow.
\end{prop}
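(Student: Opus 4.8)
The plan is to show that every proximally irreducible extension $\phi\colon L \to P(\Pi(G))$ is an isomorphism, which is exactly the assertion that $P(\Pi(G))$ has no nontrivial proximally irreducible extensions. The strategy mirrors the proof that $P(\Pi(G))$ is affine $G$-projective, with affine $G$-irreducibility replaced by proximal irreducibility, and with the extra input that a minimal proximal flow admits no nontrivial endomorphisms.

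First I would reduce to the case where the domain is of the form $P(Y)$. By Theorem \ref{Thm:AffProjCoverExist} the affine $G$-projective cover $\psi\colon P(Y) \to L$ exists, and it is of the form $P(Y)$ by Corollary \ref{Cor:AffProjFlows}; moreover $\psi$ is an affine $G$-irreducible extension. Since the composition of an affine $G$-irreducible extension with a proximally irreducible extension is proximally irreducible (Lemma \ref{Lem:CompAffIrrProxIrr}), the map $\eta := \phi \circ \psi\colon P(Y) \to P(\Pi(G))$ is proximally irreducible.

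Because $\Pi(G)$ is minimal, Proposition \ref{Prop:ProxExt} applies and shows that $\eta|_Y\colon Y \to \Pi(G)$ (the restriction to $\overline{\ext}(P(Y)) = Y$, which lands in $\overline{\ext}(P(\Pi(G))) = \Pi(G)$ by Lemma \ref{Lem:ProxIrrImpliesIrr}) is a $G$-irreducible proximal extension. In particular $Y$ is minimal, and as a proximal extension of the proximal flow $\Pi(G)$ it is itself a minimal proximal $G$-flow. Now the universal property of $\Pi(G)$ yields an extension $\pi\colon \Pi(G) \to Y$, and the two composites $\eta|_Y \circ \pi$ and $\pi \circ \eta|_Y$ are $G$-endomorphisms of minimal proximal flows. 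Any such endomorphism $\sigma$ of a minimal proximal flow is the identity: for $z$ in the flow, proximality of the pair $(z, \sigma(z))$ gives a net $(g_\lambda)$ with $\lim_\lambda g_\lambda z = \lim_\lambda g_\lambda \sigma(z)$, and writing $w = \lim_\lambda g_\lambda z$ and using continuity and equivariance gives $\sigma(w) = \lim_\lambda g_\lambda \sigma(z) = w$, so the nonempty closed $G$-invariant fixed-point set equals the whole flow by minimality. Hence $\eta|_Y$ is an isomorphism, and therefore the induced affine map $\eta$ on measures is an isomorphism.

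Finally, since $\eta = \phi \circ \psi$ is an isomorphism, $\psi$ is injective; being also a surjective continuous affine map between compact convex sets it is then an isomorphism, so $\phi = \eta \circ \psi^{-1}$ is an isomorphism, as required. The main obstacle is the third step: knowing only that $Y$ is a minimal proximal flow does not by itself force $\eta|_Y$ to be an isomorphism, since $\Pi(G)$ is not $G$-projective, so one cannot mimic the argument used for $P(X)$ with $X$ a $G$-projective flow. The essential ingredient is precisely the triviality of the endomorphism monoid of a minimal proximal flow, which is what allows the universal property of $\Pi(G)$ to upgrade the $G$-irreducible proximal extension $\eta|_Y$ into an isomorphism.
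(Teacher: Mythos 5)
Your proof is correct and follows essentially the same route as the paper's: pass to the affine $G$-projective cover $P(Y) \to L$, compose via Lemma \ref{Lem:CompAffIrrProxIrr} to get a proximally irreducible extension $P(Y) \to P(\Pi(G))$, apply Proposition \ref{Prop:ProxExt} to see $Y$ is a minimal proximal flow, and conclude by the universal property of $\Pi(G)$. The only difference is that you spell out the final step (the triviality of endomorphisms of minimal proximal flows, i.e.\ coalescence) which the paper leaves implicit in the phrase ``by the universal property of $\Pi(G)$''; this is a welcome clarification, not a deviation.
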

\begin{proof}
    Let $L$ be an affine $G$-flow and $\phi\colon L \to P(\Pi(G))$ a proximally irreducible extension. Now let $\psi\colon  P(X) \to L$ be the affine $G$-projective cover of $L$. By Lemma \ref{Lem:CompAffIrrProxIrr} we obtain that $\eta:= \phi \circ \psi \colon  P(X) \to P(\Pi(G))$ is a proximally irreducible extension. Thus Proposition \ref{Prop:ProxExt} tells us that $\eta|_X\colon  X \to \Pi(G)$ is a proximal extension and $X$ must be a minimal $G$-flow. In particular, $X$ must be a minimal proximal $G$-flow and thus $\eta|_X$ is an isomorphism by the universal property of $\Pi(G)$. It follows that $\eta$ and thus $\phi$ must be an isomorphism. 
\end{proof}
\subsection{Strong amenability of subgroups}
We now present a characterization of strong amenability for closed subgroups of Polish groups. This characterization answers an open question of Zucker \cite[Question 7.6]{Zucker2021}. We prove each direction separately for clarity. 
\begin{theorem}\label{Thm:StrAmenSubgp}
   If $G$ is a Polish group and $H \leq G$ is a closed subgroup, then $H$ is strongly amenable if and only if $P(\Sa(G/H))$ is proximally irreducible. 
\end{theorem}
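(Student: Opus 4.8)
The plan is to prove the two implications separately, following the template of Theorems \ref{Thm:ExAmenSubgp} and \ref{Thm:AmenSubgp} but replacing their ``$H$-fixed point'' arguments with ``minimal proximal fibre'' arguments, and using the machinery of Lemmas \ref{Lem:ProxIrrImpliesIrr} and \ref{Lem:CompAffIrrProxIrr} together with the comeager orbit of $H\in\Sa(G/H)$.

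For the forward direction, I would take an arbitrary proximally irreducible extension $\phi\colon L \to P(\Sa(G/H))$ and show it is an isomorphism. First reduce to the case $L = P(X)$: letting $\psi\colon P(X) \to L$ be the affine $G$-projective cover (Theorem \ref{Thm:AffProjCoverExist} and Corollary \ref{Cor:AffProjFlows}), the composite $\eta := \phi\circ\psi$ is again proximally irreducible by Lemma \ref{Lem:CompAffIrrProxIrr}, and it suffices to show $\eta$ is an isomorphism. By Lemma \ref{Lem:ProxIrrImpliesIrr} the restriction $\eta|_X\colon X \to \Sa(G/H)$ is a $G$-irreducible extension; since $H$ has dense orbit, $G$-irreducibility forces every point of the fibre $F := \eta|_X^{-1}(\set{H})$ to have dense $G$-orbit, so $X$ is point transitive. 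The heart of this direction is to show that $F$ is a \emph{minimal proximal} $H$-flow. Given $w\in F$, respectively $x_1,x_2\in F$ and $\mu = \tfrac12(\delta_{x_1}+\delta_{x_2})$, I form the affine subflow $C = \overline{\conv}(\overline{G\cdot\delta_w})$, respectively $\overline{\conv}(\overline{G\cdot\mu})$, of $P(X)$. A limit argument shows $C$ meets $\conv(X\cap\eta^{-1}(\set{y}))$ for all $y\in\Sa(G/H)$ (immediate over the comeager orbit $G/H$ by equivariance, and obtained on boundary points by passing to convergent subnets), so proximal irreducibility gives $C = P(X)$ and hence, by Milman's theorem, $\ext(P(X)) = X$ lies in $\overline{G\cdot\delta_w}$, respectively $\overline{G\cdot\mu}$.

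The key technical step is then to \emph{localise to $H$}: if $g_\lambda w \to x$ with $x\in F$, then $g_\lambda H \to H$ in $\Sa(G/H)$, so by comeagerness of the orbit one may write $g_\lambda = u_\lambda k_\lambda$ with $u_\lambda \to 1_G$ and $k_\lambda \in H$; passing to a subnet with $k_\lambda w \to w'$ and using \emph{joint} continuity of the flow action $G\times X\to X$ gives $x = w' \in \overline{Hw}$. This simultaneously yields $\overline{Hw}=F$ for all $w\in F$ (so $F$ is minimal) and, applied to $\mu$, an element $p\in\Sa(H)$ with $px_1 = px_2$ (so $F$ is proximal). As $H$ is strongly amenable, $\Pi(H)$ is trivial, so the minimal proximal $H$-flow $F$ is a single point $\set{x_0}$, and $x_0$ is an $H$-fixed point. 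The universal property of $\Sa(G/H)$ then produces a $G$-map $\theta\colon \Sa(G/H)\to X$, $gH\mapsto gx_0$, which is a section of $\eta|_X$; $G$-irreducibility forces $\theta$ to be surjective, hence an isomorphism, so $\eta|_X$, $\eta$, and finally $\phi$ are isomorphisms.

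For the reverse direction I would argue by contraposition, so assume $H$ is not strongly amenable and build a nontrivial proximally irreducible extension of $P(\Sa(G/H))$. The guiding observation, essentially the converse of the computation above, is a sufficient criterion: \emph{if $X\to\Sa(G/H)$ is a $G$-irreducible extension whose fibre $F$ over $H$ is a proximal $H$-flow, then $P(X)\to P(\Sa(G/H))$ is proximally irreducible.} Indeed, any affine subflow $C$ satisfying the defining condition in particular meets $\conv(F)$; since finitely many points of a proximal flow can be simultaneously collapsed by some $p\in\Sa(H)$, one produces a Dirac mass $\delta_z\in C$ with $z\in F$, and as $z$ has dense $G$-orbit this forces $C = P(X)$. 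It therefore suffices to construct a $G$-irreducible extension $X\to\Sa(G/H)$ whose fibre over $H$ is the nontrivial minimal proximal $H$-flow $\Pi(H)$. Starting from the $G$-projective cover $Z = \proj_G(\Sa(G/H))\subseteq\Sa(G)$, whose fibre over $H$ is a minimal left ideal $M(H)=\Sa(H)\rho$, I would take the $G$-invariant equivalence relation on $Z$ obtained by transporting the kernel of the canonical factor map $r\colon M(H)\to\Pi(H)$ around by the $G$-action, and set $X$ to be the resulting quotient. \emph{The main obstacle is precisely this construction}: one must verify that the transported relation is closed and does not over-collapse the fibre over $H$, so that it remains $\Pi(H)$ rather than a proper, possibly trivial, quotient. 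The difficulty is the failure of joint continuity of the semigroup action of $\Sa(G)$ on $Z$ -- the very phenomenon behind the paper's inability to compose proximally irreducible extensions in general (Question \ref{Ques:CompProxIrrExt}) -- and I expect it to be controlled using that $H$ has comeager orbit in $\Sa(G/H)$ together with Effros's theorem, exactly as in Lemma \ref{Lem:Stab}. Once $X$ is in hand, the criterion above makes $P(X)\to P(\Sa(G/H))$ a nontrivial proximally irreducible extension, so $P(\Sa(G/H))$ is not proximally irreducible.
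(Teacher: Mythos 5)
Your forward direction is correct and is essentially the paper's own argument: reduce to $\eta=\phi\circ\psi\colon P(X)\to P(\Sa(G/H))$ via the affine $G$-projective cover and Lemma \ref{Lem:CompAffIrrProxIrr}, use proximal irreducibility plus Milman's theorem on the subflows $\overline{\conv}(\overline{G\cdot\delta_w})$ and $\overline{\conv}(\overline{G\cdot\mu})$, and show the fibre over $H$ is a minimal proximal $H$-flow, hence a point. The only difference is cosmetic: where you decompose $g_\lambda=u_\lambda k_\lambda$ with $u_\lambda\to 1_G$, $k_\lambda\in H$ by hand (comeagerness plus Effros plus joint continuity), the paper applies Lemma \ref{Lem:Stab} directly ($p\delta_H=\delta_H$ forces $p\in\Sa(H)$, and $p$ is a limit of elements of $H$); these are the same mechanism.

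The reverse direction, however, has a genuine gap, and you have located it yourself: your entire plan rests on producing a $G$-flow $X$ with a $G$-irreducible extension $X\to\Sa(G/H)$ whose fibre over $H$ is $\Pi(H)$, and the proposed construction --- transporting the kernel of $r\colon M(H)\to\Pi(H)$ around $\proj_G(\Sa(G/H))\subseteq\Sa(G)$ by the $G$-action and quotienting --- is exactly the step you leave unverified. This is not a routine verification: the transported relation need not be closed, its closure need not remain an equivalence relation, and controlling what the generated closed invariant equivalence relation does to the fibre over $H$ (it could a priori collapse $M(H)$ far past $\Pi(H)$) is precisely where joint continuity of the $\Sa(G)$-action fails; ``expecting'' Effros to control it is not a proof. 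The paper's proof shows that this quotient never needs to be built. Instead of collapsing $M(H)$ inside $\Sa(G)$, it realizes $\Pi(H)$ as a closed subset of $P(\Sa(H))\subseteq P(\Sa(G))$ (Lemma \ref{UMPF-as-affine-subflow-lemma} applied to $H$), sets $Z:=\overline{\conv}(\overline{G\cdot\Pi(H)})$, and proves directly that the restriction of $P(\Sa(G))\to P(\Sa(G/H))$ to $Z$ is proximally irreducible. The fibre control you would need for your quotient is carried out there at the level of measures: Milman's theorem gives $\overline{\ext}(Z)\subseteq\overline{G\cdot\Pi(H)}$, and a limit $x=\lim_\lambda g_\lambda y_\lambda$ with $y_\lambda\in\Pi(H)$ lying over $\delta_H$ is shown to lie in $\Pi(H)$ itself by combining Lemma \ref{Lem:Stab} with the circle operation in the hyperspace, $p\bigcdot\Pi(H)=\Pi(H)$ for $p\in\Sa(H)$ --- exactly the device that tames the failure of joint continuity, but applied to a subset that already exists rather than to a quotient one must construct. (Your intermediate criterion --- proximal fibre over $H$ plus $G$-irreducibility implies proximal irreducibility of the induced affine extension --- is correct, and it is in effect what the paper verifies; but the object it is applied to is the affine subflow $Z$, which is not of the form $P(X)$, which is also why Definition \ref{Def:ProxIrr} is stated for arbitrary affine flows.) As written, your proof of this implication is incomplete at its crucial step.
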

\begin{proof}[Proof of Theorem \ref{Thm:StrAmenSubgp} $(\Rightarrow)$]
    Let $L$ be an affine subflow and consider a proximally irreducible extension $\phi\colon L \to P(\Sa(G/H))$. Let $X$ be a $G$-flow and $\psi\colon P(X) \to L$ an affine $G$-irreducible extension. By Lemma \ref{Lem:CompAffIrrProxIrr} we have that $\phi\circ \psi\colon P(X) \to P(\Sa(G/H))$ is proximally irreducible. To show $\phi$ is an isomorphism, it suffices to show $\eta:= \phi \circ \psi$ is an isomorphism. 
    
    We claim that $\eta^{-1}(\set{\delta_H}) \cap X$ is a minimal proximal $H$-flow. Fix $\delta_x\in \eta^{-1}(\set{\delta_H}) \cap X$ and consider the affine subflow 
    \[
    Z:= \overline{\conv}(\overline{G\delta_x}) \subseteq P(X).
    \]
   As $H \in \Sa(G/H)$ has dense orbit, we obtain that 
    \[
    Z \cap \conv(X \cap \eta^{-1}(\set{\delta_y})) \neq \emptyset, \ \ \ \forall y\in \Sa(G/H).
    \]
    Since $\eta$ is proximally irreducible we obtain that $Z = P(X)$. Milman's theorem gives us that $X \subseteq \overline{G\delta_x}$. In particular, for any $\delta_y \in \eta^{-1}(\set{\delta_H}) \cap X$ there exists $p \in \Sa(G)$ such that $p\delta_x = \delta_y$. Applying $\eta$ we obtain that
    \[
    p\delta_H = \delta_H
    \]
    and thus, by Lemma \ref{Lem:Stab}, we have that $p \in \Sa(H)$. Therefore, the $H$-orbit of any $\delta_x \in \eta^{-1}(\set{\delta_H})\cap X$ is dense in $\eta^{-1}(\set{\delta_H})\cap X$ and it follows that it is a minimal $H$-flow. The argument for proximality is similar. Indeed, let $\delta_{x_1}, \delta_{x_2} \in \eta^{-1}(\set{\delta_H})\cap X$ and consider the affine subflow
    \[
        \overline{\conv}(\overline{G\cdot (\frac{\delta_{x_1} + \delta_{x_2}}{2})}) \subseteq P(X).
    \]
    Using that $\eta$ is proximally irreducible and Milman's theorem we have that $X \subseteq  \overline{G\cdot (\frac{\delta_{x_1} + \delta_{x_2}}{2})}$. Thus given any $\delta_y\in \eta^{-1}(\set{\delta_H})\cap X$ there exists $p \in \Sa(G)$ with 
    \[
    \delta_y = \frac{p\delta_{x_1} + p \delta_{x_2}}{2}.
    \]
    Since $\delta_y$ is an extreme point we obtain that $p\delta_{x_i} = \delta_y$. As before, applying $\eta$ yields that $p \in \Sa(H)$. Hence, $\eta^{-1}(\set{\delta_H}) \cap X$ is a minimal proximal $H$-flow. As $H$ is strongly amenable it must be a singleton and thus $\eta|_X\colon X \to \Sa(G/H)$ is an isomorphism. It follows that $\eta\colon P(X) \to P(\Sa(G/H))$ is an isomorphism. 
\end{proof}
We now work towards the converse to the above theorem.
\begin{lemma} \label{UMPF-as-affine-subflow-lemma}
    We can identify $P(\Pi(G)) \subseteq P(\Sa(G))$ as an affine subflow. 
\end{lemma}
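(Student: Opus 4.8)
The plan is to realize $P(\Pi(G))$ as the image of a section of a canonical affine surjection onto it, exploiting that $P(\Pi(G))$ is affine $G$-projective. First I would use that $\Pi(G)$ is minimal, and in particular point transitive: fixing any $\xi_0 \in \Pi(G)$, the universal property of $\Sa(G)$ produces a $G$-map $q\colon \Sa(G) \to \Pi(G)$ with $q(g) = g\xi_0$ for all $g \in G$. Since $\xi_0$ has dense orbit and $q(\Sa(G))$ is a subflow, $q$ is surjective, i.e. an extension. Pushing forward measures along $q$ then yields an affine extension $q_*\colon P(\Sa(G)) \to P(\Pi(G))$; it is continuous, affine, and $G$-equivariant, and it is surjective because $q$ is a continuous surjection of compact spaces.

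Next I would invoke the fact, established earlier, that $P(\Pi(G))$ is a proximally irreducible, and hence affine $G$-projective, affine $G$-flow (Proposition \ref{Prop:UMPFIsProxIrr}, together with the observation that proximally irreducible flows are affine $G$-projective). Applying affine $G$-projectivity to the identity map $\mathrm{id}\colon P(\Pi(G)) \to P(\Pi(G))$ and the affine extension $q_*$ produces an affine $G$-map $s\colon P(\Pi(G)) \to P(\Sa(G))$ with $q_* \circ s = \mathrm{id}$.

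Finally I would check that $s$ identifies $P(\Pi(G))$ with an affine subflow of $P(\Sa(G))$. Having a left inverse, $s$ is injective; being a continuous injection from a compact Hausdorff space, it is a homeomorphism onto its image $s(P(\Pi(G)))$. This image is compact (continuous image of a compact set), convex (affine image of a convex set), and $G$-invariant (by equivariance of $s$), hence an affine subflow of $P(\Sa(G))$, and $s$ is an affine $G$-isomorphism onto it. We may therefore identify $P(\Pi(G))$ with this affine subflow. The argument has no genuine obstacle: the only points needing care are the surjectivity of $q_*$ and the verification that $s(P(\Pi(G)))$ satisfies the definition of an affine subflow, both routine. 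The real content is simply recognizing that the affine $G$-projectivity of $P(\Pi(G))$ furnishes the desired section of $q_*$.
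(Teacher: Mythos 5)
Your proof is correct and is essentially the paper's argument: both start from the canonical affine extension $P(\Sa(G)) \to P(\Pi(G))$ obtained by pushing forward measures along an extension $\Sa(G) \to \Pi(G)$, and both conclude using the affine $G$-projectivity of $P(\Pi(G))$ established earlier in the paper. The only difference is cosmetic: you apply the lifting property of Definition \ref{Defn:AffineProj} to the identity map to obtain a section of the pushforward whose image is the desired affine subflow, whereas the paper passes (via Zorn's lemma) to an affine subflow of $P(\Sa(G))$ minimal with respect to mapping onto $P(\Pi(G))$ and notes that the resulting affine $G$-irreducible extension must be an isomorphism --- these are the two equivalent formulations of projectivity recorded in Lemma \ref{Lem:AffProjAffIrr}.
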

\begin{proof}
    There exists an extension $\psi\colon \Sa(G)\to \Pi(G)$ which in turn induces an affine extension $\phi\colon P(\Sa(G)) \to P(\Pi(G))$. Let $K \subseteq P(\Sa(G))$ be an affine subflow minimal with respect to $\phi(K)= P(\Pi(G))$. It follows that $\phi|_Z\colon  K \to P(\Pi(G))$ is an affine $G$-irreducible extension and thus an isomorphism. 
\end{proof}
\begin{question}
    The proof of Lemma \ref{UMPF-as-affine-subflow-lemma} can be easily adjusted to show $P(\Pi(G))$ is an affine subflow of $P(M(G))$. Thus there exists a probability measure $\mu $ on $M(G)$ such that $\overline{G\cdot \mu} \simeq \Pi(G)$. Can we explicitly construct or characterize such measures? 
\end{question}

If $X$ is a compact space, then the collection of all nonempty closed subsets of $X$, denoted $K(X)$, is itself a compact space with a basic open set given by
\[
\set{C \in K(X): C \cap U_j \neq \emptyset \text{ and } C \subseteq U \text{ for } j = 1,\dots, n}
\]
where $U,U_1,\dots, U_n \subseteq X$ are nonempty open subsets of $X$. Moreover, if $X$ is a $G$-flow, then $K(X)$ is also a $G$-flow with the action given by
\[
g\cdot A := gA = \set{ga:a \in A}
\]
for all $g\in G$ and $A \in K(X)$. By \cite[Proposition 5.10]{EllisEllis2014} we can explicitly see the action $\Sa(G) \curvearrowright K(X)$ via the \emph{circle operation} given by
\[
p \bigcdot A := \set{x\in X:\exists\text{ nets }(g_\lambda)_\lambda \subseteq G, (a_\lambda)_\lambda \subseteq A \text{ with } g_\lambda \to p \text{ and } g_\lambda a_\lambda \to x}.
\]

\begin{proof}[Proof of Theorem \ref{Thm:StrAmenSubgp} $(\Leftarrow)$]
    Consider $P(\Pi(H)) \subseteq P(\Sa(H)) \subseteq P(\Sa(G))$ and define the affine subflow
    \[
    Z:= \overline{\conv}(\overline{G\cdot \Pi(H)}) \subseteq P(\Sa(G)). 
    \]
    Let $\psi\colon P(\Sa(G)) \to P(\Sa(G/H))$ denote the affine extension given by $\psi(\delta_{g}) = \delta_{gH}$. Notice, as seen in the proof of Theorem \ref{Thm:AffProjCoverPointTrans}, we have that 
    \[
    \psi^{-1}(\delta_H) = P(\Sa(H)).
    \]
    We claim that $\phi:= \psi|_Z\colon  Z \to P(\Sa(G/H))$ is proximally irreducible. Notice $\phi$ is surjective as it contains $\Pi(H)$ which gets mapped to $\delta_H$. Now let $C \subseteq Z$ be an affine subflow such that 
    \[
    C \cap \conv(\overline{\ext}(Z) \cap \phi^{-1}(\set{\delta_y})) \neq \emptyset, \ \ \ \forall y\in \Sa(G/H).
    \]
    In particular, there exists positive reals $\lambda_1,\dots, \lambda_n$ such that $\sum_j \lambda_j = 1$ and there exists $x_1,\dots, x_n \in \overline{\ext}(Z) \cap \phi^{-1}(\set{\delta_H})$ with
    \[
    \lambda_1x_1 + \cdots  + \lambda_n x_n \in C.
    \]
    We claim that for all $1\leq j \leq n$ we have $x_j \in \Pi(H)$. First notice by Milman's theorem we have that
    \[
    \overline{\ext}(Z) \subseteq \overline{G\cdot \Pi(H)}.
    \]
    Fix $1\leq j \leq n$ and let $(g_\lambda)_\lambda \subseteq G$, $(y_\lambda)_\lambda \subseteq \Pi(H)$ be nets such that 
    \[
    x_j= \lim_\lambda g_\lambda y_\lambda.
    \]
    Applying $\phi$ and using the fact that each $y_\lambda \in P(\Sa(H))= \psi^{-1}(\set{\delta_H})$ we obtain that
    \[
    \delta_H = \lim_\lambda g_\lambda \delta_H.
    \]
    Let $(g_{\lambda_i})_i$ be a subnet such that $g_{\lambda_i} \to p \in \Sa(G)$. It follows that $\delta_H = p\delta_H$ and thus $p \in \Sa(H)$. Now by definition of the circle operation we obtain that
    \[
    x_j = \lim_i g_{\lambda_i}y_{\lambda_i} \in p\bigcdot \Pi(H).
    \]
    Let $(h_\gamma)_\gamma \subseteq H$ be a net such that $h_\gamma \to p$. We obtain that
    \[
    p\bigcdot \Pi(H)= \lim_\gamma h_\gamma\cdot \Pi(H) = \lim_\gamma \Pi(H) = \Pi(H).
    \]
    Therefore, $x_j \in \Pi(H)$ for all $1\leq j \leq n$. By proximality, there exists $q \in \Sa(H)$ such that $qx_1 = \cdots = qx_n \in \Pi(H)$. It follows that 
    \[
    qx_1 \in C
    \]
    and thus $C = Z$. Since $P(\Sa(G/H))$ was assumed to have no nontrivial proximally irreducible extensions we obtain that $\phi$ is an isomorphism and thus $\phi^{-1}(\set{\delta_H})$ consists of an $H$-fixed point, say $x\in \ext(Z)$. Now repeating the exact same argument as above we obtain that $x \in \Pi(H)$. Therefore, $H$ is strongly amenable. 
\end{proof}
Using the above theorem we can now characterize when the universal minimal proximal flow has a comeager orbit or is metrizable.

Recall that a closed subgroup $H \leq G$ is said to be \emph{presyndetic} if for all $U \in \mathcal{N}_G$ we have that $UH \subseteq G$ is a syndetic subset. This is seen to be equivalent to the minimality of the $G$-flow $\Sa(G/H)$. Moreover, $H$ is said to be \emph{co-precompact} if $\Sa(G/H) = \widehat{G/H}$. For Polish groups $G$, a closed subgroup $H$ is seen to be co-precompact if and only if $\Sa(G/H)$ is metrizable. For more information on presyndetic and co-precompact subgroups see \cite{BassoZucker2025}.
\begin{theorem} \label{Thm:StructureOfUMPF}
    Let $G$ be a Polish group.
    \begin{enumerate}
        \item $\Pi(G)$ has a comeager orbit if and only if there exists a presyndetic maximal strongly amenable closed subgroup $H \leq G$. Moreover in this case, $\Pi(G) \simeq \Sa(G/H)$.
        \item  $\Pi(G)$ is metrizable if and only if there exists a presyndetic co-precompact maximal strongly amenable closed subgroup $H \leq G$. Moreover in this case, $\Pi(G) \simeq \widehat{G/H}$. 
    \end{enumerate}
\end{theorem}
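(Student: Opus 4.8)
The plan is to reduce both statements to the single isomorphism $\Pi(G)\simeq\Sa(G/H)$. Granting this, the comeager-orbit claim in (1) is immediate, since the orbit $G/H$ of the base point is always comeager in $\Sa(G/H)$; and (2) follows from (1) together with the standard facts that a metrizable minimal flow of a Polish group has a comeager orbit and that $\Sa(G/H)$ is metrizable (equal to $\widehat{G/H}$) exactly when $H$ is co-precompact. Thus I concentrate on (1) and treat the two directions separately, modelling the argument on Zucker's structure theorem for $\Pi_s(G)$ in \cite{Zucker2021}.

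\textbf{Forward direction.} Suppose $x_0\in\Pi(G)$ has comeager orbit and put $H:=\Stab(x_0)$, with canonical extension $\phi\colon\Sa(G/H)\to\Pi(G)$, $gH\mapsto gx_0$. Since $x_0$ has nonmeager orbit, Lemma \ref{Lem:Stab} yields $\phi^{-1}(\{x_0\})=\{H\}$, and as the orbit of $H$ is dense this already shows $\phi$ is $G$-irreducible. I then claim $\phi$ is a proximal extension: given $p,q$ with $\phi(p)=\phi(q)$, I invoke that a minimal proximal flow admits, for each point $z$, an element $p_z\in\Sa(G)$ acting as the constant map with value $z$ (the constant maps form the unique minimal left ideal of the enveloping semigroup). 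Applying $p_{x_0}$ sends both $p$ and $q$ into $\phi^{-1}(\{x_0\})=\{H\}$, so $p_{x_0}p=p_{x_0}q$; writing $p_{x_0}=\lim_\lambda g_\lambda$ and using continuity of $r\mapsto r\cdot p$ gives $\lim_\lambda g_\lambda p=\lim_\lambda g_\lambda q$, i.e.\ $p,q$ are proximal. Now Proposition \ref{Prop:ProxExt} makes $P(\phi)$ a proximally irreducible extension of $P(\Pi(G))$, which by Proposition \ref{Prop:UMPFIsProxIrr} forces $P(\phi)$, hence $\phi$, to be an isomorphism. Thus $\Pi(G)\simeq\Sa(G/H)$, so $H$ is presyndetic (as $\Sa(G/H)$ is minimal) and, by Theorem \ref{Thm:StrAmenSubgp} applied to $P(\Sa(G/H))\simeq P(\Pi(G))$, strongly amenable. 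For maximality, if $H\subseteq H'$ with $H'$ closed strongly amenable, then $H'$ is presyndetic and the coset map $\Sa(G/H)\to\Sa(G/H')$ is a $G$-irreducible proximal extension (its domain $\simeq\Pi(G)$ is proximal); Proposition \ref{Prop:ProxExt} and strong amenability of $H'$ force it to be an isomorphism, impossible unless $H=H'$ since any $g\in H'\setminus H$ collapses the distinct cosets $gH\neq H$.

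\textbf{Backward direction.} Conversely, let $H$ be presyndetic, strongly amenable, and maximal such; by Theorem \ref{Thm:StrAmenSubgp}, $P(\Sa(G/H))$ is proximally irreducible, and $\Sa(G/H)$ is minimal. I choose a minimal subflow $W\subseteq\Pi(G)\times\Sa(G/H)$. The projection $\pi_2\colon W\to\Sa(G/H)$ is $G$-irreducible (minimal domain) and proximal, since two points of a fibre differ only in the $\Pi(G)$-coordinate, which is collapsible because $\Pi(G)$ is proximal; hence Proposition \ref{Prop:ProxExt}, with proximal irreducibility of $P(\Sa(G/H))$, shows $\pi_2$ is an isomorphism. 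Composing with the first projection gives a factor map $\Phi\colon\Sa(G/H)\to\Pi(G)$ with $\Phi(H)=:a_0$. The crucial point is that $\Phi$ transports the comeager orbit of $\Sa(G/H)$ onto a comeager orbit of $\Pi(G)$, so that $a_0$ has comeager orbit; granting this, the forward direction applies to $a_0$ and shows that $H':=\Stab(a_0)$ is strongly amenable with $\Pi(G)\simeq\Sa(G/H')$. As $H\subseteq H'$ and $H$ is maximal strongly amenable, $H=H'$, whence $\Pi(G)\simeq\Sa(G/H)$ and, in particular, has a comeager orbit.

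\textbf{Main obstacle.} I expect the single hard step to be the comeager-orbit transfer along the factor map $\Phi$ in the backward direction, equivalently the assertion that $\Pi(G)$ admits a \emph{nonmeager} orbit once a presyndetic maximal strongly amenable $H$ exists. This is exactly the input that unlocks Lemma \ref{Lem:Stab} and lets maximality finish the argument, and it cannot be taken for granted: factor maps of minimal flows need not be open, so the image of the comeager orbit is not formally guaranteed to be nonmeager, and the spaces $\Sa(G/H)$ and $\Pi(G)$ are typically non-metrizable, so Effros-type micro-transitivity must be applied with care. The compact-group case (where the only maximal choice is $H=G$, forcing $\Pi(G)=\{*\}$, whereas the non-maximal $H=\{1\}$ gives the non-proximal $\Sa(G/\{1\})=G$) shows that maximality is genuinely indispensable here, so the transfer argument must be the place where strong amenability and maximality are combined, following Zucker's treatment of $\Pi_s(G)$.
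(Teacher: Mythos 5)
Your forward direction of (1) is correct and is actually more self-contained than the paper's: where the paper invokes \cite[Theorem 5.5]{Zucker2021} (comeager orbit plus $G$-ED) to obtain $\Pi(G)\simeq\Sa(G/H)$, you produce the isomorphism directly, showing the canonical map $\Sa(G/\Stab(x_0))\to\Pi(G)$ is $G$-irreducible (the fiber computation needs Effros as in the proof of Theorem \ref{Thm:ProjCoverPointTrans}, or a lift along $\Sa(G)\to\Sa(G/H)$ to apply Lemma \ref{Lem:Stab}, but this is fine) and proximal via the constant maps in the enveloping semigroup of a minimal proximal flow (a legitimate fact: proximality collapses finite sets, and a limit along the net of finite subsets collapses everything), then concluding with Proposition \ref{Prop:ProxExt} and Proposition \ref{Prop:UMPFIsProxIrr}. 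Your maximality argument coincides with the paper's. Your construction of $\Phi\colon\Sa(G/H)\to\Pi(G)$ in the backward direction (minimal subflow $W\subseteq\Pi(G)\times\Sa(G/H)$, second projection proximal and $G$-irreducible, hence an isomorphism by Theorem \ref{Thm:StrAmenSubgp}) is also valid and differs from the paper, which instead gets an $H$-fixed point in $\Pi(G)$ from \cite[Lemma 7.4]{Zucker2021} and strong amenability, then uses the universal property of $\Sa(G/H)$.

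However, the step you flag as the ``main obstacle'' is a genuine gap in your argument, and your guess about how to fill it points in the wrong direction. The paper closes exactly this hole by citing \cite[Proposition 14.1]{AngelKechrisLyons2014}: with no amenability hypotheses whatsoever, any minimal factor of a minimal flow of the form $\Sa(G/H)$ again has a comeager orbit and is itself isomorphic to some $\Sa(G/H')$. So the comeager-orbit transfer is a general structural fact about factors of minimal Samuel compactifications of coset spaces; strong amenability and maximality are not what make it work — they enter only afterwards, to identify $H'$ with $H$ (the paper needs \cite[Proposition 6.7]{Zucker2021} and a conjugation here, whereas your construction yields $H\subseteq H'=\Stab(\Phi(H))$ directly, which is a small simplification). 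Without this external input, or an independent proof of it, your backward direction is incomplete.

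Separately, your reduction of (2) to (1) rests on a false ``standard fact'': it is not true that every metrizable minimal flow of a Polish group has a comeager orbit — an irrational rotation of the circle under $\Z$ is minimal and metrizable, but every orbit is countable and hence meager. What is true, and what the paper uses, is that $\Pi(G)$ is $G$-ED and that every minimal metrizable $G$-ED flow has a comeager orbit \cite[Corollary 3.3]{Zucker2018}; only with the $G$-ED hypothesis does metrizability of $\Pi(G)$ reduce part (2) to part (1). (The backward implication of (2), that a presyndetic co-precompact maximal strongly amenable $H$ gives $\Pi(G)\simeq\widehat{G/H}$ metrizable, is fine as you state it.)
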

\begin{proof}
(1)
    $(\Rightarrow)$ As $\Pi(G)$ has a comeager orbit and is G-ED, by \cite[Theorem 5.5]{Zucker2021}, there exists a closed subgroup $H \leq G$ such that $\Pi(G) \simeq \Sa(G/H)$. By Proposition \ref{Prop:UMPFIsProxIrr}, $P(\Pi(G)) \simeq P(\Sa(G/H))$ is proximally irreducible and thus $H$ is strongly amenable by Theorem \ref{Thm:StrAmenSubgp}. Suppose $H' \leq G$ is a strongly amenable closed subgroup containing $H$. Consider $\phi\colon \Sa(G/H)\to \Sa(G/H')$ defined via $\phi(gH) = gH'$. It follows that $\phi$ is a proximal extension between minimal $G$-flows. By Proposition \ref{Prop:ProxExt}, the induced affine extension $\psi\colon  P(\Sa(G/H)) \to P(\Sa(G/H'))$ is proximally irreducible. Using Theorem \ref{Thm:StrAmenSubgp} along with the assumption that $H'$ is strongly amenable we obtain that $\psi$ must be an isomorphism. Therefore, we have that $H = H'$ and hence $H$ is a maximal strongly amenable closed subgroup. 
    
    $(\Leftarrow)$ Since $\Sa(G/H)$ is minimal we obtain, by \cite[Lemma 7.4]{Zucker2021}, that every proximal $G$-flow is also a proximal $H$-flow. Since $H$ is strongly amenable, every proximal $G$-flow has an $H$-fixed point. In particular, there exists an extension $\phi\colon \Sa(G/H) \to \Pi(G)$. By \cite[Proposition 14.1]{AngelKechrisLyons2014}, $\Pi(G)$ has a comeager orbit and $\Pi(G) \simeq \Sa(G/H')$ for some closed subgroup $H' \leq G$. Theorem \ref{Thm:StrAmenSubgp} along with Proposition \ref{Prop:ProxExt} tells us that $H'$ is strongly amenable. Moreover, by \cite[Proposition 6.7]{Zucker2021}, there exists $g\in G$ such that $H \subseteq gH'g^{-1}$. By maximality of $H$ we must have $H = H'$ and thus $\Sa(G/H) \simeq \Pi(G)$. 

    (2) Follows from the fact that $\Pi(G)$ is $G$-ED and that any minimal metrizable $G$-ED flow must have a comeager orbit (see \cite[Corollary 3.3]{Zucker2018}).
\end{proof}
\subsection{$G$-projectivity once again}
We now prove the claimed result about $G$-projective and affine $G$-projective covers mentioned after Theorems \ref{Thm:ProjCoverPointTrans} and \ref{Thm:AffProjCoverPointTrans}. 

A key technique behind the proofs will be the following lemma already implicitly used in the proof of Theorem \ref{Thm:StrAmenSubgp}.
\begin{lemma}\label{Lem:FiberStab}
    Let $G$ be a Polish group and $X$ a $G$-flow with a point $x_0 \in X$ having comeager orbit. If $Y$ is another $G$-flow, $\phi\colon Y \to X$ an extension, and $C \subseteq \phi^{-1}(\set{x_0})$ is a closed $\Stab(x_0)$-invariant subset, then
    \[
    \overline{G\cdot C} \cap \phi^{-1}(\set{x_0}) = C.
    \]
\end{lemma}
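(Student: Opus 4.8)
=== PROOF PROPOSAL ===

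The plan is to prove the nontrivial inclusion $\overline{G\cdot C} \cap \phi^{-1}(\set{x_0}) \subseteq C$, since the reverse inclusion is immediate: $C \subseteq \overline{G\cdot C}$ and $C \subseteq \phi^{-1}(\set{x_0})$ by hypothesis. So I fix a point $y \in \overline{G\cdot C}$ with $\phi(y) = x_0$ and aim to show $y \in C$.

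First I would pass to nets to exploit the circle operation $p \bigcdot C$ introduced right before the proof of Theorem \ref{Thm:StrAmenSubgp}. Since $y \in \overline{G\cdot C}$, there exist nets $(g_\lambda)_\lambda \subseteq G$ and $(c_\lambda)_\lambda \subseteq C$ with $g_\lambda c_\lambda \to y$. By compactness of $\Sa(G)$, after passing to a subnet I may assume $g_\lambda \to p$ for some $p \in \Sa(G)$, so that $y \in p \bigcdot C$. Applying $\phi$, which is continuous and $G$-equivariant, to the convergence $g_\lambda c_\lambda \to y$ and using $\phi(c_\lambda) = x_0$ (as $c_\lambda \in C \subseteq \phi^{-1}(\set{x_0})$), I get $g_\lambda x_0 = g_\lambda \phi(c_\lambda) = \phi(g_\lambda c_\lambda) \to \phi(y) = x_0$. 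Hence $p x_0 = x_0$, i.e.\ $p \in S_{x_0} = \set{q \in \Sa(G): q x_0 = x_0}$.

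The key step is now to identify $S_{x_0}$. Since $x_0$ has comeager (hence nonmeager) orbit, Lemma \ref{Lem:Stab} gives $S_{x_0} = \Sa(\Stab(x_0))$; write $H := \Stab(x_0)$, so $p \in \Sa(H)$. This is exactly the maneuver already used in the proof of Theorem \ref{Thm:StrAmenSubgp}, where the analogous conclusion $p \in \Sa(H)$ was extracted from $p\delta_H = \delta_H$. Now I take a net $(h_\gamma)_\gamma \subseteq H$ with $h_\gamma \to p$ in $\Sa(G)$. Because $C$ is closed and $\Stab(x_0) = H$-invariant, each $h_\gamma \cdot C \subseteq C$, and I expect the continuity of the action in the semigroup variable (fixing the $K(X)$-coordinate at $C$) to yield $p \bigcdot C = \lim_\gamma h_\gamma \cdot C \subseteq \overline{C} = C$, mirroring the computation $p\bigcdot \Pi(H) = \lim_\gamma h_\gamma \cdot \Pi(H) = \Pi(H)$ in Theorem \ref{Thm:StrAmenSubgp}. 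Since $y \in p \bigcdot C \subseteq C$, this completes the argument.

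The main obstacle I anticipate is making the step $p \bigcdot C \subseteq C$ fully rigorous: this requires care with the circle operation and the topology on $K(X)$, specifically justifying that $h_\gamma \cdot C \to p \bigcdot C$ in the Vietoris topology and that $H$-invariance of the \emph{closed} set $C$ passes to the limit. One must verify that any $x \in p \bigcdot C$ arises as a limit $\lim_i h_{\gamma_i} c_i$ with $c_i \in C$, and then use closedness of $C$ together with $h_{\gamma_i} c_i \in C$. Equivalently, one can argue directly at the level of points: given $y \in p \bigcdot C$ with $p \in \Sa(H)$, approximate $p$ by $h_\gamma \in H$, and since $h_\gamma C \subseteq C$ with $C$ closed, conclude $y \in C$. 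This pointwise formulation sidesteps the need to invoke continuity of $p \mapsto p \bigcdot A$ (which fails in general) and instead uses only the definition of the circle operation together with the fact that $p$ lies in the closure of $H$ inside $\Sa(G)$.
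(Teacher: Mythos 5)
Your main argument is exactly the paper's proof: the paper likewise writes $y = \lim_\lambda g_\lambda y_\lambda$ with $y_\lambda \in C$, passes to a subnet with $g_\lambda \to p \in \Sa(G)$, applies $\phi$ and Lemma \ref{Lem:Stab} to conclude $p \in \Sa(\Stab(x_0))$, and then finishes with $y \in p \bigcdot C \subseteq C$ in the Vietoris hyperspace using $\Stab(x_0)$-invariance of $C$. So the proposal is correct and takes the same route.

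One correction to your closing paragraph, though: you have the continuity issue backwards. For a \emph{fixed} closed set $C$, the map $\Sa(G) \ni q \mapsto q \bigcdot C \in K(Y)$ \emph{is} continuous: by the cited \cite[Proposition 5.10]{EllisEllis2014} the circle operation is the canonical $\Sa(G)$-action on the $G$-flow $K(Y)$, and for any $G$-flow the extended action is continuous in the semigroup variable once the flow coordinate is fixed (this is stated in the paper's background on $\Sa(G)$). What fails in general is continuity in the other variable, $A \mapsto p \bigcdot A$ for fixed $p$. Hence the hyperspace step you were suspicious of is rigorous as stated: take $h_\gamma \in H$ with $h_\gamma \to p$, note $h_\gamma \bigcdot C = h_\gamma C \subseteq C$, and use that $\set{B \in K(Y) : B \subseteq C}$ is Vietoris-closed to get $p \bigcdot C = \lim_\gamma h_\gamma C \subseteq C$. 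By contrast, your proposed ``pointwise'' replacement is the step with a gap: membership $y \in p \bigcdot C$ is witnessed by \emph{some} net $(g_\lambda)_\lambda \subseteq G$ converging to $p$, and nothing forces these witnesses to lie in $H$; switching to a different net $(h_\gamma)_\gamma \subseteq H$ converging to $p$ gives no control over $y$, so ``$h_\gamma C \subseteq C$ and $C$ closed'' does not by itself yield $y \in C$. To make a purely pointwise argument work one would have to show, for instance, that $p \in \Sa(H)$ forces $g_\lambda \in UH$ eventually for every $U \in \cN_G$ (true, via the near-ultrafilter description of $\Sa(H) \subseteq \Sa(G)$), whence $y \in \overline{UC}$ for all $U$ and finally $y \in \bigcap_{U \in \cN_G} \overline{UC} = C$ by compactness and normality of $Y$ --- which is more work than the hyperspace argument, not less.
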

\begin{proof}
    Let $(g_\lambda)_\lambda \subseteq G$ and $(y_\lambda)_\lambda \subseteq C$ be nets such that 
    \[
   y:=  \lim_\lambda g_\lambda y_\lambda \in \phi^{-1}(\set{x_0}).
    \]
    Passing to a subnet we may assume $(g_\lambda)_\lambda$ converges to $p \in \Sa(G)$. Applying $\phi$ we obtain that
    \[
   x_0 =  \lim_\lambda g_\lambda x_0 = px_0
    \]
    and thus $p \in \Sa(\Stab(x_0))$ by Lemma \ref{Lem:Stab}. Moving the Vietoris Hyperspace and using that $C$ is $\Stab(x_0)$ invariant we obtain that
    \[
    y \in p \bigcdot C  \subseteq C.
    \]
\end{proof}
\begin{prop}\label{Prop:ProjCoverPointTransGen}
    Suppose $G$ is a Polish group, $X$ a point transitive $G$-flow, and $x_0 \in X$ has comeager orbit. Viewing $M(\Stab(x_0)) \subseteq \Sa(G)$ as a closed subset we obtain that 
    \[
    \proj_G(X) \simeq \overline{G\cdot M(\Stab(x_0))}.
    \]
\end{prop}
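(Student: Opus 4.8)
The plan is to realize $\proj_G(X)$ explicitly as a subflow of the Samuel compactification. Write $H := \Stab(x_0)$ and let $\phi\colon \Sa(G) \to X$ be the canonical extension determined by $\phi(g) = g x_0$; since $x_0$ has comeager (hence nonmeager) orbit, Lemma~\ref{Lem:Stab} gives $\phi^{-1}(\set{x_0}) = \Sa(H)$. Fix a minimal $H$-subflow $M(H) \subseteq \Sa(H) \subseteq \Sa(G)$ and set $Z := \overline{G \cdot M(H)}$. The strategy is to verify that $Z$ is a subflow of $\Sa(G)$ which is \emph{minimal} with respect to the property $\phi(Z) = X$; granting this, the point-transitive construction recalled in Section~\ref{Sec:PointTrans}, together with the uniqueness in Theorem~\ref{Thm:ExistenceProjCover}, identifies $Z$ with $\proj_G(X)$.

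First I would check surjectivity. Since $M(H) \subseteq \Sa(H) = \phi^{-1}(\set{x_0})$, equivariance of $\phi$ yields $\phi(G \cdot M(H)) = G \cdot x_0$, which is dense in $X$; as $\phi$ is continuous and $X$ is compact, $\phi(Z) = X$. The crucial step is then to pin down the fiber of $Z$ over $x_0$. Here I would apply Lemma~\ref{Lem:FiberStab} with the $G$-flow $\Sa(G)$, the extension $\phi$, and the closed $H$-invariant set $C = M(H)$ sitting inside $\phi^{-1}(\set{x_0})$; this gives exactly
\[
Z \cap \Sa(H) = \overline{G \cdot M(H)} \cap \phi^{-1}(\set{x_0}) = M(H).
\]

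With the fiber identified, minimality is immediate. If $W \subseteq Z$ is a subflow with $\phi(W) = X$, then $W$ meets $\phi^{-1}(\set{x_0})$, so $W \cap \Sa(H)$ is a nonempty closed $H$-invariant subset of $Z \cap \Sa(H) = M(H)$. Minimality of $M(H)$ as an $H$-flow forces $W \cap \Sa(H) = M(H)$, whence $M(H) \subseteq W$ and therefore $Z = \overline{G \cdot M(H)} \subseteq W$. Thus $Z$ admits no proper subflow surjecting onto $X$, i.e.\ $\phi|_Z$ is $G$-irreducible and $Z$ is minimal with respect to $\phi(Z) = X$. Invoking Section~\ref{Sec:PointTrans} concludes $\proj_G(X) \simeq Z = \overline{G \cdot M(H)}$.

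The main obstacle is the fiber computation $Z \cap \Sa(H) = M(H)$: a priori, forming the closure $\overline{G \cdot M(H)}$ could introduce new points over $x_0$ as limits of translates $g_\lambda m_\lambda$ with $g_\lambda x_0 \to x_0$ but $g_\lambda \notin H$. Lemma~\ref{Lem:FiberStab} is precisely the tool that rules this out, and it is worth recalling why it works: passing to a convergent subnet $g_\lambda \to p \in \Sa(G)$, the relation $p x_0 = x_0$ together with Lemma~\ref{Lem:Stab} forces $p \in \Sa(H)$, and then the circle operation places the limit inside $p \bigcdot M(H) \subseteq M(H)$ by $H$-invariance. Once this is in hand, everything else reduces to the minimality of the $H$-flow $M(H)$.
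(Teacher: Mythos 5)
Your proof is correct and takes essentially the same route as the paper's: both restrict the canonical extension $\phi\colon \Sa(G) \to X$ to $\overline{G\cdot M(\Stab(x_0))}$, use Lemma \ref{Lem:FiberStab} to identify the fiber over $x_0$ with $M(\Stab(x_0))$, and deduce $G$-irreducibility from the minimality of $M(\Stab(x_0))$ as a $\Stab(x_0)$-flow before invoking the Section \ref{Sec:PointTrans} construction. Your write-up is in fact slightly more explicit than the paper's, which leaves the final step (nonempty intersection with the fiber forces the subflow to be everything) implicit.
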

\begin{proof}
    Let $\phi\colon \Sa(G) \to X$ be the extension given by $\phi(g) = gx_0$ and $Y := \overline{G\cdot M(\Stab(x_0))}$. By the discussion in Section \ref{Sec:PointTrans} it suffices to prove that
    \[
    \psi:= \phi|_Y \colon Y \to X
    \]
    is a $G$-irreducible extension. Notice $\psi$ is surjective as 
    \[
    M(\Stab(x_0)) \subseteq \Sa(\Stab(x_0))=  \phi^{-1}(\set{x_0}).
    \]
    Let $C \subseteq Y$ be a subflow such that $\psi(C)= X$. It follows that $C \cap \psi^{-1}(\set{x_0}) \neq \emptyset$. By Lemma \ref{Lem:FiberStab}, we see that $\psi^{-1}(\set{x_0}) = M(\Stab(x_0))$. It follows that we must have $C= Y$.
\end{proof}
We now have the analogous result for affine $G$-projective covers.
\begin{prop}\label{Prop:AffProjCoverPointTransGen}
    Let $G$ be a Polish group and suppose $X$ is a $G$-flow such that $x_0 \in X$ has comeager orbit. Viewing $\Pi_s(\Stab(x_0)) \subseteq P(\Sa(G))$ as a closed subset we have that $\overline{\conv}(\overline{G\cdot\Pi_s(\Stab(x_0))})$ is isomorphic to the affine $G$-projective cover of $P(X)$.
\end{prop}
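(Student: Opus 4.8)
The plan is to mirror the proof of Proposition \ref{Prop:ProjCoverPointTransGen}, working one level up in the measures. Write $H := \Stab(x_0)$, let $\phi\colon \Sa(G) \to X$ be the extension $\phi(g) = gx_0$, and let $\Phi\colon P(\Sa(G)) \to P(X)$ be the induced affine extension. By Lemma \ref{Lem:Stab} we have $\phi^{-1}(\set{x_0}) = \Sa(H)$, and since $\delta_{x_0}$ is an extreme point of $P(X)$ this gives $\Phi^{-1}(\set{\delta_{x_0}}) = P(\Sa(H))$. Fix the embedding $\Pi_s(H) = \overline{\ext}(K)$, where $K \subseteq P(M(H))$ is a minimal affine $H$-subflow (so $K \simeq P(\Pi_s(H))$), and set $\hat L := \overline{\conv}(\overline{G\cdot \Pi_s(H)})$. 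Every $\nu \in \Pi_s(H)$ is supported on $\Sa(H)$, so $\Pi_s(H) \subseteq \Phi^{-1}(\set{\delta_{x_0}})$; moreover, by Proposition \ref{Prop:ProjCoverPointTransGen} together with $M(H) \subseteq \proj_G(X)$ we may regard $\hat L \subseteq P(\proj_G(X))$. Surjectivity of $\Phi|_{\hat L}$ is immediate, since $\Phi$ collapses $\Pi_s(H)$ to $\delta_{x_0}$ and $x_0$ has dense orbit, whence $\Phi(\hat L) = \overline{\conv}(\overline{G\cdot \delta_{x_0}}) = \overline{\conv}(X) = P(X)$.

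In view of Theorem \ref{Thm:AltConOfAffProjCover} (applied to the affine flow $P(X)$, whose extreme boundary is $X$ and whose $G$-projective cover is $\proj_G(X)$), it then suffices to prove that $\Phi|_{\hat L}\colon \hat L \to P(X)$ is affine $G$-irreducible. The key computation, carried out exactly as in the proof of Theorem \ref{Thm:StrAmenSubgp} via the circle operation, is the fiber identity
\[
\overline{G\cdot \Pi_s(H)} \cap \Phi^{-1}(\set{\delta_{x_0}}) = \Pi_s(H).
\]
Indeed, if $w = \lim_\lambda g_\lambda \nu_\lambda$ with $\nu_\lambda \in \Pi_s(H)$ lies in the fiber, then applying $\Phi$ forces $g_\lambda x_0 \to x_0$; passing to a subnet with $g_\lambda \to p \in \Sa(G)$, Lemma \ref{Lem:Stab} gives $p \in \Sa(H)$, and writing $p$ as a limit of a net in $H$ yields $w \in p \bigcdot \Pi_s(H) = \Pi_s(H)$ by $H$-invariance of $\Pi_s(H)$.

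With this in hand I would analyze the face $F := \hat L \cap \Phi^{-1}(\set{\delta_{x_0}})$ of $\hat L$ (a face because $\delta_{x_0}$ is extreme in $P(X)$). Since the extreme points of a face are extreme in the ambient flow, Milman's theorem and the fiber identity give $\ext(F) \subseteq \ext(\hat L) \cap \Phi^{-1}(\set{\delta_{x_0}}) \subseteq \overline{G\cdot \Pi_s(H)} \cap \Phi^{-1}(\set{\delta_{x_0}}) = \Pi_s(H)$; combined with the trivial inclusion $\Pi_s(H) \subseteq F$ this yields $F = \overline{\conv}(\Pi_s(H)) = K \simeq P(\Pi_s(H))$. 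Now $P(\Pi_s(H))$ is the affine $H$-projective cover of the trivial flow and hence is a minimal affine $H$-flow, so $F$ has no proper affine $H$-subflow. Finally, if $C \subseteq \hat L$ is an affine subflow with $\Phi(C) = P(X)$, then $C \cap F$ is a nonempty affine $H$-subflow of $F$, forcing $C \cap F = F$ and in particular $\Pi_s(H) \subseteq C$; as $C$ is closed, convex and $G$-invariant, $\hat L = \overline{\conv}(\overline{G\cdot \Pi_s(H)}) \subseteq C$, so $\Phi|_{\hat L}$ is affine $G$-irreducible and the result follows.

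I expect the main obstacle to be the fiber identity together with the face analysis: one must control the closure $\overline{G\cdot \Pi_s(H)}$ over the whole group while having only $H$-invariance of $\Pi_s(H)$, which is precisely where the circle operation and the observation that $\Phi^{-1}(\set{\delta_{x_0}})$ is a face (so that Milman applies to $\ext(F)$) carry the argument. A secondary point requiring care is checking that $\hat L$ genuinely lies inside $P(\proj_G(X))$, so that Theorem \ref{Thm:AltConOfAffProjCover} applies; this is exactly where Proposition \ref{Prop:ProjCoverPointTransGen} enters.
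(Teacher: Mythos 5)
Your proposal is correct and takes essentially the same route as the paper: reduce via Theorem \ref{Thm:AltConOfAffProjCover} to showing that the canonical affine extension restricted to $\overline{\conv}(\overline{G\cdot\Pi_s(\Stab(x_0))})$ is affine $G$-irreducible, identify the fiber over $\delta_{x_0}$ as $\overline{\conv}(\Pi_s(\Stab(x_0)))$ using the face property, Milman's theorem, and the circle-operation argument (you reprove inline what the paper cites as Lemma \ref{Lem:FiberStab}), and then show any surjecting affine subflow must contain $\Pi_s(\Stab(x_0))$. The only cosmetic difference is in the last step, where you invoke affine minimality of $P(\Pi_s(\Stab(x_0)))$ (as the affine $\Stab(x_0)$-projective cover of a point) while the paper uses proximality and minimality of $\overline{\conv}(\Pi_s(\Stab(x_0)))$ as a $\Stab(x_0)$-flow; these are equivalent facts, both resting on strong proximality of $\Pi_s(\Stab(x_0))$.
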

\begin{proof}
    Let $\psi\colon P(\Sa(G)) \to P(X)$ be the affine extension given by $\psi(\delta_{1_G}) = \delta_{x_0}$. By Theorem \ref{Thm:AltConOfAffProjCover} it suffices to prove that $\psi$ restricted to $$K:= \overline{\conv}(\overline{G\cdot\Pi_s(\Stab(x_0))}) \subseteq P(\Sa(G))$$ is affine $G$-irreducible. First notice that $\psi(\Pi_s(\Stab(x_0))) = \set{\delta_{x_0}}$ and thus $K$ maps onto $P(X)$. Next we claim that 
    \[
    \psi^{-1}(\set{\delta_{x_0}}) \cap K = \overline{\conv}(\Pi_s(\Stab(x_0))).
    \]
    As $\psi^{-1}(\set{\delta_{x_0}})\cap K$ is a face in $K$, let $y \in \psi^{-1}(\set{\delta_{x_0}})\cap K$ be an extreme point. By Milman's theorem we have that $y \in \overline{G\cdot \Pi_s(\Stab(x_0))}$. Then Lemma \ref{Lem:FiberStab} tells us that $y \in \Pi_s(\Stab(x_0))$. Therefore, we have that
    \[
    \psi^{-1}(\set{\delta_{x_0}}) \cap K \subseteq \overline{\conv}(\Pi_s(\Stab(x_0)))
    \]
    and the reverse inclusion is clear. 

    Let $C \subseteq K$ be an affine subflow such that $\psi(C)= P(X)$. Thus there exists 
    \[
    y\in C \cap \psi^{-1}(\set{\delta_{x_0}}) = C \cap \overline{\conv}(\Pi_s(\Stab(x_0))).
    \]
    Notice $\overline{\conv}(\Pi_s(\Stab(x_0)))$ is a proximal $\Stab(x_0)$-flow. Indeed, it is seen to be the image of $P(\Pi_s(\Stab(x_0)))$ under the barycenter map and proximality of $P(\Pi_s(\Stab(x_0)))$ follows as $\Pi_S(\Stab(x_0))$ is $\Stab(x_0)$-strongly proximal. Thus using proximality and minimality we obtain that $\Pi_s(\Stab(x_0)) \subseteq C$ and hence $C = K$. Therefore, $\psi|_K$ is an affine $G$-irreducible and our desired result follows.
\end{proof}
We also have a kind of converse to the above result.
\begin{prop}
    Let $G$ be a Polish group and $X$ a $G$-flow such that $x_0 \in X$ has comeager orbit. If $\phi\colon P(Y) \to P(X)$ is an affine $G$-irreducible extension, then $\phi^{-1}(\set{\delta_{x_0}}) \cap Y$ is a minimal strongly proximal $\Stab(x_0)$-flow.
\end{prop}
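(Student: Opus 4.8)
The plan is to set $H := \Stab(x_0)$ and write $\psi := \phi|_Y$. First I would record the basic structure. By Lemma \ref{Lem:AffIrrImpliesIrr} the restriction $\psi\colon Y \to X$ is a ($G$-irreducible) extension, so $F := \phi^{-1}(\set{\delta_{x_0}}) \cap Y = \psi^{-1}(\set{x_0})$ is nonempty and closed, and it is $H$-invariant because $\psi$ is equivariant and $H$ fixes $x_0$; thus $F$ is an $H$-flow. I would also note that $\phi^{-1}(\set{\delta_{x_0}})$ is a face of $P(Y)$ whose extreme points are exactly $Y \cap \phi^{-1}(\set{\delta_{x_0}}) = F$, so by Krein--Milman $\phi^{-1}(\set{\delta_{x_0}}) = \overline{\conv}(F) = P(F)$; in particular $\phi(\mu) = \delta_{x_0}$ for every $\mu \in P(F)$. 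Since $x_0$ has comeager, hence dense, orbit, we have $\overline{G\delta_{x_0}} = X$ inside $P(X)$.

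The engine of the argument, which I would extract as a key step, is the claim that for every $\mu \in P(F)$ one has $Y \subseteq \overline{G\mu}$. To see this, consider the affine subflow $Z := \overline{\conv}(\overline{G\mu})$. Since $\phi$ is affine, continuous and equivariant, $\phi(Z) = \overline{\conv}(\overline{G\,\phi(\mu)}) = \overline{\conv}(\overline{G\delta_{x_0}}) = \overline{\conv}(X) = P(X)$. Affine $G$-irreducibility of $\phi$ then forces $Z = P(Y)$, and Milman's theorem gives $Y = \ext(P(Y)) \subseteq \overline{G\mu}$.

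For minimality, I would fix $y_0 \in F$ and apply the key step to $\mu = \delta_{y_0}$: since $\overline{G\delta_{y_0}} = \set{\delta_y : y \in \overline{Gy_0}}$, this yields $\overline{Gy_0} = Y$. Setting $C := \overline{Hy_0}$, a closed $\Stab(x_0)$-invariant subset of $\psi^{-1}(\set{x_0})$, and using $GH = G$ to check that $\overline{G\cdot C} = \overline{Gy_0} = Y$, Lemma \ref{Lem:FiberStab} (applied to $\psi$) gives $F = Y \cap \psi^{-1}(\set{x_0}) = \overline{G\cdot C} \cap \psi^{-1}(\set{x_0}) = C = \overline{Hy_0}$. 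As $y_0 \in F$ was arbitrary, $F$ is a minimal $H$-flow.

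For strong proximality I would verify that the trivial extension $F \to \set{*}$ is strongly proximal, which by the characterization recalled in the background suffices. Given $\mu \in P(F)$, the key step produces, for a chosen $y_0 \in F$, a net $(g_\lambda)_\lambda \subseteq G$ with $g_\lambda\mu \to \delta_{y_0}$. The step I expect to be the main obstacle is upgrading this contraction from $G$ to $H$. Passing to a subnet with $g_\lambda \to p \in \Sa(G)$ and applying $\phi$ gives $\delta_{g_\lambda x_0} = g_\lambda\phi(\mu) \to \delta_{x_0}$, so $p x_0 = x_0$; since the orbit of $x_0$ is nonmeager, Lemma \ref{Lem:Stab} yields $p \in \Sa(H)$. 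Continuity of $q \mapsto q\mu$ gives $p\mu = \delta_{y_0}$, and choosing a net $(h_\gamma)_\gamma \subseteq H$ with $h_\gamma \to p$ produces $h_\gamma\mu \to \delta_{y_0}$ with $y_0 \in F$. Thus every measure on $F$ is contracted along $H$ to a point mass of $F$, so $F \to \set{*}$ is strongly proximal. Combining the two paragraphs, $F = \phi^{-1}(\set{\delta_{x_0}}) \cap Y$ is a minimal strongly proximal $H$-flow.
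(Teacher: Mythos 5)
Your proof is correct and takes essentially the same approach as the paper's: the central step ($Y \subseteq \overline{G\cdot\mu}$ for every $\mu$ supported on the fiber, obtained from affine $G$-irreducibility together with Milman's theorem) and the upgrade of the contracting net from $G$ to $\Stab(x_0)$ via Lemma \ref{Lem:Stab} are precisely the paper's argument. The only difference is organizational: for minimality you invoke Lemma \ref{Lem:FiberStab} with $C = \overline{\Stab(x_0)\cdot y_0}$, whereas the paper argues directly that any two points of the fiber are linked by some $p \in \Sa(\Stab(x_0))$; both reductions rest on the same stabilizer lemma.
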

\begin{proof}
    By Lemma \ref{Lem:AffIrrImpliesIrr} it follows that $\phi|_Y\colon Y \to X$ is a $G$-irreducible extension. Thus each $y\in \phi^{-1}(\set{x_0}) \cap Y$ has dense $G$-orbit in $Y$. In particular, for any $y' \in \phi^{-1}(\set{x_0}) \cap Y$ there exists $p \in \Sa(G)$ with $py = y'$. Applying $\phi$ and using Lemma \ref{Lem:Stab} we obtain that $p \in \Sa(\Stab(x_0))$. Therefore, $\phi^{-1}(\set{x_0}) \cap Y$ is a minimal $\Stab(x_0)$-flow. Now given any $\mu \in P(\phi^{-1}(\set{x_0}) \cap Y) \subseteq P(Y)$ we see that 
    \[
    \phi(\overline{\conv}(\overline{G\cdot \mu})) = P(X)
    \]
    and thus $Y \subseteq \overline{G\cdot \mu}$ by Milman's theorem. Thus there exists $p \in \Sa(G)$ with $p \mu \in \phi^{-1}(\set{x_0}) \cap Y$. Applying $\phi$ once again we obtain that $p \in \Sa(\Stab(x_0))$, i.e., $\phi^{-1}(\set{x_0}) \cap Y$ is also a strongly proximal $\Stab(x_0)$-flow.
\end{proof}
\subsection{Maximal proximally irreducible extensions}
 A natural question is if every affine $G$-flow has a proximally $G$-projective cover. In particular, given an affine $G$-flow $L$ does there exists a maximal proximally irreducible extension $K$ of $L$?

An obstacle to the above is that it is not clear if the composition of two proximally irreducible extensions remains proximally irreducible. We state this as an explicit question.
\begin{question}\label{Ques:CompProxIrrExt}
    Let $K,L$, and $P$ be affine $G$-flows. If $\phi\colon L \to K$ and $\psi\colon P \to L$ are proximally irreducible extensions, then is $\phi \circ \psi \colon P \to K$ also proximal irreducible?
\end{question}

Regardless we can prove such maximal proximally irreducible extensions exist in certain cases when said obstacle disappears. 
\begin{prop}\label{Prop:CompOfProxIrr}
    Let $X$ be a $G$-flow such that either it is minimal or it contains a comeager orbit. If $L,K$ are affine $G$-flows, $\phi\colon L \to P(X)$ and $\psi\colon  K \to L$ proximally irreducible extensions, then $\phi \circ \psi\colon K \to P(X)$ is proximally irreducible. 
\end{prop}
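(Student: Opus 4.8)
The plan is to fix an affine subflow $C \subseteq K$ witnessing the hypothesis of proximal irreducibility for $\eta := \phi \circ \psi$, that is, $C \cap \conv(\overline{\ext}(K) \cap \eta^{-1}(\set{\delta_x})) \neq \emptyset$ for every $x \in X$, and to prove $C = K$. Writing $V := \overline{\ext}(K)$ and $W := \overline{\ext}(L)$, Lemma \ref{Lem:ProxIrrImpliesIrr} gives that $\psi|_V \colon V \to W$ and $\phi|_W \colon W \to X$ are $G$-irreducible extensions, hence so is $\eta|_V \colon V \to X$. Since $K = \overline{\conv}(V)$, it suffices to show $V \subseteq C$. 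The governing constraint throughout is that I may only use that $\phi$ and $\psi$ are \emph{separately} proximally irreducible; assuming this of $\eta$ would be circular.

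\emph{The minimal case.} If $X$ is minimal then $W$ and $V$ are minimal as well, since a $G$-irreducible extension of a minimal flow is minimal. First I would show $\phi|_W$ and $\psi|_V$ are proximal extensions: given $w_1, w_2$ in a common fibre of $\phi|_W$, the affine subflow $\overline{\conv}(\overline{G \cdot \frac{w_1 + w_2}{2}}) \subseteq L$ meets $\conv(W \cap \phi^{-1}(\set{\delta_y}))$ for every $y \in X$, using minimality of $X$ to transport the witnessing combination over one fibre to every fibre by translation, so proximal irreducibility of $\phi$ forces it to equal $L$; Milman's theorem and collapsing at an extreme point then produce $p \in \Sa(G)$ with $p w_1 = p w_2$. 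The same argument with $\psi$ and minimality of $W$ handles $\psi|_V$, and composing shows $\eta|_V$ is proximal. Applying the hypothesis on $C$ at any base point yields $\sum_j \lambda_j k_j \in C$ with the $k_j$ in a single $\eta|_V$-fibre; a routine induction (using that $\Sa(G)$ is a semigroup) collapses them simultaneously to one point $v' \in V \cap C$, and minimality of $V$ gives $V = \overline{G \cdot v'} \subseteq C$.

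\emph{The comeager case.} Here let $x_0 \in X$ have comeager orbit, put $H := \Stab(x_0)$, and consider the fibre $V_0 := V \cap \eta^{-1}(\set{\delta_{x_0}})$. I would prove that $V_0$ is a \emph{minimal proximal $H$-flow}. Minimality follows as in the proof of Theorem \ref{Thm:StrAmenSubgp}: points of $V$ over $x_0$ have dense $G$-orbit by $G$-irreducibility of $\eta|_V$, while any $p \in \Sa(G)$ fixing $x_0$ lies in $\Sa(H)$ by Lemma \ref{Lem:Stab}, so Lemma \ref{Lem:FiberStab} confines the relevant orbit relations to $V_0$. Proximality is the delicate step and must be carried out in two stages mirroring $\eta = \phi \circ \psi$: given $v_1, v_2 \in V_0$, first collapse their $\psi$-images inside $W_0 := W \cap \phi^{-1}(\set{\delta_{x_0}})$, itself a minimal proximal $H$-flow by the $\tfrac{\cdot+\cdot}{2}$-trick applied to $\phi$, by some $p \in \Sa(H)$, bringing $p v_1, p v_2$ into a common $\psi$-fibre over a point of $W_0$; then collapse these by the same trick applied to $\psi$, aiming at a genuine extreme point of $K$ over $x_0$ so that Lemma \ref{Lem:Stab} again forces the collapsing element into $\Sa(H)$. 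With $V_0$ established as a minimal proximal $H$-flow, the hypothesis on $C$ at $x_0$ yields a convex combination of points of $V_0$ inside $C$; simultaneous $\Sa(H)$-collapse places a point of $V_0$ in $C$, $H$-minimality gives $V_0 \subseteq C$, and $G$-irreducibility of $\eta|_V$ upgrades this to $V = \overline{G \cdot V_0} \subseteq C$.

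I expect the comeager case to be the main obstacle. Beyond the need to reason through $\phi$ and $\psi$ separately to avoid circularity, which is precisely what forces the two-stage collapse, the essential difficulty is keeping every collapse \emph{inside the single fibre over $x_0$}; this is exactly what nonmeagerness of the orbit buys through Lemmas \ref{Lem:Stab} and \ref{Lem:FiberStab}, and it is why the analogous statement is unclear for general base flows (cf.\ Question \ref{Ques:CompProxIrrExt}). A secondary subtlety is that points of $W = \overline{\ext}(L)$ need not be extreme, so the collapses cannot target arbitrary fibre points and must instead be aimed at honest extreme points of $K$ over $x_0$, whose existence is guaranteed by the face structure of $\eta^{-1}(\set{\delta_{x_0}})$.
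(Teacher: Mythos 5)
Your proof is correct, and it rests on the same toolkit as the paper's: the same split into the minimal and comeager-orbit cases, the device of feeding affine subflows of the form $\overline{\conv}\bigl(\overline{G\cdot \tfrac{z_1+z_2}{2}}\bigr)$ into proximal irreducibility of $\phi$ or $\psi$ and extracting collapses via Milman's theorem at an extreme point, Lemma \ref{Lem:Stab} to force collapsing elements into $\Sa(\Stab(x_0))$, and a simultaneous semigroup collapse of the convex combination supplied by the hypothesis on $C$. Where you diverge is in the endgame. The paper never analyzes the fibre of the composite $\eta=\phi\circ\psi$: it collapses only the $\psi$-images of the combination inside $\overline{\ext}(L)$ (so it needs the fibre structure of $\phi$ alone), transports the collapsed point over all of $\overline{\ext}(L)$ by dense orbits, and then concludes $C=K$ by applying the definition of proximal irreducibility of $\psi$ to $C$ itself. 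You instead prove the stronger intermediate statement that $V_0=\overline{\ext}(K)\cap\eta^{-1}(\set{\delta_{x_0}})$ is a minimal proximal $\Stab(x_0)$-flow --- which is exactly what forces your two-stage collapse, first through $W_0$ and then through $\psi$ aimed at an extreme point of $K$ over $x_0$ --- and you finish with $H$-minimality, density of $G$-orbits, and Krein--Milman ($V\subseteq C$ forces $C=K$), never invoking proximal irreducibility of $\psi$ on $C$ directly. The paper's route is shorter for precisely this reason; yours costs the extra stage but yields a by-product the paper only records in a different setting (compare the unlabelled proposition after Proposition \ref{Prop:AffProjCoverPointTransGen}, the strongly proximal analogue for affine $G$-irreducible extensions): the extreme fibre of a composite of proximally irreducible extensions over the comeager-orbit point is itself a minimal proximal $\Stab(x_0)$-flow. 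The two delicate points you flag are indeed the right ones and are handled correctly --- collapses must stay inside $\Sa(\Stab(x_0))$, which Lemma \ref{Lem:Stab} guarantees once the collapse is aimed at a point over $x_0$, and the targets must be genuine extreme points of $K$ (respectively $L$), which exist because $\eta^{-1}(\set{\delta_{x_0}})$ is a nonempty closed face.
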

\begin{proof}
    First assume $X$ is a minimal $G$-flow. Let $C \subseteq K$ be an affine subflow such that 
    \[
    C \cap \conv(\overline{\ext}(K)\cap \psi^{-1}(\phi^{-1}(\set{x}))) \neq \emptyset, \ \ \ \ \text{for all }x\in X.
    \]
    Given $z\in \overline{\ext}(L)$ notice that any $x,y\in \phi^{-1}(\set{\phi(z)})$ are a proximal pair. Indeed, consider the affine subflow
    \[
     \overline{\conv}(\overline{G\cdot (\frac{x + y}{2})}) \subseteq L.
    \]
    Using minimality of $X$, the assumption that $\phi$ is proximally irreducible, and Milman's theorem we obtain that $$\ext(L) \subseteq \overline{G\cdot (\frac{x + y}{2})}.$$ In particular, there exists $p\in \Sa(G)$ such that $px = py \in \ext(L)$. 
    Now by assumption, there exists $y_1,\dots,y_n\in \overline{\ext}(K)$ and positive $\lambda_1,\dots, \lambda_n \in \R$ such that $\psi(y_j) \in \phi^{-1}(\set{\phi(z)})$ for all $1\leq j \leq n$, $\sum_{j=1}^n \lambda_j = 1$, and 
    \[
    w:= \lambda_1y_1 + \cdots + \lambda_n y_n \in C.
    \]
    Pick $p \in \Sa(G)$ such that $p\psi(y_1) = \cdots  = p\psi(y_n) = z$. It follows that
    \[
    pw \in C\cap \conv (\overline{\ext}(K) \cap \psi^{-1}(\set{z})).
    \]
    Using that $\psi$ is proximally irreducible we obtain that $C = K$ thus showing $\phi\circ \psi$ is proximally irreducible. 

    Now assume $X$ is a $G$-flow such that $x_0 \in X$ has comeager orbit. We first claim that $\phi^{-1}(\set{x_0}) \cap \overline{\ext}(L)$ is a minimal proximal $\operatorname{Stab}(x_0)$-flow. Indeed, if $z \in \phi^{-1}(\set{x_0})$, then using that $x_0\in X$ has dense orbit we get that the affine subflow $C := \overline{\conv}(\overline{G\cdot z})$ satisfies
    \[
    C \cap \conv (\overline{\ext}(L)\cap\phi^{-1}(x)) \neq \emptyset, \ \ \ \text{for all }x\in X.
    \]
    Thus $C = L$ and Milman's theorem implies $\overline{G\cdot z}  = \overline{\ext}(L)$. In particular, given any $z' \in \phi^{-1}(\set{x_0}) \cap \overline{\ext}(L)$ there exists $p \in \Sa(G)$ with $pz = z'$. Applying $\phi$ and using Lemma \ref{Lem:Stab} we obtain that $p \in \Sa(\Stab(x_0))$. It follows that $\phi^{-1}(\set{x_0})\cap \overline{\ext}(Z)$ is a minimal $\operatorname{Stab}(x_0)$-flow. Given $z_1,z_2 \in \phi^{-1}(\set{x_0})\cap \overline{\ext}(L)$, the exact same argument with 
    \[
    C := \overline{\conv}(\overline{G\cdot (\frac{z_1 + z_2}{2})})
    \]
    implies that $ \phi^{-1}(\set{x_0})\cap \overline{\ext}(L)$ is a proximal $\Stab(x_0)$-flow.
    
    Suppose $C \subseteq Y$ is an affine subflow such that
    \[
    C \cap \conv(\overline{\ext}(K)\cap \psi^{-1}(\phi^{-1}(\set{x}))) \neq \emptyset,  \ \ \ \text{for all }x\in X.
    \]
    Let $w:= \lambda_1y_1 + \cdots + \lambda_ny_n\in C$ be such that $y_j\in \overline{\ext}(Y)$, $\psi(y_j) \in \phi^{-1}(\set{x_0})$ for $1\leq j \leq n$ and $\lambda_1,\dots, \lambda_n$ are positive reals with $\sum_{j=1}^n \lambda_j = 1$. Now we can find $p\in \Sa(G)$ such that
    \[
    z_0  := p\psi(y_1) = \cdots p \psi(y_n) \in \phi^{-1}(\set{x_0}).
    \]
    Now using that $z_0 \in \overline{\ext}(L)$ has dense orbit we obtain that
    \[
    C \cap \conv (\overline{\ext}(K) \cap \psi^{-1}(\set{z})) \neq \emptyset, \ \ \ \text{for all }z \in \overline{\ext}(L).
    \]
    Therefore, $C = K$ showing $\phi \circ \psi$ is proximally irreducible. 
\end{proof}

   \begin{lemma}\label{Lem:FactorOfProxIrr}
    Let $K$, $L$ and $P$ be affine $G$-flows. Suppose we have affine extensions $\phi\colon K \to L$, $\psi\colon  P \to L$, and $\eta\colon K \to P$ such that $\phi = \psi \circ \eta$. If $\phi$ is a proximally irreducible extension, then so is $\eta$. 
\end{lemma}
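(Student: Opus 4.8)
The plan is to verify directly that $\eta$ satisfies the definition of a proximally irreducible extension, exploiting proximal irreducibility of $\phi$ together with the factorization $\phi = \psi\circ\eta$. So I would fix an affine subflow $C \subseteq K$ with
\[
C \cap \conv(\overline{\ext}(K) \cap \eta^{-1}(\set{z})) \neq \emptyset \quad \text{for all } z \in \overline{\ext}(P),
\]
and aim to deduce $C = K$. Since $\phi$ is proximally irreducible, it suffices to show that $C$ meets $\conv(\overline{\ext}(K) \cap \phi^{-1}(\set{y}))$ for every $y \in \overline{\ext}(L)$, as this is exactly the defining hypothesis that forces $C = K$.

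The key step I would establish first is the inclusion $\overline{\ext}(L) \subseteq \psi(\overline{\ext}(P))$. For $y \in \ext(L)$, the fiber $\psi^{-1}(\set{y})$ is nonempty by surjectivity of $\psi$, and it is a compact face of $P$: extremality of $y$ together with affineness of $\psi$ forces any convex decomposition of a fiber point to remain in the fiber. A nonempty compact convex set has an extreme point by Krein--Milman, and an extreme point of a face of $P$ is extreme in $P$, so $y \in \psi(\ext(P))$. Hence $\ext(L) \subseteq \psi(\overline{\ext}(P))$, and since $\psi(\overline{\ext}(P))$ is the continuous image of a compact set, it is closed, giving $\overline{\ext}(L) \subseteq \psi(\overline{\ext}(P))$.

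With this in hand I would fix $y \in \overline{\ext}(L)$, choose $z \in \overline{\ext}(P)$ with $\psi(z) = y$, and apply the hypothesis on $C$ to this $z$. This produces a point $w = \sum_j \lambda_j x_j \in C$ with $\lambda_j > 0$, $\sum_j \lambda_j = 1$, and $x_j \in \overline{\ext}(K)$ satisfying $\eta(x_j) = z$. Then $\phi(x_j) = \psi(\eta(x_j)) = \psi(z) = y$, so each $x_j \in \overline{\ext}(K) \cap \phi^{-1}(\set{y})$, whence
\[
w \in C \cap \conv(\overline{\ext}(K) \cap \phi^{-1}(\set{y})).
\]
As $y \in \overline{\ext}(L)$ was arbitrary, $C$ satisfies the defining condition for the proximally irreducible extension $\phi$, so $C = K$, and therefore $\eta$ is proximally irreducible.

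The only genuinely nontrivial step is the inclusion $\overline{\ext}(L) \subseteq \psi(\overline{\ext}(P))$, which holds for any affine extension $\psi$ and is the one place where compactness and the face structure of fibers over extreme points are used; everything else is a direct transport of the fiber condition through the equation $\phi = \psi \circ \eta$, since $\eta(x) = z$ and $\psi(z) = y$ immediately give $\phi(x) = y$.
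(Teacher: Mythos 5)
Your proof is correct and follows essentially the same route as the paper: fix an affine subflow $C \subseteq K$ satisfying the fiber condition for $\eta$, lift each $y \in \overline{\ext}(L)$ to some $z \in \overline{\ext}(P)$ with $\psi(z) = y$, and transport the condition through $\phi = \psi \circ \eta$ to conclude $C = K$ from proximal irreducibility of $\phi$. The only difference is that you explicitly justify the surjectivity of $\psi|_{\overline{\ext}(P)}$ onto $\overline{\ext}(L)$ (via fibers over extreme points being faces, Krein--Milman, and compactness), a step the paper's proof uses implicitly without comment.
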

\begin{proof}    
    Suppose $C \subseteq K$ is an affine subflow satisfying 
    \[
     C \cap \conv (\overline{\ext}(K)\cap \eta^{-1}(\set{z})) \neq \emptyset, \ \ \ \ \text{for all } z\in \overline{\ext}(P). 
    \]
    Given $y\in \overline{\ext}(L)$ pick $z \in \overline{\ext}(P)$ such that $\psi(z) = y$. Using $\phi = \psi \circ \eta$ we obtain that
    \[
    \conv (\overline{\ext}(K)\cap \eta^{-1}(\set{z})) \subseteq \conv (\overline{\ext}(K)\cap \phi^{-1}(\set{y})).
    \]
    As $\phi$ is proximally irreducible we obtain that $C = K$, thus showing $\eta$ is proximally irreducible. 
\end{proof}
\begin{lemma}\label{Lem:InvLimProxIrr}
   Let $L$ be an affine $G$-flow and $(K_\lambda)_{\lambda \in I}$ be a collection of affine $G$-flows where $I$ is some directed set. Suppose we have proximally irreducible extensions $\pi_\lambda\colon  K_\lambda \to L$ and we have affine extension $\iota_\lambda^\beta\colon  K_\beta \to K_\lambda$ such that $\pi_\beta = \pi_\lambda \circ \iota_\lambda^\beta$ for all $\lambda < \beta \in I$. If $K:= \varprojlim K_\lambda$ is the inverse limit and $\eta_\lambda\colon  K \to K_\lambda$ are the canonical affine extensions, then defining $\pi:= \pi_\lambda \circ \eta_\lambda$ (choice of $\lambda$ does not matter) we have that $\pi\colon  K \to L$ is a proximally irreducible extension. 
\end{lemma}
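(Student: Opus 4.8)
The plan is to mimic the inverse-limit step in the proof of Theorem~\ref{Thm:AffProjCoverExist}, but with the extra bookkeeping on extreme points and fibers that proximal irreducibility demands. Write $\pi = \pi_\lambda \circ \eta_\lambda$; this is independent of $\lambda$ because $\eta_\lambda = \iota_\lambda^\beta \circ \eta_\beta$ forces $\pi_\beta \circ \eta_\beta = \pi_\lambda \circ \iota_\lambda^\beta \circ \eta_\beta = \pi_\lambda \circ \eta_\lambda$, and it is a surjective affine $G$-map. Let $C \subseteq K$ be an affine subflow with $C \cap \conv(\overline{\ext}(K) \cap \pi^{-1}(\set{y})) \neq \emptyset$ for every $y \in \overline{\ext}(L)$; I must show $C = K$. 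The reduction I would use is the standard inverse-limit fact already invoked in Theorem~\ref{Thm:AffProjCoverExist}: a closed subset $C$ of $K = \varprojlim K_\lambda$ with $\eta_\lambda(C) = K_\lambda$ for all $\lambda$ must equal $K$. So it suffices to prove $\eta_\lambda(C) = K_\lambda$ for each fixed $\lambda$, and since $\eta_\lambda(C)$ is an affine subflow of $K_\lambda$, this will follow from proximal irreducibility of $\pi_\lambda$ once I verify that $\eta_\lambda(C)$ meets $\conv(\overline{\ext}(K_\lambda) \cap \pi_\lambda^{-1}(\set{y}))$ for every $y \in \overline{\ext}(L)$.

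The main obstacle is that pushing the hypothesis on $C$ forward through $\eta_\lambda$ requires knowing that $\eta_\lambda$ carries $\overline{\ext}(K)$ into $\overline{\ext}(K_\lambda)$, which is false for a general affine surjection. I would establish exactly this using the tower structure. First, each bonding map $\iota_\lambda^\beta$ is itself proximally irreducible: applying Lemma~\ref{Lem:FactorOfProxIrr} to the factorization $\pi_\beta = \pi_\lambda \circ \iota_\lambda^\beta$ of the proximally irreducible map $\pi_\beta$ gives this at once. Hence by Lemma~\ref{Lem:ProxIrrImpliesIrr} each $\iota_\lambda^\beta$ restricts to a $G$-irreducible (in particular surjective) extension $\overline{\ext}(K_\beta) \to \overline{\ext}(K_\lambda)$, so in particular $\iota_\lambda^\beta(\overline{\ext}(K_\beta)) \subseteq \overline{\ext}(K_\lambda)$. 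This lets me form the inverse limit of $G$-flows $X := \varprojlim_\lambda \overline{\ext}(K_\lambda)$ along these restricted bonding maps, realized as a closed subflow of $K$ with surjective projections $\eta_\lambda|_X \colon X \to \overline{\ext}(K_\lambda)$.

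Next I would show $\overline{\conv}(X) = K$, which is where Krein--Milman enters. For each $\lambda$, continuity and affineness of $\eta_\lambda$ give $\eta_\lambda(\overline{\conv}(X)) \supseteq \overline{\conv}(\eta_\lambda(X)) = \overline{\conv}(\overline{\ext}(K_\lambda)) = K_\lambda$, so $\eta_\lambda(\overline{\conv}(X)) = K_\lambda$ for all $\lambda$; the inverse-limit fact applied to the closed set $\overline{\conv}(X)$ then yields $\overline{\conv}(X) = K$. Milman's theorem now forces $\ext(K) \subseteq X$, hence $\overline{\ext}(K) \subseteq X$, which unpacks precisely to the desired inclusion $\eta_\lambda(\overline{\ext}(K)) \subseteq \overline{\ext}(K_\lambda)$ for every $\lambda$.

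With this inclusion the pushforward is routine. Fix $\lambda$ and $y \in \overline{\ext}(L)$, and take $w = \sum_{j=1}^n \lambda_j x_j \in C$ from the hypothesis, with each $x_j \in \overline{\ext}(K) \cap \pi^{-1}(\set{y})$ and $\lambda_j > 0$, $\sum_j \lambda_j = 1$. Then $\eta_\lambda(x_j) \in \overline{\ext}(K_\lambda)$ by the inclusion just proved, while $\pi_\lambda(\eta_\lambda(x_j)) = \pi(x_j) = y$, so each $\eta_\lambda(x_j) \in \overline{\ext}(K_\lambda) \cap \pi_\lambda^{-1}(\set{y})$ and therefore
\[
\eta_\lambda(w) = \sum_{j=1}^n \lambda_j\, \eta_\lambda(x_j) \in \eta_\lambda(C) \cap \conv\!\big(\overline{\ext}(K_\lambda) \cap \pi_\lambda^{-1}(\set{y})\big).
\]
Note that this lands in the genuine (finite) convex hull $\conv$ and not merely in its closure, which is exactly why the extreme-point inclusion was the crux of the argument. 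As $y \in \overline{\ext}(L)$ was arbitrary, proximal irreducibility of $\pi_\lambda$ gives $\eta_\lambda(C) = K_\lambda$, and since $\lambda$ was arbitrary the inverse-limit fact gives $C = K$. Hence $\pi$ is proximally irreducible.
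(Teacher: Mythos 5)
Your proposal is correct and takes essentially the same route as the paper's proof: both first show each bonding map $\iota_\lambda^\beta$ is proximally irreducible via Lemma~\ref{Lem:FactorOfProxIrr}, hence surjects on closures of extreme points, then combine Krein--Milman, Milman's theorem, and the inverse-limit rigidity fact to get $\eta_\lambda(\overline{\ext}(K)) \subseteq \overline{\ext}(K_\lambda)$, and finish with the identical pushforward argument. The only cosmetic difference is that you realize the key set as $X = \varprojlim_\lambda \overline{\ext}(K_\lambda)$ with surjective projections, whereas the paper works with the decreasing family $C_\lambda = \eta_\lambda^{-1}(\overline{\ext}(K_\lambda)) \cap \overline{\ext}(K)$ and its intersection, which is your $X$ intersected with $\overline{\ext}(K)$.
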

\begin{proof}
       Given any $\lambda < \beta \in I$ we have that $\pi_\beta = \pi_\lambda \circ \iota_\lambda^\beta$ where $\pi_\beta$ is proximally irreducible. By Lemma \ref{Lem:FactorOfProxIrr}, we have that $\iota_\lambda^\beta$ is also proximally irreducible. In particular, $\iota_\lambda^\beta(\overline{\ext}(K_\beta)) = \overline{\ext}(K_\lambda)$. 
       
        We claim that for all $\lambda \in I$ we have $\eta_\lambda(\overline{\ext}(K))= \overline{\ext}(K_\lambda)$. Defining the $G$-flow $C_\lambda := \eta_\lambda^{-1}(\overline{\ext}(K_\lambda)) \cap \overline{\ext}(K)$ we have that $\eta_\lambda(C_\lambda) = \overline{\ext}(K_\lambda)$. Given $\lambda < \beta \in I$ and using that $\eta_\lambda = \iota_\lambda^\beta \circ \eta_\beta$ we obtain
        \begin{align*}
            C_\beta &= \eta_\beta^{-1}(\overline{\ext}(K_\beta)) \cap \overline{\ext}(K) \\
            &= \eta_\beta^{-1}((\iota_\lambda^\beta)^{-1}(\overline{\ext}(K_\lambda))\cap \overline{\ext}(K_\beta) ) \cap \overline{\ext}(K) \\
            &\subseteq \eta_\beta^{-1}((\iota_\lambda^\beta)^{-1}(\overline{\ext}(K_\lambda))) \cap \overline{\ext}(K) \\
            &= \eta_\lambda^{-1}(\overline{\ext}(K_\lambda)) \cap \overline{\ext}(K) \\
            &= C_\lambda.
        \end{align*}
        Moreover, we have 
        \[
        \eta_\lambda(C_\beta) = \iota_\lambda^\beta(\eta_\beta(C_\beta)) = \iota_\lambda^\beta(\overline{\ext}(K_\beta))= \overline{\ext}(K_\lambda). 
        \]
        Therefore the $G$-flow $C:= \bigcap_{\lambda\in I} C_\lambda \subseteq \overline{\ext}(K)$ satisfies $\eta_\lambda(C) = \overline{\ext}(K_\lambda)$ for all $\lambda \in K$. In particular, we obtain 
        \[
        \eta_\lambda(\overline{\conv}(C)) = K_\lambda
        \]
        for all $\lambda \in I$. Hence, by the universal property of the inverse limit we must have $\overline{\conv}(C) = K$ and thus by Milman's theorem $\overline{\ext}(K) = C$. From this it follows that $\eta_\lambda(\overline{\ext}(K))= \overline{\ext}(K_\lambda)$ for all $\lambda \in I$. 
        
        We now show $\pi\colon  K \to L$ is a proximally irreducible extension. Let $Z \subseteq K$ be an affine subflow such that 
        \[
         Z \cap \conv (\overline{\ext}(K)\cap \pi^{-1}(\set{y})) \neq \emptyset, \  \ \ \text{for all } y\in \overline{\ext}(L).
        \]
       Fix $\lambda \in I$ and $z\in \overline{\ext}(K_\lambda)$. Since $\pi_\lambda\colon K_\lambda \to L$ is proximally irreducible we have that $\pi_\lambda(\overline{\ext}(K_\lambda)) = \overline{\ext}(L)$. Let $w := \alpha_1x_1  + \cdots + \alpha_n x_n \in Z$ where $x_1,\dots, x_n \in \overline{\ext}(K)$ are such that $\pi(x_j) = \pi_\lambda(z) \in \overline{\ext}(L)$ for $1\leq j\leq n$ and $\alpha_1,\dots, \alpha_n$ are positive real numbers with $\sum_{j=1}^n \alpha_j = 1$. As $\pi_\lambda\circ\eta_\lambda = \pi$ we obtain that for all $1\leq j\leq n$, $\eta_\lambda(x_j) \in \pi_\lambda^{-1}(\set{y})$ and thus 
       \[
       \eta_\lambda(w) \in \eta_\lambda(Z) \cap \conv (\overline{\ext}(K_\lambda) \cap \pi_\lambda^{-1}(\set{y})) \neq \emptyset.
       \]
       Since this holds for all $y\in \overline{\ext}(L)$ we must have $\eta_\lambda(Z) = K_\lambda$. Using the universal property of the inverse limit we obtain $Z = K$. Therefore, $\pi\colon K \to L$ is proximally irreducible.
\end{proof}
\begin{theorem}\label{Thm:MaxProxIrrFactor}
        Let $K,L$ be affine $G$-flows and $\phi\colon K\to L$ an affine extension. Then there exists an affine $G$-flow $P$ such that the following holds.
        \begin{enumerate}
            \item There exists an affine extension $\psi\colon  K \to P$.
            \item There exists a proximally irreducible extension $\pi\colon  P \to L$ such that $\pi \circ \psi = \phi$.
            \item $P$ is a maximal factor of $K$ with respect to the above two properties.
        \end{enumerate}
    \end{theorem}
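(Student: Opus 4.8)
The plan is to realise $P$ as a maximal element of a poset of factorizations of $\phi$, using the inverse-limit lemma to supply upper bounds for chains. Concretely, I would let $\mathcal{F}$ be the collection of triples $(P',\psi',\pi')$ in which $\psi'\colon K \to P'$ is an affine extension, $\pi'\colon P' \to L$ is a proximally irreducible extension, and $\pi' \circ \psi' = \phi$. Order $\mathcal{F}$ by declaring $(P_1,\psi_1,\pi_1) \leq (P_2,\psi_2,\pi_2)$ whenever there is an affine extension $\theta\colon P_2 \to P_1$ with $\theta \circ \psi_2 = \psi_1$ and $\pi_1 \circ \theta = \pi_2$; since $\psi_2$ is surjective, such a $\theta$ is unique when it exists, so this descends to a genuine partial order on isomorphism classes. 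A maximal element of $\mathcal{F}$ is then exactly an affine $G$-flow $P$ satisfying conditions (1)--(3), so the whole theorem reduces to an application of Zorn's lemma.

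First I would check that $\mathcal{F}$ is nonempty and forms a set. Nonemptiness is witnessed by $(L,\phi,\mathrm{id}_L)$: the identity is proximally irreducible because, as $\mathrm{id}^{-1}(\set{y}) = \set{y}$, the defining condition of Definition \ref{Def:ProxIrr} forces any eligible affine subflow $C \subseteq L$ to contain every $y \in \overline{\ext}(L)$, whence $C = L$ by the Krein--Milman theorem. That $\mathcal{F}$ is a set rather than a proper class follows from the fact that every factor $P'$ of $K$ is determined up to isomorphism by the closed unital $G$-invariant subspace $(\psi')^{*}A(P') \subseteq A(K)$; since the subspaces of $A(K)$ form a set, the isomorphism classes occurring in $\mathcal{F}$ are bounded in number, and Zorn's lemma applies.

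The heart of the argument is producing an upper bound for a chain $\set{(P_\lambda,\psi_\lambda,\pi_\lambda)}_{\lambda \in I}$. Writing $\theta_\lambda^\beta\colon P_\beta \to P_\lambda$ for the unique connecting maps when $\lambda \leq \beta$, these are affine extensions with $\pi_\beta = \pi_\lambda \circ \theta_\lambda^\beta$, so I can form the inverse limit $P_\infty := \varprojlim P_\lambda$ with projections $\eta_\lambda\colon P_\infty \to P_\lambda$. By Lemma \ref{Lem:InvLimProxIrr}, the induced map $\pi_\infty\colon P_\infty \to L$ defined by $\pi_\lambda \circ \eta_\lambda$ (independent of $\lambda$) is a proximally irreducible extension. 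The compatible family $\psi_\lambda\colon K \to P_\lambda$ induces an affine $G$-map $\psi_\infty\colon K \to P_\infty$ with $\eta_\lambda \circ \psi_\infty = \psi_\lambda$ and $\pi_\infty \circ \psi_\infty = \phi$, so the only remaining point is surjectivity of $\psi_\infty$. For this I would observe that $\psi_\infty(K)$ is a closed subset of $P_\infty$ satisfying $\eta_\lambda(\psi_\infty(K)) = \psi_\lambda(K) = P_\lambda$ for every $\lambda$; a standard finite-intersection-property argument then shows that any closed subset of an inverse limit of compact spaces which projects onto each coordinate must be the entire inverse limit. Hence $(P_\infty,\psi_\infty,\pi_\infty) \in \mathcal{F}$ and dominates the chain.

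With all chains bounded, Zorn's lemma produces a maximal element $(P,\psi,\pi)$ of $\mathcal{F}$, and this $P$ is the desired maximal factor. I expect the only genuinely delicate step to be the surjectivity of $\psi_\infty$ onto the inverse limit, since all of the content concerning proximal irreducibility is already packaged into Lemma \ref{Lem:InvLimProxIrr}. The overall scheme parallels the Zorn-type and transfinite arguments used earlier (compare Lemma \ref{Lem:AffProjAffIrr} and Theorem \ref{Thm:AffProjCoverExist}), with the role played there by ``composition of irreducible extensions'' taken over here by ``passage to inverse limits,'' which is precisely why we can avoid the open composition problem of Question \ref{Ques:CompProxIrrExt}.
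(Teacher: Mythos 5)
Your proposal is correct and is essentially the paper's own argument: the paper also applies Zorn's lemma to the factors of $K$ through which $\phi$ factors with proximally irreducible second leg (phrased there via closed $G$-invariant equivalence relations $R_\lambda$ on $K$), bounds chains by the inverse limit $K/\bigcap_\lambda R_\lambda \simeq \varprojlim K/R_\lambda$, and invokes Lemma \ref{Lem:InvLimProxIrr} exactly as you do. The only difference is bookkeeping: your finite-intersection-property verification that $\psi_\infty$ surjects onto the inverse limit, and your set-theoretic reduction via subspaces of $A(K)$, are details the paper compresses into its identification of the quotient with the inverse limit.
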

    \begin{proof}
        Notice $L$ itself satisfies conditions $1$, $2$. Now suppose $(R_\lambda)_{\lambda \in I}$ are closed decreasing $G$-invariant equivalence relations on $K$ such that $K/R_{\lambda}$ are affine $G$-flows and there exists proximally irreducible extensions $\pi_\lambda\colon K/R_\lambda \to L$. Furthermore all the affine extensions $K \to K/R_\lambda$, $K/R_\beta \to K/R_\lambda$ for $\lambda < \beta$, and $K/R_\lambda \to L$ are compatible in the way one expects. Then let $R:= \bigcap_{\lambda \in I} R_\lambda$ and notice 
        \[
        K/R \simeq \varprojlim K/R_\lambda.
        \]
        By Lemma \ref{Lem:InvLimProxIrr}, $K/R$ satisfies the above two conditions. By Zorn's lemma such a maximal factor of $K$ exists.
    \end{proof}
We can now show certain affine $G$-flows have a maximal proximally irreducible extension. In particular, the following theorem holds for affine $G$-flows of the form $P(Y)$ where $Y$ is either a minimal $G$-flow or contains a comeager orbit.
 \begin{theorem}\label{Thm:MaxProxIrrExt}
        Let $L$ be an affine $G$-flow such that if $K,P$ are affine $G$-flows and $\phi\colon K \to L$, $\psi\colon  P \to K$ are proximally irreducible extensions, then $\phi \circ \psi \colon P \to L$ is also a proximally irreducible extension. Then there exists a proximally irreducible affine  $G$-flow $P(X)$ and a proximally irreducible extension $\phi\colon P(X) \to L$, i.e., $L$ admits a maximal proximally irreducible extension.
    \end{theorem}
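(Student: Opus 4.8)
The plan is to mimic, almost verbatim, the transfinite construction used to prove Theorem \ref{Thm:AffProjCoverExist}, but with affine $G$-irreducible extensions replaced throughout by proximally irreducible ones. The hypothesis placed on $L$ — that the composite of two proximally irreducible extensions sitting over $L$ is again proximally irreducible — is precisely what licenses the successor step of the recursion, while Lemma \ref{Lem:InvLimProxIrr} takes care of the limit step. So I would argue by contradiction: suppose $L$ admits no maximal proximally irreducible extension, meaning that for every proximally irreducible extension $\phi\colon M \to L$ the flow $M$ fails to be proximally irreducible and hence carries a nontrivial proximally irreducible extension. I then build, by transfinite recursion on $\lambda \in \operatorname{ORD}$, affine $G$-flows $L_\lambda$ with proximally irreducible extensions $\phi_\lambda\colon L_\lambda \to L$ and affine extensions $\gamma_\lambda^\beta\colon L_\lambda \to L_\beta$ for $\beta < \lambda$, compatible in the sense that $\phi_\beta \circ \gamma_\lambda^\beta = \phi_\lambda$ and $\gamma_\beta^\alpha \circ \gamma_\lambda^\beta = \gamma_\lambda^\alpha$. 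One starts with $L_0 = L$ and $\phi_0 = \text{id}$, which is proximally irreducible by Krein--Milman.

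For the successor step, since $\phi_\lambda\colon L_\lambda \to L$ is proximally irreducible, $L_\lambda$ is not a proximally irreducible flow and so carries a nontrivial proximally irreducible extension $\gamma_{\lambda+1}^\lambda\colon L_{\lambda+1} \to L_\lambda$; I set $\phi_{\lambda+1} := \phi_\lambda \circ \gamma_{\lambda+1}^\lambda$, which is proximally irreducible exactly by the composition hypothesis on $L$, and define the remaining $\gamma_{\lambda+1}^\beta$ by composition. For a limit ordinal $\lambda$, I take $L_\lambda := \varprojlim_{\beta < \lambda} L_\beta$ with $\gamma_\lambda^\beta$ the canonical projections — surjective because all bonding maps are — and $\phi_\lambda := \phi_\beta \circ \gamma_\lambda^\beta$. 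The decisive point here is that Lemma \ref{Lem:InvLimProxIrr} requires only that the bonding maps be affine extensions and that the maps $\phi_\beta$ into $L$ be proximally irreducible, both of which hold; hence $\phi_\lambda$ is again proximally irreducible. Thus the recursion manufactures proximally irreducible extensions $\phi_\lambda$ of $L$ of arbitrarily large ordinal length, with every successor bonding map nontrivial.

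To close the contradiction I need a uniform cardinality bound on proximally irreducible extensions of $L$. Given any such $\phi\colon M \to L$, Lemma \ref{Lem:ProxIrrImpliesIrr} shows $\phi$ carries $\overline{\ext}(M)$ onto $\overline{\ext}(L)$, and for each $x \in \ext(L)$ the fiber $\phi^{-1}(\set{x})$ is a face, so I may pick $y_x \in \ext(M)$ with $\phi(y_x) = x$. Putting $C := \overline{\conv}(\overline{G \cdot \set{y_x : x \in \ext(L)}})$, a short limiting argument — approximate an arbitrary $y \in \overline{\ext}(L)$ by a net in $\ext(L)$ and pass to a convergent subnet of the corresponding $y_x$ — shows $C$ meets $\overline{\ext}(M) \cap \phi^{-1}(\set{y})$ for every $y \in \overline{\ext}(L)$, whence proximal irreducibility forces $C = M$. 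Consequently $|M|$ is bounded by a cardinal $\kappa$ depending only on $G$ and $|\ext(L)|$, just as in the bound $2^{2^{|\R||G||K|}}$ of Theorem \ref{Thm:AffProjCoverExist}. On the other hand the growth estimate of \cite[Theorem 3.8]{BallHagler1996} yields $|L_\lambda| \geq |\lambda|$ since every successor bonding map is nontrivial, so taking $\lambda$ with $|\lambda| > \kappa$ is absurd. Therefore $L$ has a maximal proximally irreducible extension $\phi\colon K \to L$ with $K$ proximally irreducible; by the discussion following Definition \ref{Def:ProxIrr} such a $K$ is affine $G$-projective, and Corollary \ref{Cor:AffProjFlows} gives $K \simeq P(X)$ for a $G$-flow $X$, as desired.

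I expect the genuine difficulty to lie in the interaction between the cardinality bound and the limit step, rather than in the formal skeleton. Concretely, one must be certain that proximal irreducibility survives inverse limits even though the bonding maps themselves are not assumed proximally irreducible (this is exactly the content of Lemma \ref{Lem:InvLimProxIrr}), and that the Ball--Hagler growth estimate, originally stated for irreducible extensions, transfers to the present weaker notion. The natural bridge is the restriction to extreme points: the successor bonding maps $\gamma_{\lambda+1}^\lambda$ restrict to $G$-irreducible extensions $\overline{\ext}(L_{\lambda+1}) \to \overline{\ext}(L_\lambda)$ by Lemma \ref{Lem:ProxIrrImpliesIrr}, and the computation $\eta_\lambda(\overline{\ext}(K)) = \overline{\ext}(K_\lambda)$ carried out inside the proof of Lemma \ref{Lem:InvLimProxIrr} is what allows the estimate on $|\overline{\ext}(L_\lambda)|$, and hence on $|L_\lambda|$, to be pushed through the limit stages. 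Everything else is a faithful transcription of the argument for Theorem \ref{Thm:AffProjCoverExist}.
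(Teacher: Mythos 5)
Your proposal is correct, but it takes a genuinely different route from the paper. You transplant the transfinite Ball--Hagler construction of Theorem \ref{Thm:AffProjCoverExist}: the composition hypothesis on $L$ licenses the successor steps, Lemma \ref{Lem:InvLimProxIrr} (which, as you rightly stress, only requires the bonding maps to be affine extensions, not proximally irreducible ones) handles the limit steps, and the contradiction comes from a cardinality bound together with the Ball--Hagler growth estimate. The one genuinely new ingredient your route requires --- and which you supply --- is the net argument showing that $C := \overline{\conv}\left(\overline{G\cdot \set{y_x : x\in \ext(L)}}\right)$ meets $\conv\left(\overline{\ext}(M)\cap\phi^{-1}(\set{y})\right)$ for \emph{every} $y\in\overline{\ext}(L)$, so that proximal irreducibility forces $C=M$; this is the correct adaptation of the bound in Theorem \ref{Thm:AffProjCoverExist}, where the hypothesis-checking is easier because affine $G$-irreducibility only asks for surjectivity of $\phi|_C$. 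The paper instead stays bounded from the outset: it sets $Z := \proj_G(\overline{\ext}(L))$, applies Theorem \ref{Thm:MaxProxIrrFactor} to the canonical extension $\psi\colon P(Z)\to L$ to obtain a \emph{maximal factor} $K$ of $P(Z)$ equipped with a proximally irreducible map $\phi\colon K\to L$, and then shows this $K$ is proximally irreducible: given any proximally irreducible $\eta\colon C\to K$, the composite $\phi\circ\eta$ is proximally irreducible by the hypothesis on $L$, it restricts to a $G$-irreducible extension $\overline{\ext}(C)\to\overline{\ext}(L)$ by Lemma \ref{Lem:ProxIrrImpliesIrr}, and $G$-projectivity of $Z$ lifts $\psi|_Z$ through this restriction; composing with the barycenter map yields an affine extension $h\colon P(Z)\to C$ with $\phi\circ\eta\circ h=\psi$, so the maximality of the factor $K$ forces $\eta$ to be an isomorphism. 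The trade-off is this: the paper's argument confines all set-theoretic bookkeeping to Theorem \ref{Thm:MaxProxIrrFactor} (a Zorn argument over factors of the single flow $P(Z)$, with Lemma \ref{Lem:InvLimProxIrr} verifying the chain condition) and, as a bonus, exhibits the maximal proximally irreducible extension as a factor of $P(\proj_G(\overline{\ext}(L)))$, a structural fact used again in the final proposition of the paper; your argument is more self-contained, bypassing $G$-projective covers and Theorem \ref{Thm:MaxProxIrrFactor} entirely, at the price of the cardinality estimate and the growth bound $|L_\lambda|\geq|\lambda|$ --- both of which the paper itself already relies on in Theorem \ref{Thm:AffProjCoverExist}, so your proof is at the same level of rigor as the paper's own transfinite arguments.
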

    \begin{proof}
        Let $Z := \proj_G(\overline{\ext}(L))$ be the $G$-projective cover and $\psi\colon  P(Z) \to Y$ the induced affine extension. Let $K$ be an affine factor of $P(Z)$ and $\phi\colon K \to L$ a proximally irreducible extension as given by Theorem \ref{Thm:MaxProxIrrFactor}. Suppose $C$ is an affine $G$-flow and $\eta\colon C \to K$ is a proximally irreducible extension. Now $\phi\circ \eta\colon C \to L$ is proximally irreducible and thus incudes a $G$-projective extension $$\phi\circ \eta|_{\overline{\ext}(C)}\colon \overline{\ext}(C) \to\overline{\ext}(L).$$Since $Z$ is the $G$-projective cover of $\overline{\ext}(L)$, there exists an extension $f\colon  Z \to \overline{\ext}(C)$ such that $\phi\circ \eta \circ f = \psi|_Z$. Consider the induced affine extension $\Tilde{f}\colon P(Z) \to P(\overline{\ext}(C))$. Now compose with the barycenter map to yield an affine extension $h\colon P(Z)\to C$ such that $\phi\circ \eta \circ h = \psi$. By our assumption on $L$, $\phi \circ \eta$ is a proximally irreducible extension. Now Theorem \ref{Thm:MaxProxIrrFactor} tells us that $\eta\colon C \to K$ must be an isomorphism. Therefore, $K$ is proximally irreducible and thus $K \simeq P(X)$ for some $G$-flow $X$.
    \end{proof}
  We can also show that if such a maximal proximally irreducible extension does exist, then uniqueness follows automatically.
      \begin{prop}\label{Prop:UniMaxProxIrrExt}
        Let $L$ be an affine $G$-flow. If $P(X)$ is a proximally irreducible flow and a proximally irreducible extension of $L$, then $X$ is unique up to isomorphism. 
    \end{prop}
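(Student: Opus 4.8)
The plan is to compare the two candidate covers directly, build a comparison map out of affine $G$-projectivity, and then force that map to be an isomorphism using the proximal irreducibility hypothesis. Suppose $\phi_1\colon P(X_1)\to L$ and $\phi_2\colon P(X_2)\to L$ are both proximally irreducible extensions with $P(X_1)$ and $P(X_2)$ proximally irreducible flows. Since a proximally irreducible flow is affine $G$-projective (the Example immediately following Definition \ref{Def:ProxIrr}), I would apply affine $G$-projectivity of $P(X_2)$ to the affine $G$-map $\phi_2\colon P(X_2)\to L$ and the affine extension $\phi_1\colon P(X_1)\to L$, obtaining an affine $G$-map $\chi\colon P(X_2)\to P(X_1)$ with $\phi_1\circ\chi=\phi_2$. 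The goal is then to prove that $\chi$ is an isomorphism; restricting an affine isomorphism to extreme points gives $X_1\simeq X_2$, since $\ext(P(X_i))=X_i$.

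The endgame is short once surjectivity of $\chi$ is in hand. If $\chi$ is an affine extension, then Lemma \ref{Lem:FactorOfProxIrr} applied to the factorization $\phi_2=\phi_1\circ\chi$ — with $\phi_2$ proximally irreducible and $\phi_1,\chi$ affine extensions — shows that $\chi$ is itself proximally irreducible. As $P(X_1)$ is by hypothesis a proximally irreducible flow, i.e.\ admits no nontrivial proximally irreducible extensions, the proximally irreducible extension $\chi\colon P(X_2)\to P(X_1)$ must be an isomorphism, finishing the argument.

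Thus everything reduces to showing that $\chi$ is surjective, and this is the main obstacle. Writing $K:=\chi(P(X_2))\subseteq P(X_1)$, an affine subflow with $\phi_1(K)=L$, I would prove $K=P(X_1)$ by verifying the fiber condition of Definition \ref{Def:ProxIrr} for $\phi_1$: that $K\cap\conv(\overline{\ext}(P(X_1))\cap\phi_1^{-1}(\set{y}))\neq\emptyset$ for every $y\in\overline{\ext}(L)$, where $\overline{\ext}(P(X_1))=X_1$. By Lemma \ref{Lem:ProxIrrImpliesIrr} the restriction $\phi_2|_{X_2}\colon X_2\to\overline{\ext}(L)$ is a $G$-irreducible extension, hence surjective, so for each $y$ there is $x_2\in X_2$ with $\phi_2(x_2)=y$; then $\chi(\delta_{x_2})\in K$ lies in the face $\phi_1^{-1}(\set{y})$ and is therefore a measure supported on $X_1\cap\phi_1^{-1}(\set{y})$. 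The remaining task is to upgrade this point of the \emph{closed} convex hull of the fiber to an honest finite convex combination lying in $K$. Here I would deploy the proximal-collapse technique from the proof of Theorem \ref{Thm:StrAmenSubgp}, together with Lemma \ref{Lem:Stab} and Lemma \ref{Lem:FiberStab}: elements $p\in\Sa(G)$ fixing $y$ act on the fiber, and proximal irreducibility of $\phi_2$ forces finite families of fiber points to be proximal, so that applying a suitable $p$ collapses them and produces the required witness in $K\cap\conv(X_1\cap\phi_1^{-1}(\set{y}))$.

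The step I expect to be genuinely delicate is precisely this passage from $\overline{\conv}$ to $\conv$ that is built into Definition \ref{Def:ProxIrr}: a naive limit of finite convex combinations over nearby fibers lands only in the closed convex hull of the limiting fiber. As in the earlier proofs of the paper, I would control this by keeping the number of terms in the convex combinations uniformly bounded, so that the fiber condition survives passage to limits and the witnesses remain genuine finite combinations. Once $K=P(X_1)$ is established, the clean endgame of the second paragraph applies and uniqueness of $X$ up to isomorphism follows.
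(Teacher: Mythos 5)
Your first two paragraphs are correct and essentially reproduce the paper's own argument: the paper manufactures the comparison map (there called $\eta$) by taking a minimal affine subflow of the fiber product $\set{(x,z)\in P(X_1)\times P(X_2):\phi_1(x)=\phi_2(z)}$ --- which is just the proof of affine $G$-projectivity of proximally irreducible flows unwound --- and then concludes, exactly as you do, via Lemma \ref{Lem:FactorOfProxIrr} and the hypothesis that the covers admit no nontrivial proximally irreducible extensions. You have also put your finger on the real subtlety: Lemma \ref{Lem:FactorOfProxIrr} and the notion of ``proximally irreducible flow'' both apply only to affine \emph{extensions}, so nothing is finished until $\chi$ is shown to be surjective. (The paper never verifies this for its $\eta$, and for an arbitrary choice of comparison map surjectivity can genuinely fail: if $L=\set{*}$ and $G$ is amenable but not strongly amenable, then $X_i\simeq \Pi(G)$, and the minimal subflow in the paper's fiber product may be taken to be $P(X_1)\times\set{z_0}$ with $z_0$ an invariant measure, making $\eta$ constant.)

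However, your proposed proof of surjectivity has a genuine gap, precisely at the step you flag as delicate. Writing $A_y:=X_1\cap\phi_1^{-1}(\set{y})$, you correctly obtain $\chi(\delta_{x_2})\in K\cap P(A_y)=K\cap\overline{\conv}(A_y)$, but Definition \ref{Def:ProxIrr} demands a point of $K\cap\conv(A_y)$. The ``proximal collapse'' technique cannot bridge this: it operates only on finitely supported measures, where for $\mu=\sum_{j\le n}\lambda_j\delta_{x_j}$ and $p\in\Sa(G)$ one has $p\mu=\sum_{j}\lambda_j\delta_{px_j}$, so joint proximality of $x_1,\dots,x_n$ collapses $\mu$ to a Dirac mass. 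The witness $\chi(\delta_{x_2})$ need not be finitely supported, and for a diffuse measure $\nu$ on the fiber there is no reason any $p\in\Sa(G)$ should move $\nu$ into $\conv(A_y)$, even if every finite subset of $A_y$ is jointly proximal --- collapsing measures rather than points is exactly strong proximality, and the hypotheses supply only proximality-type control. Moreover, the input claim that ``proximal irreducibility of $\phi_2$ forces finite families of fiber points to be proximal'' is proved in the paper (Proposition \ref{Prop:ProxExt}, Proposition \ref{Prop:CompOfProxIrr}) only when $\overline{\ext}(L)$ is minimal or has a comeager orbit, since those arguments need the orbit closure of a single measure to meet the convex hull of every extreme-point fiber; the present proposition allows an arbitrary affine $G$-flow $L$. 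Finally, your fallback of keeping ``the number of terms uniformly bounded'' has nothing to act on: $\chi(\delta_{x_2})$ is not presented as a limit of convex combinations of fiber points with boundedly many terms, so there is no such net whose limit you can control. As written, the proof is therefore incomplete at the surjectivity step; the endgame is fine once surjectivity is supplied, but supplying it requires an idea not present in the proposal (nor, for that matter, made explicit in the paper).
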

    \begin{proof}
       Let $\phi\colon P(X) \to L$ and $\psi\colon P(Z) \to L$ be maximal proximally irreducible extensions.  Define $C \subseteq P(X) \times P(Z)$ via
       \[
       C:= \set{(x,z): \phi(x) = \psi(z)}.
       \]
       It follows that $C$ is an affine subflow and the projection map $\pi_1\colon C \to P(X)$ is an affine extension. Let $C' \subseteq C$ be an affine subflow minimal with respect to $\pi_1(C') = P(X)$. Thus $\pi_1|_{C'}\colon  C' \to P(X)$ is an affine $G$-irreducible extension. In particular, $\pi_1|_{C'}$ is a proximally irreducible extension and hence a homeomorphism. If $\pi_2\colon C \to P(Z)$ denotes the projection onto $P(Z)$, then $\eta:= \pi_2\circ (\pi_1|_{C'})^{-1}\colon  P(X) \to P(Z)$ is an affine $G$-map such that $\psi \circ \eta = \phi$. It follows by Lemma \ref{Lem:FactorOfProxIrr}, that $\eta$ is a proximally irreducible extension. Therefore, $\eta\colon P(X) \to P(Z)$ is an isomorphism since $P(Z)$ was assumed to have no nontrivial proximally irreducible extensions. 
    \end{proof}
    Now using Theorem \ref{Thm:MaxProxIrrExt} we can prove an analogue of Proposition \ref{Prop:MaxStrProxExt} for the case of proximal extensions. Note the proximal case is already know (see \cite[Chapter 10]{Auslander1988}) but our techniques developed here provide another proof. 
    \begin{prop}\label{Prop:MaxProxExt}
        Let $Y$ be a minimal $G$-flow. Then there exists a minimal $G$-flow $X$ and a proximal extension $\phi\colon X \to Y$ such that for any minimal $G$-flow $Z$ and a proximal extension $\psi\colon Z \to Y$, there exists a proximal extension $\pi\colon X \to Z$ with $\psi \circ \pi = \phi$, i.e., $X$ is seen a maximal proximal extension of $Y$. Moreover, $X$ is unique up to isomorphism.
    \end{prop}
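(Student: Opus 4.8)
The plan is to mirror the proof of Proposition \ref{Prop:MaxStrProxExt}, replacing the affine $G$-projective cover by the \emph{maximal proximally irreducible extension} and strongly proximal extensions by proximal ones, with Proposition \ref{Prop:ProxExt} playing the role that Proposition \ref{Prop:StrProxExt} played there. The entire argument proceeds by transporting the problem from the $G$-flows $X,Y$ to the affine $G$-flows $P(X),P(Y)$, solving it in the affine category, and then restricting back to extreme points.

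First I would verify that $P(Y)$ admits a maximal proximally irreducible extension. Since $Y$ is minimal, Proposition \ref{Prop:CompOfProxIrr} (applied with the flow $Y$ itself) guarantees that the composition of any two proximally irreducible extensions into $P(Y)$ is again proximally irreducible, which is exactly the hypothesis required by Theorem \ref{Thm:MaxProxIrrExt}. Applying that theorem produces a proximally irreducible affine $G$-flow $P(X)$ together with a proximally irreducible extension $\Phi\colon P(X) \to P(Y)$. By the $(\Leftarrow)$ direction of Proposition \ref{Prop:ProxExt}, the restriction $\phi := \Phi|_X\colon X \to Y$ is a $G$-irreducible proximal extension; since $Y$ is minimal, $G$-irreducibility of $\phi$ forces $X$ to be minimal. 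This yields the desired minimal flow $X$ and proximal extension $\phi$.

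For maximality, suppose $Z$ is a minimal $G$-flow and $\psi\colon Z \to Y$ is a proximal extension. As $Z$ is minimal, $\psi$ is automatically $G$-irreducible, so the $(\Rightarrow)$ direction of Proposition \ref{Prop:ProxExt} shows that the induced affine extension $\Psi\colon P(Z) \to P(Y)$ is proximally irreducible. I would then run the fiber-product argument from Proposition \ref{Prop:UniMaxProxIrrExt}: set $C := \set{(x,z) \in P(X)\times P(Z) : \Phi(x) = \Psi(z)}$, choose an affine subflow $C' \subseteq C$ minimal with respect to surjecting onto $P(X)$ under the first projection $\pi_1$, so that $\pi_1|_{C'}$ is affine $G$-irreducible and hence, being proximally irreducible, an isomorphism (because $P(X)$ has no nontrivial proximally irreducible extensions). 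Composing with the second projection yields an affine $G$-map $\eta\colon P(X) \to P(Z)$ with $\Psi \circ \eta = \Phi$, and Lemma \ref{Lem:FactorOfProxIrr} shows $\eta$ is itself proximally irreducible. Restricting to extreme points, Lemma \ref{Lem:ProxIrrImpliesIrr} produces a $G$-irreducible extension $\pi := \eta|_X\colon X \to Z$, which is proximal by Proposition \ref{Prop:ProxExt} (here using that $Z$ is minimal). Evaluating $\Psi \circ \eta = \Phi$ on Dirac masses gives $\psi \circ \pi = \phi$, as required.

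Finally, uniqueness of $X$ up to isomorphism is immediate from Proposition \ref{Prop:UniMaxProxIrrExt}, since $P(X)$ is a proximally irreducible flow which is a proximally irreducible extension of $P(Y)$; the constructed maximal proximal extension of $Y$ corresponds precisely to the unique maximal proximally irreducible extension of $P(Y)$. The only genuine subtlety—and the point at which the general theory fails (cf.\ Question \ref{Ques:CompProxIrrExt})—is verifying the composition hypothesis of Theorem \ref{Thm:MaxProxIrrExt}, which here is supplied for free by the minimality of $Y$ via Proposition \ref{Prop:CompOfProxIrr}. The remaining steps are routine transfers between $X$ and $P(X)$ through extreme points, the only care being to note that $\eta$ carries Dirac masses to Dirac masses, so that $\pi$ is well defined and the identity $\psi\circ\pi=\phi$ holds on all of $X$.
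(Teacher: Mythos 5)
Your proposal is correct and follows essentially the same route as the paper: Proposition \ref{Prop:CompOfProxIrr} supplies the composition hypothesis of Theorem \ref{Thm:MaxProxIrrExt} to produce the maximal proximally irreducible extension $P(X) \to P(Y)$, Proposition \ref{Prop:ProxExt} transfers between proximal and proximally irreducible extensions, and the factorization plus uniqueness come from Proposition \ref{Prop:UniMaxProxIrrExt}. The only difference is that you inline the fiber-product argument which the paper simply cites as ``by the proof of Proposition \ref{Prop:UniMaxProxIrrExt}''; this is a matter of exposition, not substance.
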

    \begin{proof}
        By Proposition \ref{Prop:CompOfProxIrr} we see that $P(Y)$ satisfies the assumptions of Theorem \ref{Thm:MaxProxIrrExt}. Thus there exists a maximal proximally irreducible extension $\eta\colon P(X) \to P(Y)$. Letting $\phi:= \eta|_X$ we see that Proposition \ref{Prop:ProxExt} implies $\phi\colon X\to  Y$ is a proximal extension. Given any minimal $G$-flow $Z$ and a proximal extension $\psi\colon Z \to Y$ we obtain that the induced affine extension $\Tilde{\psi}\colon P(Z) \to P(Y)$ is proximally irreducible. By the proof of Proposition \ref{Prop:UniMaxProxIrrExt} there exists a proximally irreducible extension $\theta: P(X) \to P(Z)$ such that $\Tilde{\psi}\circ \theta = \eta$. It follows that $\pi := \theta|_X$ is the required proximal extension. Moreover, uniqueness of $X$ also follows from Proposition \ref{Prop:UniMaxProxIrrExt}.
    \end{proof}
    We now prove analogies of Theorem \ref{Thm:AffProjCoverPointTrans} and Proposition \ref{Prop:AffProjCoverPointTransGen} for proximally irreducible extensions. 
    \begin{theorem}\label{Thm:MaxProxIrrOfPointTrans}
        Let $G$ be a Polish group and $X$ a $G$-flow such that $x_0 \in X$ has comeager orbit. If the universal minimal proximal flow of $H :=\Stab(x_0)$ contains a comeager orbit, then $P(\Sa(G/H^*))$ is isomorphic to the maximal proximally irreducible extension of $P(X)$ where $H^* \leq H$ is a maximal strongly amenable closed subgroup such that $\Pi(H) \simeq \Sa(H/H^*)$.
    \end{theorem}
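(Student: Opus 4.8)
The plan is to exhibit $K := P(\Sa(G/H^*))$ simultaneously as a proximally irreducible flow and as a proximally irreducible extension of $P(X)$. Uniqueness of such an object (Proposition \ref{Prop:UniMaxProxIrrExt}), together with existence of the maximal proximally irreducible extension (Theorem \ref{Thm:MaxProxIrrExt}, applicable because $x_0$ has comeager orbit so that $P(X)$ satisfies the composition hypothesis by Proposition \ref{Prop:CompOfProxIrr}), will then force $K$ to be the maximal one. The first property is immediate: $H^*$ is strongly amenable, so Theorem \ref{Thm:StrAmenSubgp} applied to the closed subgroup $H^* \leq G$ shows that $K$ has no nontrivial proximally irreducible extensions.

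For the second property, I would consider the $G$-map $\gamma\colon \Sa(G/H^*) \to X$ given by $gH^* \mapsto gx_0$ and the induced affine extension $\Phi\colon K \to P(X)$. The key preliminary is the fiber over $x_0$: factoring $\gamma = \alpha \circ \beta_1$ through $\alpha\colon \Sa(G/H) \to X$, $gH \mapsto gx_0$, and $\beta_1\colon \Sa(G/H^*)\to\Sa(G/H)$, $gH^* \mapsto gH$, the fiber computations from the proofs of Theorems \ref{Thm:ProjCoverPointTrans} and \ref{Thm:AffProjCoverPointTrans} give $\alpha^{-1}(\set{x_0}) = \set{H}$ and $\beta_1^{-1}(\set{H}) = \Sa(H/H^*)$, whence $\gamma^{-1}(\set{x_0}) = \Sa(H/H^*) \simeq \Pi(H)$. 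Since $\delta_{x_0}$ is extreme in $P(X)$, the fiber $\Phi^{-1}(\set{\delta_{x_0}})$ is a face, so its extreme points are Dirac masses and
\[
\overline{\ext}(K) \cap \Phi^{-1}(\set{\delta_{x_0}}) = \set{\delta_p : p \in \Pi(H)}.
\]

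The main step is to show $\Phi$ is proximally irreducible. Given an affine subflow $C \subseteq K$ with $C \cap \conv(\overline{\ext}(K) \cap \Phi^{-1}(\set{\delta_y})) \neq \emptyset$ for all $y \in X$, I would test the hypothesis at $y = x_0$ to obtain $p_1,\dots,p_n \in \Pi(H)$ and positive weights $\lambda_j$ summing to $1$ with $\sum_j \lambda_j \delta_{p_j} \in C$. The decisive input is that $\Pi(H)$ is a \emph{proximal} $H$-flow: this yields $q \in \Sa(H)$ with $q p_1 = \cdots = q p_n =: p \in \Pi(H)$, and applying $q$ (which preserves the $\Sa(G)$-invariant set $C$) collapses the measure to $\delta_p \in C$. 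Finally, minimality of $\Pi(H)$ gives $H^* \in \overline{H\cdot p}$, and since $H^* \in \Sa(G/H^*)$ has dense $G$-orbit we get $\overline{G\cdot p} = \Sa(G/H^*)$; Milman's theorem then forces $\overline{\conv}(\overline{G\cdot \delta_p}) = C = K$.

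I expect the main obstacle to be exactly this appeal to proximality: one must collapse the finitely many extreme points $p_1,\dots,p_n$ of a single fiber by a \emph{single} element of $\Sa(H)$, which uses the proximal (not merely minimal) nature of $\Pi(H)$, just as in Proposition \ref{Prop:ProxExt} and the proof of Theorem \ref{Thm:StrAmenSubgp}. Once $C$ is seen to contain one Dirac mass $\delta_p$ with $p \in \Pi(H)$, the density argument through the comeager base-point orbit is routine. Having established both properties, Proposition \ref{Prop:UniMaxProxIrrExt} identifies $K = P(\Sa(G/H^*))$ as the maximal proximally irreducible extension of $P(X)$.
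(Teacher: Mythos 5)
Your proposal is correct and follows essentially the same route as the paper's proof: both establish that $P(\Sa(G/H^*))$ is proximally irreducible via Theorem \ref{Thm:StrAmenSubgp}, identify the fiber of the canonical extension over $\delta_{x_0}$ with (measures on) $\Sa(H/H^*)\simeq\Pi(H)$, collapse a convex combination of fiber points using $H$-proximality and $H$-minimality, and then conclude via the dense $G$-orbit of $H^*$ and uniqueness of the maximal proximally irreducible extension. The only cosmetic slip is that your final step (the closed convex hull of all Dirac masses being all of $P(\Sa(G/H^*))$) is the Krein--Milman theorem rather than Milman's theorem.
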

    \begin{proof}
       The existence of such an $H^* \leq H$ follows by Theorem \ref{Thm:StructureOfUMPF}. By Theorem \ref{Thm:StrAmenSubgp} we see that $P(\Sa(G/H^*))$ is proximally irreducible. Thus it suffices to show that the canonical affine extension $\phi\colon P(\Sa(G/H^*)) \to P(X)$ given by $\phi(gH^*) = g\delta_{x_0}$ is proximally irreducible. As in the proof of Theorem \ref{Thm:AffProjCoverPointTrans} we see that
        \[
        \phi^{-1}(\set{\delta_{x_0}}) = P(\Sa(H/H^*)).
        \]
        Now suppose $C \subseteq P(\Sa(G/H^*))$ is an affine subflow such that
        \[
        C \cap \conv(\Sa(G/H)\cap \phi^{-1}(\set{\delta_x})) \neq \emptyset, \ \ \ \text{for all }x \in X.
        \]
        In particular, there exists positive reals $\lambda_1,\dots, \lambda_n \in \R$ with $\sum_{j=1}^n \lambda_j = 1$ and there exists $y_1,\dots, y_n \in \Sa(G/H^*)\cap \phi^{-1}(\set{\delta_{x_0}}) = \Sa(H/H^*)$ such that 
        \[
        \lambda_1y_1 + \cdots + \lambda_n y_n \in C.
        \]
        Using $H$-proximality and $H$-minimality of $\Sa(H/H^*)$ we see that $\Sa(H/H^*) \subseteq C$ and thus $C = P(\Sa(G/H^*))$. Therefore, $\phi$ is proximally irreducible and the desired result follows. 
    \end{proof}
    \begin{prop}
           Let $G$ be a Polish group and $X$ a $G$-flow such that $x_0 \in X$ has comeager orbit. Viewing $\Pi(H) \subseteq P(\Sa(H)) \subseteq P(\Sa(G))$ as a closed subset where $H := \Stab(x_0)$ we have that $\overline{\conv}(\overline{G\cdot \Pi(H)})$ is isomorphic to the maximal proximally irreducible extension of $P(X)$.
    \end{prop}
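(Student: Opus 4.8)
The plan is to exhibit $K := \overline{\conv}(\overline{G\cdot\Pi(H)})$, together with the restriction of the canonical affine extension, as a maximal proximally irreducible extension of $P(X)$, and then invoke uniqueness. Since $x_0$ has comeager orbit, Proposition \ref{Prop:CompOfProxIrr} shows that $P(X)$ satisfies the hypothesis of Theorem \ref{Thm:MaxProxIrrExt}, so a maximal proximally irreducible extension of $P(X)$ exists and, by Proposition \ref{Prop:UniMaxProxIrrExt}, is unique up to isomorphism. Thus it suffices to prove two things: that the affine extension $\phi := \psi|_K \colon K \to P(X)$, where $\psi\colon P(\Sa(G)) \to P(X)$ is the affine extension induced by $g \mapsto gx_0$ (so $\psi(\delta_g) = \delta_{gx_0}$), is proximally irreducible; and that $K$ is itself a proximally irreducible flow.

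For the first point I would argue exactly as in the converse direction of Theorem \ref{Thm:StrAmenSubgp} and in Theorem \ref{Thm:MaxProxIrrOfPointTrans}. Note first that $\psi^{-1}(\set{\delta_{x_0}}) = P(\Sa(H))$ by Lemma \ref{Lem:Stab}, so $\Pi(H) \subseteq \phi^{-1}(\set{\delta_{x_0}})$ and $\phi$ is onto. Given an affine subflow $C \subseteq K$ meeting $\conv(\overline{\ext}(K) \cap \phi^{-1}(\set{\delta_x}))$ for every $x \in X$, I specialize to $x = x_0$ to obtain $y_1, \dots, y_n \in \overline{\ext}(K) \cap \phi^{-1}(\set{\delta_{x_0}})$ and positive weights $\lambda_j$ summing to $1$ with $\sum_j \lambda_j y_j \in C$. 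By Milman's theorem $\overline{\ext}(K) \subseteq \overline{G\cdot\Pi(H)}$, so writing each $y_j$ as a limit $\lim_\mu g_\mu z_\mu$ with $z_\mu \in \Pi(H)$ and passing to a convergent subnet $g_\mu \to p_j \in \Sa(G)$, applying $\phi$ gives $\delta_{x_0} = p_j\delta_{x_0} = \delta_{p_j x_0}$, whence $p_j \in \Sa(H)$ by Lemma \ref{Lem:Stab}; the circle operation together with $H$-invariance of $\Pi(H)$ then forces $y_j \in p_j\bigcdot\Pi(H) = \Pi(H)$. Using that $\Pi(H)$ is a minimal proximal $H$-flow, I choose $q \in \Sa(H)$ with $qy_1 = \dots = qy_n \in \Pi(H)$; then $q\big(\sum_j \lambda_j y_j\big) = qy_1 \in C \cap \Pi(H)$, and minimality of $\Pi(H)$ gives $\Pi(H) \subseteq C$, hence $K = \overline{\conv}(\overline{G\cdot\Pi(H)}) \subseteq C$. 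This shows $\phi$ is proximally irreducible.

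For the second point, the mechanism is the comparison argument from Proposition \ref{Prop:UniMaxProxIrrExt}. Let $\pi\colon P(W_0) \to P(X)$ be the (unique) maximal proximally irreducible extension, and let $\theta\colon L \to K$ be an arbitrary proximally irreducible extension. Because $x_0$ has comeager orbit, Proposition \ref{Prop:CompOfProxIrr} guarantees that $\phi\circ\theta\colon L \to P(X)$ is again proximally irreducible. Forming the fiber product $D = \set{(k,w) \in K \times P(W_0) : \phi(k) = \pi(w)}$ and passing to an affine subflow minimal over $P(W_0)$, the projection to $P(W_0)$ is affine $G$-irreducible and hence, as $P(W_0)$ is proximally irreducible, an isomorphism; this yields an affine $G$-map $\alpha\colon P(W_0) \to K$ with $\phi\circ\alpha = \pi$, and feeding $\theta$ through the same comparison and applying Lemma \ref{Lem:FactorOfProxIrr} forces $\theta$ to be an isomorphism. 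The structural input making all of this go through is the identity $\overline{\ext}(K) \cap \phi^{-1}(\set{\delta_{x_0}}) = \Pi(H)$: the inclusion $\subseteq \Pi(H)$ is precisely the computation of the previous paragraph, and since this set is a nonempty closed $H$-invariant subset of the minimal flow $\Pi(H)$ it is all of $\Pi(H)$, so it is a minimal proximal $H$-flow matching the corresponding fiber of $P(W_0)$. Consequently $K$ is a proximally irreducible flow, $(K,\phi)$ is a maximal proximally irreducible extension of $P(X)$, and uniqueness identifies it with the asserted object.

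I expect the genuine difficulty to lie entirely in the second point, namely in verifying that $K$ has \emph{no} nontrivial proximally irreducible extensions rather than merely that $\phi$ is one. This is exactly where the general failure recorded in Question \ref{Ques:CompProxIrrExt} could bite, and the comeager-orbit hypothesis is what rescues the argument, through Proposition \ref{Prop:CompOfProxIrr}; a secondary technical nuisance is the bookkeeping of surjectivity in the fiber-product comparison (distinguishing $\conv$ from $\overline{\conv}$ on the fibers over extreme points), which I would handle exactly as in the proof of Proposition \ref{Prop:UniMaxProxIrrExt}.
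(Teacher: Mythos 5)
The first half of your proposal is correct and agrees with the paper: the verification that $\phi=\psi|_K\colon K\to P(X)$ is proximally irreducible (specialize to the fiber over $\delta_{x_0}$, use Milman's theorem plus the circle operation to force the relevant extreme points of $K$ into $\Pi(H)$, then collapse with $H$-proximality and conclude by $H$-minimality) is exactly the argument the paper imports from the converse direction of Theorem \ref{Thm:StrAmenSubgp}, and your reduction of the proposition to ``$\phi$ is proximally irreducible and $K$ is a proximally irreducible flow, then apply Theorem \ref{Thm:MaxProxIrrExt} and Proposition \ref{Prop:UniMaxProxIrrExt}'' is a legitimate plan.

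The gap is in the second half, and it is a gap of direction. Your fiber-product comparison produces affine $G$-maps \emph{out of} $P(W_0)$: a map $\alpha\colon P(W_0)\to K$ with $\phi\circ\alpha=\pi$, and (feeding $\theta$ through) a map $\beta\colon P(W_0)\to L$ with $\phi\circ\theta\circ\beta=\pi$. From these, Lemma \ref{Lem:FactorOfProxIrr} can only certify that $\alpha$, $\beta$, or $\theta\circ\beta$ are proximally irreducible, and even that requires them to be \emph{surjective}, which you never verify (and which is not automatic: proximal irreducibility of $\phi$ only rules out proper affine subflows of $K$ that meet every set $\conv(\overline{\ext}(K)\cap\phi^{-1}(\{\delta_x\}))$; an affine subflow such as $\alpha(P(W_0))$ that merely surjects onto $P(X)$ need not be all of $K$). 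More fundamentally, even granting surjectivity, proximally irreducible surjections $P(W_0)\to L\to K$ do not force $\theta$ to be injective: the defining property of $P(W_0)$ --- no nontrivial proximally irreducible extensions --- applies only to surjections \emph{onto} $P(W_0)$, and you never produce one from $K$ or from $L$. (Nor can you compose $\beta$ with $\theta$ via Proposition \ref{Prop:CompOfProxIrr}, since there the base must be $P(X)$ with $X$ minimal or having a comeager orbit; composing over the base $K$ is precisely the unresolved Question \ref{Ques:CompProxIrrExt}.) The paper's proof goes the other way: it takes a lift $\pi\colon P(\Sa(G))\to P(Y)$ of the canonical map $P(\Sa(G))\to P(X)$ through the maximal extension $\psi\colon P(Y)\to P(X)$, restricts it to $K$, and then Lemma \ref{Lem:FactorOfProxIrr} makes $\pi|_K\colon K\to P(Y)$ proximally irreducible, so that the no-nontrivial-extensions property of $P(Y)$ applies to this surjection onto $P(Y)$ and yields $K\simeq P(Y)$. (Surjectivity of $\pi|_K$ still needs an argument --- the paper is terse there too --- but with the map in that direction the final step is valid.) Your argument, as written, never reaches a statement to which that property can be applied, so the assertion that $\theta$ is an isomorphism is a non sequitur.
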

    \begin{proof}
        Let $\phi\colon P(\Sa(G)) \to P(X)$ be the affine extension given by $\phi(\delta_{g}) = \delta_{gx_0}$. We claim that it suffices to prove $\phi$ restricted to $K := \overline{\conv}(\overline{G\cdot \Pi(H)})$ is proximally irreducible. Indeed, by Theorem \ref{Thm:MaxProxIrrExt}, let $\psi\colon P(Y)\to P(X)$ be the maximal proximally irreducible extension of $P(X)$. It is not hard to show there exists an affine extension $\pi\colon P(\Sa(G)) \to P(Y)$ such that $\phi = \psi \circ \pi$. As $\phi|_K$ is proximally irreducible, Lemma \ref{Lem:FactorOfProxIrr} tells us that $\pi|_K$ is also proximally irreducible and thus an isomorphism. 

       One can use the exact same argument as in proof of the converse direction of Theorem \ref{Thm:StrAmenSubgp} to show $\eta$ is proximally irreducible. 
    \end{proof}

\bibliographystyle{alpha}

\begin{thebibliography}{alpha}

\bibitem{Alfsen1971} E. M. Alfsen, {\it Compact convex set and boundary integrals}, {Springer-Verlag}, {New York-Heidelberg}, {1971.}

\bibitem{Anantharaman-Delaroche2003} C. Ananthraman-Delaroche, {\it On spectral characterizations of amenability}, {Israel J. Math.} {\bf 137} (2003), 1--33.

\bibitem{AngelKechrisLyons2014} O. Angel, A. S. Kechris, R. Lyons, {\it Random orderings and unique ergodicity of automorphism groups}, {J. Eur. Math. Soc. (JEMS)} {\bf 16} (2014), no. 10, {2059--2095.}

\bibitem{Auslander1988} J. Auslander, {\it Minimal flows and their extensions}, {North-Holland}, {Amsterdam}, {1988.}

\bibitem{AuslanderGlasner1977} J. Auslander and S. Glasner, {\it Distal and highly proximal extensions of minimal flows}, {Indiana Univ. Math. J.} {\bf 26} (1977), no. 4, {731--749.}

\bibitem{BallHagler1996} R. N. Ball and J. N. Hagler, {\it The Gleason cover of a flow}, {Ann. N. Y. Acad. Sci.} {\bf 788} {(1996)}, {13--33.}

\bibitem{BassoZucker2025} G. Basso and A. Zucker, {\it Topological groups with tractable minimal dynamics}, {preprint} (2025). 

\bibitem{BenYaacovMellerayTsankov2017} I. Ben Yaacov, J. Melleray, T. Tsankov, {\it Metrizable universal minimal flows of {P}olish groups have a comeagre orbit}, {Geom. Funct. Anal.} {\bf 27} (2017), no. 1, {67--77.}

\bibitem{CapraceMonod2014} P-E. Caprace and N. Monod, {\it Relative amenability}, {Groups Geom. Dyn.} {\bf 8} (2014), {no. 3}, {747--774.}

\bibitem{ChoiEffros1977} M. D. Choi and E. G. Effros, {\it Injectivity and operator spaces}, {J. Funct. Anal.} {\bf 24} (1977), {no. 2}, {156--209.}

\bibitem{Effros1965} E. G. Effros, {\it Transformation groups and {$C\sp{\ast} $}-algebras}, {Ann. of Math. (2)} {\bf 81} (1965), {38--55.}

\bibitem{EllisEllis2014} D. B. Ellis and R. Ellis, {\it Automorphisms and equivalence relations in topological dynamics}, {Cambridge University Press}, {Cambridge}, {2014.}


\bibitem{Glasner1976} S. Glasner, {\it Proximal flows}, {Springer-Verlag}, Berlin-New York, 1976.

\bibitem{Gleason1958} A. M. Gleason, {\it Projective topological spaces}, {Illinois J. Math.} {\bf 2} {(1958)}, {no. 4A}, {482--489.}


\bibitem{GutmanLi2013} Y. Gutman and H. Li, {\it A new short proof for the uniqueness of the universal minimal space}, {Proc. Amer. Math. Soc.} {\bf 141} (2013), no. 1, {265--267.}

\bibitem{GutmanNguyenVanThe2015} Y. Gutman and L. Nguyen Van Th\'e, {\it On relative extreme amenability}, {Sci. Math. Jpn.} {\bf 78} {(2015)}, no. 3, {183--191.}

\bibitem{HadwinPaulsen2011} D. Hadwin and V. I. Paulsen, {\it Injectivity and projectivity in analysis and topology}, {Sci. China Math.} {\bf 54} (2011), no. 11, {2347--2359.}

\bibitem{Hamana1979a} M. Hamana, {\it Injective envelopes of {$C\sp{\ast} $}-algebras}, {J. Math. Soc. Japan} {\bf 31} (1979), no. 1, {181--197.}

\bibitem{Hamana1979b} M. Hamana, {\it Injective envelopes of operator systems}, {Publ. Res. Inst. Math. Sci.} {\bf 15} (1979), no. 3, {773--785.}

\bibitem{Hamana1985} M. Hamana, {\it Injective envelopes of {$C^\ast$}-dynamical systems}, {Tohoku Math. J. (2)} {\bf 37} (1985), no. 4, {463--487.}

\bibitem{Hodges1993} W. Hodges, {\it Model theory}, {Cambridge Univ. Press}, {Cambridge}, {1993.}

\bibitem{Kadison1951} R. V. Kadison, {\it A representation theory for commutative topological algebra}, {Mem. Amer. Math. Soc.} {\bf 7} (1951), {1--39}.

\bibitem{KalantarKennedy2017} M. Kalantar and M. Kennedy, {\it Boundaries of reduced {$C^*$}-algebras of discrete groups}, {J. Reine Angew. Math.} {\bf 727} (2017), {247--267.}

\bibitem{KechrisPestovTodorcevic2005} A. S. Kechris, V. G. Pestov, S. Todorcevic, {\it Fra\"iss\'e limits, {R}amsey theory, and topological dynamics of automorphism groups}, {Geom. Funct. Anal.} {\bf 15} (2005), {no. 1}, {106--189.}

\bibitem{KennedyRaumSalomon2022} M. Kennedy, S. Raum, G. Salomon, {\it Amenability, proximality and higher-order syndeticity}, {Forum Math. Sigma} {\bf 10} (2022), {e22}, {1--28.}

\bibitem{KocakStrauss19797} M. Ko\c cak and D. Strauss, {\it Near ultrafilters and compactifications}, {Semigroup Forum} {\bf 55} (1997), no. 1, 94--109.

\bibitem{LeBoudecTsankov2025} {A. Le Boudec and T. Tsankov}, {\it Continuity of the stabilizer map and irreducible extensions}, {Comment. Math. Helv.} {\bf 100} (2025), no. 1, {123--146.}

\bibitem{MellerayNguyenVanTheTsankov2016} J. Melleray, L. Nguyen Van Th\'e, T. Tsankov, {\it Polish groups with metrizable universal minimal flows}, {Int. Math. Res. Not. IMRN} (2016), no. 5, {1285--1307.}

\bibitem{Paulsen2002} V. I. Paulsen, {\it Completely bounded maps and operator algebras}, {Cambridge University Press}, {Cambridge}, {2002.}

\bibitem{Phelps2001} R. R. Phelps, {\it Lectures on Choquet's theorem}, {Springer-Verlag}, {Berlin}, {2001.}

\bibitem{Samuel1948} P. Samuel, {\it Ultrafilters and compactification of uniform spaces}, {Trans. Amer. Math. Soc.} {\bf 64} (1948), {100--132.}

\bibitem{Veech1977} W. A. Veech, {\it Topological dynamics}, {Bull. Amer. Math. Soc.} {\bf 83} (1977), no. 5, 775--830.

\bibitem{Zucker2016} A. Zucker, {\it Topological dynamics of automorphism groups, ultrafilter combinatorics, and the generic point problem}, {Trans. Amer. Math. Soc.} {\bf 368} (2016), {no. 9}, {6715--6740.}

\bibitem{Zucker2018} A. Zucker, {\it A direct solution to the generic point problem}, {Proc. Amer. Math. Soc.} {\bf 146} (2018), no. 5, {2143--2148.}

\bibitem{Zucker2021} A. Zucker, {\it Maximally highly proximal flows}, {Ergodic Theory Dynam. Systems} {\bf 41} (2021), {no. 7}, {2220--2240.}

\end{thebibliography}

\end{document}